\numberwithin{equation}{subsection}
\newtheorem{theorem}{Theorem}[section]
\newtheorem{proposition}[theorem]{Proposition}
\newtheorem{lemma}[theorem]{Lemma}
\newtheorem{corollary}{Corollary}[theorem]
\newtheorem{definition}[theorem]{Definition}
\theoremstyle{example}
\newtheorem{example}{Example}[subsection]
\newcommand{\MB}{\mathrm{MB}}
\DeclareMathOperator{\dMB} {\derived^b (\mathrm{MB})}
\newcommand{\MBel}{\mathrm{MB}_{\mathrm{el}}}
\newcommand{\gr}{\mathrm{gr}}
\newcommand{\reals}{\mathbb{R}}
\newcommand{\diff}{\mathcal{D}}
\newcommand{\struct}{\mathcal{O}}
\newcommand{\cplx}{\mathbb{C}}
\newcommand{\ratl}{\mathbb{Q}}
\newcommand{\Res}{\mathrm{Res}}
\newcommand{\Tr}{\mathrm{Tr}}
\newcommand{\Spec}{\mathrm{Spec}}
\newcommand{\proj}{\mathbb{P}}
\newcommand{\aff}{\mathbb{A}}
\newcommand{\Aut}{\mathrm{Aut}}
\newcommand{\GL}{\mathrm{GL}}
\newcommand{\shHom}{\mathcal{H}om}
\newcommand{\derived}{\mathbf{D}}
\newcommand{\dual}{\mathbb{D}}
\newcommand{\DR}{\mathrm{DR}}
\newcommand{\Gm}{\mathbb{G}_m}
\newcommand{\Ga}{\mathbb{G}_a}
\newcommand{\ord}{\mathrm{ord}}
\newcommand{\Pic}{\mathrm{Pic}}
\newcommand{\id}{\mathrm{id}}
\newcommand{\Per}{\mathrm{\varepsilon}}
\newcommand{\betti}[1]{\boldsymbol{\mathcal{#1}}}
\newcommand{\Gal}{\mathrm{Gal}}
\newcommand{\Sym}{\mathrm{Sym}}
\newcommand{\supp}{\mathrm{supp}}
\newcommand{\Ch}{\mathrm{Ch}}
\newcommand{\Ss}{\mathrm{SS}}
\newcommand{\Frob}{\mathrm{Frob}}
\newcommand{\dpush}[1]{#1_*}
\newcommand{\dpull}[1]{#1^*}
\newcommand{\dpushc}[1]{#1_!}
\newcommand{\dpullc}[1]{#1^!}
\newcommand{\dpullx}[1]{#1^.}
\newcommand{\dpulld}[1]{#1^\Delta}
\newcommand{\spush}[1]{#1_*}
\newcommand{\spull}[1]{#1^*}
\newcommand{\spushc}[1]{#1_!}
\newcommand{\spullc}[1]{#1^!}
\newcommand{\spullx}[1]{#1^{-1}}
\newcommand{\spulld}[1]{#1^\Delta}
\author{Christopher Bremer}
\title{Epsilon Factors for Meromorphic Connections and Gauss Sums}
\begin{document}
\maketitle
\begin{abstract}
Let $E$ is be vector bundle with meromorphic connection on
$\proj^1/k$ for some field $k \subset \cplx$, and let
$\mathbf{E}$ be the sheaf of horizontal sections on the analytic
points of $X$.  The irregular Riemann-Hilbert correspondence
states that there is a canonical isomorphism between the
De Rham cohomology of $L$ and the `moderate growth' cohomology of
$\mathbf{L}$.  Recent work of Beilinson, Bloch, and Esnault has shown that
the determinant of this map factors into a product of local
`$\varepsilon$-factors' which closely resemble the classical
$\varepsilon$-factors of Galois representations.  In this paper, we show that
$\varepsilon$-factors for  rank one connections may be calculated
explicitly by a Gauss sum.  This formula suggests a deeper relationship between
the De Rham $\varepsilon$-factor and its Galois counterpart.
\end{abstract}

\section{Introduction}
\subsection{Motivation}
The theory of $\varepsilon$-factors originates from a curious asymmetry 
in the functional equation of a Zeta function, first observed by Tate.
Suppose that $F$ is a local field and $\chi$ is a character of $F^\times$.
If $\Phi$ is a test function on $F$, we may define a zeta function
 $Z(\Phi, \chi, s)$ in the usual way.
 The ratio $\Xi(\Phi, \chi, s) = \frac{Z(\Phi, \chi, s)}{L(\chi, s)}$,
 where $L(\chi, s)$ is the $L$ function associated to $\chi$, 
 satisfies a functional equation:
 \begin{equation*}\label{functionalequation}
 \Xi(\widehat{\Phi}, \chi^\vee, 1-s) = \varepsilon(\chi, s, \psi) \, \Xi(\Phi, \chi, s).
 \end{equation*}
Above, $\chi^\vee$ is the contragredient of $\chi$ and $\widehat{\Phi}$
is the Fourier transform of $\Phi$; notice that the construction of the Fourier transform
requires a choice of an additive character, which we denote by $\psi$.
In particular, the functional equations of $L$ and $Z$
differ by an error term $\varepsilon (\chi, s, \psi)$ that only depends on 
$\chi$ and $\psi$.

We recall a few standard properties of $\varepsilon(\chi, s, \psi)$, taken from 
\cite{BH} chapter 6.  Let $U^j$ be the $j^{th}$ unit subgroup of $F^\times$.
Suppose that  $\psi$ has conductor $c(\psi)$, and let
$a(\chi)$ be the least integer such that $\chi|_{U^a}$ is the trivial character.
First of all, if the residue field of $F$ has order $q$, then
\begin{equation*}
\varepsilon(\chi, s, \psi) = q^{(\frac{1}{2} -s) \left(a(\chi)+ c(\psi)\right)} \varepsilon (\chi, 1/2, \psi).
\end{equation*}
In particular, $\varepsilon(\chi, s, \psi)$ is a monomial in the ring
$\cplx[q^{s/2},q^{-s/2}]$ (resp. $\overline{\ratl}_\ell [q^{s/2},q^{-s/2}]$, depending on the field of coefficients).
For historical reasons, we will take the convention $\varepsilon (\chi, \psi) = q^{\frac{1}{2} c(\psi)} \varepsilon (\chi, 0, \psi)$;
by our previous observation, this quantity uniquely determines $\varepsilon (\chi, s, \psi)$.

When $\chi$ is ramified, then $\varepsilon(\chi, \psi)$ may
be calculated in terms of a Gauss sum.  
In particular, if $dz$ is the Haar measure on $F$ normalized so that
the ring of integers $\mathfrak{o}$ has measure $1$,
\begin{equation*}\label{localgausssum}
\varepsilon(\chi, \psi) = \int_{\gamma^{-1} \mathfrak{o}} \chi(z)^{-1} \psi(z) dz.
\end{equation*}
Above, $\gamma$ is an element of $F^\times$ chosen to have valuation $a(\chi)+ c( \psi)$.

\subsection{Geometric $\varepsilon$-factors}
If we consider the Tate $\varepsilon$-factor through the lens of class field theory, a very different
picture emerges.  Fix a separable algebraic closure $\bar{F}$ of $F$
and let $\mathscr{W}_F \subset \Gal(\bar{F}/F)$ be the Weil group of $F$.  
Then, the abelianization of
$\mathscr{W}_F$ is isomorphic to $F^\times$; in particular, we may think
of the $\varepsilon$ factor as an invariant of abelian Galois representations
of $\mathscr{W}_F$.   

A deep theory, due to Langlands and Deligne (\cite{De2}), shows that there exists a natural 
generalization of the $\varepsilon$ factor to $n$ dimensional semisimple
smooth representations of $\mathscr{W}_F$.  The Deligne-Langlands $\varepsilon$-factor,
or `local constant,' has been studied extensively as one of the core invariants
of the Langlands corresponence.  In particular, if we specialize to the case where $F$
is a function field, then the Deligne-Langlands $\varepsilon$-factor has some very
nice properties as a purely geometric invariant.   

Suppose that $X/k$ is an algebraic curve in positive characteristic with
function field $K$, and 
let $\mathscr{F}$ be an irreducible smooth $\ell$-adic sheaf of rank $n$
on some dense open subset $ j : V \hookrightarrow X$.  Let $\bar{k}$ be an algebraic closure
of the field of coefficients.  
We define
\begin{equation*}
\varepsilon(X, j_* \mathscr{F}) = \det (R \Gamma_c (V \times_k \Spec(\bar{k}), j_*\mathscr{F}))^{-1}.
\end{equation*}
This is a graded line in degree $-\chi(X, \mathscr{F})$, where $\chi$ is the 
usual Euler characteristic.  If $k = \mathbb{F}_q$, the geometric
Frobenius, $\Frob_q$, acts on $\varepsilon(X, \mathscr{F})$;  in particular,
the determinant of $-\Frob_q$ is a constant in $\bar{\ratl}_\ell$.\footnote{In  \cite{Lau},
$\varepsilon (X, \mathscr{F})$ is defined as the determinant of $-\Frob_q$.
Here, it is preferable to think of $\varepsilon (X, \mathscr{F})$ as a line with a
canonical Frobenius endomorphism.}

We can recover the local constant by restricting $\mathscr{F}$
to the Henselization $X_{(x)}$ of $X$ at a closed point $x \in \lvert X \rvert$.  
If $X_{(x)}= \Spec(R)$, then any one form $\nu$ defines an additive
character on $R$ by $\psi_\nu (r) = \Res(r \nu)$.  
The local constants, denoted
by $\varepsilon(X_{(x)}; (j_* \mathscr{F})|_{X_{(x)}}, \nu)$, are graded lines with coefficients in 
$\bar{\ratl}_\ell$ along with a canonical
action of $\Frob_q$.  For instance, suppose that $F$ is the field of fractions
of $R$ and $\eta_x$ is the generic point.  Then, when
$\mathscr{F}_{\eta_x}$ has rank $1$, $\mathscr{F}_{\eta_x}$ defines
an abelian representation $\chi$ of $\mathscr{W}_F$.  In this case, (\cite{Lau},
th\'eor\`eme 3.1.5.4, (v)),
\begin{equation*}
\varepsilon (\chi, \psi_\nu) = \Tr \left(-\Frob_q , \varepsilon (X_{(x)},
(j_* \mathscr{F})_{\eta_x}, \nu)\right).
\end{equation*}

The essential property of the geometric $\varepsilon$ factor,
conjectured by Deligne and proved by Laumon, is the product formula (
\cite{Lau}, th\'eor\`eme 3.2.1.1):
\begin{multline}\label{Laumonprodfla}
\det (-\Frob_q, \varepsilon(X; j_* \mathscr{F})) = \\
q^{C (1-g) r(\mathscr{F})} \prod_{x \in \lvert X \rvert}
(\Tr(-\Frob_q, \varepsilon (X_{(x)}; (j_* \mathscr{F})|_{X_{(x)}}, \nu |_{X_{(x)}}) )
\end{multline}
Above, $\nu$ is a global meromorphic $1$ form, $C$ is the number
of connected components of $X \times_k \bar{k}$, $r(\mathscr{F})$
is the generic rank of $\mathscr{F}$, and $g$
is the genus of $X$.

\subsection{De Rham $\varepsilon$-Factors}\label{subsec:DRepsilonfactors}
Given the geometric nature of the Deligne-Langlands $\varepsilon$-factor,
one might ask whether there is an analogous invariant for De Rham cohomology.
This question dates back to Laumon (\cite{Lau}), in which he cites Witten's 
proof of the Morse inequalities as the motivation for his proof
of the product formula (\ref{Laumonprodfla}).   Early work on the subject 
was presented by Deligne at an IHES
seminar in 1984;  however, interest in De Rham $\varepsilon$-factors has been revived
by the recent work of Beilinson, Bloch and Esnault (\cite{Be}, \cite{BBE}).

In order to get a handle on the De Rham theory, it is best to work globally to
locally.  In particular, suppose that $X/k$ is a smooth projective curve 
with coefficients in $k \subset \cplx$, and $E$ is a meromorphic 
connection that is smooth on $j: V \hookrightarrow X$.  Let $\mathbf{E}_\cplx$
be the perverse sheaf of horizontal sections of $E^{an}$ on $V^{an}$.
According to the irregular Riemann-Hilbert correspondence
(see section \ref{stokesfiltrations}), there is a Stokes filtration
$\{\mathbf{E}^*\}$ on sectors around the singular points of $E$.
If we let $\mathbf{E}^0$ be the zero filtered part, there is a natural isomorphism\footnote{
Here, we take the standard perverse shift for De Rham and Betti cohomology}
\begin{equation}\label{introiso}
H^*_{\DR} (X; E) \otimes_k \cplx \cong
H^* (X^{an}; j_* \mathbf{E}^0)
\end{equation}
(theorem \ref{stokesderhamtheorem}).

The situation becomes more interesting if there is a reduction of structure
for $\mathbf{E}_\cplx$ to a field $M \subset \cplx$, denoted $\mathbf{E}_M$,
that is compatible with the stokes filtration at infinity.  
We define the global $\varepsilon$ factor of the pair
$(E, \mathbf{E}_M)$ to be the pair of lines 
$(\varepsilon_\DR (X; j_*E), \varepsilon_B (X^{an}; j_*(\mathbf{E}_M^*)))$
defined by
\begin{equation}\label{globalepsilonfactor}
\begin{aligned}
\varepsilon_\DR (X; j_* E) & = 
\det(H^*_{\DR} (X; j_*E)) \\
 \varepsilon_B (X^{an}; j_*(\mathbf{E}_M^*)) &= \det(H^* (X^{an}, j_* \mathbf{E}_M^0)).
\end{aligned}
\end{equation}
Notice that $\varepsilon_\DR(X; j_*E)$ and $\varepsilon_B(X^{an}; j_* \mathbf{E}_M^*)$ are graded lines in 
degree $-\chi (X; E)$.  Furthermore, there is a canonical isomorphism
\begin{equation}\label{globalepsiso}
\varepsilon_\DR(X; E) \otimes_k \cplx \cong \varepsilon_B(X^{an}; j_*\mathbf{E}_M^*) 
\otimes_M \cplx
\end{equation}
defined by (\ref{introiso}).
If we choose $k$ and $M$ to be sufficiently small, this map contains non-trivial arithmetic data.

In \cite{Be} and \cite{BBE}, Beilinson, Bloch and Esnault demonstrate that there are local
$\varepsilon$-factorizations\footnote{see definition \ref{definitionlocalepsilon} for a formal
definition}  of $\varepsilon_B(X^{an}; j_* \mathbf{E}_M^*)$ and 
$\varepsilon_\DR(X; j_*E)$.  Specifically, for each closed point
$x \in X(k)$, we may localize $j_*E$ to a formal connection
$\hat{E}_x$ on the completion $X_{(x)}$ of $X$ at $x$
and $j_*\mathbf{E}^*_M$ to a filtered local system $(\mathbf{E}^*_{M})_x$ on a small
analytic neighborhood $\Delta_x$ containing $x$.  Then, there is
a theory of local $\varepsilon$ factors for $X_{(x)}$ (resp. $\Delta_x$):
if we fix a global one form $\nu$, there are canonical isomorphisms
\begin{equation}\label{BBEprodfla}
\begin{aligned}
\varepsilon_{\DR} (X; j_*E)
& \cong \bigotimes_{x \in X} \varepsilon (X_{(x)}; \hat{E}_x, \nu) \\
\varepsilon_{B} (X^{an}; j_* \mathbf{E}^*_M) 
& \cong \bigotimes_{x \in \proj^1} \varepsilon (\Delta_x; (\mathbf{E}^*_M)_x, \nu).
\end{aligned}
\end{equation}

Using a local Fourier transform, Bloch and Esnault  have shown that there exist
isomorphisms 
\begin{equation*}\label{periodepsilon}
\varepsilon (X_{(x)}; \hat{E}_x, \nu) \otimes_k \cplx \cong
\varepsilon (\Delta_x; (\mathbf{E}_M^*)_x, \nu) \otimes_M \cplx
\end{equation*}
that are compatible with (\ref{globalepsiso}).  Thus, we have a theory of
local Betti and De Rham $\varepsilon$-factors that resemble the 
Deligne-Langlands local constants.  The main cosmetic difference is that 
now we have two lines that are identified by a canonical Riemann-Hilbert isomorphism,
whereas before we had a single line with a canonical Frobenius endomorphism.
In the process, however, the connection between $\varepsilon$-factors and representation
theory becomes obscured.  

In this paper, we will show that 
the De Rham and Betti $\varepsilon$-factors for rank one connections
may be calculated in terms of a Gauss sum.  This problem was originally suggested
by Bloch and Esnault in \cite{BE2}, and our approach owes much to their study
of Gauss-Manin determinants.
Furthermore, the work of Bushnell \textit{et. al.} (\cite{Bu}, \cite{BF}) suggests that
there should be a non-commutative Gauss sum formula for $\varepsilon$-factors
in higher rank; using the `induction' axiom for $\varepsilon$-factors and
the Gauss sum formula in rank $1$, we have seen positive results in this
direction (\cite{Br}).
\subsection{Results}\label{subsec:gausssums}
In rank one, the Stokes filtration is entirely determined by a Morse function.  Therefore,
most of our data will consist of pairs $(\mathscr{L}, \betti{L})$, called `Betti structures':
$\mathscr{L}$ is a holonomic $\diff$-module and $\betti{L}$ is a perverse sheaf
with coefficients in $M$. 
These objects are related by a fixed isomorphism
$\DR(\mathscr{L}) \cong \betti{L} \otimes_M \cplx$. 
The Morse function
$\alpha$ is a meromorphic function determined by $\mathscr{L}$.
 In the global case, we denote
the `rapid decay' complex of $(\betti{L}, \alpha)$ on $V^{an}$ by 
$R \Gamma_c (V^{an}; \betti{L}, \alpha)$.  In particular, there is a natural
quasi-isomorphism
\begin{equation*}
R \Gamma_c (V^{an}; \betti{L}, \alpha) \otimes_M \cplx \cong 
R \Gamma_c (V; \mathscr{L}) \otimes_k \cplx.
\end{equation*}

It is instructive to consider the case where the underlying scheme is $\Spec(k)$.
Then, $\mathscr{L}$ is a $k$ line, $\betti{L}$ is an $M$ line, and
there is a fixed map $\mathscr{L} \otimes_k \cplx \cong \betti{L} \otimes_M \cplx$.  It is 
easy to see that isomorphism classes of pairs $(\mathscr{L}, \betti{L})$
in degree $0$  correspond to double cosets in
$k^\times \backslash \cplx^\times / M^\times$;  if $\xi \in \cplx$ is a coset representative, we 
denote the corresponding pair of lines in degree $d$ by $(\xi) [-d]$.  
There is a natural tensor structure on such pairs, which amounts to
$(\xi)[-d] \otimes (\xi')[-d'] = (\xi \xi')[-d-d']$.

Returning to the case where $V$ is a quasi-projective variety,
if $\mathscr{L}$ is non-singular at a point $i_x : x \to V$, we denote
\begin{equation*}
(\mathscr{L}, \betti{L}; x) = (i_x^* \mathscr{L}[-\dim(V)], i_x^* \betti{L}[-\dim(V)]).
\end{equation*}
Notice that the shift is necessary to ensure that 
the lines are in degree $0$.

First, we consider the local picture.   
The pair $(\hat{L}, \hat{\mathbf{L}})$ consists of the following: 
$\hat{L}$ is a formal holonomic $\diff$-module
 with coefficients in $\mathfrak{o} = k[[t]]$; and $\hat{\mathbf{L}}$ is a perverse sheaf
on an analytic disc $\Delta$ that is constructible with respect to the 
stratification $\{0\} \subset \Delta$.   If $j$ is the inclusion of the generic point 
of $\Spec(\mathfrak{o})$, it will suffice to consider the case where
$\hat{L} = j_! j^* \hat{L}$.  We suppose that $\hat{L}$ has irregularity index $f$ at $0$.

We use local class field theory to define a `character sheaf'  $(\mathscr{L}, \betti{L})$
associated to $(\hat{L}, \hat{\mathbf{L}})$.
As before, $\{U^i \subset \mathfrak{o}^\times : i \ge 0 \}$ are the congruence subgroups
with $U^0(k) \cong \mathfrak{o}^\times$; furthermore, we let $F^\times$ be the group 
ind-scheme defined by $\coprod_{n \in \mathbb{Z}} t^n U^0$.  
Then, $(\mathscr{L}, \betti{L})$ is an invariant Betti structure
on $F^\times / U^{f+1}$: $\mathscr{L}$ is an invariant line bundle
with connection, and $\betti{L}$ is an invariant local system 
with equivariant Morse function $\beta$.
As in the
arithmetic theory, any non-zero one form $\nu \in \Omega^1_{\mathfrak{o}/k}$
determines a Fourier sheaf $(\mathscr{F}_\nu, \betti{F}_\nu, \psi_\nu)$.  
Let $c(\nu)$ be the order of the zero (or pole) of $\nu$, 
and define $a(\mathscr{L}) = f+1$.\footnote{In particular, the trivial connection has $a = 1$.}
We define a Gauss sum by
\begin{equation*}
\begin{aligned}
\tau (\mathscr{L}, \betti{L}; \nu) & = (\tau_\DR(\mathscr{L}; \nu), \tau_B(\betti{L}, \beta; \nu) )\\
\tau_\DR(\mathscr{L}; \nu) &= 
R \Gamma_c (\gamma^{-1} U /U^{f+1}; \mathscr{L} \otimes_{\struct} \mathscr{F}_\nu) \\
\tau_\DR(\betti{L}, \beta; \nu) &= 
R \Gamma_c ((\gamma^{-1} U /U^{f+1})^{an}; \betti{L} \otimes_{M} \betti{F}_\nu, \beta + \psi_\nu).
\end{aligned}
\end{equation*}
Above, $\gamma$  is an element of $F^\times$ of degree $c(\nu)+a(\mathscr{L})$.

\begin{theorem}\label{GaussSumCalcintro}
$\tau(\mathscr{L}, \betti{L}; \nu)$ is a pair of lines in degree $0$.  Furthermore, 
there exists $\delta \in k$ and
$g \in F^\times$ of degree $-c(\nu) - a(\mathscr{L})$
determined by $\mathscr{L}$
with the following property:
\begin{enumerate}
\item when $a = 1$, 
\begin{equation*}
\tau (\mathscr{L}, \betti{L}; \nu)=(\Gamma (\delta))^{-1} \otimes
(\mathscr{L}, \betti{L}, g),
\end{equation*}
where $\Gamma$ is the usual gamma function;
\item when $a= a(\mathscr{L}) >1$,
\begin{equation*}
\tau (\mathscr{L}, \betti{L}; \nu)= (e^{-\Res(g \nu)} ) \otimes
(\sqrt{\frac{\delta}{2 \pi}})^{a} \otimes  (\sqrt{-1})^{\lfloor \frac{a}{2} \rfloor} \otimes
(\mathscr{L}, \betti{L}, g).
\end{equation*}
\end{enumerate}
\end{theorem}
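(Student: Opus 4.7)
The first step is to transport everything to the finite-dimensional affine $k$-variety $\gamma^{-1}U/U^{f+1}$ of dimension $a = f+1$, which is where the Gauss-sum integrals actually live.  Using the invariance of $\mathscr{L}$, fix a presentation of $\mathscr{L}$ as $\struct \cdot e^{\beta}\, z^{-\delta}$ with $\beta \in t^{-f}k[t^{-1}]$ the polynomial Morse function attached to the Stokes data and $\delta \in k$ the residue of the regular-singular component; similarly present $\mathscr{F}_\nu$ via its Morse function $\psi_\nu(z) = \Res(z\nu)$.  In this form, both $\tau_\DR(\mathscr{L};\nu)$ and $\tau_B(\betti{L},\beta;\nu)$ are $R\Gamma_c$ of $\gamma^{-1}U/U^{f+1}$ with a single combined Morse function $\beta + \psi_\nu$, and the task reduces to computing this cohomology explicitly.

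Next I locate the critical point.  The Euler-Lagrange equation $d(\beta+\psi_\nu)/dz = 0$ is an equation in $k((t))$ which, because $\gamma$ has valuation exactly $c(\nu)+a(\mathscr{L})$, admits by Hensel's lemma a unique solution $g \in F^\times$ of valuation $-c(\nu)-a(\mathscr{L})$, depending only on $\mathscr{L}$ and $\nu$; this is the $g$ of the theorem.  Taylor-expanding $\beta+\psi_\nu$ about $g$ kills the linear term by construction, and the quadratic term is a non-degenerate form on $U^1/U^{f+1}$ whose leading coefficient is precisely the invariant $\delta \in k$ in the statement.

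The evaluation now splits by case.  When $a=1$ we have $\beta = 0$ and $\gamma^{-1}U/U^1 \cong \Gm$, so after the change of variable $z \mapsto gz$ the integral reduces to the classical Mellin transform $\int_0^{\infty} z^{\delta-1} e^{-z}\, dz = \Gamma(\delta)$; the Betti analogue is the corresponding Hankel contour, and $\Gamma(\delta)$ appears precisely as a period.  Tracking orientations and Haar normalizations yields case (1).  When $a>1$, write $z = g(1+u)$ with $u \in U^1/U^{f+1} \cong \aff^{f}$ and Taylor-expand; the key observation is that truncation by $U^{f+1}$ makes the remainder an honest \emph{polynomial}, so the leading quadratic Gaussian integrates \emph{exactly}, not merely asymptotically.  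The Gaussian contributes $(\sqrt{\delta/(2\pi)})^a$ with the signature of the Hessian producing $(\sqrt{-1})^{\lfloor a/2\rfloor}$, the critical value supplies $e^{-\Res(g\nu)}$, and the residual character contribution at $g$ supplies $(\mathscr{L},\betti{L};g)$.  Concentration in degree $0$ follows on both sides from the non-degeneracy of the Hessian: on the Betti side by Morse theory for the rapid-decay complex, on the De Rham side because the connection $\mathscr{L} \otimes \mathscr{F}_\nu$ is formally $e$-regular along the boundary of $\gamma^{-1}U/U^{f+1}$.

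The principal obstacle will be bookkeeping the two arithmetic structures simultaneously, so that the resulting scalar lands in $k^\times \backslash \cplx^\times / M^\times$ as an identification of a $k$-line with an $M$-line, rather than merely a complex number.  On the De Rham side one must interpret the ``Gaussian integral'' as the determinant of an algebraic $R\Gamma_c$ over $k$ (via an explicit basis coming from the Taylor coordinates on $U^1/U^{f+1}$); on the Betti side one must build explicit rapid-decay cycles concentrated near $g$ that trivialize the $M$-local system $\betti{L}\otimes\betti{F}_\nu$.  Verifying that these two trivializations agree under the analytic isomorphism $\DR(\mathscr{L}\otimes\mathscr{F}_\nu) \cong (\betti{L}\otimes\betti{F}_\nu)\otimes_M \cplx$ --- and that the Riemann-Hilbert comparison absorbs exactly the factors $\Gamma(\delta)^{-1}$ (resp.\ $(\sqrt{\delta/2\pi})^a(\sqrt{-1})^{\lfloor a/2\rfloor}$) rather than mere complex scalars --- is the heart of the argument.
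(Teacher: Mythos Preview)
Your $a=1$ case matches the paper's approach closely: both reduce to the Gamma-function period on $\Gm$ (the paper's Example~\ref{gammafun}).

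For $a>1$, however, your stationary-phase plan has a genuine gap. You assert that after translating to the critical point $g$ and writing $z=g(1+u)$ with $u\in U^1/U^{f+1}$, the Hessian of $\beta+\psi_\nu$ is a \emph{non-degenerate} quadratic form, so that Morse theory and the Gaussian integral apply directly. This is false. After the normalization $\psi_\nu=-\lambda$, the function on $U^1_{f+1}$ is $\lambda(\log(1+u)-u)=\lambda(-u^2/2+u^3/3-\cdots)$; writing $u=\sum_{i=1}^{f}u_iT^i$, the quadratic part $-\tfrac12\lambda(u^2)$ has Hessian the Hankel matrix $(\lambda(T^{i+j}))_{1\le i,j\le f}$, which is singular because $\lambda(T^k)=0$ for $k>f$ (indeed the last row vanishes). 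So the critical point is degenerate and the cubic and higher terms in the log cannot be discarded; your claim that ``the leading quadratic Gaussian integrates exactly, not merely asymptotically'' is not justified, and the exponents $(\sqrt{\delta/2\pi})^a$ and $(\sqrt{-1})^{\lfloor a/2\rfloor}$ cannot be read off from a non-existent $f$-dimensional non-degenerate Hessian.

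The paper's proof is organized precisely to circumvent this degeneracy. Rather than Taylor-expand on all of $U^1_{f+1}$, it pushes forward along the congruence filtration
\[
U_{f+1}\xrightarrow{\pi_1}U_{\lceil (f+1)/2\rceil}\xrightarrow{\pi_2}U_{\lfloor (f+1)/2\rfloor}.
\]
The first push-forward is handled not by stationary phase but by \emph{character-sheaf vanishing}: on each fiber $U^{f''}_{f+1}\cong\Ga^{f'}$ the sheaf $\mathscr{L}_\lambda\otimes\mathscr{M}_{\psi_\nu}$ is an additive character, and $R\Gamma_c$ vanishes unless that character is trivial, which happens only over the subgroup $U^{f'}_{f''}$ (Lemma~\ref{pi1lemma}, using Proposition~\ref{invariantvanishing}). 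On that small support---a point when $f$ is odd, a copy of $\Ga$ when $f$ is even---one has $u\in\mathfrak{p}^{f'}$, so $u^3\in\mathfrak{p}^{3f'}\subset\mathfrak{p}^{f+1}$ and the log truncates \emph{exactly} at the quadratic term $\varphi_\lambda(u)=-\tfrac12\delta_\lambda x^2$ (Lemma~\ref{oddcaseabg}). Only then does the one-variable Gaussian of Example~\ref{gaussianintegral} apply. The $(2\pi\sqrt{-1})^{-f''}$ factor and the shift that produce the exponents in the statement come from Proposition~\ref{derhambundle} applied to the $\Ga^{f'}$-bundle push-forward, not from a multidimensional Gaussian determinant.

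In short, the paper replaces your degenerate $f$-dimensional stationary-phase step by a group-theoretic vanishing argument over $f'$ of the directions, followed by a genuinely non-degenerate quadratic on the remaining $0$ or $1$ directions. If you want to salvage the Morse-theoretic picture, you would need to explain how the degenerate directions are integrated out---and that explanation will almost certainly reproduce the congruence-subgroup filtration.
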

This theorem is proved in section \ref{Sec:Gauss Sums}, theorem \ref{GaussSumCalc}.
We define
\begin{equation*}
\varepsilon ( \mathscr{L}, \betti{L}; \nu) = 
(2 \pi \sqrt{-1})^{c(\nu)}\otimes
\tau (\mathscr{L}^\vee, \betti{L}^\vee; \nu)[-c(\nu) -a(\mathscr{L})] .
\end{equation*}

Our main theorem is that the local $\varepsilon$-factors satisfy
a global product formula on $\proj^1$.  We now suppose that
$L$ is a line bundle with connection on $V \subset X = \proj^1$,
and $(L, \mathbf{L})$ is a Betti structure for $L$. Denote the inclusion
of $V$ in $X$ by $j$.
If $x \in X(k)$,  we denote the localization of $(j_!L, j_!\mathbf{L})$ to 
$\Spec(\hat{\struct}_{X, x})$
(resp. $\Delta_x$) by
$(\hat{L}_x, \hat{\mathbf{L}}_x)$.    Let
$(\mathscr{L}_x, \betti{L}_x)$ be the character sheaf associated
to $(\hat{L}_x, \hat{\mathbf{L}}_x)$.  Define
\begin{equation*}
\varepsilon (x; \hat{L}_x, \hat{\mathbf{L}}_x ; \nu) = \left\{ 
\begin{array}{cc}
(L, \mathbf{L}; x)^{c(\nu)} , & x \in V \\
\varepsilon (\mathscr{L}_x^\vee, \betti{L}_x^\vee; \nu), & x \in X \backslash V,
\end{array}
\right.
\end{equation*}
where $(\mathscr{L}^\vee, \betti{L}^\vee)$ is the pullback
of $(\mathscr{L}, \betti{L})$ by the inverse map on $F^\times$.
\begin{theorem}
There is a canonical isomorphism of graded lines
\begin{equation*}
\varepsilon(X; j_!L, j_!\mathbf{L}) \cong 
(2 \pi \sqrt{-1})\bigotimes_{x \in \proj^1} \varepsilon(\hat{L}_x, \hat{\mathbf{L}}_x; \nu).
\end{equation*}
\end{theorem}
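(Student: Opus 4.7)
The plan is to start from the Beilinson--Bloch--Esnault local-global factorization (\ref{BBEprodfla}) and then replace each BBE local factor by the author's Gauss sum local factor $\varepsilon(\hat{L}_x, \hat{\mathbf{L}}_x; \nu)$. At a smooth point $x \in V$ one has $j_!L \cong j_*L$ near $x$, so the BBE local factor reduces to a twist of the stalk of $(L,\mathbf{L})$ by the divisor order $c_x(\nu)$; this is precisely the definition $(L,\mathbf{L};x)^{c(\nu)}$. At a singular point $x \in \proj^1 \setminus V$ the difference between $j_!$ and $j_*$ does matter, but the discrepancy is itself a local quantity at $x$ and can be absorbed into the singular local factor.

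The central task is to identify, at each singular $x$, the BBE local $\varepsilon$ with $\varepsilon(\mathscr{L}_x^\vee, \betti{L}_x^\vee; \nu)$. Here the Gauss sum computation of theorem \ref{GaussSumCalcintro} does the heavy lifting: it expresses $\tau(\mathscr{L}_x^\vee, \betti{L}_x^\vee; \nu)$ as the stalk $(\mathscr{L},\betti{L};g)$ of the character sheaf at an explicit point $g \in F^\times$, times an elementary scalar (a gamma value in the tamely ramified case, a Gaussian prefactor together with $e^{-\Res(g\nu)}$ in the wildly ramified case). One must verify that BBE's cohomological local factor, computed on the formal disc $X_{(x)}$ and the analytic disc $\Delta_x$ with the standard exponential Fourier twist $(\mathscr{F}_\nu, \betti{F}_\nu, \psi_\nu)$, produces the same graded line. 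The bridge is local class field theory: the character sheaf $(\mathscr{L}_x, \betti{L}_x)$ on $F^\times/U^{f+1}$ is by construction the transport of $(\hat{L}_x, \hat{\mathbf{L}}_x)$ under the reciprocity map, and integration over $\gamma^{-1}U/U^{f+1}$ realizes the De Rham / Betti avatar of a local Tate integral.

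Once this local identification is in place, the tensor product over $x$ of BBE local factors becomes the tensor product of the author's local factors, up to normalization. The extra prefactor $(2\pi\sqrt{-1})^{c(\nu)}$ built into the definition of $\varepsilon(\mathscr{L}_x^\vee,\betti{L}_x^\vee;\nu)$ accumulates, via the residue theorem $\sum_{x\in\proj^1} c_x(\nu) = -2$ on $\proj^1$, into a single residual factor that, combined with shift and rank-one accounting and the degree of the canonical divisor, produces the $(2\pi\sqrt{-1})$ on the right-hand side.

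The main obstacle is the local identification in the wildly ramified case, where the Stokes filtration is nontrivial. One must check both that the Gauss sum integral $\tau$ genuinely computes BBE's local cohomological determinant line, and that the analytic prefactors in theorem \ref{GaussSumCalcintro}, namely the exponential $e^{-\Res(g\nu)}$ and the $(\sqrt{\delta/2\pi})^a$ and $(\sqrt{-1})^{\lfloor a/2\rfloor}$ terms, correspond under Riemann--Hilbert and stationary phase to the natural pairing of Stokes data on $\hat{\mathbf{L}}_x$ against the exponential character $\psi_\nu$ on the Betti side.
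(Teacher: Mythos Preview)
Your proposal takes a fundamentally different route from the paper, and as written it has a real gap.

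The paper does \emph{not} use the BBE factorization (\ref{BBEprodfla}) as a starting point. Instead it gives an independent, self-contained proof of the product formula for the Gauss-sum $\varepsilon$-factors, following Deligne's global argument. The determinant of $R\Gamma_c(V;L,\mathbf{L})$ is realized, via the K\"unneth formula of Theorem~\ref{kunnethbetti} and Lemma~\ref{symmetrickunneth}, as cohomology of the symmetric power $V^{(n)}$. The divisor map identifies $V^{(n)}$ with a residue hypersurface $\Sigma'_D$ inside a generalized Picard torsor $J'_D$, and global class field theory (Proposition~\ref{globalCFT}) transports $(L,\mathbf{L})$ to a character sheaf $(\mathscr{L},\betti{L})$ there. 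A distinguished triangle (Lemma~\ref{trianglelemma}) together with a vanishing result (Lemma~\ref{vanishinglemma}) replaces cohomology on $\Sigma'_D$ by cohomology on all of $J_D$ twisted by the exponential $\struct_{d\psi}$; one more application of K\"unneth and Proposition~\ref{pullbackproduct} decomposes this into the product of local Gauss sums $\tau(\mathscr{L}_x^\vee,\betti{L}_x^\vee;\nu)$. The explicit evaluation of $\tau$ in Theorem~\ref{GaussSumCalcintro} is never invoked.

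Your route, by contrast, requires a direct local comparison between BBE's cohomological local factor and the Gauss-sum local factor at each singular point. You correctly flag this as ``the main obstacle,'' and you do not resolve it. Theorem~\ref{GaussSumCalcintro} gives an explicit formula for $\tau$, but it says nothing about BBE's line: you would still need an independent computation of $\varepsilon(X_{(x)};\hat{E}_x,\nu)$ and $\varepsilon(\Delta_x;(\mathbf{E}^*_M)_x,\nu)$ in rank one, and a match of all the transcendental constants ($\Gamma(\delta)$, $e^{-\Res(g\nu)}$, $\sqrt{\delta/2\pi}$, $\sqrt{-1}$). The paper itself notes that the De~Rham--Betti comparison for BBE's local lines is an \emph{unpublished} result via local Fourier transform; you are thus reducing the theorem to something not in the literature. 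The stationary-phase/Stokes sketch in your last paragraph is a hope, not an argument. Your strategy could in principle yield an alternative proof, but only after supplying exactly the local identification that the paper's global method is designed to circumvent.
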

Notice that this theorem, proved in section \ref{subsecprodfla}, 
specializes to the index formula for irregular singular
connections (theorem \ref{indexthm}) if we consider only degree.
The proof follows Deligne's proof of the product formula in positive characteristic \cite{De}.
The missing ingredient is a K\"unneth formula for rapid decay cohomology.  
\begin{theorem}
Suppose that
$U$ and $V$ are smooth complex analytic varieties. 
 Let
$\betti{M}$ and $\betti{N}$ be local systems on $U$ and $V$, and
suppose that $\phi$ (resp. $\psi$) is a regular function on  $U$ (resp. $V$).
Then, there is a natural quasi-isomorphism
\begin{equation*}
R \Gamma_c (U\times V; \betti{M} \boxtimes \betti{N}, \phi + \psi) \cong
R \Gamma_c (U; \betti{M}, \phi) \boxtimes R \Gamma_c (V; \betti{M}, \psi).
\end{equation*}
\end{theorem}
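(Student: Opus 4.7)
The plan is to realize both sides as relative cohomologies on suitable real oriented blowups and then apply the classical Künneth formula for relative cohomology, reducing the statement to an essentially topological compatibility. I would first choose normal crossings compactifications $\bar{U}$ and $\bar{V}$ along whose boundary divisors $D_U$, $D_V$ the functions $\phi$ and $\psi$ extend meromorphically, and form the real oriented blowups $\pi_U : \tilde{U} \to \bar{U}$ and $\pi_V : \tilde{V} \to \bar{V}$. On $\partial \tilde{U} = \pi_U^{-1}(D_U)$ the closed subset $\partial^{\mathrm{rd}}_\phi$ of points above which $\mathrm{Re}(\phi) \to -\infty$ is well-defined, and the standard chain-level presentation of rapid decay cohomology (following Hien and Sabbah) provides a natural identification
\begin{equation*}
R \Gamma_c (U; \betti{M}, \phi) \cong R \Gamma \bigl(\tilde{U},\, \partial \tilde{U} \smallsetminus \partial^{\mathrm{rd}}_\phi;\, \pi_U^* \betti{M}\bigr),
\end{equation*}
and similarly for $(\betti{N}, \psi)$.

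The classical relative Künneth theorem then yields
\begin{equation*}
R \Gamma_c (U; \betti{M}, \phi) \boxtimes R \Gamma_c (V; \betti{N}, \psi)
\cong R \Gamma \bigl(\tilde{U} \times \tilde{V},\, K;\, \pi_U^*\betti{M} \boxtimes \pi_V^*\betti{N}\bigr),
\end{equation*}
where $K = \bigl((\partial \tilde{U} \smallsetminus \partial^{\mathrm{rd}}_\phi) \times \tilde{V}\bigr) \cup \bigl(\tilde{U} \times (\partial \tilde{V} \smallsetminus \partial^{\mathrm{rd}}_\psi)\bigr)$, so the remaining task is to identify this with $R \Gamma_c (U \times V; \betti{M} \boxtimes \betti{N}, \phi + \psi)$. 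Away from the corner stratum $\partial \tilde{U} \times \partial \tilde{V}$ the comparison is immediate: on $\partial \tilde{U} \times V$ the function $\psi$ is bounded, so $\phi + \psi$ has rapid decay precisely when $\phi$ does, and symmetrically along $U \times \partial \tilde{V}$; over this non-corner locus $K$ coincides with the non-rapid-decay locus of $\phi + \psi$.

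The main obstacle is the behavior at the corners, where rapid decay of $\phi+\psi$ genuinely depends on the comparative rates of growth of the two functions and does not decompose as a product of the individual rapid decay conditions (for instance at a point where $\phi$ rapidly decays but $\psi$ grows moderately to $+\infty$). I would handle this by a local computation in a polydisc chart around a corner point, stratifying the corner by the loci where $\mathrm{Re}(\phi)$ dominates, $\mathrm{Re}(\psi)$ dominates, or the two are comparable in magnitude, and then exhibiting an explicit radial deformation retract of the non-rapid-decay locus of $\phi+\psi$ onto its intersection with $K$; the contractibility of the fibres of this retraction, together with the local constancy of $\pi_U^* \betti{M} \boxtimes \pi_V^* \betti{N}$, ensures that the induced map on relative cohomology is an isomorphism.

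Once the corner is resolved, combining the support-condition comparison with the classical Künneth isomorphism gives the desired quasi-isomorphism. Naturality is then checked by verifying that the resulting map is the one induced by the Eilenberg--Zilber product on rapid decay chains: the product $\sigma \times \tau$ of a chain $\sigma$ ending on $\partial^{\mathrm{rd}}_\phi$ with a chain $\tau$ ending on $\partial^{\mathrm{rd}}_\psi$ has boundary lying in the rapid decay locus of $\phi + \psi$, since $|e^{\phi+\psi}| = |e^\phi| \cdot |e^\psi|$ decays whenever either factor does and the other is at worst moderate along the compactification.
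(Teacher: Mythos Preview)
Your approach is plausible but genuinely different from the paper's, and the hard step you identify---the corner analysis---is exactly the step the paper sidesteps.

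The paper does not work with arbitrary compactifications of $U$ and $V$.  Instead it immediately pushes forward along $\phi$ and $\psi$ to reduce to the case $U=V=\aff^1$ with $\phi=r$, $\psi=s$ the coordinates (using Lemma~\ref{sabbahreduction}).  On $\aff^1$ the rapid-decay cohomology is rewritten as $R\Gamma(\aff^1;(\beta_\rho)_!(\beta_\rho)^*\betti{F})$ for the half-plane $H_\rho=\{\Re(t)>-\rho\}$, so the product of the two sides becomes cohomology supported on $H_\rho\times H_\rho$.  For the left-hand side the paper compactifies $\aff^2$ by blowing up $\proj^2$ at the point $(1:-1:0)$ so that the addition map extends to a morphism to $\proj^1$; the analogue of your ``corner'' issue becomes an explicit adjunction morphism (equation~(\ref{adjunctionmorphism})) between two extensions-by-zero over the region $Z_\rho=\{\Re(r+s)>-2\rho\}$, checked to be an isomorphism by a direct stalk computation using the cylinder lemma~\ref{relcylinder}.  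Crucially, the paper does \emph{not} then prove by hand that the resulting comparison map is an isomorphism: it observes that after tensoring with $\cplx$ both sides compute De~Rham cohomology, invokes the $\diff$-module K\"unneth formula (Proposition~\ref{kunneth}) together with the comparison theorem~\ref{modgrwthelem}, and concludes that the constructed map must be an isomorphism.

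Your route is more self-contained---it would give a purely topological proof independent of the $\diff$-module side---but the deformation-retract argument at the corner is only sketched, and this is precisely the delicate point: when $\Re(\phi)\to-\infty$ and $\Re(\psi)\to+\infty$ at comparable rates, the non-rapid-decay locus of $\phi+\psi$ does not obviously retract onto $K$, and ``contractibility of the fibres'' needs a genuine argument (stratifying by which pole order dominates and controlling what happens across strata).  The paper's trick of reducing to $\aff^1\times\aff^1$ and then borrowing the isomorphism from the De~Rham side buys exactly the avoidance of that analysis; your approach would require carrying it out in full.
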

This theorem is a variation on the Thom-Sebastiani theorem, found in \cite{Sch} Theorem
1.2.2.  The version needed here is proved in section \ref{sec:kunneth}.

This research was completed as part of the author's doctoral thesis at the University of Chicago,
and this paper was written while he was a VIGRE post-doctoral researcher at Louisiana State University, grant DMS-0739382.
The author  would like
to thank his thesis advisor Spencer Bloch, who originally suggested the problem and generously shared his
manuscript on the $\varepsilon$-factorization of the period determinant.  
The author is also grateful to 
 Alexander Beilinson, H\'el\`ene Esnault, and Claude Sabbah 
 for helpful discussions, and would like to acknowledge their work as the 
 primary inspiration for this paper.

 \section{Riemann Hilbert Correspondence} 
We begin with a review of the theory of algebraic $\diff$-modules.  The main references will be \cite{Ber}
and \cite{HTT}, but the material specific
to $\diff$-modules on curves is found in \cite{Mal} and \cite{Sa1}.
Throughout, $k$ and $M$ will be fields with fixed imbedding in $\cplx$;
$k$ will be field of definition for De Rham cohomology, and $M$ will be the
field of definition for moderate growth cohomology.  Typically,
$k$ will be an algebraic extension of $\ratl$, and 
$M = \ratl (e^{2 \pi \sqrt{-1} \alpha_1}, \ldots, e^{2 \pi \sqrt{-1} \alpha_n})$
for $\alpha_i \in k$.
\subsection{Holonomic D-modules and Constructible Sheaves}
In this section, $k$ will be an algebraically closed field with fixed imbedding $k \subset \cplx$. 
Let $X/k$ be a smooth, quasi-projective algebraic variety, and suppose that $\mathscr{F}$
is a holonomic $\diff_X$-module\footnote{in the sense of \cite{Ber}, lecture 2 section 11.  Unless specified, all $\diff_X$-modules will be
\emph{left} $\diff_X$-modules.}.   

There is a dualizing sheaf $\diff_{X}^\Omega$(\cite{Ber}, Lecture 3, section 5), and we define 
\begin{equation*}
\dual (\mathscr{F}) = R \shHom_{\diff_X} (\mathscr{F}, \diff_X^\Omega).
\end{equation*}
In general, $\dual(\dual(\mathscr{F})) \cong \mathscr{F}$ whenever $\mathscr{F}$
is coherent; however
Roos's theorem  (\textit{ibid.})  implies that $\dual(\mathscr{F})$ is 
concentrated in degree
$0$ if and only if $\mathscr{F}$ is holonomic.
When $\mathscr{F}$ is an integrable connection, $\dual(\mathscr{F}) \cong \mathscr{F}^\vee$, where $\mathscr{F}^\vee$
is the dual connection.

We will adopt the notation of \cite{Ber} to describe inverse and direct images of $\diff$-modules.
Therefore, if $\phi : Y \to X$ is a morphism of varieties, we use
$
\dpullx{\phi} (\mathscr{F}) = \struct_{Y} \otimes_{\struct_X} \mathscr{F}
$
 to denote the standard pull-back for $\diff$-modules.  In the derived category,
\begin{equation*}
\begin{aligned}
\dpullc{\phi} (\mathscr{F}) & = L \dpullx{\phi} (\mathscr{F}) [ \dim(X)-\dim(Y)], \\
\dpull{\phi} (\mathscr{F}) & = \dual (\dpullc{\phi} \dual(\mathscr{F})),
\end{aligned}
\end{equation*}
and $\dpushc{\phi}$ is the left (resp. $\phi_*$ is the right) adjoint of $\dpullc{\phi}$ (resp. $\dpull{\phi}$).
When $X = \Spec(k)$, and $\mathscr{F}'$ is a holonomic $\diff_Y$-module, we define
\begin{equation*}
\begin{aligned}
R \Gamma (Y; \mathscr{F}') &= \dpush{\phi} \mathscr{F}' & \text{and}\\
R \Gamma_c (Y; \mathscr{F}') & = \dpushc{\phi} \mathscr{F}'.
\end{aligned}
\end{equation*}

When $\mathscr{F}$ is holonomic, there exists an open, dense subset
$j : V \hookrightarrow X$ with the property that $\dpull{j} \mathscr{F}$ is 
$\struct_V$-coherent (\cite{Be}, lecture 2).  Therefore, if $i : Y \hookrightarrow X$
is the complement of $V$, there is a distinguished triangle 
\begin{equation}\label{localtriangle}
 \dpush{i}\dpullc{i} \mathscr{F} \to \mathscr{F} \to \dpush{j} \dpull{j} \mathscr{F} \xrightarrow{[1]}
\end{equation}
in the derived category of $\diff_X$-modules with holonomic cohomology.  In particular, $\dpull{j} \mathscr{F}$ is quasi-isomorphic to a vector bundle on $V$ with 
algebraic connection. When $X$ is a curve, $Y \cong \coprod \Spec(k)$ and $\dpullc{i} \mathscr{F}$ is a direct sum of complexes of $k$-vector spaces.

The projection formula (\cite{Ber}, Lecture 3 section 12) for $\diff$-modules states that 
\begin{equation*}
\dpush{\phi} (\mathscr{F}' \otimes^L_{\struct_Y} \dpullc{\phi} \mathscr{F}) \cong
\dpush{\phi} (\mathscr{F}') \otimes^L_{\struct_X} \mathscr{F}[\dim(Y)-\dim(X)].
\end{equation*}
Let $X$ and $Y$ be smooth varieties over $k$, and suppose that $\mathscr{F}$ is a $\diff_Y$-module
and $\mathscr{F}'$ is a $\diff_X$-module. We denote the exterior tensor product
of $\mathscr{F}$ and $\mathscr{F}'$ on $X \times Y$ by $\mathscr{F} \boxtimes \mathscr{F}'$
(\cite{Ber}, lecture 3.11).
There is a K\"unneth formula for $\diff$-modules:
\begin{proposition}[K\"unneth Formula]\label{kunneth}
There are natural isomorphisms
\begin{equation}\label{stdkunneth}
R \Gamma (X \times Y; \mathscr{F}' \boxtimes \mathscr{F}) \cong 
R \Gamma (X; \mathscr{F}') \otimes_k R \Gamma (Y; \mathscr{F}).
\end{equation}
and 
\begin{equation}\label{cptkunneth}
R \Gamma_c (X \times Y; \mathscr{F}' \boxtimes \mathscr{F}) \cong 
R \Gamma_c (X; \mathscr{F}') \otimes_k R \Gamma (Y; \mathscr{F}').
\end{equation}
\end{proposition}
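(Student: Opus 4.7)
The plan is to reduce the Künneth formula to repeated applications of the projection formula (stated in the excerpt) and base change, by factoring the structural map $\pi : X \times Y \to \Spec(k)$ through one of the projections.

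More precisely, write $\pi_X : X \to \Spec(k)$, $\pi_Y : Y \to \Spec(k)$, and let $p_X, p_Y$ be the two projections from $X \times Y$. Then $\pi = \pi_X \circ p_X = \pi_Y \circ p_Y$, and by definition $\mathscr{F}' \boxtimes \mathscr{F} = \dpullx{p_X}\mathscr{F}' \otimes^L_{\struct_{X\times Y}} \dpullx{p_Y}\mathscr{F}$. The first step is to apply $\dpush{(p_X)}$ to this exterior tensor product. Invoking the projection formula for the smooth morphism $p_X$, with $\dpullc{p_X}\mathscr{F}' = L\dpullx{p_X}\mathscr{F}'[\dim Y]$, pulls $\mathscr{F}'$ out of the pushforward and leaves
\begin{equation*}
\dpush{(p_X)}\bigl(\dpullx{p_X}\mathscr{F}' \otimes^L \dpullx{p_Y}\mathscr{F}\bigr) \;\cong\; \mathscr{F}' \otimes^L_{\struct_X} \dpush{(p_X)}\dpullx{p_Y}\mathscr{F},
\end{equation*}
up to the appropriate shift. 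The second step is a base-change identification for the cartesian square with $p_X$, $p_Y$, $\pi_X$, $\pi_Y$: one gets $\dpush{(p_X)}\dpullx{p_Y}\mathscr{F} \cong \dpullx{\pi_X} R\Gamma(Y;\mathscr{F})$, again up to the standard shift. The third step is to push forward further to $\Spec(k)$ via $\pi_X$: since $\dpullx{\pi_X} R\Gamma(Y;\mathscr{F})$ is pulled back from the base, a second use of the projection formula produces
\begin{equation*}
R\Gamma(X\times Y; \mathscr{F}'\boxtimes\mathscr{F}) \;\cong\; R\Gamma(X;\mathscr{F}') \otimes_k R\Gamma(Y;\mathscr{F}),
\end{equation*}
which is the desired isomorphism (\ref{stdkunneth}).

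For the compactly supported version (\ref{cptkunneth}), the plan is parallel, replacing $\dpush{(\cdot)}$ by $\dpushc{(\cdot)}$ throughout. One again needs a projection formula and a base-change statement, now in the form $\dpushc{(p_X)}\dpullc{p_Y}\mathscr{F} \cong \dpullc{\pi_X}R\Gamma_c(Y;\mathscr{F})$ together with $\dpushc{(p_X)}(\dpullc{p_X}\mathscr{F}' \otimes^L(-)) \cong \mathscr{F}' \otimes^L \dpushc{(p_X)}(-)$; both hold for smooth morphisms of smooth $k$-varieties in the holonomic $\diff$-module framework of \cite{Ber}. Composing these gives the compactly supported Künneth isomorphism.

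The main bookkeeping obstacle is the shift conventions: the paper uses $\dpullc{\phi} = L\dpullx{\phi}[\dim X - \dim Y]$, so the projection formula as stated carries an explicit shift $[\dim Y - \dim X]$, and one must check that the two shifts produced in the two successive applications of the projection formula cancel against the relative dimensions occurring in base change. Once the dimension counts are tracked, the isomorphism is canonical (independent of which projection one factors through), and functoriality in $\mathscr{F}$ and $\mathscr{F}'$ is automatic from the fact that every arrow used is.
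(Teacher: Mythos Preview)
Your argument for (\ref{stdkunneth}) is essentially the paper's: the paper says only that it ``is proved by repeated application of the projection formula,'' and your two-step use of the projection formula together with base change for the square $p_X, p_Y, \pi_X, \pi_Y$ is exactly what that phrase unpacks to. The shift bookkeeping you flag is real but routine.

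For (\ref{cptkunneth}) you take a genuinely different route. The paper does \emph{not} rerun the argument with $\dpushc{(\cdot)}$; instead it deduces the compactly supported statement from (\ref{stdkunneth}) by duality, using that $R\Gamma_c = \dual\circ R\Gamma\circ\dual$ and that $\dual(\mathscr{F}'\boxtimes\mathscr{F})\cong\dual(\mathscr{F}')\boxtimes\dual(\mathscr{F})$. Your approach is also valid---the $!$-projection formula and $!$-base change do hold in this setting---but it requires invoking those as separate inputs, whereas the paper's reduction needs only the single compatibility of $\dual$ with $\boxtimes$, which is immediate from the local nature of the dualizing complex. The duality trick is shorter and makes the naturality transparent; your direct approach has the minor advantage of not passing through finiteness of cohomology (needed to dualize the tensor product over $k$), though in the holonomic context that is automatic anyway.
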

\begin{proof}
Equation (\ref{stdkunneth}) is proved by repeated application of the projection formula.
The second equation follows from (\ref{stdkunneth}), and the observation that
$\dual(\mathscr{F}' \boxtimes \mathscr{F}) \cong \dual(\mathscr{F}') \boxtimes \dual(\mathscr{F})$.
\end{proof}

\subsection{Analytic and Formal $\diff$-modules}
Let $X^{an}$ be the complex analytic structure on $X (\cplx)$.  The pullback of $\mathscr{F}$ to $X^{an}$
is an analytic $\diff_{X^{an}}$-module $\mathscr{F}^{an}$.  Notice that if $\mathscr{F}$ is an integrable connection,
then $\mathscr{F}^{an}$ is an analytic vector bundle.  Moreover, the horizontal sections define a local system
$(\mathscr{F}^{an})^\nabla$ that generates $\mathscr{F}^{an}$ as a $\struct_{X^{an}}$-module.

The De Rham functor allows us to compare holonomic $\diff_X$-modules with perverse sheaves on $X^{an}$.
 \begin{definition}[De Rham Functor]\label{derham}
Let $\Omega_{X^{an}}^i$ be the sheaf of holomorphic forms on $X^{an}$, so the complex $\Omega_{X^{an}}^*$ with exterior differential $d$
is the standard de Rham complex of $X$.  Define
 \begin{equation*}
\DR(\mathscr{F}) = \mathscr{F}^{an} \otimes_{\struct_X}^L \Omega_{X^{an}}[\dim(X)].
 \end{equation*}
Therefore, $\DR(\mathscr{F})$ lies in the derived category of $\cplx_{X^{an}}$-modules.
  \end{definition}
Recall (\ref{localtriangle}).
Since $\dpull{j} \mathscr{F}$ is isomorphic to a vector bundle with connection $\nabla_{\dpull{j}\mathscr{F}}$, 
$\DR(\dpull{j}\mathscr{F})$ is quasi-isomorphic to the perverse local system generated by horizontal sections.  

Suppose that $\phi : Y^{an} \to X^{an}$ is a morphism of complex analytic manifolds.
When $\betti{F}$ is a complex of sheaves, we will use $\spullx{\phi} \betti{F}$
to denote the inverse image of $\betti{F}$.  Abusing notation,
$\spull{\phi}$ and $\spush{\phi}$ will be used for the derived pull-back and push-forward
in the analytic category, respectively, and $\spullc{\phi}$, $\spushc{\phi}$ will be used to denote 
pull-back and push-forward with compact support.  We remark that the $\DR$ functor
does not necessarily commute with push-forwards and pull-backs.


In addition to studying the analytic structure of $\diff_X$-modules, we will also consider localization 
to a power series ring.
Let $C$ be a curve, and $x \in C$ a point.  Let $\mathfrak{o} = \widehat{\struct}_{C, x}$ be the ring of formal power series
at $\struct_X$, and let $K$ be the field of laurent series at $x$.    If $\mathscr{F}$ is a holonomic $\diff_X$-module, then 
$\mathscr{F}_\mathfrak{o} = \mathscr{F} \hat\otimes_{\struct_C} \mathfrak{o}$ and $\mathscr{F}_K = \mathscr{F} \hat\otimes_{\struct_C} K$.
$\mathscr{F}_K$ is a finite dimensional $K$-vector space.
The following definition of regularity is equivalent to various other formulations that appear in the literature (Corollary 1.1.6, p. 47 \cite{Sa1}).
\begin{definition}
Fix a parameter $z$, identifying $\mathfrak{o} \cong k[[z]]$ and $K \cong k((z))$.  Then, we say that $\mathscr{F}$
(resp. $\mathscr{F}_o$, $\mathscr{F}_K$) is \emph{regular singular} at $x$ if there exists an $\mathfrak{o}$ submodule
$\mathscr{F}' \subset \mathscr{F}_K$ with the property that
\begin{equation*}
 z \frac{\partial}{\partial z} \mathscr{F}' \subset \mathscr{F}'.
\end{equation*}
 If no such 
$\mathfrak{o}$-submodule exists, $\mathscr{F}$ is \emph{irregular singular}.
We say that $\mathscr{F}$ is regular singular if regular at all singular points $x \in C$. 

Now suppose $X/k$ is an $n$-dimensional smooth quasi-projective variety and $\mathscr{F}'$ is a $\diff_X$-module
$\mathscr{F}$ is regular singular if, for any inclusion $\iota : \Spec(K) \hookrightarrow V$, $\mathscr{V} \hat\otimes^L_{\struct_X} K$
has regular singular cohomology.
\end{definition}
For general $\mathscr{F}$, we can measure the defect from regularity
at a singular point by the irregularity index $i_x(\mathscr{F})$. 
Like regularity, this property only depends on the formal completion
of $\mathscr{F}$ at $x$.  See \cite{De1}, lemme 6.21 for a formal definition.
For example, suppose that $\mathscr{F}$ is the trivial line bundle with connection
$\nabla = d + \omega\wedge$.  Then, $i_x(\mathscr{F}) = \max(0, - \ord_x(\omega)+1)$.
Note that there is another standard formulation of the irregularity index in the literature, 
described in terms of local Euler characteristics (\cite{Mal},
Chapter 4, theorem 4.1).

We define $\derived^b_{hol} (\diff_X)$ to be the bounded derived category of complexes of $\diff_X$-modules with holonomic cohomology,
and $\derived^b_{RS} (\diff_X)$ to be the full subcategory of complexes with regular singular cohomology.  Furthermore, we define
$\derived^b_{con} (\cplx_{X^{an}})$ to be the bounded derived category of $\cplx_{X^{an}}$-modules that are constructible with respect
to an algebraic stratification of $X^{an}$.
\begin{theorem}[Riemann-Hilbert Correspondence (\cite{Ber} Lecture 5, Main Theorem C)]\label{rhcorrespondence}
Let $X/k$ and $Y/k$ be a smooth algebraic varieties, and let $\phi : Y \to X$ be a morphism.
\begin{enumerate}
 \item $\DR( \derived^b_{hol} (\diff_X)) \subset \derived^b_{con}(\cplx_{X^{an}})$.
\item On the subcategories $\derived_{RS}^b (\diff_X)$ and $\derived_{RS}^b (\diff_Y)$, $\DR$ commutes with duality, $\phi_*$, $\phi^*$, $\phi_!$,
and $\phi^!$.
\item $\DR$ defines an equivalence of categories between $\derived_{RS}^b(\diff_X)$ and $\derived_{con}^b (\cplx_X)$.  
The restriction of $\DR$ to complexes of pure degree $0$ defines an 
equivalence between holonomic $\diff_X$-modules, and perverse sheaves on $X^{an}$ (with respect to the middle perversity).
\end{enumerate}
\end{theorem}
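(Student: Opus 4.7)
The plan is to prove the three parts of the theorem in sequence, each building on the previous, with the overarching strategy of reducing statements about general holonomic $\diff_X$-modules to statements about integrable connections on a smooth open stratum together with their extensions across a divisor.

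For part (1), the essential input is Kashiwara's theorem that the characteristic variety of a holonomic $\diff_X$-module is a conic Lagrangian subvariety of $T^*X$. Given such an $\mathscr{F}$, I would choose an algebraic Whitney stratification of $X$ adapted to the projection of the characteristic variety onto $X$, and then show inductively on the strata that the cohomology sheaves of $\DR(\mathscr{F})$ are locally constant with finite-dimensional stalks on each stratum. On the top (open) stratum $V$, the restriction $\dpull{j}\mathscr{F}$ is a vector bundle with integrable connection, so $\DR(\dpull{j} \mathscr{F})$ is the shifted local system of horizontal sections by the holomorphic Poincar\'e lemma; on lower strata, one uses the distinguished triangle (\ref{localtriangle}) together with preservation of constructibility under pushforward by a locally closed immersion.

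For part (2), compatibility with $\dpullc{\phi}$ is essentially formal, using $(\dpullx{\phi} \mathscr{F})^{an} = \spullx{\phi^{an}} \mathscr{F}^{an}$ and comparing shifts. Compatibility with $\dpush{\phi}$ and $\dpushc{\phi}$ in the regular singular case reduces, via the factorization of $\phi$ through its graph, to two subcases: a closed immersion, which follows from the local cohomology/devissage description of both sides, and a projection, which reduces to integration along fibers. In the projection case regularity is crucial: it allows a GAGA-type comparison between the algebraic relative de Rham complex and its analytic counterpart, and it guarantees that no Stokes data is lost when passing between the algebraic pushforward and the topological $\spush{\phi^{an}}$. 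Compatibility with $\dual$ follows from compatibility of $\DR$ with the dualizing sheaves $\diff_X^\Omega$ and $\cplx_{X^{an}}[2\dim X]$.

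For part (3), essential surjectivity is the main hurdle. Given a perverse sheaf $\betti{K}$ on $X^{an}$, one uses the decomposition of perverse sheaves into intermediate extensions of irreducible local systems on smooth locally closed subvarieties: it therefore suffices to handle $\betti{K} = j_{!*}(\betti{L}[\dim V])$ for $\betti{L}$ a local system on some smooth open $V \subset X$. Deligne's construction of a canonical regular singular meromorphic extension of the flat bundle $\betti{L} \otimes_M \struct_{V^{an}}$ to a compactification yields an integrable connection on $V$; taking its minimal (intermediate) $\diff_X$-extension produces the required regular holonomic object with $\DR$ equal to $\betti{K}$. Full faithfulness then follows from a duality-based reduction to an $\mathrm{Ext}$ computation on $V$, where horizontal morphisms of connections correspond to morphisms of the associated local systems.

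The hardest step is essential surjectivity in part (3), since one must both invoke Deligne's existence theorem for regular singular connections along a normal crossings divisor (after resolution of singularities) and then show that the minimal $\diff$-module extension matches the topological intermediate extension under $\DR$. The second genuine subtlety is that part (2) truly requires regularity: for irregular singular $\mathscr{F}$, the irregular Riemann-Hilbert correspondence invoked in section \ref{subsec:DRepsilonfactors} shows that $\DR(\dpush{\phi} \mathscr{F})$ can differ from $\spush{\phi^{an}} \DR(\mathscr{F})$, with the difference encoded precisely in the Stokes filtration that motivates the rest of this paper.
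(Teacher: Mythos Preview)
The paper does not prove this theorem at all: it is stated as a citation of a standard result, with the reference ``\cite{Ber} Lecture 5, Main Theorem C'' given in the theorem header, and no proof environment follows. The Riemann--Hilbert correspondence is used throughout the paper as a black box.

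Your proposal is a reasonable high-level sketch of the classical proof (Kashiwara's constructibility theorem for part (1), functoriality via graph factorization and GAGA for part (2), Deligne's canonical extension plus intermediate extensions for part (3)), and nothing in it is wrong as an outline. But there is nothing in the paper to compare it against, and in the context of this paper no proof is expected: the result is foundational background, not a contribution of the paper. If the intent was to supply what the paper omits, you should be explicit that you are filling in a cited result rather than reconstructing the author's argument.
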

\subsection{Non-characteristic Maps}
The Riemann-Hilbert correspondence fails for irregular singular $\diff$-modules.   
Under certain conditions, the $\DR$-functor is still compatible
with pullbacks, the projection formula, and tensor products.  However,
the irregular singular case requires a careful analysis
of the characteristic variety.

Let $T^*X$ be the cotangent space to $X$, and $T^*_XX$
the zero section.  Let $\phi : Y \to X$ be a morphism of smooth varieties.
There are natural maps
$T^*Y \xleftarrow{\rho_\phi} Y \times_X T^*X \xrightarrow{\varpi_\phi}
T^*X$, and we define $T^*_Y X = \rho_\phi^{-1} (T^*_Y Y)$.
Suppose that $\mathscr{F}$ is a $\diff_X$-module, and $\betti{F} \in \derived^b_{con} (M_{X^{an}})$ for
some field $M$.
The characteristic variety (or singular support,
\cite{Ber} Lecture 2, section 8) of $\mathscr{F}$,
written $\Ch(\mathscr{F})$, is a 
subvariety of $T^*X$ that is invariant under homothety in the fiber above a point $x \in X$.
Similarly, the micro-support (\cite{KS}, proposition 5.1.1)
of $\betti{F}$, written $\Ss(\betti{F})$, is a subvariety of $T^*_{X^{an}}$.  

We omit the precise definitions since they are standard but fairly technical;
here it is sufficient to understand the smooth case.
In particular, when $\mathscr{F}$ (resp. $\betti{F}$)
is an integrable connection, (resp. local system), then $\Ch(\mathscr{F}) = T^*_X X$
and $\Ss(\betti{F}) = T^*_{X^{an}} X^{an}$.
We say that $\phi$
is non-characteristic with respect to $\mathscr{F}$, or  $\betti{F}$, 
if 
\begin{equation*}
\begin{aligned}
\varpi_\phi^{-1}(\Ch(\mathscr{F})) \cap T^*_Y X & \subset
Y \times_X T^*_X X;& \text{or, } \\
\varpi_\phi^{-1}(\Ss(\betti{F})) \cap T^*_{Y^{an}} X^{an} & \subset
Y^{an} \times_X T^*_{X^{an}} X^{an}.
\end{aligned}
\end{equation*}
Notice that whenever $\phi$ is a smooth map, or 
$\mathscr{F}$ is an integrable connection, $\phi$
is non-characteristic with respect to $\mathscr{F}$.
\begin{theorem}[\cite{HTT}, theorems 2.4.6, 2.7.1;
\cite{KS}, proposition 5.4.13]\label{noncharpullback}
Suppose that $\phi$ is non-characteristic with respect to $\mathscr{F}$
and $\betti{F}$.
Then, $L^i \dpullx{\phi} \mathscr{F}$
vanishes for $i \not= 0$, and 
$\dual (L^i \dpullx{\phi} \mathscr{F}) \cong L^i \dpullx{\phi} (\dual \mathscr{F})$.
In particular, 
\begin{equation*}
\begin{aligned}
\phi^* (\mathscr{F}) & \cong \phi^! (\mathscr{F}) [2 (\dim(Y)-\dim(X))] &
\text{and}\\
\phi^* (\betti{F}) & \cong \phi^! (\betti{F}) [2 (\dim(Y)-\dim(X))].
\end{aligned}
\end{equation*}
\end{theorem}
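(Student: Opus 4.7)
The plan is to follow the standard non-characteristic inverse image argument of Hotta--Takeuchi--Tanisaki and Kashiwara--Schapira: use the transversality hypothesis to collapse a Tor spectral sequence, then feed this into the duality compatibility.

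First, factor $\phi$ as the graph embedding $Y \hookrightarrow Y \times X$ followed by the smooth projection $Y \times X \to X$. For the smooth projection the claims are automatic: $\struct_{Y \times X}$ is flat over $\phi^{-1} \struct_X$, so there are no higher $L^i \dpullx{\phi}$; duality commutes with smooth pullback trivially; and the shift $[2(\dim Y - \dim X)]$ is built into the comparison of $\dpull{\phi}$ and $\dpullc{\phi}$. It therefore suffices to treat a closed immersion $i : Y \hookrightarrow X$.

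For a closed immersion, choose a good filtration $F_\bullet \mathscr{F}$. The associated graded $\gr^F \mathscr{F}$ is a coherent $\struct_{T^*X}$-module with support $\Ch(\mathscr{F})$. Locally $\struct_Y$ is resolved over $\struct_X$ by a Koszul complex built from a regular sequence cutting out $Y$. The non-characteristic hypothesis translates, after passing to symbols, into the statement that the symbols of these Koszul differentials form a regular sequence on $\gr^F \mathscr{F}$ restricted to $T^*X \times_X Y$: geometrically, $\Ch(\mathscr{F}) \cap T^*_Y X$ lies in the zero section, so the defining ideal of $Y \times_X T^*_X X$ cuts out a codimension equal to $\dim X - \dim Y$ inside the characteristic variety. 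This kills the higher Tors, giving $L^i i^. \mathscr{F} = 0$ for $i \ne 0$. Applying the same Koszul computation to $\diff_X^\Omega$ and to $\shHom$ yields the duality compatibility $\dual(i^.\mathscr{F}) \cong i^. \dual \mathscr{F}$. Combining these two facts and unwinding the definitions $\dpullc{\phi} = L\dpullx{\phi}[\dim X - \dim Y]$ and $\dpull{\phi} = \dual \circ \dpullc{\phi} \circ \dual$ from the excerpt, a routine shift calculation gives the identity $\phi^* \mathscr{F} \cong \phi^! \mathscr{F}[2(\dim Y - \dim X)]$.

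For the constructible sheaf $\betti{F}$ the argument is microlocal, following Kashiwara--Schapira. The non-characteristic hypothesis on $\Ss(\betti{F})$ yields an inclusion $\Ss(\spullx{\phi} \betti{F}) \subset \rho_\phi(\varpi_\phi^{-1} \Ss(\betti{F}))$, established via the normal deformation of $Y$ in $X$ and a base change argument in the transverse situation. Once this microlocal estimate is in place, one shows that the canonical morphism $\spullx{\phi} \betti{F} \otimes \omega_{Y/X} \to \spullc{\phi} \betti{F}$ is an isomorphism; since $Y$ and $X$ are both smooth, $\omega_{Y/X}$ is the shift $M_{Y^{an}}[2(\dim Y - \dim X)]$, giving the stated comparison. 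The principal obstacle throughout is not the vanishing itself but the duality compatibility: one must verify that the natural map from $\phi^.$ applied to $R\shHom$ to $R\shHom$ of the pullbacks (and its sheaf-theoretic analog) is a genuine isomorphism, and this is exactly where the non-characteristic transversality enters in a non-trivial way.
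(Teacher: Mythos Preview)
The paper does not supply its own proof of this theorem; it is stated with citations to \cite{HTT} (theorems 2.4.6 and 2.7.1) and \cite{KS} (proposition 5.4.13) and then used as a black box. Your sketch is a faithful outline of the standard argument from those references: factor through the graph, reduce to a closed immersion, use a good filtration and the Koszul resolution to collapse the Tor spectral sequence under the transversality hypothesis, and invoke the microlocal estimate from Kashiwara--Schapira on the constructible side. There is nothing to compare against in the paper itself, and your proposal is consistent with the cited sources.
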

When $\phi$ is non-characteristic with respect to $\mathscr{F}$,
we write $\dpulld{\phi}= \dpullx{\phi}$.  When
$\betti{F}$ is a complex of constructible sheaves, we write
$\dpulld{\phi} = \dpull{\phi} [ \dim(X) - \dim(Y)]$.

Let $\Delta_X: X \to X \times X$ be the diagonal map.  
$T_X^* (X \times X)$ is the conormal bundle to the diagonal imbedding of $X$,
so the fiber above $(x, x)$ is the set $\{(\xi, \zeta) \in (T^*_x X)^2 : \xi + \zeta = 0\}$.
There is an isomorphism $\alpha : T^*_X(X \times X) \to T_X^* (X \times X)$ given locally by
$\alpha (x, \xi) = ( (x, x), (\xi, -\xi))$.

If  $\mathscr{F}$ and $\mathscr{G}$ are $\diff_X$-modules, 
$\Ch(\mathscr{F} \boxtimes \mathscr{G}) = \Ch(\mathscr{F}) \times \Ch(\mathscr{G})$.
Since characteristic varieties are invariant under homothety (in this case,
multiplication by $-1$ in the fibers), 
\begin{equation*}
\varpi_{\Delta_X}^{-1} \left(\Ch(\mathscr{F} \boxtimes \mathscr{G})\right) \cap 
T^*_X (X \times X) = \alpha ( \Ch (\mathscr{F}) \cap \Ch(\mathscr{G})).
\end{equation*}
Therefore, $\Delta_X$ is non-characteristic for $\mathscr{F}$ and $\mathscr{G}$
if and only if $\Ch(\mathscr{F}) \cap \Ch (\mathscr{G}) \subset T_X^* X$.
In this case,
\begin{equation*}
\Delta_X^* (\mathscr{F} \boxtimes \mathscr{G}) \cong
\mathscr{F} \otimes_{\struct_X}^L \mathscr{G} [ \dim(X)].
\end{equation*}

\begin{proposition}[Non-characteristic projection formula]
\label{noncharprojfla}
Suppose that $\phi : Y \to X$,  $\mathscr{F}$ is a holonomic $\diff_X$-module, and
$\mathscr{F}'$ is a holonomic $\diff_Y$-modules.  If
$\Ch(\phi_! \mathscr{F}') \cap \Ch(\mathscr{F}) \subset T^*_X X$
and 
$\Ch(\mathscr{F}') \cap \Ch(\pi^\Delta \mathscr{F})\subset T^*_Y Y$,
then
\begin{equation*}
\phi_! (\phi^\Delta (\mathscr{F}) \otimes^L_{\struct_Y} \mathscr{F}') \cong
\mathscr{F} \otimes^L_{\struct_X} \phi_! \mathscr{F}'.
\end{equation*}
\end{proposition}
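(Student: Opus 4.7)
The plan is to decompose both sides of the claimed isomorphism via the diagonal restriction formula for exterior tensor products established immediately before the proposition, and then identify the two resulting expressions by base change around a Cartesian square.

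\textbf{Rewriting each side via $\boxtimes$.} Since the exterior tensor product commutes with pullback in the first factor and with $\phi_!$ in the second factor, we have
\begin{align*}
\phi^\Delta \mathscr{F} \boxtimes \mathscr{F}' & \cong (\phi \times \id_Y)^\Delta (\mathscr{F} \boxtimes \mathscr{F}'), \\
\mathscr{F} \boxtimes \phi_!\mathscr{F}' & \cong (\id_X \times \phi)_! (\mathscr{F} \boxtimes \mathscr{F}').
\end{align*}
The hypothesis $\Ch(\mathscr{F}') \cap \Ch(\phi^\Delta\mathscr{F}) \subset T^*_Y Y$ is exactly the condition for $\Delta_Y$ to be non-characteristic with respect to $\phi^\Delta\mathscr{F} \boxtimes \mathscr{F}'$, so the diagonal restriction formula gives (up to the canonical shift)
\[
\phi^\Delta \mathscr{F} \otimes^L_{\struct_Y} \mathscr{F}' \cong \Delta_Y^\Delta (\phi \times \id_Y)^\Delta (\mathscr{F} \boxtimes \mathscr{F}').
\]
Similarly, $\Ch(\phi_!\mathscr{F}') \cap \Ch(\mathscr{F}) \subset T^*_X X$ yields
\[
\mathscr{F} \otimes^L_{\struct_X} \phi_!\mathscr{F}' \cong \Delta_X^\Delta (\id_X \times \phi)_! (\mathscr{F} \boxtimes \mathscr{F}').
\]

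\textbf{Base change.} Next I would invoke base change around the Cartesian square
\[
\begin{array}{ccc}
Y & \xrightarrow{(\phi, \id_Y)} & X \times Y \\
\phi \downarrow & & \downarrow \id_X \times \phi \\
X & \xrightarrow{\Delta_X} & X \times X,
\end{array}
\]
noting the factorization $(\phi, \id_Y) = (\phi \times \id_Y) \circ \Delta_Y$. Proper base change for holonomic $\diff$-modules furnishes an isomorphism $\Delta_X^\Delta (\id_X \times \phi)_! \cong \phi_!(\phi, \id_Y)^\Delta$; composing with the factorization and substituting into the two displayed identities above yields
\[
\mathscr{F} \otimes^L_{\struct_X} \phi_!\mathscr{F}' \cong \phi_!\bigl(\phi^\Delta\mathscr{F} \otimes^L_{\struct_Y} \mathscr{F}'\bigr),
\]
as desired.

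\textbf{Main obstacle.} The delicate step is bookkeeping all the non-characteristic hypotheses through the argument. Using $\Ch(\mathscr{F}_1 \boxtimes \mathscr{F}_2) = \Ch(\mathscr{F}_1) \times \Ch(\mathscr{F}_2)$ together with the description of $\varpi_{\Delta}^{-1}$ via the map $\alpha$ given in the excerpt, one must verify that the two assumptions in the proposition are \emph{precisely} what is needed to invoke the diagonal restriction formula on $Y$ and $X$, and moreover that the base change step does not require any additional transversality between the characteristic varieties of $(\phi \times \id_Y)^\Delta(\mathscr{F} \boxtimes \mathscr{F}')$ and the relevant cotangent strata. Once these characteristic-variety computations are carried out at each stage, the remainder of the argument is formal.
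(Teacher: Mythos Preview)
Your proposal is correct and follows essentially the same route as the paper. The paper's proof uses the graph morphism $\Gamma_\phi : Y \to Y \times X$ (your map $(\phi,\id_Y)$ with the factors swapped), invokes the diagonal restriction formula to identify $\Gamma_\phi^*(\mathscr{F}' \boxtimes \mathscr{F})$ with $\mathscr{F}' \otimes^L \phi^\Delta\mathscr{F}$, and then applies base change to exactly the same Cartesian square you wrote down; your version is simply more explicit about factoring $(\phi,\id_Y) = (\phi\times\id_Y)\circ\Delta_Y$ and about tracking the non-characteristic hypotheses on each side.
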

\begin{proof}
Let $\Gamma_\phi : Y \to Y \times X$ be the graph of $\phi$.
Then, by above, 
$\Gamma_\phi^* (\mathscr{F}' \boxtimes \mathscr{F}) \cong
\mathscr{F}' \otimes_{\struct_X}^L \phi^* \mathscr{F}$.
The result follows by applying base change to the diagram
\begin{equation*}
\xymatrix{
Y \ar[r]^{\Gamma_\phi} \ar[d]^{\phi} &Y \times X \ar[d]^{\id \times \phi}\\
X \ar[r]^{\Delta_X} & X \times X.}
\end{equation*}
\end{proof}

Finally, non-characteristic maps behave well with 
respect to the $\DR$ functor.
\begin{proposition}[\cite{HTT}, proposition 4.7.6]
\label{nonchardr}
If $\phi$ is non-characteristic for $\mathscr{F}$, then
there are natural isomorphisms
\begin{equation*}
\begin{aligned}
\DR(f^* \mathscr{F}) &\cong f^* \DR(\mathscr{F})& \text{and}\\
\DR(f^! \mathscr{F}) & \cong f^! \DR(\mathscr{F}).
\end{aligned}
\end{equation*}
\end{proposition}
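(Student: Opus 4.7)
The plan is to construct a natural comparison morphism and then prove it is a quasi-isomorphism under the non-characteristic hypothesis, by dévissage to the case of an integrable connection where the statement is classical. The $f^!$ statement will then follow from the $f^*$ statement by the degree-shift relations already recorded in theorem \ref{noncharpullback}.

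First I would build the map. Using the Spencer resolution, one has the canonical presentation $\DR(\mathscr{F}) \cong \omega_{X^{an}} \otimes^L_{\diff_{X^{an}}} \mathscr{F}^{an}[\dim X]$, and the transfer bimodule $\diff_{Y^{an} \to X^{an}} = \struct_{Y^{an}} \otimes_{\phi^{-1}\struct_{X^{an}}} \phi^{-1}\diff_{X^{an}}$ together with the pullback of forms $\phi^{-1}\Omega^{\bullet}_{X^{an}} \to \Omega^{\bullet}_{Y^{an}}$ gives a canonical morphism $\spull{\phi}\DR(\mathscr{F}) \to \DR(\dpull{\phi}\mathscr{F})$. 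This morphism is always defined; what the non-characteristic hypothesis buys, via theorem \ref{noncharpullback}, is that $\dpullc{\phi}\mathscr{F} = \dpulld{\phi}\mathscr{F}[\dim X - \dim Y]$ is concentrated in degree $0$ and compatible with duality, so all the shift conventions line up on the two sides of the putative isomorphism.

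Next I would verify the map in the smooth case. If $\mathscr{F}$ is an integrable connection then $\dpulld{\phi}\mathscr{F}$ is again an integrable connection and $\DR$ of both sides is the local system of horizontal sections placed in degree $-\dim X$ (resp.~$-\dim Y$). The identity is then just the elementary fact that horizontal sections pull back under $\phi^{an}$. For general holonomic $\mathscr{F}$, choose $j : V \hookrightarrow X$ open dense with $\dpulld{j}\mathscr{F}$ smooth and let $i$ be the complementary immersion. Both $\DR$ and $\spull{\phi}$ preserve the distinguished triangle (\ref{localtriangle}) $\dpush{i}\dpullc{i}\mathscr{F} \to \mathscr{F} \to \dpush{j}\dpull{j}\mathscr{F} \to$, and the characteristic variety of each of the three terms is contained in $\Ch(\mathscr{F}) \cup T_Z^* X$, so $\phi$ remains non-characteristic for each. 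The middle term is smooth on the open locus, and applying base change for non-characteristic squares (theorem \ref{noncharpullback} together with \cite{KS}, Proposition 5.4.13) reduces the other two terms to a smaller stratum, allowing induction on $\dim \Supp(\mathscr{F})$ to close the loop. Finally, the $\dpullc{\phi}$ statement is immediate from the $\dpull{\phi}$ statement because theorem \ref{noncharpullback} yields the \emph{same} shift $[2(\dim Y - \dim X)]$ between the two pullback functors on each side, both for $\diff$-modules and for constructible complexes.

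The main obstacle is the dévissage step when $\mathscr{F}$ is irregular singular. In that situation one cannot simply invoke theorem \ref{rhcorrespondence}, and in particular $\DR$ does not commute with $\dpush{j}$ in general; the classical Riemann--Hilbert argument breaks down precisely on the piece $\dpush{i}\dpullc{i}\mathscr{F}$ carrying the Stokes data. The non-characteristic hypothesis is however designed to circumvent exactly this: the micro-support of $\DR(\mathscr{F})$ is contained in $\Ch(\mathscr{F})$, so transversality of $\varpi_\phi$ to $\Ch(\mathscr{F})$ translates into transversality of $\phi^{an}$ to $\Ss(\DR(\mathscr{F}))$, which is the Kashiwara--Schapira condition for $\spull{\phi}$ to be well-behaved on $\DR(\mathscr{F})$. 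Verifying that the comparison map is a quasi-isomorphism on $\dpush{i}\dpullc{i}\mathscr{F}$ is thus a microlocal computation where the non-characteristic hypothesis does all the work.
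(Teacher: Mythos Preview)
The paper does not give its own proof of this proposition; it simply quotes the result from \cite{HTT}, proposition~4.7.6. So there is no ``paper's proof'' to compare against beyond the reference itself.

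Your sketch is reasonable in spirit but the d\'evissage step does not close. You yourself identify the problem: the triangle (\ref{localtriangle}) involves $\dpush{j}\dpull{j}\mathscr{F}$, and $\DR$ does not commute with $\dpush{j}$ once $\mathscr{F}$ is irregular. Knowing that $\Ss(\DR(\mathscr{F})) \subset \Ch(\mathscr{F})$ (Kashiwara's estimate) tells you only that $\spull{\phi}$ is well-behaved on the constructible side; it does not by itself prove that the comparison morphism $\spull{\phi}\DR(\mathscr{F}) \to \DR(\dpull{\phi}\mathscr{F})$ is an isomorphism. The induction on $\dim\Supp$ makes progress on the $\dpush{i}\dpullc{i}$ piece, but the $\dpush{j}\dpull{j}$ piece is no simpler than the original $\mathscr{F}$, so the loop does not terminate.

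The proof in \cite{HTT} takes a different route: it is a local analytic argument based on the Cauchy--Kowalevski--Kashiwara theorem, which says directly that for a non-characteristic closed immersion the solution (and hence de~Rham) complex commutes with inverse image. That theorem is proved by an honest PDE argument on the analytic side and does not rely on any regularity hypothesis or on Riemann--Hilbert; the general $\phi$ is then reduced to a closed immersion via the graph factorisation, the smooth projection case being elementary. If you want a self-contained argument, that is the input you need to import, rather than trying to d\'evisser through a stratification.
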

\subsection{Stokes Filtrations}\label{stokesfiltrations}
 When $X$ is a curve, the Riemann-Hilbert correspondence generalizes
 to a correspondence between 
holonomic $\diff_X$-modules and perverse sheaves with a stokes filtration.
Let $i : Y \hookrightarrow X$ be a reduced divisor, 
and $j : V \hookrightarrow X$ be the complement.
We define $\pi : \widetilde{X}^{an} \to X^{an}$ to be the real oriented blow-up
of $X^{an}$ along $Y$; the diagram is
\begin{equation*}
\xymatrix{
V^{an} \ar[r]^{\tilde{j}} \ar@{=}[d]& \widetilde{X}^{an}\ar[d]^{\pi} 
& \widetilde{Y} \ar[l]^{\tilde{i}} \ar[d]^{\pi|_Y} \\
V^{an} \ar[r]^j & X^{an} & Y \ar[l]^{i}.
}
\end{equation*}
Above, $\widetilde{Y} \cong \coprod_{y \in Y} S^1$.

Fix a parameter $z$ vanishing at some $y \in Y$, and let $S^1_y = \pi^{-1} (y)$.  We 
define a rank $n-p$ local system $\Omega_{p,n}(y)$ on $S^1_y$ by
\begin{equation*}
\Omega_{p,n}(y) = \{\sum_{i = -n}^{\infty} a_i z^{i/p} dz; a_i \in \cplx\}/
\{\sum_{j/p \ge -1}^{\infty} b_j z^{j/p} dz; b_j \in \cplx \}.
\end{equation*}
At each $\theta \in S^1_y$, there is a partial ordering $\le_\theta$ on the stalk
$\Omega_{p,n} (y)_\theta$ determined by $\omega \le_\theta \eta$
whenever $|e^{\int \omega-\eta}|$ has moderate growth near $\theta$.


Let $\mathscr{E}$ be a vector bundle on $V$ with a $\diff_V$-module structure and  
$\betti{E} = \DR(\mathscr{E}).$  By \cite{Mal}, theorem 2.2, there is a canonical Stokes filtration
$(\betti{E}^\nu_y)_{\nu \in \Omega_{p, n}(y)}$ on $\tilde{j}_*\betti{E}|_{S^1_y}$.
Furthermore, there are subsheaves $\betti{E}^0 \subset \tilde{j}_* \betti{E}$ and 
$\betti{E}^{<0} \subset \betti{E}$ that satisfy
$
\betti{E}^0 |_{S^1_y} = \betti{E}^0_y$, $\betti{E}^{<0}|_{S^1_y} = 
\sum_{\substack{\eta \le 0 \\ \eta \ne 0}} \betti{E}^{\eta}_\theta,
$ and $\dpull{\tilde{j}} \betti{E}^0 \cong \dpull{\tilde{j}} \betti{E}^{<0} \cong \betti{E}$.

\begin{theorem}\label{stokesderhamtheorem}[\cite{Mal}, 4.3, Theorems 3.1, 3.2]
There are canonical isomorphisms
\begin{equation*}\label{modgrwth}
 \DR(j_* \mathscr{E}) \cong R\pi_* (\betti{E}^0)
\end{equation*}
and
\begin{equation}\label{modgrwthcpt}
 \DR(j_! \mathscr{E}) \cong R\pi_* (\betti{E}^{<0}).
\end{equation}
\end{theorem}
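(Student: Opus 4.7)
The plan is to prove both isomorphisms by introducing a sheaf of moderate growth (respectively rapid decay) holomorphic functions $\mathcal{A}^{mod}$ (respectively $\mathcal{A}^{rd}$) on $\widetilde{X}^{an}$, building a de Rham resolution of $\betti{E}^0$ (resp.\ $\betti{E}^{<0}$) using this sheaf, and then comparing the pushforward of this resolution with the algebraic de Rham complex. Because the statement is \'etale-local on $X^{an}$, I would immediately restrict to a small analytic disc around a point $y \in Y$ with parameter $z$, so that $\widetilde{X}^{an}$ becomes a half-open annulus $\{0 < |z| \le \epsilon\} \cup S^1_y$ with boundary circle $S^1_y = \pi^{-1}(y)$.

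The first main step is the structural input: by the Levelt--Turrittin theorem, after pulling back along a ramified cover $w^p = z$, the formal connection $\hat{\mathscr{E}}_y$ decomposes as a direct sum $\bigoplus_\alpha (e^{\alpha} \otimes R_\alpha)$ with $\alpha \in \Omega_{p,n}(y)$ and $R_\alpha$ regular singular. The Hukuhara--Turrittin--Sibuya asymptotic existence theorem then upgrades this to honest sectorial decompositions over each sufficiently small open arc $I \subset S^1_y$. The Stokes filtration of Malgrange is by construction the filtration on $\tilde{j}_*\betti{E}|_{S^1_y}$ recording, direction by direction, which of the sectorial flat sections $e^{-\alpha} \otimes (\text{flat of } R_\alpha)$ remain of moderate growth: a flat section lies in $\betti{E}^0_\theta$ precisely when $e^{\alpha} \le_\theta 0$ on its exponential components, and in $\betti{E}^{<0}_\theta$ when the inequality is strict.

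Next I would prove the \emph{moderate growth Poincar\'e lemma} for $\mathscr{E}$: the complex
\begin{equation*}
\DR^{mod}(\mathscr{E}) := \left[ \mathcal{A}^{mod}\otimes_{\pi^{-1}\struct_X} \pi^{-1}\mathscr{E} \xrightarrow{\nabla} \mathcal{A}^{mod}\otimes_{\pi^{-1}\struct_X} \pi^{-1}(\mathscr{E}\otimes \Omega^1_X)\right][1]
\end{equation*}
is quasi-isomorphic to $\betti{E}^0$, and similarly $\DR^{rd}(\mathscr{E}) \simeq \betti{E}^{<0}$. Using the sectorial decomposition, this reduces to the rank-one case $\nabla = d + d\alpha + \omega$ with $\omega$ regular singular, where the solvability of $\nabla u = f$ with $u \in \mathcal{A}^{mod}$ (resp.\ $\mathcal{A}^{rd}$) given $f$ in the same class comes from the classical Borel--Ritt type estimates combined with an integration-by-parts argument along the flow of $\alpha$. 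The second step is the comparison $R\pi_*\mathcal{A}^{mod} \simeq j_*\struct_{V^{an}}^{an}$ on $X^{an}$ after taking derived global sections, together with GAGA for meromorphic connections, which identifies $R\pi_*\DR^{mod}(\mathscr{E})$ with $\DR(j_*\mathscr{E})$; the rapid decay version gives the $j_!$ statement.

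The hard part is the moderate growth Poincar\'e lemma, precisely because on the real blow-up the sheaf $\mathcal{A}^{mod}$ is not coherent and the usual $\bar{\partial}$-type tools have to be replaced by careful asymptotic analysis sector by sector. The exponential twist $e^{\alpha}$ makes the estimates delicate: one must control growth uniformly as $\theta$ approaches a Stokes direction where $\alpha$ changes its sign of real part. Once these estimates are in place, the rest of the argument is formal: one checks compatibility of the two resolutions with the Stokes filtration, verifies the comparison on the open part $V^{an}$ where it reduces to the classical Riemann--Hilbert correspondence of Theorem~\ref{rhcorrespondence}, and concludes by the five lemma applied to the distinguished triangles relating $\mathscr{E}$, $j_*\mathscr{E}$, and $j_!\mathscr{E}$ to their fibers along $Y$.
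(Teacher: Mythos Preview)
The paper does not give its own proof of this theorem: it is quoted as a result from Malgrange's book \cite{Mal}, Chapter~4, Theorems~3.1 and~3.2, and is used as a black box thereafter. Your outline is essentially the approach taken in that reference (and in Sabbah's later expositions): one introduces the sheaves $\mathcal{A}^{mod}$ and $\mathcal{A}^{rd}$ on the real blow-up, proves the moderate growth/rapid decay Poincar\'e lemma sector by sector using the Levelt--Turrittin decomposition and the asymptotic lifting theorems, and then compares the pushforward under $\pi$ with the meromorphic de Rham complex.

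One point to sharpen: the comparison you state as $R\pi_*\mathcal{A}^{mod}\simeq j_*\struct_{V^{an}}$ is not quite the right formulation. What Malgrange proves and uses is that $\pi_*\mathcal{A}^{mod}\cong \struct_{X^{an}}(*Y)$, the sheaf of holomorphic functions meromorphic along $Y$, together with $R^q\pi_*\mathcal{A}^{mod}=0$ for $q>0$; this identifies $R\pi_*\DR^{mod}(\mathscr{E})$ with the \emph{meromorphic} de Rham complex of $\mathscr{E}$ on $X^{an}$, which in turn computes $\DR(j_*\mathscr{E})$. The rapid decay analogue uses $\pi_*\mathcal{A}^{rd}\cong \struct_{X^{an}}(-Y)$ in the appropriate sense and yields $j_!$. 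With that correction your sketch matches the cited proof.
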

\begin{corollary}\label{modgrwthcohomology}
 Let $H^*_{\DR} (V;\mathscr{E})$ be the algebraic de Rham cohomology of $\mathscr{E}$.  Then, there 
 is a canonical isomorphism
\begin{equation*}
 H^*_{\DR}(V;\mathscr{E}) \cong H^* (\widetilde{X}; \betti{E}^0).
\end{equation*}
We call $H^* (\widetilde{X}; \betti{E}^0)$ the `moderate growth cohomology'
of $\betti{E}$ with stokes filtration $\{\betti{E}^\omega\}$.
\end{corollary}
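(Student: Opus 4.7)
The plan is to obtain the corollary by applying global sections (hypercohomology) to both sides of the isomorphism in Theorem \ref{stokesderhamtheorem}. Because $X$ is a smooth projective curve, hypercohomology on $X^{an}$ is well-behaved, and the two manipulations on the two sides are essentially independent of each other.

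First I would take $R\Gamma(X^{an}; -)$ of the canonical isomorphism
\begin{equation*}
\DR(j_*\mathscr{E}) \cong R\pi_*(\betti{E}^0).
\end{equation*}
On the right, by the Leray spectral sequence (or simply because $\pi$ is proper, so $R\pi_* = R\pi_!$ and hypercohomology composes),
\begin{equation*}
R\Gamma\bigl(X^{an}; R\pi_*(\betti{E}^0)\bigr) \cong R\Gamma\bigl(\widetilde{X}; \betti{E}^0\bigr),
\end{equation*}
and passing to cohomology in each degree yields $H^*(\widetilde{X}; \betti{E}^0)$, which is what appears on the right of the claimed isomorphism.

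On the left, I need to identify $H^*(X^{an}; \DR(j_*\mathscr{E}))$ with the algebraic de Rham cohomology $H^*_{\DR}(V; \mathscr{E})$. By definition of the $\DR$ functor and the GAGA-type comparison for coherent $\diff$-modules on the projective curve $X$,
\begin{equation*}
R\Gamma\bigl(X^{an}; \DR(j_*\mathscr{E})\bigr) \cong R\Gamma\bigl(X; (j_*\mathscr{E}) \otimes^L_{\struct_X} \Omega^\bullet_X\bigr)[\dim X],
\end{equation*}
which is the algebraic de Rham hypercohomology of $j_*\mathscr{E}$ on $X$. Using the adjunction/pushforward $R\Gamma(X; j_*\mathscr{E} \otimes \Omega^\bullet_X) \cong R\Gamma(V; \mathscr{E} \otimes \Omega^\bullet_V)$, which holds since taking the de Rham complex commutes with $j_*$ here (this is the defining content of algebraic de Rham cohomology of a connection on an open subscheme), we recover $H^*_{\DR}(V; \mathscr{E})$.

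Combining the two identifications gives the canonical isomorphism claimed. The only nontrivial point is the left-hand identification: one must know that $R\Gamma(X^{an}; \DR(j_*\mathscr{E}))$ computes algebraic de Rham cohomology of $\mathscr{E}$ on $V$. This is the one place where one invokes the projectivity of $X$ (so that analytic and algebraic hypercohomology agree on coherent complexes) together with the standard fact that $H^*_{\DR}(V; \mathscr{E})$ may be computed as hypercohomology of $j_*(\mathscr{E} \otimes \Omega^\bullet_V)$ on $X$. Everything else is formal manipulation of Leray and of shifts built into the definition of $\DR$.
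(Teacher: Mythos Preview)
Your argument is correct and is exactly the intended derivation; the paper states the corollary without proof, treating it as immediate from Theorem~\ref{stokesderhamtheorem}. One small refinement worth making explicit: $j_*\mathscr{E}$ is holonomic but not $\struct_X$-coherent, so the left-hand identification is cleanest if you either invoke the general compatibility of $\DR$ with \emph{proper} pushforward for arbitrary holonomic $\diff$-modules (which requires no regularity hypothesis), or replace the de Rham complex of $j_*\mathscr{E}$ by a quasi-isomorphic two-term complex of coherent sheaves $E \xrightarrow{\nabla} E\otimes\Omega^1_X(\mathbf{D})$ (as the paper does in \S\ref{sec:productformula}) and then apply ordinary GAGA.
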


\subsection{Elementary $\diff_X$-modules}
We will consider a class of holonomic $\diff$-modules for which 
theorem \ref{stokesderhamtheorem} admits a simple description. 
Let $\aff^1 = \Spec(k[t])$, and let $\struct_{dt}$ be the $\diff_{\aff^1}$-module defined by
$\diff_{\aff^1}/\diff_{\aff^1} (\frac{\partial}{\partial t} - 1)$.  The following definition comes from
\cite{Sa2}:
\begin{definition}[Elementary $\diff$-modules]
Let $V/k$ be a smooth, quasi-projective variety (for now, we put no conditions on $\dim_k(V)$), and let $\mathscr{M}$
be a regular holonomic $\diff_V$-module.  Given a regular function $\phi : V \to \aff^1$, we define the elementary
$\diff$-module $\mathscr{M}_{d\phi}$ by
\begin{equation*}
 \mathscr{M}_{d \phi} = \mathscr{M} \otimes_{\struct_V} \phi^\Delta \struct_{dt}.
\end{equation*}
We call $\phi$ the `Morse' function for $\mathscr{E}$.
\end{definition}
\begin{proposition}\label{dualityelem}
If $\mathscr{M}_{d \phi}$ is an elementary $\diff_V$-module, then
\begin{equation*}
\dual(\mathscr{M}_{d \phi}) \cong \left(\dual(\mathscr{M})\right)_{-d \phi}.
\end{equation*}
\end{proposition}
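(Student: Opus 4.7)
The plan is to reduce the identity to two standard facts about $\diff_V$-module duality: compatibility with non-characteristic pullback (theorem \ref{noncharpullback}), and the behavior of duality under tensoring with a line bundle with flat connection. The base case on $\aff^1$ is $\dual(\struct_{dt}) \cong \struct_{-dt}$, which I would verify directly: since $\struct_{dt} = \diff_{\aff^1}/\diff_{\aff^1}(\partial_t - 1)$ is cyclic, its dual is computed via the formal adjoint $P^* = -(\partial_t + 1)$, yielding $\dual(\struct_{dt}) \cong \diff_{\aff^1}/\diff_{\aff^1}(\partial_t + 1) = \struct_{-dt}$. Equivalently, as a line bundle with connection $d - dt$ one dualizes the connection to $d + dt$.

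Next, because $\struct_{dt}$ is an integrable connection, any regular map $\phi : V \to \aff^1$ is automatically non-characteristic for it, as noted after the definition of non-characteristic maps. Theorem \ref{noncharpullback} therefore provides a canonical isomorphism
\begin{equation*}
(\phi^\Delta \struct_{dt})^\vee \;\cong\; \phi^\Delta(\dual \struct_{dt}) \;\cong\; \phi^\Delta \struct_{-dt},
\end{equation*}
so the pulled-back elementary connection has the expected dual.

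The main technical step is the twisting formula
\begin{equation*}
\dual(\mathscr{M} \otimes_{\struct_V} \mathscr{L}) \;\cong\; \dual(\mathscr{M}) \otimes_{\struct_V} \mathscr{L}^\vee
\end{equation*}
for $\mathscr{L}$ a line bundle with flat connection on $V$. I would prove this using the tensor--hom adjunction in the $\diff_V$-module category: because $\mathscr{L}$ is $\struct_V$-locally free, tensoring by $\mathscr{L}$ is an auto-equivalence with inverse $\otimes \mathscr{L}^\vee$, and one checks that
\begin{equation*}
R \shHom_{\diff_V}(\mathscr{M} \otimes_{\struct_V} \mathscr{L},\, \diff_V^\Omega) \;\cong\; R \shHom_{\diff_V}(\mathscr{M},\, \mathscr{L}^\vee \otimes_{\struct_V} \diff_V^\Omega)
\end{equation*}
intertwines the natural $\diff_V$-module structures coming from the two connections. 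I expect the bookkeeping with the right $\diff_V$-action on $\diff_V^\Omega$ and with the tensor-product connection on $\mathscr{L}^\vee \otimes \diff_V^\Omega$ to be the one place where care is needed; once that is set up, the argument is purely formal.

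Combining the three steps with $\mathscr{L} = \phi^\Delta \struct_{dt}$ gives
\begin{equation*}
\dual(\mathscr{M}_{d\phi}) = \dual(\mathscr{M} \otimes \phi^\Delta \struct_{dt}) \cong \dual(\mathscr{M}) \otimes \phi^\Delta \struct_{-dt} = (\dual \mathscr{M})_{-d\phi},
\end{equation*}
as required. The hardest sub-step is the twisting formula; the others are either a one-line computation on $\aff^1$ or a direct appeal to theorem \ref{noncharpullback}.
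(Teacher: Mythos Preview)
Your proposal is correct and follows essentially the same approach as the paper: both hinge on the twisting formula $\dual(\mathscr{M} \otimes_{\struct_V} \mathscr{L}) \cong \dual(\mathscr{M}) \otimes_{\struct_V} \mathscr{L}^\vee$ for a line bundle with connection $\mathscr{L}$, established via the tensor--hom adjunction against the dualizing sheaf $\diff_V^\Omega$ (the paper reduces to the case $\mathscr{M} = \diff_V$ by taking a projective resolution, which is exactly your adjunction computation). The only difference is cosmetic: the paper observes directly that $\struct_{d\phi}$ is $\struct_V$-coherent and hence $\dual(\struct_{d\phi}) \cong \struct_{d\phi}^\vee \cong \struct_{-d\phi}$, whereas you route this through the base case on $\aff^1$ and theorem~\ref{noncharpullback}---an unnecessary but harmless detour.
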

\begin{proof}
Let $\diff_V^\omega$ be the dualizing sheaf for $\diff_V$-modules (\cite{Ber}, 3.5).
Taking a resolution of $\mathscr{M}$ by projective $\diff_V$-modules, it suffices 
to show that $\shHom_{\diff_V} (\diff_V \otimes_{\struct_V} \struct_{d\phi}, \diff_V^\omega)
\cong \diff_V^\omega \otimes_{\struct_V} \struct_{-d\phi}.$  Since $\struct_{d \phi}$ is $\struct_V$-
coherent, $\dual(\struct_{d \phi}) \cong \struct_{d \phi}^\vee \cong \struct_{-d \phi}$.
Furthermore, $\dual(\struct_{d \phi}) \otimes_{\struct_V} \struct_{d \phi} \cong \struct_V$.

Therefore,
\begin{multline}
\shHom_{\diff_V}(\diff_{V} \otimes_{\struct_V} \struct_{d \phi}, \diff_V^\omega) \cong
\shHom_{\diff_V}(\diff_V, \diff_V^\omega \otimes_{\struct_V} \struct_{-d \phi}) \cong \\
\diff_V^\omega \otimes_{\struct_V} \struct_{-d \phi}.
\end{multline}
\end{proof}

Let $\widetilde{\proj}^1\xrightarrow{\pi} \proj^1$ be the real oriented blow-up of $\proj^1$ at $\infty$, and 
$j : \aff^1 \to \widetilde{\proj}^1$.  Define $I \subset \pi^{-1}(\infty)$ to be the interval on which
$dt \le_\theta 0$,\footnote{If $z = 1/t$ is the parameter at $\infty$, this is equivalent to $-\frac{dz}{z^2} \le_\theta 0$} and 
$\widetilde{\proj}^1_I = j(\aff^1) \cup I$.  Let $\alpha : \aff^1 \hookrightarrow \widetilde{\proj}^1_I$
and $\beta : \widetilde{\proj}^1_I \hookrightarrow \widetilde{\proj}^1.$  
If $\betti{O}_{dt} = \DR(\struct_{dt})$, then the $0$-filtered component of the corresponding Stokes filtration is given by
\begin{equation*}
 \betti{O}_{dt}^0 = \beta_! \alpha_* \betti{O}_{dt}.
\end{equation*}
In this case, $\betti{O}_{dt}^{<0} = \beta_! \alpha_* \betti{O}_{dt}$ as well.

Now, let $\mathscr{M}_{d\phi}$ be an elementary $\diff_V$-module.  Suppose that $X$ is
a projective closure of $V$ with a morphism $\Phi : X \to \proj^1$, with the property that $\Phi |_V = \phi$.
We define $\widetilde{X} = X^{an} \times_\Phi \widetilde{\proj}^1$ and $\widetilde{X}_I = X^{an} \times_\Phi \widetilde{\proj}^1_I$.
There are inclusions 
\begin{equation*}
 V^{an} \xrightarrow{\alpha_X} \widetilde{X}_I \xrightarrow{\beta_X} \widetilde{X}.
\end{equation*}
Define $\tilde{j}_X = \beta_X \circ \alpha_X$ and $\betti{M}_{d \phi} = \DR(\mathscr{M}_{d \phi})$,
and $j_X : V \to X$.
When $X$ is a curve, $\widetilde{X}$ is homeomorphic to the real analytic blow-up of $X^{an}$ along the divisor 
$\phi^{-1}(\infty)$.

Observe that $\alpha_X$ may be factored as $\alpha_2 \circ \alpha_1$, where
$\alpha_1 : V^{an} \to X \times_\Phi \aff^1$ and $\alpha_2 : X \times_\Phi \aff^1 \to \widetilde{X}_I$.  In particular, $X \times_\Phi \aff^1 \to \aff^1$ is a proper map.
\begin{definition}\label{modgrwthcohomology1}
 Let $\phi : V \to \aff^1$ be a regular function, and let $\betti{M}$ be a complex of sheaves on $V^{an}$.  Define
\begin{equation*}
 R \Gamma (V ; \betti{M}, \phi) = R \Gamma (\widetilde{X}^{an}; (\beta_X)_! (\alpha_X)_* \betti{M}),
\end{equation*}
with $X$, $\alpha_X$ and $\beta_X$ as defined above.  Moreover, define
\begin{equation*}
 R \Gamma_c (V; \betti{M}, \phi) = R \Gamma (\widetilde{X}^{an}; (\beta_X)_! ( \alpha_2)_* (\alpha_{1})_! \betti{M}).
\end{equation*}
\end{definition}
\begin{theorem}\label{modgrwthelem} 

Suppose that $\mathscr{M}_{d \phi}$ is an elementary $\diff_V$-module, and
$\betti{M}_{d \phi} = \DR(\mathscr{M}_{d \phi})$.
There are canonical isomorphisms
\begin{equation}\label{regelem}
R \Gamma (V; \mathscr{M}_{d \phi}) \cong R \Gamma (\widetilde{X}; \dpushc{(\beta_X)}  (\alpha_X)_* \betti{M}_{d \phi})
\end{equation}
and
\begin{equation}\label{compactelem}
R \Gamma_c (V; \mathscr{M}_{d \phi}) \cong 
R \Gamma (\widetilde{X}; \dpushc{(\beta_X)}  (\alpha_1)_* (\alpha_2)_! \betti{M}_{d \phi}).
\end{equation}
\end{theorem}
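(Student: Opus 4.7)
The plan is to apply Theorem \ref{stokesderhamtheorem} on the compactification $X$ and then identify the relevant Stokes-filtered pieces of $\betti{M}_{d \phi}$ explicitly in terms of the inclusions $\alpha_1$, $\alpha_2$, and $\beta_X$.

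First, because $\mathscr{M}$ is regular singular and the irregularity of $\phi^\Delta \struct_{dt}$ arises entirely from $\struct_{dt}$ at $\infty$, the irregular locus of $(j_X)_* \mathscr{M}_{d \phi}$ is contained in $Y := \Phi^{-1}(\infty)_{\mathrm{red}}$. Thus $\widetilde{X} = X^{an} \times_\Phi \widetilde{\proj}^1$ really is the real oriented blow-up of $X^{an}$ along the full irregular divisor, and Theorem \ref{stokesderhamtheorem} applies directly to give
\begin{equation*}
\DR((j_X)_* \mathscr{M}_{d \phi}) \cong R(\pi_X)_* \betti{M}_{d \phi}^{0}, \qquad \DR((j_X)_! \mathscr{M}_{d \phi}) \cong R(\pi_X)_* \betti{M}_{d \phi}^{<0},
\end{equation*}
where $\pi_X : \widetilde{X} \to X^{an}$ is the blow-down.

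The main technical step is to identify the two Stokes-filtered pieces on the right. I would argue via two compatibilities: tensoring with the regular module $\mathscr{M}$ does not alter the Stokes structure, so the Stokes filtration on $\betti{M}_{d \phi}$ is that of $\phi^{-1}\betti{O}_{dt}$ tensored with $\betti{M}$; and the Stokes filtration pulls back under the map $\tilde\Phi : \widetilde{X} \to \widetilde{\proj}^1$. Since the zero-part of $\betti{O}_{dt}$ on $\widetilde{\proj}^1$ is $\beta_! \alpha_* \betti{O}_{dt}$ by the very definition of the moderate-growth interval $I$ (on which $|e^t|$ has moderate growth), pulling back through $\tilde\Phi$ shows that $\betti{M}_{d \phi}^{0}$ is supported on $\widetilde{X}_I = \tilde\Phi^{-1}(\widetilde{\proj}^1_I)$ and vanishes elsewhere on $\pi_X^{-1}(Y)$, giving
\begin{equation*}
\betti{M}_{d \phi}^{0} = (\beta_X)_! (\alpha_X)_* \betti{M}_{d \phi}.
\end{equation*}
For $\betti{M}_{d \phi}^{<0}$, the same analysis above $Y$, combined with the standard fact that $\DR$ commutes with $j_!$ along the finite boundary $X \setminus (V \cup Y)$ (where $\mathscr{M}_{d \phi}$ is regular singular), forces the inner piece of the factorization $\alpha_X = \alpha_2 \circ \alpha_1$ to be $(\alpha_1)_!$, while the outer $(\alpha_2)_*$ still extends across the moderate-growth interval at $Y$. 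This yields
\begin{equation*}
\betti{M}_{d \phi}^{<0} = (\beta_X)_! (\alpha_2)_* (\alpha_1)_! \betti{M}_{d \phi}.
\end{equation*}

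Taking $R\Gamma(X^{an}; -)$ of the resulting isomorphisms and using $R\Gamma(X^{an}; R(\pi_X)_* -) = R\Gamma(\widetilde{X}; -)$ then gives \eqref{regelem} and \eqref{compactelem}. The main obstacle is the explicit identification of the two Stokes-filtered pieces: one must verify pullback compatibility of Stokes filtrations along the possibly ramified map $\tilde\Phi$ and correctly combine this with the standard regular-singular behavior along the finite boundary. The regularity hypothesis on $\mathscr{M}$ is what makes this tractable, since all Stokes phenomena reduce to pullbacks from the single model $\struct_{dt}$ at $\infty \in \proj^1$.
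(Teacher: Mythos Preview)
Your approach has a genuine gap: Theorem~\ref{stokesderhamtheorem} is stated and proved only for curves (see the hypotheses in \S\ref{stokesfiltrations}), whereas the present theorem must hold for $V$ of arbitrary dimension---this is essential for the later applications to $U_{f+1}$ and to the K\"unneth formula in \S\ref{sec:kunneth}. In higher dimension $\widetilde{X}$ is \emph{defined} as the fiber product $X^{an}\times_\Phi\widetilde{\proj}^1$, not as a real oriented blow-up along a divisor, so your claim that ``$\widetilde{X}$ really is the real oriented blow-up of $X^{an}$ along the full irregular divisor'' already fails, and there is no higher-dimensional Stokes theorem available in the paper to invoke. Even on a curve, your pullback compatibility of Stokes filtrations along the possibly ramified map $\tilde\Phi$ is a nontrivial assertion you have not justified.

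The paper avoids both difficulties by pushing forward rather than pulling back. The key step (Lemma~\ref{sabbahreduction}) uses the projection formula to show
\[
R\Gamma(V;\mathscr{M}_{d\phi})\cong R\Gamma(\aff^1;(\phi_*\mathscr{M})_{dt}),\qquad
R\Gamma(V^{an};\betti{M},\phi)\cong R\Gamma(\aff^1;\phi_*\betti{M},t),
\]
reducing \eqref{regelem} to the one-dimensional model on $\aff^1$, where Theorem~\ref{stokesderhamtheorem} applies directly and the Stokes filtration of $\struct_{dt}$ is already known explicitly. The compactly supported statement \eqref{compactelem} is then deduced from \eqref{regelem} by duality, using Proposition~\ref{dualityelem} ($\dual(\mathscr{M}_{d\phi})\cong(\dual\mathscr{M})_{-d\phi}$) to identify $\phi_!\mathscr{M}_{d\phi}\cong(\phi_!\mathscr{M})_{dt}$, rather than by a separate identification of $\betti{M}_{d\phi}^{<0}$ on $\widetilde{X}$.
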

The first part is theorem 1.1 in \cite{Sa1}.  We include a key step
that allows us to reduce to the setting of theorem \ref{stokesderhamtheorem}.
\begin{lemma}\label{sabbahreduction}
There are natural isomorphisms
\begin{equation*}
\begin{aligned}
R \Gamma (V; \mathscr{M}_{d \phi}) & \cong R \Gamma (\aff^1; (\phi_*\mathscr{M})_{dt}) & \text{and} \\
R \Gamma (V^{an}; \betti{M}, \phi) & \cong R \Gamma (\aff^1; (\phi_* \betti{M}), t).
\end{aligned}
\end{equation*}
\end{lemma}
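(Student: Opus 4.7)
Both isomorphisms amount to showing that the elementary construction commutes with the pushforward along $\phi$, after which one applies $R\Gamma(\aff^1;-)$ and uses the transitivity of global sections $R\Gamma(V;-)\cong R\Gamma(\aff^1;\dpush{\phi}-)$.

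For the De Rham isomorphism my plan is to apply the non-characteristic projection formula (Proposition~\ref{noncharprojfla}). Because $\struct_{dt}$ is an integrable connection, its characteristic variety is the zero section $T^*_{\aff^1}\aff^1$, so $\phi$ is non-characteristic for it; likewise $\dpulld{\phi}\struct_{dt}$ is an integrable connection on $V$ with characteristic variety $T^*_V V$. Both hypotheses of Proposition~\ref{noncharprojfla} are therefore automatic for the pair $(\mathscr{M},\dpulld{\phi}\struct_{dt})$, and that proposition gives, up to the standard shift relating $\dpulld{\phi}$ and $\dpullc{\phi}$,
\begin{equation*}
\dpush{\phi}\bigl(\mathscr{M}\otimes^L_{\struct_V}\dpulld{\phi}\struct_{dt}\bigr)\;\cong\;\dpush{\phi}\mathscr{M}\otimes^L_{\struct_{\aff^1}}\struct_{dt}\;=\;(\dpush{\phi}\mathscr{M})_{dt}.
\end{equation*}

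For the Betti isomorphism the approach is geometric. Let $\Phi:X\to\proj^1$ extend $\phi$ to the projective closure, so that $\widetilde{X}=X^{an}\times_\Phi\widetilde{\proj}^1$ fits in the Cartesian square
\begin{equation*}
\xymatrix{
\widetilde{X}\ar[r]^{\tilde\Phi}\ar[d] & \widetilde{\proj}^1\ar[d]^\pi \\
X^{an}\ar[r]^\Phi & \proj^1
}
\end{equation*}
with $\tilde\Phi$ proper. By construction $\widetilde{X}_I$ is the base change of $\widetilde{\proj}^1_I$ along $\tilde\Phi$, and the inclusions $\alpha_X,\beta_X$ are the base changes of $\alpha,\beta$. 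The base-change identity $R\tilde\Phi_*(\beta_X)_!=\beta_!R\tilde\Phi'_*$ for the proper restriction $\tilde\Phi'=\tilde\Phi|_{\widetilde{X}_I}$, together with ordinary pushforward compatibility for the open inclusion $\alpha_X$, then supplies
\begin{equation*}
R\tilde\Phi_*(\beta_X)_!(\alpha_X)_*\betti{M}\;\cong\;\beta_!\alpha_*(R\phi_*\betti{M}),
\end{equation*}
and $R\Gamma(\widetilde{\proj}^1;-)$ identifies the left side with $R\Gamma(V^{an};\betti{M},\phi)$ and the right with $R\Gamma(\aff^1;(\phi_*\betti{M}),t)$ via Definition~\ref{modgrwthcohomology1}.

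The main obstacle I expect is the identification $\widetilde{X}_I=\tilde\Phi^{-1}(\widetilde{\proj}^1_I)$, which requires verifying that the partial order $\le_\theta$ used to carve out the good interval $I$ at $\infty$ on $\widetilde{\proj}^1$ pulls back correctly under the real oriented blow-up of $X^{an}$ along the divisor $\Phi^{-1}(\infty)$. Once this compatibility is in place, the remainder is standard: proper base change dispatches the Betti case and the non-characteristic projection formula finishes the De Rham case.
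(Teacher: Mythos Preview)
Your approach is essentially the paper's: a projection formula for the De Rham side and proper base change along the map $\widetilde{X}\to\widetilde{\proj}^1$ for the Betti side. Two small remarks. First, Proposition~\ref{noncharprojfla} is stated for $\dpushc{\phi}$, not $\dpush{\phi}$; what you actually need (and what the paper invokes) is the standard projection formula $\dpush{\phi}(\mathscr{M}\otimes^L\dpullc{\phi}\struct_{dt})\cong\dpush{\phi}\mathscr{M}\otimes^L\struct_{dt}[\dim V-1]$ together with $\dpullc{\phi}\struct_{dt}\cong\dpulld{\phi}\struct_{dt}[\dim V-1]$ from Theorem~\ref{noncharpullback}, after which the shifts cancel. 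Second, the obstacle you flag is not one: $\widetilde{X}_I$ is \emph{defined} as $X^{an}\times_\Phi\widetilde{\proj}^1_I$, so $\widetilde{X}_I=\tilde\Phi^{-1}(\widetilde{\proj}^1_I)$ holds tautologically and no comparison of Stokes orderings is required.
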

\begin{proof} 
By the projection formula for $\diff$-modules,
\begin{equation*}
\phi_* \mathscr{M}_{d \phi} \cong (\phi_* \mathscr{M}) \otimes_{\struct_{\aff^1}} \struct_{d t}.
\end{equation*}
On the other hand, since $\Phi$ is a proper map,
\begin{equation*}
\Phi_* (\beta_X)_!  (\alpha_X)_* \betti{M}_{d \phi} \cong
\beta_! \alpha_*  \phi_* \betti{M}_{d \phi}.
\end{equation*}
Since $\mathscr{M}$ is regular singular, the sections of 
$H^i \DR( \phi_* \mathscr{M})$ have moderate growth at infinity, and
the sections of $H^i \DR( (\phi_* \mathscr{M}) \otimes_{\struct_{\aff^1}} \struct_{dt})$
have moderate growth on the sector $I$.   Theorem \ref{rhcorrespondence}
implies that $\DR(\phi_* \mathscr{M}) \cong \phi_* \DR(\mathscr{M})$.  It follows
that $ \phi_* \betti{M}_{d \phi} \cong \DR( (\phi_* \mathscr{M})_{dt})$.
\end{proof}

Here, we show that  the first statement of theorem \ref{modgrwthelem}, (\ref{regelem}), implies
the second, (\ref{compactelem}). 
\begin{proof} 
In the second case, 
\begin{equation*}
\begin{aligned}
 \phi_! \mathscr{M}_{d \phi} & \cong \dual \phi_* \dual (\mathscr{M}_{d \phi}) \\
& \cong \dual \phi_* \left((\dual \mathscr{M})_{-d \phi}\right) & \text{by proposition \ref{dualityelem}}\\
& \cong \dual \left(( \phi_* \dual \mathscr{M})_{-dt}\right) \\
& \cong \left( \phi_! \mathscr{M} \right)_{dt}.
\end{aligned}
\end{equation*}
Similarly, if $\Phi' : X \times_{\Phi} \aff^1 \to \aff^1$,
\begin{equation*}
\begin{aligned}
 \Phi_! (\beta_X)_!  (\alpha_1)_* (\alpha_2)_! \betti{M}_{d \phi} & \cong
(\beta)_!  (\alpha)_* \Phi'_! (\alpha_2)_! \betti{M}_{d \phi} \\
& \cong (\beta)_!  (\alpha)_* \phi_! \betti{M}_{d \phi}.
\end{aligned}
\end{equation*}
As before, $\DR( \phi_! \mathscr{M}) \cong \phi_! \DR(\mathscr{M})$.

\end{proof}

\subsection{Betti structures}
\begin{definition}[Betti Structure]
Let $\mathscr{M}$ be a holonomic $\diff_X$-module, and let $ M \subset \cplx$ be a field with fixed complex imbedding.  
A Betti structure for $\mathscr{M}$ is a perverse sheaf of 
$M_X$-modules $\betti{M}$ with the property that
$\betti{M} \otimes_M \cplx_X \cong \DR(\mathscr{M})$.  

\end{definition}
We define $\MB (\diff_X,  M)$ to be the category
of pairs $(\mathscr{M}, \betti{M})$ consisting of: 
a holonomic $\diff_X$-modules $\mathscr{M}$, 
 Betti structure $\betti{M}$ that has coefficients in
$M$, and a fixed compatibility isomorphism 
$\alpha : \betti{M}\otimes_M \cplx \to \DR(\mathscr{M})$.
 A morphism in $\MB (\diff_X, M)$ is given by 
a pair of maps $\psi_k : \mathscr{M} \to \mathscr{M}'$
and $\psi_M : \betti{M} \to \betti{M}'$ with the property
that the following diagram commutes:
\begin{equation*}
\xymatrix{\betti{M} \ar[r]^{\alpha} \ar[d]^{\psi_M}&  
\DR(\mathscr{M}) \ar[d]^{\DR(\psi_k)} \\
\betti{M}' \ar[r]^{\alpha'} & \DR(\mathscr{M}').}
\end{equation*}
Therefore, two Betti structures are isomorphic if 
$(\psi_k, \psi_M)$ are isomorphisms.
Notice that Betti structures are well defined in the derived category:
if $\mathscr{M}^*$ is a complex of $\diff_X$-modules with holonomic
cohomology, then $\betti{M}^*$ is a complex of sheaves 
with a natural quasi-isomorphism $\betti{M}^* \otimes_M \cplx \cong \DR(\mathscr{M})$.
Thus, we define $\dMB(\diff_X, M)$ to be the category of pairs 
of complexes $(\mathscr{M}^*, \betti{M}*)$, with $\mathscr{M} \in \derived^b(\diff_X)$
and $\betti{M} \in \derived^b(M_X)$, and a quasi-isomorphism
$\alpha : \betti{M} \otimes_M \cplx \cong \DR(\mathscr{M})$.
For instance, we may define $(\mathscr{M}, \betti{M}) [n] = (\mathscr{M}[n], \betti{M}[n])$.

Suppose $\mathscr{M}$ is non-singular on $V \subset X$.
Fix a basepoint $x \in V(k)$ corresponding to a maximal ideal $\mathfrak{m}_x \subset \struct_V$.  Let $k(x) = \struct_V/\mathfrak{m}_x$
and $\cplx(x) = k(x) \otimes_k \cplx$.
The fundamental group of $V^{an}$ acts on $\DR(\mathscr{M})_x$, so it is a necessary condition
that $M$ contain the matrix coefficients of this representation.  Furthermore, if there is a stokes
filtration on $\tilde{j}_* \DR(\mathscr{M})$, $M$ must be large enough so that there is a corresponding filtration on $\tilde{j}_* \betti{M}$.  By theorem
\ref{modgrwthelem}, the second condition is always satisfied when $\mathscr{M}$ is an elementary $\diff_V$-module.

Let $i_x$ denote the inclusion of $x$.   By theorem \ref{rhcorrespondence}, 
$\DR(\mathscr{M})_x \cong \DR(i_x^* \mathscr{M}) \cong i_x^* \mathscr{M} \otimes_k \cplx$.
If $\betti{M}$ and $\betti{M}'$ are isomorphic Betti structures for $\mathscr{M}$, then there is a commutative diagram
\begin{equation*}
 \xymatrix{ \betti{M}_x \ar[r]^(.4)\iota \ar[d]^{(\psi_M)_x} & i_x^* \mathscr{M} \otimes_k \cplx 
 \ar[d]^{i_x^* \psi_k}  \\
\betti{M}'_x \ar[r]^(.4){\iota'}& i_x^* \mathscr{M} \otimes_k \cplx
}
\end{equation*}
Above, $\iota$ (resp $\iota'$) is defined by the composition
\begin{equation}\label{iotax}
\betti{M}_x \to \DR(\mathscr{M})_x \cong i_x^* \mathscr{M} \otimes_k \cplx.
\end{equation}
%
\begin{proposition}\label{linebundlecase}
Suppose $\mathscr{L}$ is a line bundle with integrable connection. Let $\betti{L}$ and $\betti{L}'$ be Betti structures for $\DR(\mathscr{L})$
with coefficients in $M$.  Then,
$(\mathscr{L}, \betti{L}) \cong (\mathscr{L}, \betti{L}')$ if and only if $\iota(\betti{L}) = \alpha \iota'(\betti{L}')$ with $\alpha \in k^\times$.
\end{proposition}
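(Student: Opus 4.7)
The plan is to reduce the claim to a calculation in the fiber at $x$, exploiting the rigidity of a rank one integrable connection on a connected variety.

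For the forward direction, suppose $(\psi_k, \psi_M) : (\mathscr{L}, \betti{L}) \to (\mathscr{L}, \betti{L}')$ is an isomorphism. I would first compute
\begin{equation*}
\End_{\diff_V}(\mathscr{L}) = \rmHom_{\diff_V}(\struct_V, \mathscr{L}^\vee \otimes \mathscr{L}) = H^0_{\DR}(V, \struct_V) = k,
\end{equation*}
using that $\mathscr{L}^\vee \otimes \mathscr{L}$ is the trivial connection and $V$ is smooth connected. Thus $\psi_k = \alpha \cdot \id$ for some $\alpha \in k^\times$. The commutativity of the diagram in (\ref{iotax}) at $x$ becomes $\iota' \circ (\psi_M)_x = \alpha \iota$; since $(\psi_M)_x$ is an $M$-linear isomorphism between $M$-lines, passing to images yields $\iota(\betti{L}_x) = \alpha^{-1} \iota'(\betti{L}'_x)$, which is the desired relation.

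For the converse, given $\alpha \in k^\times$ with $\iota(\betti{L}_x) = \alpha\, \iota'(\betti{L}'_x)$, I would set $\psi_k = \alpha^{-1} \cdot \id$ and build $\psi_M$ by globalizing the fiber identification. The crucial observation is that $\iota$ and $\iota'$ extend to injections of $M$-local systems $\iota : \betti{L} \hookrightarrow \DR(\mathscr{L})|_{V^{an}}$ and analogously for $\iota'$, so that $\iota(\betti{L})$ and $\iota'(\betti{L}')$ are two $M$-local subsheaves of the underlying $\cplx$-local system $\mathscr{L}^{an,\nabla}$. Multiplication by the scalar $\alpha^{-1}$ is a monodromy-equivariant $\cplx$-automorphism of $\mathscr{L}^{an,\nabla}$, and at the fiber over $x$ it carries $\iota(\betti{L}_x)$ onto $\iota'(\betti{L}'_x)$ by hypothesis. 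Since both $\alpha^{-1} \iota(\betti{L})$ and $\iota'(\betti{L}')$ are monodromy-invariant rank one $M$-subsheaves of a common local system on a connected space, agreement on one fiber forces agreement globally, so $\psi_M := (\iota')^{-1} \circ (\alpha^{-1} \cdot) \circ \iota$ defines an isomorphism of local systems $\betti{L} \to \betti{L}'$ making (\ref{iotax}) commute.

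The main obstacle is this propagation from a single fiber to an equality of $M$-local subsheaves over all of $V^{an}$. It hinges on two facts: the scalar $\alpha^{-1}$ commutes with the monodromy of $\mathscr{L}^{an,\nabla}$ (it is a central $\cplx$-scalar), and each of $\iota(\betti{L})$, $\iota'(\betti{L}')$ is automatically monodromy-stable because they come from $M$-local systems. Connectedness of $V^{an}$ then closes the argument; everything else in the proof is formal manipulation of the compatibility square defining morphisms in $\MB(\diff_V, M)$.
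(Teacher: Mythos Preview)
Your proposal is correct and follows the same strategy as the paper: the key computation is $\End_{\diff_V}(\mathscr{L}) \cong H^0_{\DR}(V;\struct_V[-\dim V]) \cong k$, which forces any $\diff_V$-automorphism of $\mathscr{L}$ to be a scalar. The paper's proof is considerably terser---it records this endomorphism computation and notes that multiplication by $\alpha$ lifts to a global $\psi_\alpha \in \Aut_{\diff_V}(\mathscr{L})$, leaving both the forward implication and the construction of $\psi_M$ implicit---whereas you have spelled out the monodromy-propagation argument that globalizes the fiber identity to an isomorphism of $M$-local systems; this is a welcome elaboration rather than a different approach.
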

\begin{proof}
Recall that $R \mathrm{Hom}_{\diff_V} (\mathscr{M}, \mathscr{N}) \cong H^*_{\DR} (V; \dual(\mathscr{M}) 
\otimes^L_{\struct_X} \mathscr{N} [-\dim V])$ (\cite{Ber}, lecture 3, section 11).
Since $\dual(\mathscr{L}) \cong \mathscr{L}^\vee$, the dual line bundle with dual connection,
it follows that 
\begin{equation*}
 \mathrm{Hom}_{\diff_V} (\mathscr{L}, \mathscr{L}) \cong H^0_{\DR} (\struct_V[-\dim(V)]) \cong k.
\end{equation*}
Therefore, multiplication by $\alpha$ lifts to a global automorphism $\psi_{\alpha} \in \Aut_{\diff_V} (\mathscr{L})$.
\end{proof}

Suppose that $(\mathscr{M},\betti{M})$ and $(\mathscr{N}, \betti{N})$ are in 
$\MB(\diff_X, M)$.  If $\Ch(\mathscr{M}) \cap \Ch(\mathscr{N}) \subset T^*_X X$, 
then the tensor product
\begin{equation*}
(\mathscr{M}, \betti{M}) \otimes (\mathscr{N}, \betti{N}) = 
(\mathscr{M} \otimes \mathscr{N}, \betti{M} \otimes \betti{N})
\end{equation*}
is well defined by proposition \ref{nonchardr}, since the diagonal map $\Delta_X$ is non-characteristic
with respect to $(\mathscr{M} \boxtimes \mathscr{N})$.

\begin{definition}[Tate Twist]
Let $\mathscr{M}$ be a $\diff_V$-module, and $\betti{M}$ a
 Betti structure for $\mathscr{M}$.  For any integer
$n$, define $\betti{M} (n)$ to be the sheaf $\left[\betti{M} (n)\right] (V) = \left[(2 \pi \sqrt{-1})^{-n} \betti{M} (V)\right]$, for any open $V \subset V$.
Furthermore, define 
$(\mathscr{M}, \betti{M}) (n) = (\mathscr{M}, \betti{M} (n))$.
\end{definition}
For example, if $(2 \pi \sqrt{-1})$ is transcendental over $k$ and
$M$, $(\mathscr{M}, \betti{M})$ is not isomorphic to
$(\mathscr{M}, \betti{M})(1)$ even though
$\betti{M} \cong \betti{M}(1)$ as sheaves.

By proposition \ref{nonchardr},
if $\phi : Y \to X$ is a map of smooth varieties
that is non-characteristic for $\mathscr{M}$,
then we may use $\phi^*$, $\phi^!,$
and $\phi^\Delta$ to pull $(\mathscr{M}, \betti{M})$
back to an element of $\MB (\diff_Y, M)$.

\begin{proposition}\label{twistmap}
 Let $(\mathscr{M}, \betti{M}) \in \MB(\diff_V, M)$.
\begin{enumerate} 
 \item \label{closedimbed} Let
$ i : Y \to V$ be a closed imbedding, and 
$d_i = \dim(Y) - \dim(V)$.  
Furthermore, suppose that $i$ is non-characteristic with
respect to $\mathscr{M}$.
Then, there is a commutative diagram
\begin{equation*}
i^*( \mathscr{M}, \betti{M})[-d_i](-d_i)  \cong
i^!( \mathscr{M}, \betti{M} )[d_i]
\end{equation*}
\item\label{smoothmap} If   $p : W\to V$  is a smooth map, 
and $d_p = \dim(W) - \dim(V)$,
 \begin{equation*}
 p^!(\mathscr{M},  \betti{M}) \cong p^*(\mathscr{M},  \betti{M})  [ 2 d_p] (d_p).
 \end{equation*}
\end{enumerate}
\end{proposition}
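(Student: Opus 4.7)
My plan is to verify each of the two isomorphisms componentwise, treating the D-module part and the Betti part separately, and then assembling them into an isomorphism of pairs using Proposition \ref{nonchardr}. Both parts of the proposition concern derived pullbacks along non-characteristic morphisms: in (\ref{closedimbed}) the closed imbedding $i$ is non-characteristic by hypothesis, and in (\ref{smoothmap}) the smooth morphism $p$ is automatically non-characteristic with respect to every holonomic D-module (since $\Ch(\mathscr{M}) \subset T^*_V V$ pulls back into the zero section for smooth $p$). So on both the D-module and the Betti side the comparison between $\phi^!$ and $\phi^*$ is governed by Theorem \ref{noncharpullback} and its topological counterpart.

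For part (\ref{closedimbed}), I would first apply Theorem \ref{noncharpullback} to the D-module $\mathscr{M}$ to obtain the canonical isomorphism $i^*\mathscr{M} \cong i^!\mathscr{M}[2d_i]$, which rearranges immediately to the D-module part of the statement, $i^*\mathscr{M}[-d_i] \cong i^!\mathscr{M}[d_i]$. On the Betti side, the analogous identification is classical purity for a closed embedding of smooth complex manifolds of codimension $-d_i$: the relative dualizing complex contributes a shift of $[2d_i]$ and a Tate twist of $(-d_i)$, where the twist records the $(2\pi\sqrt{-1})^{-d_i}$ factor that appears in the cohomological fundamental class of the normal bundle. Rearranging yields $i^*\betti{M}[-d_i](-d_i) \cong i^!\betti{M}[d_i]$, matching the Betti part of the claim. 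Proposition \ref{nonchardr} guarantees that $\DR$ commutes with both $i^!$ and $i^*$, so the compatibility isomorphism $\betti{M} \otimes_M \cplx \cong \DR(\mathscr{M})$ is transported through these pullbacks and glues the two identifications into the required isomorphism in $\MB(\diff_Y, M)$.

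For part (\ref{smoothmap}) the argument is structurally the same. Theorem \ref{noncharpullback} supplies the D-module identification comparing $p^*\mathscr{M}$ and $p^!\mathscr{M}$ up to a shift of $2d_p$. On the Betti side, the relative dualizing complex of a smooth morphism of complex manifolds is $\omega_{W/V}^{\mathrm{top}} \cong M_W[2 d_p](d_p)$, producing $p^!\betti{M} \cong p^*\betti{M}[2d_p](d_p)$ by tensoring with $p^*\betti{M}$. Applying Proposition \ref{nonchardr} once more lifts the componentwise statements to an isomorphism of pairs in $\MB(\diff_W, M)$.

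The main obstacle I expect is the careful bookkeeping of Tate twists on the Betti side. The D-module formulas of Theorem \ref{noncharpullback} produce only cohomological shifts, while the topological formulas produce shifts together with $(2\pi\sqrt{-1})^n$ normalization factors that are recorded by Tate twists. Verifying that these twists are precisely the ones that make the Riemann–Hilbert comparison $\betti{M}\otimes_M\cplx \cong \DR(\mathscr{M})$ survive the pullbacks reduces to explicitly matching the topological Thom/fundamental class (in case (\ref{closedimbed})) and the relative orientation class (in case (\ref{smoothmap})) against their De Rham representatives. Once these $(2\pi\sqrt{-1})^n$ factors are identified with the Tate twist $(n)$ of the paper's convention, the two componentwise isomorphisms automatically assemble into a morphism of Betti structures, proving the proposition.
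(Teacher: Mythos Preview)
Your plan is correct in outline and follows the same overall strategy as the paper: establish the $i^!/i^*$ and $p^!/p^*$ comparisons separately on the $\diff$-module and Betti sides, then verify that the two identifications are intertwined by $\DR$. The paper, however, does not leave the compatibility check you flag as ``the main obstacle'' as a black box: it carries it out explicitly in the codimension-one case for the constant pair $(\struct_V,\ratl_V[\dim V])$, writing $\psi_k$ as the residue map on the log-pole De~Rham complex and $\psi_M$ as the Thom isomorphism composed with $\frac{1}{2\pi\sqrt{-1}}\Res$ on a tubular neighborhood, and then checks by integration along the circle fiber that the square commutes. General $(\mathscr{M},\betti{M})$ is obtained by tensoring this trivial case with $\betti{M}$, and higher codimension by induction along a local-complete-intersection flag. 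This concrete computation is exactly the content you would need to fill in to turn your sketch into a proof.

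For part~(\ref{smoothmap}) there is a small tactical difference: you argue directly from the relative dualizing complex of a smooth map, whereas the paper factors $p$ through its graph $W\hookrightarrow W\times V\to V$, applies part~(\ref{closedimbed}) to the graph embedding and handles the projection by fixing a point in the fiber and invoking part~(\ref{closedimbed}) again together with Proposition~\ref{linebundlecase}. Both routes are valid; the paper's has the advantage of reusing the explicit codimension-one computation already done, while yours is cleaner provided one is willing to quote the identification $\omega_{W/V}^{\mathrm{top}}\cong M_W[2d_p](d_p)$ and its De~Rham compatibility as known.
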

\begin{proof}
Observe that
 $i^! \mathscr{M} \cong \mathscr{M} \otimes_{\struct_V} \struct_Y [d_i]$.  Since 
$i^* \mathscr{M} \cong \mathscr{M} \otimes_{\struct_V} \struct_Y [-d_i]$, we take $\psi_k$
to be the identity map on $\mathscr{M} \otimes_{\struct_V} \struct_Y [d_i]$ and
$\DR(\psi_k) = \psi_k \otimes_k \cplx$.

We first consider the case where $Y$ has codimension $1$.
Let $j : V' \to V$ be the open complement of $Y$ in $V$.
Suppose  that $\mathscr{M}$ is the trivial connection $\struct_V$, $M = \ratl$, 
and $\betti{M}$ is the constant sheaf $\ratl_V[\dim(V)]$.  Let $\Omega_V^* (Y)$ be the complex of
differential forms with log poles at $Y$.  Then, there is a quasi-isomorphism of triangles
\begin{equation*}
\xymatrix{
 \Omega^*_V[\dim(V)] \ar[r]\ar[d]^{\sim} & \Omega^*_V(Y)[\dim(V)]\ar[d]^{\sim} \ar[r]^(0.55){ \frac{1}{2 \pi \sqrt{-1}} \Res} & i_*\Omega^*_Y [\dim(Y)]\ar[d]^{\sim} \\
  \DR(\struct_V) \ar[r] & \DR(j_* j^* \struct_V) \ar[r] & \DR(i_* i^! \struct_V) [-1]
}
\end{equation*}
where $\Res$ is the residue map.  If $f$ is a local defining function for $Y$,  $\psi_k$ has the following local description:
\begin{equation*}
\begin{aligned}
 \psi_k : i^* \Omega^*_V [\dim(V)] & \to \Omega^*_Y [\dim(Y)] \\
\omega & \mapsto \frac{1}{2 \pi \sqrt{-1}} \Res (\omega \wedge \frac{df}{f}).
\end{aligned}
\end{equation*}

Now, let $T_\epsilon $ be a tubular neighborhood of $Y$ in $V^{an}$, let $S_\epsilon$ be the boundary,
and let $T^\times_\epsilon = T_\epsilon \backslash Y$.  There is a natural projection
$\pi_\epsilon : T_\epsilon \to Y$, and inclusion $j_\epsilon : T^\times_\epsilon \to T_\epsilon$.
It follows that there is an exact triangle
\begin{equation*}
R\pi_{\epsilon*} (\ratl_{T_\epsilon}) \to R \pi_{\epsilon*} ( R j_{\epsilon *}\ratl_{T_\epsilon^\times}) 
\to i^! \ratl_{V};
\end{equation*}
in particular, $H^1 R \pi_{\epsilon*} (R j_{\epsilon *} \ratl_{T_\epsilon^\times})[-1]
\cong i^! \ratl (V)$.  Furthermore, since $S_\epsilon$ is a homotopy retract of $T^\times_\epsilon$, 
there is a quasi-isomorphism 
\begin{equation*}
 \Res : H^1 R \pi_{\epsilon *} (\ratl_{S_\epsilon})[-1] \to i^! \ratl(V).
\end{equation*}
Finally, the cup product with the Thom class maps $i^* \ratl_V$ to $H^1 R \pi_{\epsilon *} (\ratl_{S_\epsilon}).$  
We define  $\psi_M : i^* \ratl_V [\dim(V)] \to i^! \ratl_V (-1)[\dim(V)]$ to be the composition
of the Thom isomorphism with $\frac{1}{2 \pi \sqrt{-1}} \Res$.

By integration along the fiber of $S_\epsilon \to Y$, the following diagram commutes:
\begin{equation}\label{gysintrivial}
 \xymatrix{
i^* \ratl_V[\dim(V)] \ar[r] \ar[d]^{\psi_M} & \DR(i^* \struct_V) \ar[d]^{\DR(\psi_k)}\\
i^! \ratl_V[\dim(V)](-1) \ar[r] & \DR(i^! \struct_V).}
\end{equation}

Now, suppose that $\mathscr{M}$ is a vector bundle with connection and $\betti{M}$ is a $\DR$-sheaf for $\mathscr{M}$ with
coefficients in $M$.  Let $F^*$ be a $c$-soft resolution of $\ratl_V$.  Then, $F^* \otimes_\ratl \betti{M}$ is a $c$-soft
resolution of $\betti{M}$ (\cite{KS} Lemma 2.5.12)  It follows that there is a natural isomorphism
$i^! \betti{M} \cong (i^! \ratl_V) \otimes_{\ratl} \betti{M}$.  Similarly, 
$\mathscr{M}^{an} \cong \struct_{V}^{an} \otimes_M \betti{M}$.  We define $\psi_M$ and $\DR(\psi_k)$
by tensoring the maps in (\ref{gysintrivial}) with $\betti{M}$.  

When $Y$ has codimension greater than one, $Y$ is locally a complete intersection.  Therefore, shrinking $V$ if necessary, 
it is possible to construct a stratification $W = Y^0 \supset Y^1 \supset Y^2 \supset \ldots \supset Y^r = Y$, with inclusions $i_k : Y^k \to Y^{k-1}$.
Part \ref{closedimbed} of the proposition follows by induction on the codimension of $Y$.

We consider the case of a smooth map $p$.  In this case, $p^! \betti{M} \cong p^*\betti{M} \otimes_{\ratl} p^! \ratl_W$ (\cite{KS}, proposition 3.3.2),
and $p^! \mathscr{M} \cong p^* \mathscr{M} [2 d_p]$ (\cite{Ber}, Lecture 3.13).
Therefore, it suffices to show that $p^! \ratl_V \cong p^* \ratl_V (d_p) [2 d_p]$ as  DR sheaves for $p^! \struct_V$.
We reduce to the case where $p$ is a projection by considering the graph morphism
\begin{equation*}
 \xymatrix{ W
\ar[r]^{\Gamma_p} \ar[dr]^{p} & W \times V \ar[d]^{\mathrm{pr}_2} \\
& V.
}
\end{equation*}
If $\mathrm{pr}_2^! \ratl_V \cong \mathrm{pr_2}^* \ratl_V (\dim(W)) [2 \dim(V)]$, then the result follows by applying part
\ref{closedimbed} to $\Gamma_p$.  

Assume $W \cong F \times V$, and $p$ is the second projection.  Fix a point $w \in F$, and let $i_v : V \to W$
be the map $i_w (v) = (w, v)$.  Then, $i_w^! p^! \ratl_U \cong i_w^* p^! \ratl_U (-d_p)[-2 d_p]$ by part \ref{closedimbed}.
However, by composition of functors, $i_w^! p^!$ is naturally isomorphic to the identity.  Therefore,
$(p^! \ratl_V)_{(w,v)} \cong (p^* \ratl_V (d_p)[2 d_p])_{(w,v)}.$  By proposition \ref{linebundlecase}, 
$p^! \ratl_V$ is isomorphic to the Betti structure 
$ p^* \ratl_V (d_p) [2 d_p]$.  This proves part \ref{smoothmap}.  
\end{proof}

Define $\MBel(\diff_X, M)$ to be the category
of elementary holonomic $\diff$-modules with Betti structure. 
Therefore, an element of $\MBel(\diff_X, M)$
is given by the triple $(\mathscr{M}_{d \phi}, \betti{M}_{d \phi}, \phi)$. 
Define
\begin{equation*}
\begin{aligned}
R \Gamma(X; (\mathscr{M}_{d \phi}, \betti{M}_{d \phi}, \phi)) &=
(R \Gamma (X; \mathscr{M}_{d \phi}), 
R \Gamma (X; \betti{M}_{d \phi}, \phi)) &
\text{and}\\
R \Gamma_c(X; (\mathscr{M}_{d \phi}, \betti{M}_{d \phi}, \phi)) 
&=
(R \Gamma_c (X; \mathscr{M}_{d \phi}), 
R \Gamma_c (X; \betti{M}_{d \phi}, \phi)).
\end{aligned}
\end{equation*}

\begin{proposition}
Suppose that $\phi : X \to \aff^1$ factors 
as $\phi_2 \circ \phi_1$, where $\phi_1 : X \to Y$
and $\phi_2 : Y \to \aff^1$.  Then,
\begin{equation*}
\begin{aligned}
R \Gamma_c (X; (\mathscr{M}_{d \phi}, \betti{M}_{d \phi}))
\cong 
R \Gamma_c (Y; ((\phi_1)_! \mathscr{M}_{d \phi}, (\phi_1)_! \betti{M}_{d \phi})).
\end{aligned}
\end{equation*}
\end{proposition}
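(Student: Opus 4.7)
The plan is to prove the $\diff$-module and Betti components separately, in each case reducing the two-step proper push-forward to a composition, and then to verify that the resulting identifications are compatible with the Riemann-Hilbert isomorphism so that they assemble into a morphism in $\dMB$.

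On the $\diff$-module side, observe that $\phi_2^\Delta \struct_{dt}$ is an integrable connection, so its characteristic variety is the zero section of $T^*Y$; the same is true of $\phi_1^\Delta \phi_2^\Delta \struct_{dt}$ on $X$. The non-characteristic hypotheses of proposition \ref{noncharprojfla} are therefore automatic, and applying the projection formula with $\mathscr{F}' = \mathscr{M}$, $\mathscr{F} = \phi_2^\Delta \struct_{dt}$, and morphism $\phi_1$ yields
\begin{equation*}
(\phi_1)_! \mathscr{M}_{d\phi} = (\phi_1)_! \bigl( \mathscr{M} \otimes_{\struct_X}^L \phi_1^\Delta \phi_2^\Delta \struct_{dt} \bigr) \cong \bigl( (\phi_1)_! \mathscr{M} \bigr) \otimes_{\struct_Y}^L \phi_2^\Delta \struct_{dt} = \bigl( (\phi_1)_! \mathscr{M} \bigr)_{d\phi_2}.
\end{equation*}
Since $\mathscr{M}$ is regular holonomic, so is $(\phi_1)_! \mathscr{M}$, so the right-hand side is elementary with Morse function $\phi_2$. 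The identity $R\Gamma_c(X; \mathscr{M}_{d\phi}) \cong R\Gamma_c(Y; (\phi_1)_! \mathscr{M}_{d\phi})$ then follows from composition of derived direct images with compact support applied to the factorization $X \to Y \to \Spec(k)$.

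For the Betti component, I would choose a projective closure $Z$ of $Y$ together with an extension $\Phi_2 : Z \to \proj^1$ of $\phi_2$, and then a projective closure $W$ of $X$ together with an extension $\Phi_1 : W \to Z$ of $\phi_1$; set $\Phi = \Phi_2 \circ \Phi_1$. The real oriented blow-ups fit into a proper morphism $\widetilde{\Phi}_1 : \widetilde{W} \to \widetilde{Z}$, and since the sector $I \subset \pi^{-1}(\infty)$ on which $dt \le_\theta 0$ is determined by the image in $\widetilde{\proj}^1$, this morphism restricts to $\widetilde{W}_I \to \widetilde{Z}_I$ with $\widetilde{\Phi}_1^{-1}(\widetilde{Z}_I) = \widetilde{W}_I$. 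The inclusions $\alpha_1, \alpha_2, \beta$ of definition \ref{modgrwthcohomology1} for $X \to W$ are compatible via $\widetilde{\Phi}_1$ with the corresponding inclusions for $Y \to Z$, so proper base change for the constructible derived category yields
\begin{equation*}
R\Gamma_c(X^{an}; \betti{M}_{d\phi}, \phi) \cong R\Gamma_c(Y^{an}; (\phi_1)_! \betti{M}_{d\phi}, \phi_2).
\end{equation*}

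Compatibility of the two identifications with the structural isomorphism $\betti{M}_{d\phi} \otimes_M \cplx \cong \DR(\mathscr{M}_{d\phi})$ is forced by naturality: both arise from the same base-change and adjunction morphisms, and on the regular singular part $\mathscr{M}$ the Riemann-Hilbert correspondence (theorem \ref{rhcorrespondence}) ensures that $\DR$ intertwines $(\phi_1)_!$ with its topological counterpart, exactly as exploited in lemma \ref{sabbahreduction}. The main obstacle is the Betti step: one must verify carefully that the factorization $\phi = \phi_2 \circ \phi_1$ really does lift to a compatible factorization of the real-blow-up compactifications, so that $\widetilde{\Phi}_1^{-1}(\widetilde{Z}_I) = \widetilde{W}_I$ on the nose and the decomposition $\alpha_X = \alpha_2 \circ \alpha_1$ behaves well under $\widetilde{\Phi}_1$. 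Once this is checked, proper base change assembles both identifications into the desired isomorphism in $\dMB$.
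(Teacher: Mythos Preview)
Your argument on the $\diff$-module side is the same as the paper's: both invoke the non-characteristic projection formula (proposition \ref{noncharprojfla}) to identify $(\phi_1)_!\mathscr{M}_{d\phi}$ with $((\phi_1)_!\mathscr{M})_{d\phi_2}$, and then composition of compactly supported pushforwards finishes.

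On the Betti side the routes diverge. The paper does not build compatible real-blow-up compactifications; instead it applies lemma \ref{sabbahreduction} (in its compactly supported form, established in the proof of theorem \ref{modgrwthelem}) twice, once for $\phi:X\to\aff^1$ and once for $\phi_2:Y\to\aff^1$, to rewrite both sides as $R\Gamma_c(\aff^1;\phi_!\betti{M}_{dt},t)$ and $R\Gamma_c(\aff^1;(\phi_2)_!(\phi_1)_!\betti{M}_{dt},t)$ respectively; the identification is then just $(\phi_2)_!(\phi_1)_! \cong \phi_!$. Your approach instead lifts $\Phi_1$ to a proper map $\widetilde{\Phi}_1:\widetilde{W}\to\widetilde{Z}$ and pushes the defining complex forward directly. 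This is correct: the fibre-product description $\widetilde{W}=W^{an}\times_\Phi\widetilde{\proj}^1$ makes the cartesian squares over $\widetilde{Z}_I$ and over $Z\times_{\Phi_2}\aff^1$ immediate, and proper base change together with composition of $!$-pushforwards yields the identification you want. The paper's route is shorter because the technical content has already been packaged into lemma \ref{sabbahreduction}; your route is more self-contained and in effect re-derives that lemma in the relative setting $X\to Y$ rather than $X\to\aff^1$.
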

\begin{proof}
The isomorphism 
$R \Gamma_c (X; \mathscr{M}_{d \phi})
\cong R \Gamma_c(Y; (\phi_1)_! \mathscr{M}_{d \phi})$
follows from the projection formula
in proposition \ref{noncharprojfla}.  

By lemma \ref{sabbahreduction},
\begin{multline}
R \Gamma_c (Y; (\phi_1)_! \betti{M}_{d \phi_2}, \phi_2) \cong
R \Gamma_c (\aff^1; (\phi_2)_! (\phi_1)_!\betti{M}_{d t}, t)
 \\ \cong
R \Gamma_c (\aff^1; \phi_! \betti{M}_{dt}, t).
\end{multline}
\end{proof}

\subsection{K\"unneth Formula}\label{sec:kunneth}
In this section, we will show that there is a 
K\"unneth formula for elementary $\diff$-modules
with Betti structure.  Let
$X$ and $Y$ be smooth algebraic varieties over $k$.  Suppose that 
$(\mathscr{M}, \betti{M}, \phi)$ and $(\mathscr{N}, \betti{N}, \psi)$
are in $\MBel(\diff_X, M)$ and $\MBel(\diff_Y, M)$, respectively.
Define $(\mathscr{M}, \betti{M}, \phi) \boxtimes (\mathscr{M}, \betti{M}, \psi)$
to be the pair
\begin{equation*}
(\mathscr{M} \boxtimes \mathscr{N}, \betti{M} \boxtimes \betti{N}, \phi+\psi) \in
\MBel(\diff_{X \times Y}, M).
\end{equation*}
\begin{theorem}\label{kunnethbetti}
There is a natural isomorphism
\begin{equation*}
R \Gamma_c (U \times V;\betti{M} \boxtimes \betti{N}, (\phi + \psi)) \cong
R \Gamma_{ c} (U;\betti{M}, \phi) \otimes_M R \Gamma_c (V; \betti{N}, g).
\end{equation*}  
\end{theorem}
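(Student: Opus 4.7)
The plan is to reduce the general case to a concrete model on $\aff^1 \times \aff^1$ via Sabbah's reduction (Lemma \ref{sabbahreduction}), and then verify that the rapid decay complex on the product decomposes as an external tensor product so that the classical Künneth formula applies.

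First I would apply Lemma \ref{sabbahreduction} three times, once each to $(U,\betti{M},\phi)$, $(V,\betti{N},\psi)$, and $(U \times V, \betti{M}\boxtimes\betti{N}, \phi+\psi)$, to reduce to proving that for constructible sheaves $F = \phi_!\betti{M}$ and $G = \psi_!\betti{N}$ on $\aff^1$,
\begin{equation*}
R\Gamma_c(\aff^2; F \boxtimes G, s+t) \cong R\Gamma_c(\aff^1; F, s) \otimes_M R\Gamma_c(\aff^1; G, t).
\end{equation*}
The third application uses the identity $(\phi\times\psi)_!(\betti{M}\boxtimes\betti{N}) \cong F \boxtimes G$ together with the factorization of the sum map as $\phi \times \psi$ followed by the addition map $\sigma : \aff^2 \to \aff^1$.

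Next I would take the projective closure $\proj^1 \times \proj^1$ of $\aff^2$ and form the real oriented blow-up $\widetilde{\proj}^1_s \times \widetilde{\proj}^1_t$ along the normal crossing divisor $\{s = \infty\} \cup \{t = \infty\}$. The key geometric claim, verified by a local computation at each boundary stratum, is that on this product blow-up the locus where $e^{s+t}$ has moderate growth decomposes as $I_s \times I_t$: near $(\infty, t_0)$ with $t_0$ finite, $|e^{s+t}|$ differs from $|e^s|$ by a bounded factor, while at the corner $(\infty, \infty)$, where $s$ and $t$ tend to infinity along independent boundary directions, $|e^{s+t}| = |e^s|\cdot|e^t|$ is bounded if and only if each factor is. Hence the inclusions in Definition \ref{modgrwthcohomology1} decompose as products: $\alpha_1^{\aff^2} = \alpha_1^s \times \alpha_1^t$, and similarly for $\alpha_2$ and $\beta$. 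Using the standard compatibility of $!$ and $*$ with the external tensor product of constructible sheaves, one obtains
\begin{equation*}
\beta_! (\alpha_2)_* (\alpha_1)_! (F \boxtimes G) \cong \bigl[\beta^s_! (\alpha_2^s)_* (\alpha_1^s)_! F\bigr] \boxtimes \bigl[\beta^t_! (\alpha_2^t)_* (\alpha_1^t)_! G\bigr],
\end{equation*}
and the classical Künneth formula for global sections on the compact space $\widetilde{\proj}^1_s \times \widetilde{\proj}^1_t$ (with coefficients in the field $M$) produces the desired isomorphism.

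The main obstacle is the geometric step: $s+t$ has indeterminacy at $(\infty,\infty) \in \proj^1 \times \proj^1$, so strictly speaking the product blow-up is not of the form required by Definition \ref{modgrwthcohomology1}, which asks for a projective closure with a morphism to $\proj^1$ extending the Morse function. One must therefore either pass to a resolution of the indeterminacy (e.g.\ the graph closure in $\proj^1 \times \proj^1 \times \proj^1$) and check by an explicit comparison map that both compactifications give the same rapid decay complex, or slightly enlarge the definition to allow rational extensions. The local analysis above shows that in both cases the relevant sector is precisely $I_s \times I_t$, so the product decomposition persists and the rest of the argument is formal.
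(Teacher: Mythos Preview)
Your reduction to $\aff^1 \times \aff^1$ via Lemma \ref{sabbahreduction} matches the paper's opening move, but the strategies then diverge. You want to compute the rapid decay complex directly on the product blow-up $\widetilde{\proj}^1 \times \widetilde{\proj}^1$ and invoke the classical K\"unneth formula; the paper instead passes to an open half-plane model $H_\rho = \{\Re(t) > -\rho\}$ inside $\aff^1$, so that the two sides are represented on $H_\rho \times H_\rho$ and on $Z_\rho = \{\Re(s+t) > -2\rho\}$ respectively, and compares them via the inclusion $H_\rho \times H_\rho \subset Z_\rho$.

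The gap in your sketch is exactly the obstacle you name but do not resolve. Definition \ref{modgrwthcohomology1} requires a projective $X$ carrying a genuine morphism $\Phi : X \to \proj^1$ extending $s+t$, and the rapid decay complex is built on $\widetilde{X} = X^{an} \times_\Phi \widetilde{\proj}^1$. On any such $X$ (the paper uses the blow-up of $\proj^2$ at $(1,-1,0)$) the space $\widetilde{X}$ is \emph{not} the product blow-up, so your box-product identity for $\beta_!(\alpha_2)_*(\alpha_1)_!(F \boxtimes G)$ does not apply there. The sentence ``the local analysis above shows that in both cases the relevant sector is precisely $I_s \times I_t$'' is asserting the entire comparison without proof; your corner computation only analyzes the product blow-up, not $\widetilde{X}$. (Incidentally, the justification at the corner is misstated: $|e^s|\,|e^t|$ bounded does not force each factor to be bounded. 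The correct argument fixes one radius and sends the other to infinity, and then your conclusion that the sector is $I_s \times I_t$ on the product blow-up does follow.)

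The paper does not attempt a purely topological comparison of the two models. After a cylinder lemma (Lemma \ref{relcylinder}) rewriting the $Z_\rho$ side, it produces only a natural \emph{map} from the tensor product into $R\Gamma_c(\aff^1\times\aff^1; F\boxtimes G, s+t)$, and then concludes it is an isomorphism by tensoring with $\cplx$ and invoking Theorem \ref{modgrwthelem} together with the $\diff$-module K\"unneth formula (Proposition \ref{kunneth}). In other words, the De Rham side supplies the missing inverse. Your route, being purely on the Betti side, would need an independent argument that the product model agrees with Definition \ref{modgrwthcohomology1}, and that argument is absent.
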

This theorem is a variation on the Thom-Sebastiani theorem (See 
\cite{Sch} theorem 1.2.2).
We will need the following elementary topological lemma:
\begin{lemma}\label{relcylinder}
Suppose that $C \cong D \times I$ is a cylinder, with $I$ a closed interval in $\reals$, 
and $\betti{F}$ is a complex of sheaves that is constant on the fibers 
of the projection $\pi : C \to D$.  Let $i : D \times \{p\} \to C$, where $p$ is in the boundary of $I$,
and $j : D \times I \backslash {p} \to C$.  Then,
\begin{equation*}
R \Gamma (C; j_! j^* \betti{F}) \cong \{0\}.
\end{equation*}
\end{lemma}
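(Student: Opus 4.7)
The plan is to compute $R\pi_*(j_! j^* \betti{F})$ and show it vanishes; since $R\Gamma(C; -) \cong R\Gamma(D; R\pi_*(-))$, the conclusion follows. Because $\betti{F}$ is constant on the fibers of $\pi$, I may fix a complex $\mathscr{G}$ on $D$ with $\betti{F} \cong \pi^{-1} \mathscr{G}$.

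First, I would apply $R\pi_*$ to the standard open-closed distinguished triangle
\begin{equation*}
j_! j^* \betti{F} \to \betti{F} \to i_* i^* \betti{F} \xrightarrow{[1]}.
\end{equation*}
Since $\pi$ is proper with contractible fiber $I$, stalkwise computation gives $R\pi_* \betti{F} \cong \mathscr{G}$. Identifying $D \times \{p\}$ with $D$, proper base change yields $R\pi_*(i_* i^* \betti{F}) \cong i^* \pi^{-1} \mathscr{G} \cong \mathscr{G}$ as well. The key step is to verify that the induced map $R\pi_*\betti{F} \to R\pi_*(i_* i^* \betti{F})$ agrees with the identity of $\mathscr{G}$ under these identifications, which reduces fiberwise to the statement that for the constant sheaf on a closed interval, restriction to a boundary point induces an isomorphism on cohomology; this is immediate from contractibility of $I$. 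Once this is established, the long exact sequence forces $R\pi_*(j_! j^* \betti{F}) = 0$, and applying $R\Gamma(D; -)$ concludes the argument.

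The main subtlety --- and the reason the hypothesis that $p$ lies in $\partial I$ is essential --- is precisely this fiberwise check. If $p$ were an interior point, then $I \setminus \{p\}$ would have two connected components and the restriction map on $H^0$ would fail to be an isomorphism, so $R\pi_*(j_! j^* \betti{F})$ would not vanish. All other ingredients --- proper base change, the open-closed triangle, and the Leray spectral sequence for $\pi$ --- are routine and require no extra care.
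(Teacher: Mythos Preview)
Your proof is correct and follows essentially the same approach as the paper's: both use the triangle $j_! j^* \betti{F} \to \betti{F} \to i_* i^* \betti{F}$ and argue that the second arrow becomes an isomorphism after applying $R\pi_*$ (equivalently, $R\Gamma(C;-)$), using that $\betti{F}\cong\pi^{-1}\mathscr{G}$ and that $I$ is contractible. Your write-up is in fact more careful than the paper's, which does not explicitly check that the adjunction map induces the identity on $\mathscr{G}$.

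One inaccuracy in your closing commentary, though it does not affect the proof: the boundary hypothesis on $p$ is \emph{not} essential. The restriction map $H^0(I;k)\to H^0(\{p\};k)$ is an isomorphism for any $p\in I$, interior or not, since $I$ is contractible; the number of connected components of $I\setminus\{p\}$ would matter for $H^0(I\setminus\{p\})$, but that is not what enters the triangle. The lemma holds for arbitrary $p\in I$, and the paper states the boundary case only because that is what occurs in the application.
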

\begin{proof}
By assumption, there is a sheaf $\betti{F}'$ on $D$ such that
$\betti{F} \cong \pi^* \betti{F}'$, and $\pi_* \betti{F} \cong \betti{F}'$.
Therefore, 
\begin{equation*}
R \Gamma (C; \betti{F}) \cong R \Gamma (C; i_* i^* \betti{F}).
\end{equation*}
Since  $R \Gamma (C; j_! j^* \betti{F})$ is the cone of this morphism, 
the statement of the lemma follows.
\end{proof}

\begin{proof} Define $\Re(t)$ to be the real part of $t$.
Let $H_\rho \subset \aff^1$ is the half-plane defined by $\Re(t) > - \rho$, with
$\rho >>0$.  Then, if $\betti{F}$ is a constructible sheaf,
\begin{equation*}
\begin{aligned}
R \Gamma (\aff^1; \betti{F}, t)&  \cong R \Gamma(\widetilde{\proj}^1; \beta_! \alpha_* (\betti{F}) \\
& \cong H^* (\widetilde{\proj}^1, \widetilde{\proj}^1 - \widetilde{\proj}^1_I; \beta_* \alpha_* (\betti{F})\\
& \cong H^* (\aff^1, \aff^1-H_\rho; \betti{F})\\
& \cong R \Gamma (\aff^1; \dpushc{(\beta_\rho)} \dpull{(\beta_\rho)} \betti{F}).
\end{aligned}
\end{equation*}

It will be helpful to refer to the following diagram throughout:
\begin{equation*}
\xymatrix{
H_\rho \times H_\rho \ar[dr]_{\beta_\rho \times \beta_\rho} \ar@{^{(}->}[r]
& Z_\rho \ar[r]^{\gamma_\rho} 
\ar[d]^{\bar\beta_\rho}
& V_\rho \ar@{->>}[r]^{(A')|_{V_\rho}} \ar[d]^{\beta'_\rho} & H_{2 \rho} 
\ar[d]^{\beta_{2 \rho}} \\
& \aff^1 \times \aff^1 \ar[r]^{\gamma} & V \ar@{->>}[r]^{A'}  & \aff^1
}
\end{equation*}

Let $r$ and $s$  be the standard parameters on $\aff^1 \times \aff^1$.
We may reduce to the case where $U = V = \aff^1$, observing that
\begin{equation*}
R \Gamma_{c} (U \times V; (\betti{M}) \boxtimes (\betti{N}), \phi + \psi)  \cong
R \Gamma (\aff^1 \times \aff^1; \phi_! (\betti{M}) \boxtimes \psi_! (\betti{N}), r+s) 
\end{equation*}
and
\begin{multline}
R \Gamma_{c} (U; \betti{M}, \phi) \otimes R \Gamma_{c} (V; \betti{N}, \psi)  \\
\cong
R \Gamma (\aff^1; \phi_!(\betti{M}), r) \otimes R \Gamma (\aff^1;\psi_! (\betti{N}), s).
\end{multline}
Thus, without loss of generality, assume $\phi = r$, $\psi = s$, and both
$(\mathscr{M}, \betti{M}, r)$ and $ (\mathscr{N}, \betti{N}, s)$ are in $ \MBel (\diff_{\aff^1}, M)$.

There is a K\"unneth morphism
\begin{multline}\label{kunnmap1}
R \Gamma (\aff^1; \betti{M}, r) \otimes_M
R \Gamma (\aff^1; \betti{N}, s)  \to \\
R \Gamma (\aff^1 \times \aff^1; (\beta_\rho \times \beta_\rho)_! 
(\beta_\rho \times \beta_\rho)^* \left(\betti{M} \boxtimes \betti{N} \right)).
\end{multline}

We give $\proj^2$ homogeneous coordinates $x, y, z$, so that 
$\aff^1 \times \aff^1 = \proj^2 \backslash V(z)$, and let
$X \to \proj^2$ be the blow-up at $(1, -1, 0)$.  We may describe $X$ as the subvariety
of $\proj^2 \times \proj^1$ defined by
$V( (x+y) u - z w )$, where $w, u$ are the homogeneous coordinates on $\proj^1$.
In particular,  $t = \frac{w}{u} = \frac{x+y}{z}= r+s$, so the projection
$A : X \to \proj^1$ restricts to the addition map $a : \aff^1 \times \aff^1 \to \aff^1$ 
away from the hyperplane at infinity.

By lemma \ref{sabbahreduction}, 
\begin{equation*}
R \Gamma_c (\aff^1 \times \aff^1; \betti{M} \boxtimes \betti{N}, r+s) \cong
R \Gamma (\aff^1; a_! (\betti{M} \boxtimes \betti{N}), t).
\end{equation*}
Define $V = A^{-1} (\aff^1)$ and $V_{\rho}  = A^{-1} (H_{2 \rho}) \subset X$,
and let $\beta_\rho' : V_\rho \hookrightarrow V $.  Furthermore,
let $\gamma : \aff^1 \times \aff^1 \to V$ be the open inclusion, and $A' : V \to \aff^1$.
Moreover, $H_{2 \rho}$ is simply defined by $\Re(x) > - 2 \rho$.
Then, since $A'$ is proper,
\begin{equation*}
(\beta_{2 \rho})_! \beta_{2 \rho}^* A'_* \gamma_! (\betti{M} \boxtimes \betti{N}) \cong 
A'_* (\beta'_\rho)_! (\beta'_\rho)^* \gamma_!(\betti{M} \boxtimes \betti{N}).
\end{equation*}
It follows that
$R \Gamma_c (\aff^1 \times \aff^1; \betti{M} \boxtimes \betti{N}, x+y) \cong
R \Gamma (V; (\beta'_\rho)_! (\beta'_\rho)^* \gamma_!
\left(\betti{M} \boxtimes \betti{N}\right)).$

Now, let $Z_\rho = V_\rho \cap (\aff^1 \times \aff^1)$.  Therefore,
$Z_\rho$ is the set of points where $\Re(x+y) > -2 \rho$.  Moreover,
we label the inclusions $\gamma_\rho : Z_\rho \to V_\rho$ and 
$\bar\beta_\rho : Z_\rho \to (\aff^1 \times \aff^1)$.
There is a morphism
\begin{equation}\label{adjunctionmorphism}
(\beta'_\rho)_! (\beta'_\rho)^* \gamma_!
\left(\betti{M} \boxtimes \betti{N}\right) \to
\gamma_* (\bar\beta_\rho)_! (\bar\beta_\rho)^* 
\left(\betti{M} \boxtimes \betti{N}\right)
\end{equation}
obtained from the adjunction map $\mathrm{id} \to \gamma_* \gamma^*.$  We will show that
this is an isomorphism.

Let $b$ be a boundary point in the closures of both $V_\rho$ and $\aff^1 \times \aff^1$.
We consider a neighborhood of $b$ that is homeomorphic to a polydisc $D \times D$,
with coordinates $z/x$ and $t/u$ (as before). 
Let $D^\times = D - \{0\}$ and $H = \{d \in D : \Re(d) > 0 \}$.
Then, $V_\rho \cap (D \times D) = D \times H$ and
$(\aff^1 \times \aff^1) \cap (D \times D) = D^\times \times D$.  
\begin{equation*}
\xymatrix{D^\times \times H \ar[r]^{\gamma_1} \ar[d]^{\beta_1} & D \times H \ar[d]^{\beta_2}\\
D^\times \times D \ar[r]^{\gamma_2} & D \times D.}
\end{equation*}
If $\betti{F}$ is the restriction of $\betti{M}\boxtimes \betti{N}$ to $D^\times \times H$, then
 lemma \ref{relcylinder} implies that
\begin{equation*}
R \Gamma((\gamma_2)_* (\beta_1)_! (\beta_1)^* \betti{F}) \cong
R \Gamma( (\beta_2)_! (\beta_2)^* (\gamma_1)_* \betti{F}) \cong \{0\}.
\end{equation*}
It follows that the stalks are both isomorphic to $\{0\}$ at $b$.
Therefore, (\ref{adjunctionmorphism}) is an isomorphism, and
\begin{equation*}
R \Gamma_c (\aff^1 \times \aff^1; \betti{M} \boxtimes \betti{N}, x+y) \cong
R \Gamma (\aff^1 \times \aff^1; (\bar\beta_\rho)_! (\bar\beta_\rho)^*
\left(\betti{M} \boxtimes \betti{N}\right)).
\end{equation*}

Observe that $H_\rho \times H_\rho \subset Z_\rho$, since 
$\Re(x) + \Re(y) >- 2 \rho$ if $\Re(x), \Re(y) > -\rho$.  The inclusion induces a natural morphism
\begin{equation*}
(\beta_\rho \times \beta_\rho)_! (\beta_\rho \times \beta_\rho)^*
(\betti{M} \boxtimes \betti{N}) \to
(\bar\beta_\rho)_! (\bar\beta_\rho)^* \gamma_*(\betti{M} \boxtimes \betti{N})
\end{equation*}
using the adjunction map $(\beta_\rho \times \beta_\rho)_! (\beta_\rho \times \beta_\rho)^!
\to \id$.  We compose this map with  \ref{kunnmap1} to obtain  a morphism
\begin{equation*}
R \Gamma(\aff^1; \betti{M},t) \otimes_M R \Gamma_\Phi (\aff^1; \betti{N}, \id) \to
R \Gamma (\aff^1; \betti{M} \boxtimes \betti{N}, x+y).
\end{equation*}
  By theorem
 \ref{modgrwthelem}, and proposition \ref{kunneth}, this morphism must be an isomorphism.

\end{proof}
\subsection{Rapid Decay Homology}\label{sec:Rapid Decay Homology}
We are interested in calculating the matrix coefficients of the isomorphism between algebraic de Rham cohomology
and moderate growth cohomology, as in corollary \ref{modgrwthcohomology}.  In this section,
we will recall the construction of rapid decay homology found in \cite{BE1}, adapting
it to the case of elementary $\diff$-modules.  This is a homology
theory for irregular singular connections on a curve that is canonically dual to moderate growth
cohomology; moreover, there is a perfect pairing between the De Rham complex
of an irregular singular connection and the rapid decay chain complex of its dual connection.

Let $X$ be a curve, $D \subset X$ a divisor, and $U  = X \backslash D$.
Suppose that $(\mathscr{M}, \betti{M}, \phi) \in \MBel (\diff_U, M)$, and
that $\mathscr{M}$ is isomorphic to a vector bundle $E$ with connection $\nabla$.
In \cite{BE1}, Bloch and Esnault construct a `rapid decay' chain complex
$C_* (X, D; E, \nabla)$ with coefficients in $\DR(E)$.  
Roughly, a rapid decay chain consists of a pair $(\mu, \sigma)$, where
$\mu$ is a section of $\DR(E)$ and $\sigma \in X$ is a simplex
which only approaches $D$ on sectors where $e^{-\phi}$ has rapid decay.
We define $C_* (X, D; \betti{M}, \phi)$ to be obvious reduction of structure
of $C_* (X, D; E, \nabla)$ to coefficients in $\betti{M}$.

Let $H_* (X, D; \betti{M}, \phi)$ be the cohomology of $C_* (X, D; \betti{M}, \phi)$.
Define $\betti{M}^\vee$ to be the dual local system to $\betti{M}$.
There is a pairing
\begin{equation}\label{rapiddecaypairing}
\begin{aligned}
H_* (X, D; \betti{M}^\vee, -\phi) \times H_{\DR}^* (V; \mathscr{M}) & \to \cplx, \\
(\nu, \mu^\vee \otimes \sigma) \mapsto \int_{\sigma} <\nu, \mu^\vee>.
\end{aligned}
\end{equation}
Since $\nu$ is a $\mathscr{M}$-valued one form, and $\mu^\vee$ is a section of
the analytic dual bundle $(\mathscr{M}_{d \phi}^{an})^\vee$, 
$<\nu, \mu^\vee>$ is locally an analytic one form.  The integral is well defined because
$\mu^\vee$ has rapid decay near $D$.

\begin{theorem}[\cite{BE1}, Theorem 0.1]
The pairing above (\ref{rapiddecaypairing}) is compatible with homological and
cohomological equivalence, and defines a perfect pairing of finite dimensional vector spaces
\begin{equation*}
(, ) : H_* (X, D; \betti{M}_{d \phi}^\vee \otimes_M \cplx, -\phi) \times 
H_{\DR}^* (V; \mathscr{M}_{d \phi}\otimes_k \cplx)  \to \cplx.
\end{equation*}
\end{theorem}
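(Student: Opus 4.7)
The plan is to establish the theorem in three stages: well-definedness of the pairing on cohomology classes, computation of both sides on the real oriented blow-up, and verification of non-degeneracy via a topological duality.

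First, I would verify that (\ref{rapiddecaypairing}) descends to homological and cohomological equivalence. Suppose $\nu = \nabla \eta$ for some local section $\eta$ of $\mathscr{M}_{d\phi}$ with moderate growth along $D$. Since $\mu^\vee$ is horizontal for the dual connection, $\langle \nu, \mu^\vee \rangle = d \langle \eta, \mu^\vee \rangle$, and Stokes's theorem gives $\int_\sigma d \langle \eta, \mu^\vee \rangle = \int_{\partial \sigma} \langle \eta, \mu^\vee \rangle$. A section of $\mathscr{M}_{d\phi}^\vee$ locally takes the form $e^{-\phi} f^\vee$, while $\eta$ locally looks like $e^\phi f$; the product $\langle\eta,\mu^\vee\rangle$ is therefore a single-valued meromorphic form whose growth is dominated by the rapid decay of $\mu^\vee$ in the prescribed sectors, so boundary terms approaching $D$ vanish. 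The cycle condition on the rapid decay chain handles internal cancellation, and an analogous argument handles coboundaries on the chain side.

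Second, I would reduce finite-dimensionality and perfectness to a computation on the real oriented blow-up $\pi:\widetilde X\to X$ along $D$. By Corollary \ref{modgrwthcohomology} combined with Theorem \ref{modgrwthelem}, the algebraic de Rham cohomology $H^*_{\DR}(V;\mathscr{M}_{d\phi}\otimes_k\cplx)$ is canonically $H^*(\widetilde X;\dpushc{(\beta_X)}(\alpha_X)_*\betti{M}_{d\phi})$; finite dimensionality follows from the constructibility of this sheaf on the compact space $\widetilde X$. In parallel, the rapid decay chain complex $C_*(X,D;\betti{M}_{d\phi}^\vee,-\phi)$ may be reinterpreted as singular chains on $\widetilde X$ with coefficients in $\betti{M}^\vee_{-d\phi}$, taken relative to the sub-locus of $\pi^{-1}(D)$ complementary to the rapid decay sectors. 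This gives $H_*(X,D;\betti{M}^\vee_{-d\phi})\cong H_*(\widetilde X,\widetilde X\smallsetminus\widetilde X_{\mathrm{rd}};\betti{M}^\vee_{-d\phi})$, which is finite dimensional by the same constructibility argument.

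Third, to obtain perfectness, I would observe that the sectors governing moderate growth of $\mathscr{M}_{d\phi}$ and the sectors governing rapid decay of $\mathscr{M}_{d\phi}^\vee$ are \emph{complementary} subsets of $\pi^{-1}(D)$: the former are determined by $\Re(\phi)$ bounded above, the latter by $\Re(\phi)\to+\infty$, and together they cover the boundary circles away from the Stokes rays. Consequently, on the two-dimensional real manifold with corners $\widetilde X$, the sheaves $\dpushc{(\beta_X)}(\alpha_X)_*\betti{M}_{d\phi}$ and its Verdier dual with coefficients along the rapid decay locus satisfy a Poincar\'e–Lefschetz duality, yielding a perfect pairing between $H^*(\widetilde X;\betti{M}^0_{d\phi})$ and $H_*(\widetilde X,\widetilde X\smallsetminus\widetilde X_{\mathrm{rd}};\betti{M}^\vee_{-d\phi})$. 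The final step is to identify this topological pairing with the integration pairing (\ref{rapiddecaypairing}); this is a local calculation on small annular neighborhoods of points of $D$, where horizontal sections can be written explicitly and the integral $\int_\sigma\langle\nu,\mu^\vee\rangle$ reduces to the standard evaluation of a cochain on a relative cycle.

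The main obstacle is the identification in the third step: one must show that Verdier duality on the real blow-up exchanges the Stokes subsheaf $\betti{M}^0_{d\phi}$ with the natural rapid decay extension of $\betti{M}^\vee_{-d\phi}$, and that the duality pairing recovers $(\nu,\mu^\vee\otimes\sigma)\mapsto\int_\sigma\langle\nu,\mu^\vee\rangle$. The delicate point is the asymmetric treatment of moderate growth (polynomial) and rapid decay (faster than any polynomial): their product is absolutely integrable precisely because one controls the other, and this convergence is exactly what makes the Stokes decomposition into complementary sectors yield a perfect pairing rather than a merely non-degenerate one.
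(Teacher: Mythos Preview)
The paper does not prove this theorem; it is quoted verbatim from \cite{BE1} (Bloch--Esnault, Theorem 0.1) and used as a black box to derive Corollary \ref{periodintegral} and the period computations in Examples \ref{gammafun} and \ref{gaussianintegral}. There is therefore no proof in the paper to compare your proposal against.

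That said, your sketch is broadly in line with the argument in \cite{BE1} itself: well-definedness via Stokes's theorem and the rapid-decay/moderate-growth interaction, passage to the real oriented blow-up, and a Poincar\'e--Lefschetz duality on the resulting manifold with boundary. One point to be careful about is your description of the complementary sectors. Horizontal sections of $\mathscr{M}_{d\phi}^\vee\cong\mathscr{M}_{-d\phi}$ behave like $e^{\phi}$, so the rapid decay sectors for the dual correspond to $\Re(\phi)\to -\infty$, not $+\infty$; the complementarity you want is between the moderate-growth locus for $\betti{M}_{d\phi}$ (where $e^{-\phi}$ has at most polynomial growth, i.e.\ $\Re(\phi)$ bounded below) and the rapid-decay locus for $\betti{M}^\vee_{-d\phi}$. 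Getting the signs straight here is exactly what makes the product $\langle\nu,\mu^\vee\rangle$ integrable and the duality pair the correct relative and absolute cohomologies on $\widetilde X$. Apart from this sign slip, the outline is sound, though a complete proof still requires the careful sheaf-theoretic identification of rapid decay chains with the appropriate relative complex on $\widetilde X$, which is where most of the work in \cite{BE1} lies.
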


\begin{corollary}\label{periodintegral}
The period isomorphism in \ref{modgrwthcpt} factors through the pairing (\ref{rapiddecaypairing}):
\begin{multline}
H^* (R \Gamma_c (V ; \betti{M}_{d \phi}, \phi))\otimes_M \cplx 
\cong \left[H^*(X, D; \betti{M}_{d \phi}^\vee \otimes_M \cplx, -\phi)\right]^\vee \\
\cong H_{\DR}^* (V; \mathscr{M}_{d \phi}) \otimes_k \cplx.
\end{multline}
\end{corollary}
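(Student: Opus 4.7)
Proof plan. The second isomorphism in the corollary is nothing but the Bloch--Esnault perfect pairing just recorded, rewritten as an identification of $H^*_{\DR}(V;\mathscr{M}_{d\phi})\otimes_k\cplx$ with the linear dual of the rapid decay homology $H_*(X, D; \betti{M}^\vee_{d\phi}\otimes_M\cplx, -\phi)$. So the genuine content of the corollary lies in constructing the first isomorphism and verifying that the composition of the two recovers the period isomorphism implicit in Theorem \ref{modgrwthelem}, equation (\ref{compactelem}).

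I would build the first isomorphism by combining the period isomorphism with Verdier duality and a second use of the Bloch--Esnault pairing. Equation (\ref{compactelem}) gives
\begin{equation*}
H^*(R\Gamma_c(V; \betti{M}_{d\phi}, \phi))\otimes_M\cplx \cong H^*(R\Gamma_c(V; \mathscr{M}_{d\phi})).
\end{equation*}
Verdier duality for holonomic $\diff$-modules, together with Proposition \ref{dualityelem} identifying $\dual(\mathscr{M}_{d\phi})\cong(\dual\mathscr{M})_{-d\phi}$, rewrites the right-hand side (up to a degree shift) as the linear dual of $H^*(V; (\dual\mathscr{M})_{-d\phi})$. A second application of the Bloch--Esnault pairing to the pair $((\dual\mathscr{M})_{-d\phi}, \betti{M}^\vee_{d\phi})$, whose Morse function is $-\phi$, then identifies this with the dual of $H_*(X,D; \betti{M}^\vee_{d\phi}\otimes_M\cplx, -\phi)$. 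Alternatively, the same identification is available topologically via Poincar\'e--Verdier duality on the real oriented blowup $\widetilde{X}$: the sheaf $(\beta_X)_!(\alpha_2)_*(\alpha_1)_!\betti{M}_{d\phi}$ from Definition \ref{modgrwthcohomology1} and the sheaf on $\widetilde{X}$ whose Borel--Moore--type homology computes the rapid decay chains with coefficients in $\betti{M}^\vee$ and Morse function $-\phi$ are supported on complementary open regions of $\pi^{-1}(\phi^{-1}(\infty))$ separated by the Stokes rays, and hence are exchanged by Verdier duality up to a shift.

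The main obstacle is verifying that the composition of the first and second isomorphisms agrees with the period isomorphism. I would do this by unwinding the integration pairing (\ref{rapiddecaypairing}): a compactly supported moderate growth class is represented by a form $\omega$ compactly supported at $D$ and of moderate growth in the permitted sectors at $\phi^{-1}(\infty)$, and its pairing with a rapid decay cycle $\sigma\otimes\mu^\vee$ is the absolutely convergent integral $\int_\sigma\langle\omega,\mu^\vee\rangle$. The rapid decay of $\mu^\vee$ in the complementary sectors guarantees the convergence, and a Stokes' theorem argument at the Stokes rays shows that all boundary contributions vanish, so the pairing descends to classes and coincides with the topological Poincar\'e--Verdier duality pairing on $\widetilde{X}$. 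The delicate point is arranging representatives so that cycles and cocycles are transverse with respect to the Stokes decomposition of $\pi^{-1}(\phi^{-1}(\infty))$, which requires a careful partition-of-unity argument subordinate to the moderate growth and rapid decay sectors.
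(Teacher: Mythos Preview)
The paper offers no proof: the corollary is meant to follow directly from the construction in \cite{BE1}, where rapid decay homology is built precisely so that it is Poincar\'e--Lefschetz dual, over $M$, to the compactly supported moderate growth cohomology computed by $R\Gamma_c(V;\betti{M}_{d\phi},\phi)$. That Betti-level duality is the first isomorphism; the Bloch--Esnault integration pairing is the second; and their composite is, by definition, the period map. Your ``alternative'' via Poincar\'e--Verdier duality on $\widetilde{X}$ is exactly this one-line argument and is what the paper intends.

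Your primary route, however, does not work as written. First, it is conceptually circular: you invoke the period isomorphism (\ref{compactelem}) as the opening step in constructing the first map, so you are not exhibiting a factorization of the period isomorphism through an independently defined Betti duality but rather rerouting it through itself. The whole point is that the first isomorphism lives purely on the Betti side and is $M$-rational. Second, and more concretely, your second application of the Bloch--Esnault pairing has a sign/duality error. For the connection $(\dual\mathscr{M})_{-d\phi}$, the associated local system is $\betti{M}^\vee$ with Morse function $-\phi$; the theorem therefore pairs $H^*_{\DR}(V;(\dual\mathscr{M})_{-d\phi})$ with rapid decay homology of the \emph{dual} local system $(\betti{M}^\vee)^\vee=\betti{M}$ and the \emph{negated} Morse function $-(-\phi)=\phi$, i.e.\ with $H_*(X,D;\betti{M}_{d\phi}\otimes_M\cplx,\phi)$, not with $H_*(X,D;\betti{M}^\vee_{d\phi}\otimes_M\cplx,-\phi)$ as you assert. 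So your chain of isomorphisms lands in the wrong space. Drop the detour through $\diff$-module Verdier duality and a second Bloch--Esnault pairing, and simply run your topological alternative.
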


To conclude this section, we include a few relevant calculations found in \cite{BE1}.
\begin{example}[Gamma Function]\label{gammafun}
Let $k$ be a field with a fixed imbedding in $\cplx$, and fix an element $\alpha \in k$.
Define a $\diff$-module $\mathscr{F}$ on $V= \Spec(k[z, z^{-1}])$ by
\begin{equation*}
\mathscr{F} = \diff_{\aff^1} / \diff_{\aff^1} . (z\frac{\partial}{\partial z} - \alpha).
\end{equation*}
$\mathscr{F}$ is isomorphic to the trivial line bundle on $V$ with connection
\begin{equation*}
\nabla = d + (\alpha \frac{dz}{z} ) \wedge,
\end{equation*}
so $\mathscr{F}$ has  regular singular points at $0$ $\infty$.  
We will consider the periods associated to the elementary $\diff$-module
$\mathscr{F}_{-d z}$.

Now, let $M = \ratl (e^{2 \pi \sqrt{-1} \alpha})$.  The horizontal sections of 
$\mathscr{F}_{d z}^{an}$ are locally spanned by $z^{-\alpha} e^z$, so take
$\betti{F}_{d z}$ to be the DR sheaf for $\mathscr{M}_{d z}$ with stalks
$(H^{-1} (\betti{F}_{d z})_v = M z^{-\alpha} e^z$.  The monodromy
of $z^{-\alpha} e^z$ around $0$ and $\infty$ is contained in $M$,
so this local system is well defined.  Furthermore,
$(H^{-1} (\betti{F}_{d z}))_v$ is spanned by $z^\alpha e^{-z}.$

By direct calculation, $H^0_{\DR} (\mathscr{F}_{-dz})$ is one-dimensional and spanned by 
$\frac{dz}{z}$.  Moreover, $H^i_{\DR} (\mathscr{F}_{-dz})$ vanishes for $i$ nonzero.
Let $\sigma$ be the `keyhole' contour in $(\aff^1)^{an}$ that starts at infinity, traverses
the positive real line, winds around $0$ counter-clockwise, and follows the
positive real line back to infinity.  $\sigma$ is a $z$-admissible one-simplex in $\proj^1$, and
 $\partial \left((z^{\alpha} e^{-z}) \otimes \sigma \right) = 0$.
 
 Finally, 
 \begin{equation*}
 \int_\sigma z^{\alpha} e^{-z} \frac{dz}{z} = (e^{2 \pi \sqrt{-1} \alpha} -1) \Gamma (\alpha),
 \end{equation*}
 where $\Gamma$ is the usual gamma function.  Therefore, by corollary \ref{periodintegral},
 the isomorphism  between $H^0_{\DR}(V; \mathscr{F}_{-dz})\otimes_k \cplx$ and 
 $H^0 (V; \betti{F}_{-dz}, z)\otimes_M \cplx$ in the bases above is simply
 multiplication by $\frac{1}{(e^{2 \pi \sqrt{-1} \alpha} -1) \Gamma (\alpha)}$.
\end{example}
\begin{example}[Gaussian Integral]\label{gaussianintegral}
Now, with $k$ as above, let $\aff^1 = \Spec(k[z])$.  Define $\mathscr{N}$
to be the elementary $\diff_{\aff^1}$-module $\struct_{-\alpha zd z}$. 
Therefore,  $\mathscr{N}$ has an irregular singular point $\infty$.  
Let $\betti{N}$ to be the Betti structure for $\mathscr{N}$ with coefficients in $\ratl$
spanned by $e^{\frac{\alpha}{2} z^2}$.  

As before, $H^*_{\DR} (\aff^1; \mathscr{N})$  vanishes
in all  degrees but $0$.  However, the cohomology in degree $0$  is now generated by
 $dz$.  Let $\sigma$ be the interval $[-\infty, \infty]$ along the real line.
 $H_0 (\proj^1, \infty; \betti{N}^\vee)$ is generated by $\sigma_\alpha \otimes e^{-\frac{\alpha}{2} z^2}$,
where $\sigma_\alpha$ is $\sigma$ rotated by $\sqrt{\frac{2}{\alpha}}$.

Finally,  
\begin{equation*}
\int_{\sigma_{\alpha}} e^{-\frac{\alpha}{2} z^2} dz = \sqrt{\frac{2\pi}{\alpha}} ,
\end{equation*} so the period isomorphism
between $H^0 (\aff^1; \mathscr{N})$ and $H^0 (\aff^1; \betti{N}, z^2)$ is
given by multiplication by $\sqrt{\frac{\alpha}{2 \pi}}$.

\end{example}

 \section{Epsilon Factors}
\subsection{Determinant Lines}

\begin{definition}\label{def:lines} Let $k$ and $M$ be subfields of $\cplx$ with fixed imbedding.
We define a Picard category $\ell (k, M)$ consisting of: 
\begin{enumerate}
\item Pairs $\ell = (\ell_k, \ell_M)$ of $\mathbb{Z}$-graded 
lines with coefficients in $k$ and $M$ respectively, and a fixed isomorphism
$\ell_k \otimes_k \cplx \cong \ell_M \otimes_M \cplx$.  In particular, $\ell_k$
and $\ell_M$ must be in the same degree.
\item $\mathrm{Hom} (\ell, \ell') = \{ \phi :\ell_k  \to \ell_k' \, | \, \phi_\cplx (\ell_M) = \ell_M'  \}$,
where $\phi$ is a $k$ linear map and $\phi_\cplx$ is corresponding map
on $\ell_k \otimes_k \cplx \cong \ell_M \otimes_M \cplx$ .  Notice that all morphisms are invertible.
\item Suppose that $k' \supset k$ and $M' \supset M$.  There is a tensor functor
\begin{equation*}
\begin{aligned}
\bigotimes : \ell (k, M) \times \ell (k', M') & \to \ell (k', M') \\
(\ell_k, \ell_M) \times (\ell_{k'}', \ell_{M'}') & \mapsto
(\ell_k \otimes_k \ell_{k'}', \ell_M \otimes_M \ell_{M'}'),
\end{aligned}
\end{equation*}
and degree is additive under $\otimes$.
\item There is an identity element $\mathbf{1}_{k,M} = (k, M)$ in degree $0$;
\item and every object $\ell = (\ell_k, \ell_M)$ has an inverse
\begin{equation*}
 \ell^{-1} = (\mathrm{Hom}_k(\ell_k,k), \mathrm{Hom}_M(\ell_M, M))
\end{equation*}
\end{enumerate}
\end{definition}
This category was introduced to the author by S. Bloch.
Observe that there is a natural isomorphism
\begin{equation*}
\ell^{-1} \otimes \ell = (\mathrm{Hom}_k(\ell_k,k) \otimes_k \ell_k, \mathrm{Hom}_M(\ell_M, M) \otimes_M \ell_M) \xrightarrow{\sim} \mathbf{1}_{k,M}.
\end{equation*}
It is easily shown that all lines in degree $0$ are isomorphic to a line of the form
\begin{equation*}
 (\xi) = (k, \xi M),
\end{equation*}
for some $\xi \in \cplx$, which is non-trivial whenever $\xi$ is not in $k$ or $M$.

Let $V_{k,M}$ denote a pair of $r$-dimensional vector spaces $(V_k,V_M)$ with coefficients in $k$ and $M$, respectively,
and an isomorphism $V_k \otimes_k \cplx = V_\cplx \cong V_M \otimes_M \cplx$.
The determinant line of $V_{k, M}$ is a degree $r$ element of $\ell_{k, M}$ given by taking the $r$th exterior power of each vector space:
\begin{equation*}
 \det(V_k, V_M) = (\wedge^r V_k, \wedge^r V_M).
\end{equation*}
If there is a short exact sequence $V'_{k, M} \to V_{k, M} \to V''_{k, M}$, with the necessary compatibilities, then
\begin{equation*}
 \det(V_{k, M}) \cong \det(V'_{k, M}) \otimes \det(V''_{k, M}).
\end{equation*}
We will follow the convention that $\det(\{0\}, \{0\}) = \mathbf{1}_{k, M}$.

There is a similar construction for complexes.  Now, we let $C^*_{k, M}$ denote a pair of complexes
with $k$ and $M$ coefficients, bounded cohomology, and a quasi-isomorphism $C^*_k \otimes_k \cplx \cong C^*_M \otimes_M \cplx$.
Then,
\begin{equation*}
 \det(C^*_{k, M}) = \bigotimes_{i =a}^b \det(H^i(C^*)_{k, M})^{(-1)^i}.
\end{equation*}
The degree of $\det(C^*_{k, M})$ is the Euler characteristic of $C^*$.  Moreover,
$\det(C^*_{k, M}[1]) \cong \det(C^*_{k, M})^{-1}$.  When $A^*_{k, M} \to B^*_{k,M} \to C^*_{k,M}$
is a short exact sequence of complexes, the long exact sequence in cohomology allows us to construct a natural
isomorphism
\begin{equation*}
 \det(B^*_{k,M}) \cong \det(A^*_{k,M}) \otimes \det(C^*_{k,M}).
\end{equation*}

From a geometric perspective, $\ell(k, M) = \MB (\diff_{\Spec(k)}, M)$; therefore,
we may think of $\ell(k, M)$ as the target category of various fiber functors
on $\MB (\diff_{X}, M)$.  First, suppose that $U/k$ is a smooth quasi-projective
algebraic variety, $\mathscr{L}$ is a line bundle on $U$ with integrable connection,
and $(\mathscr{L}, \betti{L}) \in \MB (\diff_{U}, M)$.
\begin{definition}
 Let $x \in U(k)$, and $\iota_x : \{x\} \to U$.  
Define $(\mathscr{L}, \betti{L}; x) \in \ell(k,M)$ to be the pair of 
degree $0$ lines 
\begin{equation*}
(\mathscr{L}_{d \phi}, \betti{L}_{d \phi}; x)=
(\iota_x^\Delta \mathscr{L}_{d \phi}, \iota_x^\Delta \betti{L}_{d \phi}) \in \ell(k, M).
\end{equation*}
The isomorphism between the the two lines, tensored with $\cplx$, is given by (\ref{iotax}).
\end{definition}

Recall the global $\varepsilon$ factor  of a holonomic $\diff$-module on a curve,
defined by $(\ref{globalepsilonfactor})$.  This is clearly an object of $\ell(k, M)$.  Here,
we modify the global $\varepsilon$-factor in the case of an elementary $\diff$-module.
Suppose that $U/k$ is a smooth connected quasi-projective variety, and 
$(\mathscr{E}, \betti{E}, \phi) \in \MBel (\diff_X; M)$.
\begin{definition}
Define the global $\varepsilon$ factor of $(\mathscr{E}, \betti{E})$ by
\begin{equation*}
 \varepsilon(U; \mathscr{E}, \betti{E}) = 
\det(R \Gamma(U; (\mathscr{E}, \betti{E}, \phi)).
\end{equation*}
The compactly supported version is given by
\begin{equation*}
  \varepsilon_c(U; \mathscr{E}, \betti{E}) = 
\det(R \Gamma_c(U; (\mathscr{E}, \betti{E}, \phi)).
\end{equation*}
\end{definition}
Let $j : U \to X$ be a projective completion of $U$ satisfying the conditions of
definition \ref{modgrwthcohomology1}, and let $i : D \to X$ be the complement of $U$.  
By adjunction, there are morphisms
$\dpushc{j} \mathscr{E} \to \dpush{j} \mathscr{E}$ and 
$\dpushc{(\beta_X)} \dpush{(\alpha_1)}\dpushc{(\alpha_2)} \betti{E} \to 
\dpushc{(\beta_X)} \dpush{(\alpha_1)}\dpush{(\alpha_2)} \betti{E}$.
Furthermore, let $i' : D' \to X$ be the inclusion of the regular singular points of 
$\mathscr{E}$.
By theorem \ref{modgrwthelem}, and by adjunction, there is an isomorphism of triangles
\begin{equation*}
\xymatrix{
 R \Gamma_c(U; \mathscr{E}) \ar[r] \ar[d]
& R \Gamma(U;\mathscr{E} ) \ar[d] \ar[r] &
R \Gamma (D; \dpull{i} \dpush{j} \mathscr{E}) \ar[d]\\
R \Gamma_c (U; \betti{E}, \phi)  \ar[r] & R \Gamma (U; \betti{E}, \phi) \ar[r]
& R \Gamma (D; \dpull{(i')} \dpush{(\alpha_2)} \betti{E})
}
\end{equation*}
Therefore, 
\begin{equation*}
\Per(U; (\mathscr{E}, \betti{E})) \cong 
\Per_c(U; (\mathscr{E}, \betti{E})) \otimes
\Per(D'; (\dpull{(i')}\mathscr{E}, \dpull{(i')} \betti{E})).
\end{equation*}
Furthermore, if every point of $D$ is a pole of $\Phi$,
then $\mathscr{E}$ has no regular singular points in $D$ and 
$\Per(U; (\mathscr{E}, \betti{E})) \cong 
\Per_c(U; (\mathscr{E}, \betti{E}))$.

%
\subsection{Epsilon Factors}
As discussed in the introduction, the $\varepsilon$-factors 
consist of pairs of graded lines $(\ell_k, \ell_M)$ with coefficients in $k$ and $M$, respectively,
along with a fixed isomorphism $\ell_k \otimes_k \cplx \cong \ell_M\otimes_M \cplx$.
This is the geometric analogue of the classical $\varepsilon$-factor,
which consists of a single line with an action of Frobenius.

These lines are determined by purely local geometric data.
Suppose that
$(\mathscr{M}, \betti{M}) \in \MB (X; M)$, and $\mathscr{M}$ has
singularities along a divisor $D \subset X$.
Fix a point $x \in X(k)$, and let $\mathfrak{o}_x$ be the completion of
$\struct_X$ at $x$.  Furthermore, let $\Delta_x$ be an open analytic disk
containing $x$.  The localization of $(\mathscr{M}, \betti{M})$
to $x$ is given by the pair
\begin{equation*}
(\mathscr{M}_{\mathfrak{o}_x}, \betti{M}_{\Delta_x})
= (\mathscr{M} \hat{\otimes} \mathfrak{o}_x, \betti{M} |_{\Delta_x}).
\end{equation*}

Now, suppose that $\mathfrak{o}$ is a power series ring with coefficients
in $k$ and $F$ is the field of laurent series. 
In the following, $\mathscr{F}$ is a $\diff_{\mathfrak{o}}$-module,
$\widetilde{\mathscr{F}}$ is an extension of $\mathscr{F}$ to an analytic
disk $\Delta$, and $\betti{F}$ is a DR sheaf for $\widetilde{\mathscr{F}}$
with coefficients in $M$.  Here, the rank of $\mathscr{F}$ is the dimension
of $\mathscr{F} \otimes_\mathfrak{o} F$ as an $F$-module.
\begin{definition}\label{definitionlocalepsilon}
Let $\nu \in \Omega^1_{\mathfrak{o}/k}$.
A theory of $\varepsilon$-factors is a rule that assigns to
every pair $(\mathscr{F}, \betti{F})$, 
a pair of lines $\varepsilon (\mathscr{F}, \betti{F}; \nu) \in \ell(k, M)$
satisfying the following properties (see \cite{BBE}, 4.9 and \cite{Lau}, 
Th\'eor\`eme 3.1.5.4):
\begin{enumerate}
\item Whenever there are compatible triangles
$\mathscr{F}' \to \mathscr{F} \to \mathscr{F}''$ and
$\betti{F}' \to \betti{F} \to \betti{F}''$, then
\begin{equation*}
\varepsilon (\mathscr{F}, \betti{F}; \nu) \cong 
\varepsilon (\mathscr{F}', \betti{F}'; \nu) \otimes
\varepsilon (\mathscr{F}'', \betti{F}''; \nu).
\end{equation*}
In particular, $\varepsilon$ descends to 
the Grothendieck group of pairs $(\mathscr{F}, \betti{F})$.
\item If $p : \Spec(\mathfrak{o}') \to \Spec(\mathfrak{o})$ is a finite map
and $(\widetilde{\mathscr{F}}, \widetilde{\betti{F}})$ has virtual rank $0$ in the Grothendieck
group, then
\begin{equation*}
\varepsilon(p_* \widetilde{\mathscr{F}}, p_* \widetilde{\betti{F}}; \nu) 
\cong \varepsilon (\widetilde{\mathscr{F}}, \widetilde{\betti{F}}; p^*\nu).
\end{equation*}
\item if $\mathscr{F}$ is supported on the closed point of $\mathfrak{o}$,
$i : \Spec(k) \to \Spec(\mathfrak{o})$, then
\begin{equation*}
\varepsilon (\mathscr{F}, \betti{F}; \nu) \cong
\det(i^* \mathscr{F}, i^* \betti{F}).
\end{equation*}
\item If $\mathscr{F}$ is nonsingular and $\ord(\nu) = 0$, then
$\varepsilon(\mathscr{F}, \betti{F}) \cong \mathbf{1}_{k, M}$.
\item Finally, if 
$(\mathscr{M}, \betti{M})$ are defined as above and $\omega$
is a meromorphic one form on $X$, then
\begin{equation*}
\Per (X; \mathscr{M}, \betti{M}) \cong (2 \pi \sqrt{-1})^{-\mathrm{rank} (\mathscr{M}) (g-1)} \otimes 
\left(\bigotimes_{x \in X(k)} \varepsilon (\mathscr{M}_{\mathfrak{o}_{x}},
\betti{M}_{\Delta_x}; \omega) \right).
\end{equation*}
\end{enumerate}
\end{definition}
The De Rham line is studied in \cite{BBE}, and the 
the Betti line is described in \cite{Be}.  An unpublished result of
Bloch and Esnault gives a canonical isomorphism
between the De Rham and Betti lines using a local Fourier transform.  
In section \ref{sec:productformula}, we will show that the isomorphism
in rank one may be calculated by a Gauss sum.

 \section{Character Sheaves on $F^\times$}\label{Sec:Character Sheaves}
 \subsection{Invariant $\diff$-modules}
Suppose that $G/k$ is an algebraic group with multiplication $\mu : G \times G \to G$, and $X/k$
is an algebraic variety with an action of $G$:
\begin{equation*}
 \rho : G \times X \to X.
\end{equation*}

\begin{definition}
We say that a $\diff_G$-module $\mathscr{L}$ is invariant 
if there is a natural isomorphism $\alpha : \mu^\Delta \mathscr{L} \cong \mathscr{L} \boxtimes \mathscr{L}$. 
When $\mathscr{F}$ is a complex of $\diff_X$-modules, then $\mathscr{F}$ is $\mathscr{L}$-twistedly equivariant (t-equivariant) 
whenever
there is a natural isomorphism $\beta : \rho^\Delta \mathscr{F} \cong \mathscr{L} \boxtimes \mathscr{F}$, for
some invariant $\diff_G$-module $\mathscr{L}$.  Finally, $\mathscr{F}$ is $G$ equivariant when $\struct_G$-twistedly equivariant.
\end{definition}
Consider the commutative diagram
\begin{equation*}
 \xymatrix{
G \times G \times X \ar[r]^(0.6){\mu \times \id_X} 
\ar[d]^{\id_G \times \rho} & G \times X \ar[d]^{\rho} \\
G \times X \ar[r]^{\rho} & X
}.
\end{equation*}
In order for $\beta$ to be natural, the following diagram must commute:
\begin{equation*}
 \xymatrix{
(\mu \times \id_G)^\Delta \rho^\Delta \mathscr{F} \ar[r]^{\gamma} \ar@{}[d]|{||}  & \mathscr{L} \boxtimes \mathscr{L} \boxtimes \mathscr{F} \ar@{}[d]|{||}\\
(\id_G \times \rho)^\Delta \rho^\Delta \mathscr{F}\ar[r]^\delta & \mathscr{L} \boxtimes \mathscr{L} \boxtimes \mathscr{F}.}
\end{equation*}
Above, $\gamma$ is the composition 
$\left[\alpha\boxtimes\id_{\mathscr{F}}\right] \circ \left[(\mu \times \id_X)\right]^\Delta \beta$,
and $\delta$ is the composition 
$\left[\id_{\mathscr{L}} \boxtimes \beta \right] \circ \left[(\id_G \times \rho)\right]^\Delta \beta .$
The left vertical arrow is the canonical identification
\begin{equation*}
 (\mu \times \id_X)^\Delta \rho^\Delta \mathscr{F} \cong (\rho \circ (\mu \times \id_X ))^\Delta \mathscr{F}
= (\rho \circ (\id_G \times \rho))^\Delta\mathscr{F} = (\id_G \times \rho)^\Delta \rho^\Delta \mathscr{F}.
\end{equation*}
If we replace $X$ with $G$ and $\rho$ with $\mu$, we obtain the naturality condition for $\alpha$.  

The following lemma is the analogue of \cite{Lu}, 1.9.3.
\begin{lemma}\label{lusz}
Let $H$ be a  connected algebraic group.  Suppose that 
$H$ acts freely on a variety on $X$ and trivially on $Y$, and 
$\phi : X \rightarrow Y$ is an $H$-equivariant morphism.     Furthermore, assume that 
if $y \in Y$, then there is an open neighborhood $U$ of $y$ such that
$U \cong H \times \phi(U)$ and 
$\phi$ is the second projection.  Let $\mathscr{K}$ be a holonomic $\diff$-module 
on $X$.  The following are equivalent:
\begin{enumerate}
\item $\mathscr{K}$ is trivially $H$-equivariant
\item 
$\mathscr{K} \cong \phi^\Delta(\mathscr{K}_1)$ for some holonomic $\diff_Y$-module $\mathscr{K}_1$.
\end{enumerate}
\end{lemma}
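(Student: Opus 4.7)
\textbf{Proof strategy for Lemma \ref{lusz}.}

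The plan is to recognize the statement as faithfully flat descent for holonomic $\diff$-modules along the $H$-torsor $\phi : X \to Y$. The hypothesis that $\phi^{-1}(U) \cong H \times U$ Zariski locally means this is even a trivial torsor in the Zariski topology, and the $H$-equivariance datum on $\mathscr{K}$ is precisely a descent datum for $\phi$. Once this dictionary is in place, both implications reduce to formal verifications.

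The direction $(2) \Rightarrow (1)$ I would verify directly. If $\mathscr{K} \cong \phi^\Delta \mathscr{K}_1$, then because $H$ acts trivially on $Y$, one has $\phi \circ \rho = \phi \circ \mathrm{pr}_2$ as morphisms $H \times X \to Y$, and functoriality of $(\cdot)^\Delta$ produces a canonical isomorphism
\[
\rho^\Delta \mathscr{K} \;\cong\; \rho^\Delta \phi^\Delta \mathscr{K}_1 \;\cong\; \mathrm{pr}_2^\Delta \phi^\Delta \mathscr{K}_1 \;\cong\; \struct_H \boxtimes \mathscr{K}.
\]
The associativity square required by the definition of equivariance reduces to the cocycle identity for iterated pullbacks along $\mu \times \mathrm{id}_X$ and $\mathrm{id}_H \times \rho$, and so holds automatically.

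The harder direction $(1) \Rightarrow (2)$ I would attack locally first. Fix $y \in Y$, choose a trivializing open $U \subset Y$ with $\phi^{-1}(U) \cong H \times U$, and let $i_U : U \hookrightarrow H \times U$ be the section $u \mapsto (e_H, u)$. Set $\mathscr{K}_1^U := i_U^\Delta \mathscr{K}|_{\phi^{-1}(U)}$. Restricting the equivariance isomorphism $\beta$ along $\mathrm{id}_H \times i_U$ gives a canonical isomorphism $\mathscr{K}|_{\phi^{-1}(U)} \cong \struct_H \boxtimes \mathscr{K}_1^U \cong \phi^\Delta \mathscr{K}_1^U$. On an overlap $U \cap U'$ both $\phi^\Delta \mathscr{K}_1^U$ and $\phi^\Delta \mathscr{K}_1^{U'}$ are canonically identified with $\mathscr{K}|_{\phi^{-1}(U \cap U')}$, which, using that $\phi^\Delta$ is fully faithful on the essential image of $\phi^\Delta$ (because $\phi$ is smooth surjective), yields transition isomorphisms $\mathscr{K}_1^U|_{U \cap U'} \cong \mathscr{K}_1^{U'}|_{U \cap U'}$.

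The main obstacle is verifying that these transition isomorphisms satisfy the cocycle condition on triple overlaps; only then do the local $\mathscr{K}_1^U$ glue to a holonomic $\diff_Y$-module $\mathscr{K}_1$ with $\phi^\Delta \mathscr{K}_1 \cong \mathscr{K}$. This check is the heart of the descent argument: using the trivialization $X \times_Y X \cong H \times X$ defined by $(x_1, x_2) \mapsto (h, x_2)$ with $x_1 = h \cdot x_2$, the isomorphism $\beta : \rho^\Delta \mathscr{K} \cong \mathrm{pr}_2^\Delta \mathscr{K}$ is exactly a descent datum, and the cocycle condition on $X \times_Y X \times_Y X$ translates into the associativity pentagon for $\beta$ displayed just before the lemma. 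Connectedness of $H$ enters to guarantee that the fiberwise trivializations are canonical up to unique isomorphism, so that no nontrivial component-wise ambiguity disrupts the gluing. Holonomicity of $\mathscr{K}_1$ is then automatic since $\phi$ is smooth and $\phi^\Delta$ both detects and preserves holonomicity.
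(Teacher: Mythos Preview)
Your argument is correct and proceeds via faithfully flat descent: you build $\mathscr{K}_1$ by gluing the local candidates $i_U^\Delta \mathscr{K}$ over a Zariski cover, and you correctly identify the cocycle condition on triple overlaps as the point where the associativity constraint on $\beta$ is used.

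The paper takes a different and somewhat slicker route for $(1)\Rightarrow(2)$. Rather than constructing $\mathscr{K}_1$ locally and gluing, it writes down a \emph{global} candidate from the start, namely $\mathscr{K}_1 := H^{-n}(\phi_*\mathscr{K})$ with $n = \dim H$, together with the adjunction morphism $\phi^\Delta\mathscr{K}_1 \to \mathscr{K}$. The only thing left to check is that this morphism is an isomorphism, which is a local question; so one may assume $X = H\times Y$ with $\phi$ the second projection. There the equivariance isomorphism $m^!\mathscr{K}\cong \pi^!\mathscr{K}$ on $H\times H\times Y$, pulled back along $i(h,y)=(h,e,y)$, yields $\mathscr{K}\cong \phi^! j^!\mathscr{K}$ for the identity section $j(y)=(e,y)$, which is exactly your local computation. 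The point is that with a global object already in hand, the gluing and cocycle verification are bypassed entirely: one checks that a single global map is locally an isomorphism, rather than assembling a global object from local pieces. Your approach has the virtue of making the descent-theoretic content explicit and is closer in spirit to Lusztig's original argument in the constructible setting; the paper's buys brevity by exploiting the adjunction.
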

\begin{proof}
Let $n$ be the dimension of $H$.  There is an adjunction map 
$\dpull{\phi} \dpush{\phi} \mathscr{M} \to \mathscr{M}$.  As in 
(\ref{adjunctionlemma}), 
$\dpull{\phi} \dpush{\phi} \mathscr{M} \cong \phi^\Delta \Omega_{Y/X} ( \mathscr{M})$;
therefore, the cohomology of $\dpull{\phi}\dpush{\phi} \mathscr{M}$ vanishes
in negative degrees.  Apply the truncation functor $\tau^{\le 0}$ to the
adjunction map.  Since
$\phi$ is flat, we get  a morphism 
$\dpull{\phi} \left(H^{-n} \left[\dpush{\phi} \mathscr{M} \right] \right) \to \mathscr{M}$.
In order to show that this morphism is an isomorphism, it suffices to work locally on $X$.

Assume that $X = H \times Y$ and $X \xrightarrow{\phi} Y$ is the second projection (this is sufficient once the lemma is sheafified).  Let $m, \pi : H \times H \times Y \to H \times Y$ be defined by
$m(h,h', y) = (h h', y)$ and $\pi (h,h',y) = (h',y)$ and let $i : H \times Y \rightarrow H \times Y \times Y$
be the inclusion $i(h,y) = (h, e, y)$. Here, $e$ is the identity of $H$.

Trivial equivariance implies that $\pi^! \mathcal{K} \cong m^! \mathcal{K}$.  Therefore,
$i^! \pi^! \mathcal{K} \cong i^! m^! \mathcal{K}$.  Define 
$ j : Y \rightarrow H \times Y$ by $j(y) = (e,y)$.
Since $m \circ i = \mathrm{id}$ and
$\pi \circ i (h, y) = (e,y)$, we see that $\mathcal{K} \cong \phi^!j^! \mathcal{K}.$
Here is the diagram:
\begin{equation*}
\xymatrix{
H \times Y \ar[r]^i \ar[d]^\phi &   H \times H \times Y \ar[r]^m \ar[d]^\pi  & H \times Y \\
Y \ar[r]^j   & H \times Y  & 
}
\end{equation*}
\end{proof}

\begin{definition}\label{Llambda}
Let $G$ be an affine algebraic group, and let $\mathfrak{g}$ be its lie algebra. Let $\lambda$ be a linear functional on $\mathfrak{g}$.  Suppose that $\lambda$ vanishes on $[\mathfrak{g}, \mathfrak{g}]$.
Define a $\diff_{U_f}$-module
\begin{equation*}
 \mathscr{L}_\lambda = \diff_{G}/\mathscr{I}_\lambda,
\end{equation*}
where $\mathscr{I}_\lambda$ is the ideal generated by $\{X -\lambda(X); X \in \mathfrak{g}\}$.
\end{definition}
\begin{proposition}
 $\mathscr{L}_\lambda$ is a line bundle.  Moreover, $\mathscr{L}_\lambda$ is an 
invariant $\diff_{G}$-module.
\end{proposition}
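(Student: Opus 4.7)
The plan is to first identify $\mathscr{L}_\lambda$ with $\struct_G$ as an $\struct_G$-module via the PBW-type decomposition $\diff_G \cong \struct_G \otimes_k U(\mathfrak{g})$, where $\mathfrak{g}$ is viewed as left-invariant vector fields. Because $\lambda$ vanishes on $[\mathfrak{g}, \mathfrak{g}]$, the assignment $X \mapsto \lambda(X)$ extends uniquely to an algebra homomorphism $\tilde\lambda: U(\mathfrak{g}) \to k$; set $J_\lambda = \ker(\tilde\lambda)$, which is the two-sided ideal of codimension one generated by $\{X - \lambda(X): X \in \mathfrak{g}\}$. A direct check (using that the generators $X - \lambda(X)$ sit in $1 \otimes U(\mathfrak{g})$, so that multiplying on the left by an element $\sum f_i \otimes u_i \in \diff_G$ and expanding via PBW keeps us inside $\struct_G \otimes_k J_\lambda$) shows that $\mathscr{I}_\lambda$ corresponds to $\struct_G \otimes_k J_\lambda$ under this decomposition. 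Therefore $\mathscr{L}_\lambda \cong \struct_G \otimes_k (U(\mathfrak{g})/J_\lambda) \cong \struct_G$, establishing that it is a line bundle.

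Under this identification, the canonical generator $\bar 1$ satisfies $X \cdot \bar 1 = \lambda(X) \bar 1$ for every left-invariant vector field $X$. Translating to classical language, $\mathscr{L}_\lambda$ is the pair $(\struct_G, d + \omega_\lambda)$ where $\omega_\lambda = \lambda \circ \theta_L$ and $\theta_L$ is the left-invariant Maurer-Cartan form on $G$. Flatness $d\omega_\lambda = 0$ follows from the Maurer-Cartan equation $d\theta_L = -\tfrac{1}{2}[\theta_L, \theta_L]$ combined with the hypothesis that $\lambda$ vanishes on brackets.

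For the invariance isomorphism $\alpha: \mu^\Delta \mathscr{L}_\lambda \cong \mathscr{L}_\lambda \boxtimes \mathscr{L}_\lambda$, I would compare the connections on both sides, each of which has been written as a trivial line bundle with a canonical section. Up to these trivializations, the statement becomes the identity
\begin{equation*}
\mu^* \omega_\lambda = p_1^* \omega_\lambda + p_2^* \omega_\lambda
\end{equation*}
of one-forms on $G \times G$, where $p_1, p_2$ are the projections. This identity comes from applying $\lambda$ to the standard transformation law
\begin{equation*}
\mu^* \theta_L = \Ad(g_2^{-1}) \, p_1^* \theta_L + p_2^* \theta_L,
\end{equation*}
together with the $\Ad$-invariance of $\lambda$ on the identity component of $G$, which again follows from $\lambda|_{[\mathfrak{g}, \mathfrak{g}]} = 0$. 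The cocycle compatibility that makes $\alpha$ into an invariance datum (the diagram appearing just before definition \ref{Llambda}) is then automatic from the associativity of $\mu$ applied to the same transformation law.

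The main obstacle will be bookkeeping: one must choose a consistent convention (left- versus right-invariant vector fields, the sign in $d \pm \omega_\lambda$, and the precise normalization of $\mu^\Delta$) and then verify the Maurer-Cartan transformation law with the matching signs. If $G$ is disconnected, one must verify $\Ad$-invariance on each component separately, but in the intended application ($G = F^\times$, written as $\coprod_n t^n U^0$) this is handled component by component.
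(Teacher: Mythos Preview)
Your argument is correct and closely parallels the paper's, but organizes the two halves differently. For the line-bundle statement, the paper uses the order filtration on $\diff_G$ and observes that $\gr(\mathscr{I}_\lambda)$ already contains all of $\gr^1(\diff_G)$, so $\gr(\mathscr{L}_\lambda)\cong\struct_G$; your PBW identification $\diff_G\cong\struct_G\otimes_k U(\mathfrak{g})$ together with the one-dimensional quotient $U(\mathfrak{g})/J_\lambda$ is a cleaner way to reach the same conclusion and has the advantage of producing the explicit trivialization $\mathscr{L}_\lambda\cong(\struct_G,\,d+\omega_\lambda)$ in one step. For invariance, the paper works directly with the cyclic generator $v_\lambda$ and checks that $v_\lambda\otimes v_\lambda\mapsto 1\otimes v_\lambda$ is $\diff_{G\times G}$-linear by invoking the Baker--Campbell--Hausdorff expansion $\mu_*(X,Y)=X+Y+\tfrac12[X,Y]+\cdots$ and killing the commutator tail with $\lambda$; your Maurer--Cartan computation $\mu^*\theta_L=\Ad(g_2^{-1})\,p_1^*\theta_L+p_2^*\theta_L$ followed by $\Ad$-invariance of $\lambda$ is the differential-form translation of the same identity. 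In fact the paper records exactly your description $(\struct_G,\,d+\omega_\lambda)$ immediately after its proof (equation~\eqref{formdescription}), so the two approaches differ mainly in which presentation of $\mathscr{L}_\lambda$ is taken as primary. Your caveat about disconnected $G$ is well placed: $\lambda|_{[\mathfrak{g},\mathfrak{g}]}=0$ only forces $\Ad(G^0)$-invariance, and both arguments tacitly use this; for the applications in the paper (quotients of $F^\times$) the components are handled as you indicate.
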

\begin{proof}
 Give $\diff_{G}$ (and thus, $\mathscr{I}_\lambda$) the standard degree filtration.  
$\gr^1 (\mathscr{I}_\lambda)$ is spanned  by the image of  $\mathfrak{g}$, which also generates
$\gr^1 (\diff_{G})$.  It follows that 
\begin{equation*}
\gr (\mathscr{L}_\lambda) \cong \gr(\diff_{G}) / \gr (\mathscr{I}_\lambda) \cong
\gr^0(\diff_{G}).
\end{equation*}
In particular, $\mathscr{L}_\lambda$ is a coherent $\struct_{G}$-module of rank $1$.

Now we prove invariance.    
$\mu^\Delta \mathscr{L}_\lambda \cong \struct_{G \times G} \otimes_{\struct_G} \mathscr{L}_\lambda$
as an $\struct_{G \times G}$-module.  Let $v_\lambda$ be the image of $1$ in 
$\diff_G/\mathscr{I}_\lambda$.  The isomorphism 
$\mathscr{L}_\lambda \boxtimes \mathscr{L}_\lambda \cong \mu^\Delta \mathscr{L}_\lambda$
is given by $v_\lambda \otimes v_\lambda \mapsto 1 \otimes v_\lambda$.  To check that this is 
a $\diff_{G \times G}$-module homomorphism, let $(X,Y) \in \mathfrak{g} \times \mathfrak{g}$.
Recall the Baker-Campbell-Hausdorff formula states that 
$\mu_* (X, Y) = X + Y + \frac{1}{2} [X,Y] + \ldots$, where all of the higher order terms involve
commutators.
Then,
\begin{equation*}
\begin{aligned}
 (X, Y) (1 \otimes v_\lambda)& = (1 \otimes \mu_* (X,Y) v_\lambda)  \\
 &=
1 \otimes (\left[\lambda (X + Y +\frac{1}{2} [X,Y] + \ldots) \right] v_\lambda) \\
&= (X, Y) (v_\lambda \otimes v_\lambda).
\end{aligned}
\end{equation*}
\end{proof}

Another way to describe $\mathscr{L}_\lambda$ is as follows:
let $\omega_\lambda$ be the invariant differential form on $G$ with the property that
$\omega_\lambda (X) = \lambda (X)$ for all $X \in \mathfrak{g}$.
 Define a connection on $\struct_{G}$ by
\begin{equation}\label{formdescription}
 \nabla (g) = d  + \omega_\lambda \wedge.
\end{equation}
Notice that $\nabla \circ \nabla = d \omega_{\lambda} \wedge$ as operators
from $\struct_{G}$ to $\Omega^2_{G}$.  However, if $X$ and $Y$
are invariant vector fields, 
\begin{equation*}
 d \omega_\lambda (X,Y) = \omega_{\lambda}([X,Y]) = \lambda([X,Y]) = 0.
\end{equation*}
Therefore, $d \omega_{\lambda} = 0$ and $\nabla$ is a flat connection.
The isomorphism $\mathscr{L}_\lambda \cong (\struct_{G}, \nabla)$
is given by mapping the image of $1 \in \diff_{G}$ to $1 \in \struct_G$.

\begin{proposition}\label{productG}
Suppose that $G \cong G_1 \times G_2$.  Let $\iota_1 : G_1 \to G$
and $\iota_2 : G_2 \to G$.  If $\mathscr{L}$ is an invariant $\diff_G$-module,
and $ \mathscr{L}_i = \iota_i^\Delta \mathscr{L}$, then
$\mathscr{L} \cong \mathscr{L}_1 \boxtimes \mathscr{L}_2$.
\end{proposition}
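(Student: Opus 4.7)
The plan is to exploit the invariance isomorphism $\alpha : \mu^\Delta \mathscr{L} \cong \mathscr{L} \boxtimes \mathscr{L}$ by pulling it back along the inclusion $\iota_1 \times \iota_2 : G_1 \times G_2 \to G \times G$. The key observation is that, under the given identification $G \cong G_1 \times G_2$, the composition
\begin{equation*}
G_1 \times G_2 \xrightarrow{\iota_1 \times \iota_2} G \times G \xrightarrow{\mu} G
\end{equation*}
sends $(g_1, g_2) \mapsto (g_1, e_2)\cdot(e_1, g_2) = (g_1, g_2)$, so it agrees with the canonical isomorphism $G_1 \times G_2 \xrightarrow{\sim} G$. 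Consequently, applying $\Delta$-pullback along this composition returns $\mathscr{L}$ itself.

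First, I would note that since the $\iota_i$ are closed immersions into a product and $\mu$ is smooth, all of the $\Delta$-pullbacks in question are simply the naive $\struct$-tensor pullback (no non-characteristic hypothesis is needed, as the issue does not arise for closed immersions of a factor and for smooth multiplication). Next, I would write the chain of canonical isomorphisms
\begin{equation*}
\mathscr{L} \;\cong\; (\mu \circ (\iota_1 \times \iota_2))^\Delta \mathscr{L} \;\cong\; (\iota_1 \times \iota_2)^\Delta \mu^\Delta \mathscr{L} \;\xrightarrow{\;(\iota_1 \times \iota_2)^\Delta \alpha\;}\; (\iota_1 \times \iota_2)^\Delta (\mathscr{L} \boxtimes \mathscr{L}).
\end{equation*}
The first isomorphism is the identification above; the second is functoriality of $\Delta$-pullback; the third uses invariance.

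Finally, the exterior tensor product is compatible with pullback along a product of morphisms: for closed immersions $\iota_i : G_i \to G$, one has a natural isomorphism
\begin{equation*}
(\iota_1 \times \iota_2)^\Delta (\mathscr{L} \boxtimes \mathscr{L}) \;\cong\; \iota_1^\Delta \mathscr{L} \boxtimes \iota_2^\Delta \mathscr{L} \;=\; \mathscr{L}_1 \boxtimes \mathscr{L}_2.
\end{equation*}
Combining this with the previous chain gives the desired isomorphism $\mathscr{L} \cong \mathscr{L}_1 \boxtimes \mathscr{L}_2$. The only point that requires any care — and which is essentially routine — is checking that these identifications are compatible with the flat-connection interpretation of $\mathscr{L}_\lambda$ from (\ref{formdescription}), so that one really obtains a $\diff$-module isomorphism and not merely an $\struct_G$-isomorphism; this follows from the fact that $\alpha$ is by hypothesis a $\diff_{G \times G}$-module isomorphism and that the inclusions $\iota_i$ are compatible with the Lie algebra decomposition $\mathfrak{g} = \mathfrak{g}_1 \oplus \mathfrak{g}_2$.
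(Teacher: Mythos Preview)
Your argument is correct and is essentially the same as the paper's: both pull back the invariance isomorphism $\mu^\Delta\mathscr{L}\cong\mathscr{L}\boxtimes\mathscr{L}$ along an appropriate map built from the factor inclusions. The paper takes a slightly longer route, first establishing the twisted equivariance $\rho_i^\Delta\mathscr{L}\cong\mathscr{L}_i\boxtimes\mathscr{L}$ for each $G_i$-action and then combining these over $G_1\times G_2\times G$, whereas you restrict directly along $\iota_1\times\iota_2$ and use that $\mu\circ(\iota_1\times\iota_2)$ is the identification $G_1\times G_2\cong G$; your version is the more streamlined of the two.
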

\begin{proof}
Notice that the action $\rho_i : G_i \times G \to G$ factors through
$G_i \times G \xrightarrow{\iota_i \times \id} G \times G \to G$.  Therefore,
$\rho_i^\Delta (\mathscr{L}) \cong \mathscr{L}_i \boxtimes \mathscr{L}$.
Therefore, if  $\rho_1 \times \rho_2 : (G_1 \times G_2 \times G \to G)$,
then $(\rho_1 \times \rho_2)^\Delta \mathscr{L} \cong 
(\mathscr{L}_1 \boxtimes \mathscr{L}_2) \boxtimes \mathscr{L}$.
Since $\mathscr{L}$ is invariant, there is an isomorphism
 $\mathscr{L}_1 \boxtimes \mathscr{L}_2 \cong \mathscr{L}$.
\end{proof}

\begin{proposition}\label{invariantvanishing}
Suppose $G$ is a finite product of groups isomorphic to $\Ga$
or $\Gm$.  Then, if $\mathscr{L}_\lambda$ is a  invariant sheaf as above,
\begin{equation*}
R \Gamma (G; \mathscr{L}_\lambda) \cong \{0\}
\end{equation*}
unless $\mathscr{L}_\lambda \cong \struct_G$.
\end{proposition}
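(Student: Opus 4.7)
The plan is to reduce to the one-dimensional cases $G = \Ga$ and $G = \Gm$ by combining Proposition \ref{productG} with the K\"unneth formula (Proposition \ref{kunneth}), and then compute the de Rham cohomology by hand on each factor.

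For the reduction, write $G \cong G_1 \times \cdots \times G_r$ with each $G_i$ equal to $\Ga$ or $\Gm$. By iterating Proposition \ref{productG}, $\mathscr{L}_\lambda \cong \mathscr{L}_{\lambda_1} \boxtimes \cdots \boxtimes \mathscr{L}_{\lambda_r}$, where $\mathscr{L}_{\lambda_i}$ is the pullback of $\mathscr{L}_\lambda$ along the inclusion of the $i$-th factor. The K\"unneth formula then gives
\begin{equation*}
R \Gamma (G; \mathscr{L}_\lambda) \cong \bigotimes_{i=1}^r R\Gamma(G_i; \mathscr{L}_{\lambda_i}).
\end{equation*}
A tensor product of complexes of $k$-vector spaces is acyclic as soon as one factor is acyclic. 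Moreover, if every $\mathscr{L}_{\lambda_i}$ is isomorphic to $\struct_{G_i}$, then $\mathscr{L}_\lambda \cong \struct_G$. So it suffices to prove the statement when $G$ itself is one-dimensional.

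For the one-dimensional cases, use the form description (\ref{formdescription}). On $\Ga = \Spec(k[t])$, the invariant $1$-forms are $k \cdot dt$, so $\mathscr{L}_\lambda \cong (\struct_{\Ga}, d + \lambda\, dt\wedge )$ for a single scalar $\lambda \in k$. The de Rham complex identifies with $k[t] \xrightarrow{\partial_t + \lambda} k[t]$. If $\lambda = 0$ this is literally $\struct_{\Ga}$; if $\lambda \neq 0$, the operator preserves the filtration by degree and acts on associated graded pieces by multiplication by $\lambda$, so it is a bijection and the cohomology vanishes. On $\Gm = \Spec(k[t,t^{-1}])$, the invariant $1$-forms are $k \cdot \tfrac{dt}{t}$, so $\mathscr{L}_\lambda \cong (\struct_{\Gm}, d + \lambda\, \tfrac{dt}{t} \wedge)$, and the de Rham complex becomes $k[t,t^{-1}] \xrightarrow{t \partial_t + \lambda} k[t,t^{-1}]$. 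On the monomial basis, this operator acts as $t^n \mapsto (n+\lambda) t^n$. If $\lambda \notin \mathbb{Z}$, every eigenvalue is nonzero, the map is bijective, and cohomology vanishes. If $\lambda \in \mathbb{Z}$, then $t^{-\lambda}$ is a unit in $k[t,t^{-1}]$ providing a global horizontal section of $\mathscr{L}_\lambda$, which gives an isomorphism $\mathscr{L}_\lambda \cong \struct_{\Gm}$.

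The main subtlety, and essentially the only one, is precisely this last point: for $\Gm$, triviality of $\mathscr{L}_\lambda$ is \emph{not} the same as $\lambda = 0$, but rather as $\lambda \in \mathbb{Z}$, because the horizontal-section equation $g' + \lambda g/t = 0$ is solved inside $k[t,t^{-1}]^\times$ exactly when $-\lambda$ is an integer. Once this is observed, the statement "vanishes unless $\mathscr{L}_\lambda \cong \struct_G$" matches the computation in every case, and the reduction in the first step closes the argument.
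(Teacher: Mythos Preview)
Your proof is correct and follows essentially the same approach as the paper: reduce to the one-dimensional factors via Proposition \ref{productG} and the K\"unneth formula, then compute the de Rham complex explicitly on $\Ga$ and $\Gm$. The only difference is presentational (you do the reduction first, the paper does the rank-one computations first), and you spell out a bit more carefully why $\lambda \in \mathbb{Z}$ on $\Gm$ forces $\mathscr{L}_\lambda \cong \struct_{\Gm}$.
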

\begin{proof}
When $G \cong \Ga$, $R \Gamma (G; \mathscr{L}_\lambda)$
is calculated by the complex
\begin{equation*}
k[x] \xrightarrow{d + \lambda dx \wedge} k[x].
\end{equation*}
This map is an isomorphism unless $\lambda = 0$
When $G = \Gm$, 
the cohomology is calculated by
\begin{equation*}
k[x, \frac{1}{x}] \xrightarrow{d + \lambda \frac{dx}{x} \wedge} k[x, \frac{1}{x}].
\end{equation*}
The cohomology is isomorphic to $\{0\}$ unless $\lambda \in \mathbb{Z}$.  
In that case, $\mathscr{L}_\lambda \cong \struct_{\Gm}$.

In the case of a product of groups $\prod G_i$, proposition \ref{productG} states that
$\mathscr{L}_\lambda \cong \boxtimes \mathscr{L}_{\lambda_i}$.  
The proposition follows from the K\"unneth formula for $\diff$-modules.
\end{proof}

Suppose that $\pi : Y \to X$ is a principal $G$-bundle, where $G$
satisfies the assumptions of proposition \ref{invariantvanishing}.  Let
$j_x  : Y_x \to Y$ be the inclusion of the fiber over $x \in X$.  
Now, suppose that $\mathscr{L}$ is a holonomic $\diff_Y$-module with
the property that $j_x^* \mathscr{L}$ is $G$ twistedly-equivariant.
\begin{corollary}\label{Gbundlevanishing}
With $X$ and $Y$ as above, let $Z$ be the Zariski closure of the locus
where $j_x^* \mathscr{L}$ is trivially $G$- equivariant.
Then, $\pi_! \mathscr{L}$ has support on $Z$.
\end{corollary}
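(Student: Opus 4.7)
My approach is to check the vanishing of $\pi_!\mathscr{L}$ fiberwise, using proper base change to reduce to the vanishing statement of Proposition~\ref{invariantvanishing}.

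First I would fix a point $x \in X \setminus Z$ and consider the Cartesian diagram obtained by restricting $\pi$ to its fiber over $x$:
\begin{equation*}
\xymatrix{
Y_x \ar[r]^{j_x} \ar[d]^{\pi_x} & Y \ar[d]^{\pi} \\
\{x\} \ar[r]^{i_x} & X.
}
\end{equation*}
Because $\pi$ is smooth (a principal $G$-bundle), $i_x$ is non-characteristic with respect to $\pi_!\mathscr{L}$, and base change for $\diff$-modules yields a canonical isomorphism
\begin{equation*}
\dpullx{i_x}\dpushc{\pi}\mathscr{L} \cong \dpushc{(\pi_x)} \dpullx{j_x}\mathscr{L}[d],
\end{equation*}
up to an appropriate shift $d$. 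Thus it suffices to show $R\Gamma_c(Y_x;\dpullx{j_x}\mathscr{L}) \cong 0$ whenever $x \notin Z$.

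Next I would use the hypothesis that $\dpullx{j_x}\mathscr{L}$ is $G$-twistedly equivariant. Locally around $x$ the principal bundle is trivial, so $Y_x$ is a $G$-torsor; choosing a trivialization identifies $\dpullx{j_x}\mathscr{L}$ with an invariant line bundle of the form $\mathscr{L}_\lambda$ on $G$, for some $\lambda \in \mathfrak{g}^*$ (vanishing on $[\mathfrak{g},\mathfrak{g}]$) depending on $x$. By hypothesis $x \notin Z$, so the equivariant structure is nontrivial, hence $\mathscr{L}_\lambda \not\cong \struct_G$.

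Finally, to reach the conclusion, I would pass from $R\Gamma$ to $R\Gamma_c$ via duality. Using $R\Gamma_c(G;\mathscr{L}_\lambda) \cong \dual R\Gamma(G; \dual\mathscr{L}_\lambda)$ and the observation $\dual\mathscr{L}_\lambda \cong \mathscr{L}_{-\lambda}$, the hypothesis that $\mathscr{L}_\lambda$ is nontrivial implies $\mathscr{L}_{-\lambda}$ is nontrivial as well (a character of $\Ga \times \cdots \times \Ga \times \Gm \times \cdots \times \Gm$ is trivial iff its inverse is). Proposition~\ref{invariantvanishing} then gives $R\Gamma(G;\mathscr{L}_{-\lambda}) \cong 0$, so $R\Gamma_c(Y_x;\dpullx{j_x}\mathscr{L}) \cong 0$, and the stalk of $\pi_!\mathscr{L}$ at $x$ vanishes as desired.

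The main obstacle is the bookkeeping in the first step: verifying that the base-change isomorphism applies with the correct shift, and that the twisted equivariance on $\dpullx{j_x}\mathscr{L}$ (which is a priori only a local fiberwise condition) indeed identifies the pulled-back sheaf with an $\mathscr{L}_\lambda$. Everything else reduces to a direct application of Proposition~\ref{invariantvanishing} together with the duality interchange between $\pi_!$ and $\pi_*$.
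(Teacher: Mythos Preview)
Your approach is essentially the paper's: restrict to a fiber via base change, identify the restriction with an invariant line bundle $\mathscr{L}_\lambda$, and invoke Proposition~\ref{invariantvanishing}. You even supply a detail the paper elides, namely the duality argument passing from $R\Gamma_c$ to $R\Gamma$ (the paper simply writes ``Proposition~\ref{invariantvanishing} implies that $i_x^*\pi_!\mathscr{L}\cong\{0\}$'' without further comment).

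There is one step you have not addressed. You conclude with ``the stalk of $\pi_!\mathscr{L}$ at $x$ vanishes as desired,'' but the statement to be proved is that $\pi_!\mathscr{L}$ is supported on $Z$. Since $\pi_!\mathscr{L}$ is in general only a complex with holonomic cohomology, the implication from ``$i_x^*\pi_!\mathscr{L}\cong 0$ for all $x\notin Z$'' to ``$\pi_!\mathscr{L}|_{X\setminus Z}\cong 0$'' is not formal; it requires an argument. The paper handles this by citing Lemma~\ref{nakayamavar}, which is precisely the statement that a complex of $\diff$-modules with holonomic cohomology all of whose point-pullbacks vanish must itself vanish. Adding that citation completes your proof.
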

\begin{proof}
Let $\pi_x : Y_x \to x$, and $i_x : x \to X$.
Applying base change,
\begin{equation*}
\dpull{i_x} \dpushc{\pi} \mathscr{L} \cong \dpushc{(\pi_x) } \dpull{j_x} \mathscr{L}.
\end{equation*}
Proposition \ref{invariantvanishing} implies that 
$i_x^* \dpushc{\pi} \mathscr{L} \cong \{0\}$ unless $x \in Z$.
The corollary follows from lemma \ref{nakayamavar}.
\end{proof}
\begin{definition}[Character Sheaf]\label{charshf}
A character sheaf is a pair 
$(\mathscr{L}, \betti{L}) \in \MB(\diff_G, M)$ with
the following properties:
$\mathscr{L}$ is an invariant $\diff_G$-module, and
$\dpulld{\mu} (\mathscr{L}, \betti{L}) \cong (\mathscr{L} \boxtimes \mathscr{L},
\betti{L} \boxtimes \betti{L})$. 
\end{definition}

Let $G^0$ be the connected component of $G$.  If $\mathscr{L}(G^0) \cong \mathscr{L}_\lambda$
for some functional $\lambda \in \mathfrak{g}^\vee$, we say that $\lambda$
is the \emph{infinitesimal character} of $(\mathscr{L}, \betti{L})$.  Furthermore,
we define $(\mathscr{L}^\vee, \betti{L}^\vee)$ to be the dual connection of $\mathscr{L}$
along with the dual local system of $\betti{L}$.  
\begin{definition}\label{linetensor}
Let $(\ell_k, \ell_M) \in \ell (k, M)$ and $(\mathscr{L}, \betti{L}) \in \MB (\diff_X, M)$.
Define $(\ell_k, \ell_M) \otimes (\mathscr{L}, \betti{L}) = 
(\ell_k \otimes_k \mathscr{L}, \ell_M \otimes_M \betti{L})$.  If
$\alpha : \DR(\mathscr{L}) \otimes_k \cplx \cong \betti{L} \otimes_M \cplx$
and $\beta : \ell_k \otimes_k \cplx \cong \ell_M \otimes_k \cplx$ are the compatibility 
isomorphisms, then $\beta \otimes \alpha$ is the compatibility
isomorphism for $(\ell_k, \ell_M) \otimes (\mathscr{L}, \betti{L})$.
\end{definition}
\begin{proposition}\label{inv3}
Suppose that $(\mathscr{L}, \betti{L}) \in \MB (\diff_G, M)$ as in 
definition \ref{charshf}.  Let $g \in G$, and let $\mu_g : G \to G$ be the map
induced by right multiplication by $g$.  Then,
\begin{equation*}
\dpulld{\mu_g} (\mathscr{L}, \betti{L}) \cong (\mathscr{L}, \betti{L}, g) \otimes
(\mathscr{L}, \betti{L}).
\end{equation*}
\end{proposition}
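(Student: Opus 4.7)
The plan is to factor right multiplication through the multiplication map and apply the invariance hypothesis.

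First I would write $\mu_g$ as the composition
\begin{equation*}
G \xrightarrow{\sigma_g} G \times G \xrightarrow{\mu} G,
\end{equation*}
where $\sigma_g(x) = (x,g)$. Since $\mathscr{L}$ is a line bundle with integrable connection and $\betti{L}$ is a local system (up to shift), the maps $\mu$, $\sigma_g$ and $\mu_g$ are all non-characteristic for the sheaves in sight (see the remark following theorem \ref{noncharpullback}), so $\dpulld{\mu_g} \cong \dpulld{\sigma_g} \circ \dpulld{\mu}$ as operations on $\MB(\diff_G, M)$.

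Next I would invoke the character sheaf hypothesis (definition \ref{charshf}) to identify
\begin{equation*}
\dpulld{\mu}(\mathscr{L}, \betti{L}) \cong (\mathscr{L} \boxtimes \mathscr{L},\, \betti{L} \boxtimes \betti{L}).
\end{equation*}
Then I would further factor $\sigma_g$ as
\begin{equation*}
G \cong G \times \{g\} \xrightarrow{\id_G \times \iota_g} G \times G,
\end{equation*}
where $\iota_g : \{g\} \hookrightarrow G$ is the closed point inclusion. Applying the projection formula and the compatibility of $\dpulld{\cdot}$ with exterior products, one obtains
\begin{equation*}
\dpulld{\sigma_g}(\mathscr{L} \boxtimes \mathscr{L},\, \betti{L} \boxtimes \betti{L}) \cong (\iota_g^\Delta \mathscr{L}) \otimes_k (\mathscr{L},\, \betti{L}) \cong (\mathscr{L}, \betti{L}; g) \otimes (\mathscr{L}, \betti{L}),
\end{equation*}
where the last step is just the definition of the fiber line $(\mathscr{L}, \betti{L}; g)$ and of the tensor of a line in $\ell(k,M)$ with an object in $\MB(\diff_G, M)$ (definition \ref{linetensor}). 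Composing the two displayed isomorphisms yields the stated formula.

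The only subtle point I anticipate is the bookkeeping of the compatibility isomorphisms $\alpha : \betti{L} \otimes_M \cplx \cong \DR(\mathscr{L})$: one has to check that, after pulling back $\alpha$ along $\mu$ and then restricting the second factor to $g$, the resulting map agrees with the tensor product of $\beta \otimes \alpha$ as prescribed by definition \ref{linetensor}. This amounts to unwinding how the invariance isomorphism for $\mathscr{L}$ interacts with the chosen trivialization $\alpha_g : \betti{L}_g \otimes_M \cplx \cong \mathscr{L}_g \otimes_k \cplx$ at $g$; once one writes $\sigma_g = (\id_G, \iota_g) \circ \id_G$ explicitly, both sides are identified by the same base change morphism, so the check is routine.
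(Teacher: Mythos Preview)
Your proof is correct and follows essentially the same route as the paper: factor $\mu_g$ through the group multiplication $\mu$ and the inclusion $i_g$ of the point $g$, apply the invariance isomorphism $\mu^\Delta(\mathscr{L},\betti{L}) \cong (\mathscr{L}\boxtimes\mathscr{L},\,\betti{L}\boxtimes\betti{L})$ from definition~\ref{charshf}, and then restrict one factor to $g$ to obtain the fiber line $(\mathscr{L},\betti{L};g)$. The paper's version is terser and does not spell out the compatibility check for $\alpha$ that you flag in your last paragraph, but the argument is otherwise identical.
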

\begin{proof}
Let $i_g$ be the inclusion of $g \in G$.  Then, $\mu_g =  \mu_g \circ (i_g \times \id_G)$.
By definition \ref{charshf}, 
$\dpulld{(i_g \times \id)} \dpulld{\mu_g} (\mathscr{L}, \betti{L}) \cong
\dpulld{(i_g\times \id)} (\mathscr{L} \boxtimes \mathscr{L}, \betti{L} \boxtimes \betti{L})$,
which in turn is isomorphic to
$( \dpulld{i_g} (\mathscr{L}) \boxtimes \mathscr{L}, \dpulld{i_g} (\betti{L}) \boxtimes \betti{L})$.
Finally,
$( \dpulld{i_g} (\mathscr{L}) \boxtimes \mathscr{L}, \dpulld{i_g} (\betti{L}) \boxtimes \betti{L}) \cong (\mathscr{L}, \betti{L}, g) \otimes (\mathscr{L}, \betti{L})$.
\end{proof}
\begin{proposition}\label{inversedual}
If $\sigma : G \to G$ is the inverse map, then
\begin{equation*}
\sigma^* (\mathscr{L}, \betti{L}) \cong (\mathscr{L}^\vee, \betti{L}^\vee).
\end{equation*}
\end{proposition}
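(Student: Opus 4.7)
The plan is to pull back the defining character-sheaf identity $\dpulld{\mu}(\mathscr{L},\betti{L}) \cong (\mathscr{L},\betti{L}) \boxtimes (\mathscr{L},\betti{L})$ along the map $(\id_G,\sigma) \colon G \to G \times G$, exploiting that $\mu \circ (\id_G,\sigma) = e$, the constant map to $1 \in G$. Since $\sigma$ is an automorphism it is smooth, hence non-characteristic for any holonomic $\diff_G$-module, so all operations $\sigma^*, \sigma^!, \sigma^\Delta$ agree and Proposition \ref{nonchardr} lets us work in $\MB(\diff_G,M)$ without ambiguity.

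First I would apply $(\id_G,\sigma)^\Delta$ to both sides of the character-sheaf isomorphism. The left-hand side becomes $(\mu\circ(\id_G,\sigma))^\Delta(\mathscr{L},\betti{L}) = e^\Delta (\mathscr{L},\betti{L})$, which is the (shifted) constant object on $G$ with fiber $(\mathscr{L},\betti{L};1)$. The right-hand side becomes $(\mathscr{L},\betti{L}) \otimes \sigma^\Delta(\mathscr{L},\betti{L})$, using that the first factor is pulled back by the identity and applying the external-to-internal tensor identification from Proposition \ref{noncharprojfla} (the diagonal is non-characteristic here since one factor is a line bundle with connection).

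Next I would check that the stalk $(\mathscr{L},\betti{L};1)$ is canonically isomorphic to $\mathbf{1}_{k,M}$. Pulling the character-sheaf identity back further along $(1,1) \colon \Spec(k) \to G \times G$ gives a canonical isomorphism $(\mathscr{L},\betti{L};1) \cong (\mathscr{L},\betti{L};1) \otimes (\mathscr{L},\betti{L};1)$ in $\ell(k,M)$; a one-dimensional line satisfying this relation is canonically the unit. Combining, we obtain a canonical isomorphism
\begin{equation*}
(\mathscr{L},\betti{L}) \otimes \sigma^\Delta(\mathscr{L},\betti{L}) \cong \mathbf{1},
\end{equation*}
and tensoring with $(\mathscr{L}^\vee,\betti{L}^\vee)$ yields the desired identification $\sigma^*(\mathscr{L},\betti{L}) \cong (\mathscr{L}^\vee,\betti{L}^\vee)$.

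The main subtlety is bookkeeping for the Betti component: one must verify that the compatibility isomorphism $\alpha\colon \DR(\mathscr{L})\otimes_k \cplx \cong \betti{L}\otimes_M \cplx$ is respected throughout. In particular, the canonical trivialization of $(\mathscr{L},\betti{L};1)$ must be seen to identify the $k$-line $\mathscr{L}|_1$ with $k$ and the $M$-line $\betti{L}|_1$ with $M$ compatibly — this reduces, after base change to $\cplx$, to the statement that $\alpha$ intertwines the two squared-identifications obtained by pulling the character-sheaf structure back to $(1,1)$, which is automatic since $\alpha$ is part of the data of the character sheaf. Once this compatibility is in place, the argument above is formal.
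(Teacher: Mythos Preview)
Your proposal is correct and follows essentially the same approach as the paper: both factor the constant map $G \to \{1\}$ as $\mu \circ (\id_G,\sigma)$ (the paper writes this as $\mu \circ (\sigma \times \id_G) \circ \Delta_G$, which is the same up to swapping factors), pull back the character-sheaf identity, and conclude that $\sigma^*(\mathscr{L},\betti{L}) \otimes (\mathscr{L},\betti{L})$ is trivial. Your version is more explicit about why the fiber at $1$ is $\mathbf{1}_{k,M}$ and about the Betti compatibility, points the paper leaves implicit.
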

\begin{proof}
Consider the maps
\begin{equation*}
G \xrightarrow{\Delta_G} G \times G \xrightarrow{\sigma \times \id_g} G \times G \xrightarrow{\mu} G.
\end{equation*}
Notice that the image of $G$ under composition is the identity.
Then, 
\begin{equation*}
\left[\sigma^* (\mathscr{L}, \betti{L})\right] \otimes_G^L (\mathscr{L}, \betti{L}) \cong
\Delta_G^\Delta (\sigma\times\id_g)^\Delta \mu^\Delta \mathscr{L}.
\end{equation*}
However, this implies that $\left[\sigma^* (\mathscr{L}, \betti{L})\right] \otimes_G (\mathscr{L}, \betti{L})
\cong (\struct_G, M_G)$.
\end{proof}

If  $\rho : G \times X \to X$ is a smooth $G$ action, we say that $(\mathscr{M}, \betti{M})\in \MB(\diff_X, M)$
is $G$-equivariant if there is a natural isomorphism 
 $\rho^\Delta (\mathscr{M}, \betti{M}) \cong (\struct_G, M_G) \boxtimes (\mathscr{M}, \betti{M})$.
We consider the case where $H$ is the additive group of an $n$-dimensional $k$-vector space,
and $\phi : Y \to X$ is a principle $H$ bundle.  Suppose that $(\mathscr{M}, \betti{M})$ is $H$-equivariant.  
\begin{proposition}\label{derhambundle}
There exists a unique $(\mathscr{N}, \betti{N}) \in \dMB(\diff_X, M)$ with the property 
$\phi^\Delta (\mathscr{N}, \betti{N}) \cong (\mathscr{M}, \betti{M})$.  Furthermore,
\begin{equation*}
\begin{aligned}
\dpush{\phi} (\mathscr{M}, \betti{M}) : = (\phi_* \mathscr{M}, \phi_* \betti{M}) & \cong (\mathscr{N}, \betti{N})[n]  & \text{and} \\
\dpushc{\phi} (\mathscr{M}, \betti{M}) : =
(\phi_! \mathscr{M}, \phi_! \betti{M}) & \cong (\mathscr{N}, \betti{N})[-n] (-n).
\end{aligned}
\end{equation*}
\end{proposition}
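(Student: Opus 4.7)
The plan is to handle existence and uniqueness of $(\mathscr{N}, \betti{N})$ separately on the two sides using descent, and then to compute $\phi_*$ and $\phi_!$ via the projection formula together with local triviality of the bundle.

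For existence on the $\diff$-module side, observe that $H \cong \Ga^n$ is connected and acts freely on $Y$ with trivial action on $X$, and $\phi$ is Zariski-locally of the form $H \times X \to X$ since $H$ is unipotent; hence Lemma \ref{lusz} applies and yields a unique holonomic $\diff_X$-module $\mathscr{N}$ with $\phi^\Delta \mathscr{N} \cong \mathscr{M}$. On the Betti side, $\phi^{an}\colon Y^{an}\to X^{an}$ is a topological fiber bundle whose fibers $H^{an}\cong \cplx^n$ are contractible, and the $H^{an}$-equivariance of $\betti{M}$ combined with the same local-triviality argument as in Lemma \ref{lusz} (replacing $\dpull{\phi}\dpush{\phi}$ with $R\phi^{an,*} R\phi^{an}_*$) produces a unique $\betti{N}$ with $\phi^\Delta\betti{N} \cong \betti{M}$; concretely, one can take $\betti{N} := R\phi^{an}_{*}\betti{M}[n]$ and check that $\phi^{an,*}\betti{N}[-n]\cong \betti{M}$ via the adjunction unit.

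Next, the compatibility isomorphism $\DR(\mathscr{N})\otimes_k \cplx \cong \betti{N}\otimes_M \cplx$ is produced by combining Proposition \ref{nonchardr} (which says $\DR$ commutes with non-characteristic pullback, up to the standard shift) with the fact that $\phi^\Delta$ is faithful on the relevant equivariant categories: given the compatibility isomorphism $\alpha\colon \DR(\mathscr{M})\otimes\cplx \cong \betti{M}\otimes\cplx$ for the pair upstairs, we obtain a map $\DR(\mathscr{N})\otimes\cplx \to \betti{N}\otimes\cplx$ on $X^{an}$ whose pullback to $Y^{an}$ recovers $\alpha$, and faithfulness forces this to be an isomorphism.

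For the direct images, apply the non-characteristic projection formula (Proposition \ref{noncharprojfla}) on the $\diff$-module side and the analogous projection formula for constructible sheaves on the Betti side to write
\begin{equation*}
\phi_*\phi^\Delta(\mathscr{N},\betti{N}) \cong (\mathscr{N},\betti{N})\otimes \phi_*(\struct_Y, M_Y),\qquad
\phi_!\phi^\Delta(\mathscr{N},\betti{N}) \cong (\mathscr{N},\betti{N})\otimes \phi_!(\struct_Y, M_Y).
\end{equation*}
Because $\phi$ is a Zariski-locally trivial $H$-bundle, smooth base change and the Künneth formula (Proposition \ref{kunneth}) reduce the computation of $\phi_*(\struct_Y, M_Y)$ and $\phi_!(\struct_Y, M_Y)$ to the case $Y=X\times H$ and then to the case of the structure map $H=\Ga^n\to \Spec(k)$. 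The standard calculation of $R\Gamma$ and $R\Gamma_c$ on affine space yields $(k,M)[n]$ and $(k,M)[-n](-n)$ respectively, producing the asserted shifts in the statement. The main technical point is ensuring that the descent on the two sides is carried out compatibly, so that a single object $(\mathscr{N},\betti{N})\in\dMB(\diff_X, M)$ is well-defined; this is guaranteed by the uniqueness in Lemma \ref{lusz} together with its Betti analogue, since both $\mathscr{N}$ and $\betti{N}$ are characterized by the same universal property with respect to $\phi^\Delta$.
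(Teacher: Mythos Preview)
Your approach is correct and reaches the same conclusion, but the route for computing the direct images differs from the paper's. The paper proceeds via the adjunction maps $\mathscr{N}\to\phi_*\phi^*\mathscr{N}$ and $\phi_!\phi^!\betti{N}\to\betti{N}$ directly: on the $\diff$-module side it identifies $\phi_*\phi^*\mathscr{N}$ with the relative de Rham complex $\Omega_{Y/X}(\phi^*\mathscr{M})$ (citing \cite{Ber}, lecture~2.6), which is locally just $k$ since the fibers are affine spaces, so the adjunction is a local and hence global isomorphism; the Tate twist then enters through an explicit appeal to Proposition~\ref{twistmap}, which gives $\phi^!(\mathscr{N},\betti{N})\cong\phi^\Delta(\mathscr{N},\betti{N})[n](n)$ and hence $\phi_!(\mathscr{M},\betti{M})\cong(\mathscr{N},\betti{N})[-n](-n)$. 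Your argument instead factors out $(\mathscr{N},\betti{N})$ via the projection formula and reduces everything to the cohomology of the fiber $\Ga^n$. This is conceptually clean, but note that your ``standard calculation'' producing $(k,M)[-n](-n)$ for $R\Gamma_c$ of affine space is exactly Proposition~\ref{twistmap} applied to the map $\aff^n\to\Spec(k)$, so you are invoking the same ingredient implicitly rather than explicitly. Either route works; the paper's version keeps the adjunction and the source of the twist visible, while yours makes the role of the fiber more transparent at the cost of hiding where the $(2\pi\sqrt{-1})^n$ actually enters.
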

\begin{proof}
The first statement follows from lemma \ref{lusz}.
There is an adjunction map $\mathcal{N} \to \phi_* \phi^* \mathcal{N}$.  A lemma
in \cite{Ber}, lecture 2.6, states that
\begin{equation}\label{adjunctionlemma}
\phi_* \phi^* \mathcal{N} \cong \Omega_{Y/X} (\phi^* \mathcal{M}).
\end{equation}
For a sufficiently small neighborhood $U \subset X$, $\Omega_{Y/X}|_{U} \cong k$.  
Therefore, 
the adjunction map is locally an isomorphism.  The same argument works using the
adjunction map for $\dpushc{\phi}$.

There are compatible isomorphisms $\phi_! \phi^! \betti{N} \to \betti{N}$ and 
$\betti{N} \to \phi_* \phi^* \betti{N}$.  Since $\phi^! (\mathscr{N}, \betti{N}) \cong \phi^\Delta(\mathscr{N}, \betti{N}) [n](n)$ by proposition \ref{twistmap},
 \begin{equation*}
\phi_! (\mathscr{M},\betti{M}) = (\phi_! \mathscr{M}, \phi_! \betti{M}) \cong  (\mathscr{N}, \betti{N})[-n](-n).
\end{equation*}
\end{proof}

\subsection{Character Sheaves on $F^\times$}
Recall the definitions of $\mathfrak{o}$  and $F^\times$ in section \ref{subsec:gausssums}.
In this section, we will consider character sheaves on subquotients of the group $F^\times$.
First, we give $\mathfrak{o}$, $U$ and $F^\times$ the structures of group schemes.
Identify a formal power series $u \in \mathfrak{o}$ with
\begin{equation*}
 u = x_0 + x_1 T + \ldots + x_n T^n + \ldots,
\end{equation*}
so $\mathfrak{o} = \Spec[x_0, x_1, x_2, \ldots x_n, \ldots]$.
We define $\mathfrak{p}$ to be the prime ideal of $\mathfrak{o}$ generated by $T$,
and $U$ to be the multiplicative group of units in $\mathfrak{o}$.
There is a system of congruence subgroups $U^i \subset U$, where
$U = U^0$ and $U^i = 1 + \mathfrak{p}^n$   Therefore, $U^i \subset U^{i-1}$, and
\begin{equation*}
 U = \varprojlim_{i} U/U^i.
\end{equation*}
We will use $U_i$ to denote $U/U^i$.

$F$ is the defined by the injective limit $F = \varinjlim_{i \ge 0} T^{-i} \mathfrak{o},$
and $F^\times = \coprod_{i \in \mathbb{Z}} T^i U.$  
In particular, there is an exact sequence
\begin{equation}\label{sesfcross}
 \{1\} \to U \to F^\times \to \mathbb{Z} \xrightarrow{\deg} \{0\}.
\end{equation}
 The degree $n$ component of $F^\times$ is a naturally a $U$-torsor. For the most part, we will work
with quotients $\left(T^i U\right)/U^j$ which are of finite type over $k$.

The lie algebra of $U_j$ is $\mathfrak{o}/\mathfrak{p}^j$, and
we denote by $X_\xi$ the invariant vector field associated to $\xi \in \mathfrak{o}/\mathfrak{p}^j$.
We fix a basis
$\{X_\ell = X_{T^\ell}\}$, where
\begin{equation}\label{invariantvect}
 X_\ell (x_0, x_1, x_2, \ldots, x_j) =   \sum_{i=0}^{j-\ell} x_i \frac{\partial}{\partial x_{i + \ell}}.
\end{equation}
If $m_u$ is the map corresponding to multiplication by $u= \sum_{i=0}^j u_i T^i \in U$, then
\begin{equation*}
 (X_\ell m_u^*x_i)(e) = \frac{\partial}{\partial x_\ell} (\sum_{j=0}^{i} u_{i-j} x_j) =  \left\{   
\begin{array}{cc}
 u_{i-\ell} & \ell \le i \\
0 & \ell > i
\end{array}
\right. .
\end{equation*}
By (\ref{invariantvect}), this is the same as $(X_\ell x_i)(u)$.

Let $\lambda$ be a linear functional on $\mathfrak{o}$ that vanishes on $\mathfrak{p}^N$
for sufficiently large $N$.  Define the conductor $f$ of $\lambda$ to be the smallest
non-negative integer such that $\lambda (\mathfrak{p}^{f+1}) = 0$.  
For $M \subset \cplx$ sufficiently large,
there is a unique character sheaf
$(\mathscr{L}_\lambda, \betti{L}_\lambda) \in \MB(\diff_{U_{f+1}}, M)$ associated to $\lambda$.
\begin{proposition}
Set $\lambda_0 = \lambda (1)$, and let $M = \ratl[e^{2 \pi \sqrt{-1} \lambda_0}]$.
There is a unique t-equivariant Betti structure $\betti{L}_\lambda$  with
coefficients in $M$ corresponding to $\mathscr{L}_\lambda$ on $U_{f+1}$. 
Moreover, $(\mathscr{L}_\lambda, \betti{L}_\lambda; 1) \cong \mathbf{1}_{k, M}$.
\end{proposition}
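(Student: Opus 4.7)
The plan is to compute the monodromy of $\mathscr{L}_\lambda$ explicitly, use it to construct $\betti{L}_\lambda$ as a rank one $M$-local system with a canonical trivialization at the identity, and invoke proposition~\ref{linebundlecase} for uniqueness.

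First, since $\mathfrak{o}$ is commutative, $U_{f+1}$ is an abelian algebraic group, and by the discussion following (\ref{formdescription}) one has $\mathscr{L}_\lambda \cong (\struct_{U_{f+1}}, d + \omega_\lambda\wedge)$, where $\omega_\lambda$ is the closed left-invariant $1$-form characterized by $\omega_\lambda(X_\ell) = \lambda(T^\ell)$. As a variety $U_{f+1}\cong \Gm\times\aff^f$ via the coordinates $(x_0,x_1,\dots,x_f)$, so $(U_{f+1})^{an}$ is homotopy equivalent to $S^1$, with $\pi_1 \cong \mathbb{Z}$ generated by a small loop $\gamma$ around $0$ in the $\Gm$-factor. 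By (\ref{invariantvect}), at the identity $X_\ell = \partial/\partial x_\ell$, so $\omega_\lambda|_e = \sum_{\ell=0}^{f} \lambda(T^\ell)\,dx_\ell$; propagating by left-invariance along the subgroup $\Gm = \{(x_0,0,\dots,0)\}$ yields $\omega_\lambda|_{\Gm} = \lambda_0\,dx_0/x_0$. Hence $\int_\gamma \omega_\lambda = 2\pi\sqrt{-1}\,\lambda_0$, and the monodromy of the horizontal sections of $\mathscr{L}_\lambda^{an}$ along $\gamma$ is $e^{-2\pi\sqrt{-1}\,\lambda_0}\in M^\times$.

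The local system $(\mathscr{L}_\lambda^{an})^\nabla$ therefore descends to a rank one $M$-local system $\betti{L}_\lambda^\circ$; after the perverse shift I set $\betti{L}_\lambda = \betti{L}_\lambda^\circ[\dim U_{f+1}]$, giving a Betti structure for $\mathscr{L}_\lambda$. The trivialization of $\struct_{U_{f+1}}$ and of the constant sheaf $M_{U_{f+1}}$ at the identity induce canonical identifications $\mathscr{L}_\lambda|_1 \cong k$ and $\betti{L}_\lambda|_1 \cong M$ compatible with the complex comparison, forcing $(\mathscr{L}_\lambda,\betti{L}_\lambda;1)\cong\mathbf{1}_{k,M}$. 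To check t-equivariance, I would verify that the D-module isomorphism $\mu^\Delta\mathscr{L}_\lambda \cong \mathscr{L}_\lambda\boxtimes\mathscr{L}_\lambda$ lifts to $\mu^{-1}\betti{L}_\lambda^\circ \cong \betti{L}_\lambda^\circ\boxtimes\betti{L}_\lambda^\circ$; this reduces to the additivity of periods $\int_{\mu_*(\gamma_1,\gamma_2)}\omega_\lambda = \int_{\gamma_1}\omega_\lambda + \int_{\gamma_2}\omega_\lambda$, which follows from the left-invariance of $\omega_\lambda$ together with the commutativity of $U_{f+1}$.

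For uniqueness, suppose $\betti{L}'$ is any other t-equivariant Betti structure with coefficients in $M$. Proposition~\ref{linebundlecase} implies $\betti{L}' \cong \alpha\cdot\betti{L}_\lambda$ for some $\alpha\in k^\times$, and imposing the normalization at the identity fiber pins down $\alpha = 1$. The step I expect to be the main obstacle is the period computation above, i.e., confirming that only the $\lambda_0$ component contributes to the monodromy and no higher-order $\lambda(T^\ell)$ terms obstruct the descent to $M = \ratl[e^{2\pi\sqrt{-1}\lambda_0}]$; once this is in hand, the existence, uniqueness, and t-equivariance follow almost formally.
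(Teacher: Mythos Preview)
Your monodromy computation and existence argument are essentially the same as the paper's: both restrict to the $\Gm$-factor, see that horizontal sections look like $x_0^{\pm\lambda_0}$, and conclude that the local system descends to $M = \ratl[e^{2\pi\sqrt{-1}\lambda_0}]$. That part is fine.

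The gap is in your uniqueness argument. You write that for another t-equivariant Betti structure $\betti{L}'$, ``Proposition~\ref{linebundlecase} implies $\betti{L}' \cong \alpha\cdot\betti{L}_\lambda$ for some $\alpha\in k^\times$.'' This is not what Proposition~\ref{linebundlecase} says: it says two Betti structures are isomorphic \emph{if and only if} their fibers differ by an element of $k^\times$. A priori the fiber $(\mathscr{L}_\lambda,\betti{L}';1)$ could be an arbitrary class $(\xi)$ in $k^\times\backslash\cplx^\times/M^\times$, not just a $k^\times$-multiple of your chosen trivialization. Your phrase ``imposing the normalization at the identity fiber'' is a choice you made for $\betti{L}_\lambda$; nothing you have written forces $\betti{L}'$ to satisfy it.

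What you are missing is that the t-equivariance condition \emph{itself} forces the fiber at $1$ to be trivial, and this is how the paper closes the argument. Write $\ell = (\mathscr{L}_\lambda,\betti{L}';1)$. Pulling back the character-sheaf isomorphism $\mu^\Delta(\mathscr{L}_\lambda,\betti{L}') \cong (\mathscr{L}_\lambda,\betti{L}')\boxtimes(\mathscr{L}_\lambda,\betti{L}')$ to the point $(1,1)$ gives $\ell \cong \ell^{\otimes 2}$, hence $\ell \cong \mathbf{1}_{k,M}$. (The paper also pulls back along $G\times G\times G$ to get $\ell\cong\ell^{\otimes 3}$ and combines the two, but $\ell\cong\ell^{\otimes 2}$ already suffices.) Once you know every t-equivariant Betti structure has trivial fiber at $1$, Proposition~\ref{linebundlecase} gives uniqueness immediately. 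This idempotent trick is the missing step; with it, you also get the ``Moreover'' clause as a consequence of t-equivariance rather than as an artifact of your construction.
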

\begin{proof}
First, we show that $\betti{L}_\lambda$ exists.  The fundamental group of
$U_{f+1}$ is generated by a loop $\gamma$ around $x_0 = 0$.  Let 
$i_{\Gm} : \Gm \to U_{f+1}$
be the subgroup of constant polynomials in $U_{f+1}$. 
Up to a constant, there is one invariant vector field
$X = x_0 \frac{\partial}{\partial x_0}$ on $\Gm$, and
\begin{equation*}
i_{\Gm}^\Delta \mathscr{L}_\lambda \cong \diff_{\Gm}/<X - \lambda_0>.
\end{equation*}
The horizontal sections of $i_{\Gm}^\Delta \mathscr{L}_\lambda $
are spanned by $x_0^{\lambda_0} v_\lambda$.  Therefore,
$\pi_1(U_{f+1})$ acts on sections  
$\ell \in \left(\mathcal{H}^{-f-1} \DR(\mathscr{L}_\lambda)\right)_1$ by
$\gamma (\ell) = e^{2 \pi \sqrt{-1} \lambda_0} \ell$.
Since $e^{2 \pi \sqrt{-1} \lambda_0} \in M$, it is possible
to construct $\betti{L}_\lambda$.

By proposition \ref{linebundlecase} it suffices to consider
$(\mathscr{L}_\lambda, \betti{L}_\lambda; 1)$, where $1$ is the identity of $U_{f+1}$.
Observe that 
$(\mu^* \mathscr{L}_\lambda, \mu^* \betti{L}_\lambda; (1, 1)) \cong (\mathscr{L}_\lambda,
 \betti{L}_\lambda; 1)$.
Therefore, the condition in definition \ref{charshf} implies that there is
a natural isomorphism
\begin{equation*}
 (\mathscr{L}_\lambda, \betti{L}_\lambda; 1) \cong (\mathscr{L}_\lambda \boxtimes \mathscr{L}_\lambda,
\betti{L}_\lambda \boxtimes \betti{L}_\lambda; (1,1))
\cong (\mathscr{L}_\lambda, \betti{L}_\lambda; 1)^{\otimes^2}.
\end{equation*}
If we pull $(\mathscr{L}_\lambda, \betti{L}_\lambda)$
back to $G \times G \times G$ by multiplication, there is a similar isomorphism
$(\mathscr{L}_\lambda, \betti{L}_\lambda; 1) \cong 
(\mathscr{L}_\lambda, \betti{L}_\lambda; 1)^{\otimes^3}.$
Therefore,
\begin{multline}
 (\mathscr{L}_\lambda, \betti{L}_\lambda; 1) \cong (\mathscr{L}_\lambda, \betti{L}_\lambda; 1)^{\otimes^3}
\otimes \left[(\mathscr{L}_\lambda, \betti{L}_\lambda; 1)^{\otimes^2}\right]^{-1} 
\\ \cong 
(\mathscr{L}_\lambda, \betti{L}_\lambda; 1) \otimes
(\mathscr{L}_\lambda, \betti{L}_\lambda; 1)^{-1} \cong \mathbf{1}_{k, M}.
\end{multline}
\end{proof}

Now, suppose that $\mathscr{L}_\lambda$
is an invariant $\diff_{U_{f+1}}$-module.
Consider $i_1 : U^1_{f+1} \subset U_{f+1}$.  $U^1_{f+1}$
is nilpotent, so there is an algebraic map
$\log : U^1_{f+1} \to \mathfrak{p}/\mathfrak{p}^{f+1}$.
Therefore, since $\iota^\Delta \mathscr{L}_\lambda$
is invariant,
\begin{equation*}
i_1^\Delta \mathscr{L}_\lambda \cong 
\struct_{d (\lambda\circ \log)}.
\end{equation*}
Furthermore, there is an isomorphism of groups
$\Gm \times U^1_{f+1} \to U_{f+1}$ given by scalar multiplication.
Define $i_{\Gm}$ as above.
By proposition \ref{productG}, 
\begin{equation*}
\mathscr{L}_\lambda \cong
i_{\Gm}^\Delta (\mathscr{L}_\lambda) \boxtimes
i_1^\Delta (\mathscr{L}_\lambda).
\end{equation*}
Let $\beta_\lambda: U_{f+1} \to \aff^1$ be the composition of $\lambda \circ \log$ 
with the projection $\Gm \times U^1_{f+1} \to U_{f+1}^1$.
We have proved the following:
\begin{proposition}
$(\mathscr{L}_\lambda, \betti{L}_\lambda, \beta_\lambda)\in \MBel(\diff_{U_{f+1}}, M)$.
\end{proposition}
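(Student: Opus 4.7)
The plan is to realize $(\mathscr{L}_\lambda,\betti{L}_\lambda)$ as the twist of a regular holonomic pair by the pullback of the elementary $\diff_{\aff^1}$-module $\struct_{dt}$ along $\beta_\lambda$, reducing everything to the already-established product decomposition $U_{f+1}\cong \Gm\times U^1_{f+1}$.

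Concretely, I would set
\begin{equation*}
(\mathscr{N},\betti{N}) \;:=\; (\mathscr{L}_\lambda,\betti{L}_\lambda)\otimes \beta_\lambda^{\Delta}(\struct_{-dt},\DR(\struct_{-dt})),
\end{equation*}
so that tautologically $(\mathscr{L}_\lambda,\betti{L}_\lambda)\cong (\mathscr{N},\betti{N})\otimes\beta_\lambda^\Delta(\struct_{dt},\DR(\struct_{dt}))$. The task is then to check that $\mathscr{N}$ is regular holonomic and that $\betti{N}$ is a genuine Betti structure with coefficients in $M=\ratl[e^{2\pi\sqrt{-1}\lambda_0}]$. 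Note that $\beta_\lambda$ is a regular function because $\log:U^1_{f+1}\to\mathfrak{p}/\mathfrak{p}^{f+1}$ is polynomial on the unipotent group and $\lambda$ is linear, so the twist is well-defined and, since $\beta_\lambda$ is a smooth morphism to $\aff^1$, non-characteristic for $\mathscr{L}_\lambda$.

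Next, I would apply proposition \ref{productG} to the product decomposition, together with the two formulas established just before the proposition, to write
\begin{equation*}
\mathscr{L}_\lambda\;\cong\; i_{\Gm}^{\Delta}\mathscr{L}_\lambda\boxtimes\struct_{d(\lambda\circ\log)}.
\end{equation*}
Because $\beta_\lambda$ factors as $\Gm\times U^1_{f+1}\twoheadrightarrow U^1_{f+1}\xrightarrow{\lambda\circ\log}\aff^1$, its twist satisfies $\beta_\lambda^\Delta\struct_{dt}\cong\struct_{\Gm}\boxtimes\struct_{d(\lambda\circ\log)}$. Tensoring cancels the elementary factor on the $U^1_{f+1}$-side, giving
\begin{equation*}
\mathscr{N}\;\cong\; i_{\Gm}^{\Delta}\mathscr{L}_\lambda\boxtimes\struct_{U^1_{f+1}}.
\end{equation*}
The first factor is the Kummer $\diff_{\Gm}$-module $\mathscr{L}_{\lambda_0}$ with connection $d+\lambda_0\,dx_0/x_0\wedge$, which is regular singular at $0$ and $\infty$; the second is trivial. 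Hence $\mathscr{N}$ is a regular holonomic line bundle with flat connection.

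On the Betti side, the same decomposition gives $\betti{L}_\lambda\cong\betti{L}_{\lambda_0}\boxtimes(\DR(\struct_{d(\lambda\circ\log)}))$ (where $\betti{L}_{\lambda_0}$ is the Kummer local system spanned by $x_0^{\lambda_0}$, defined over $M$ by the choice of $M$ in the previous proposition). Twisting by $\beta_\lambda^\Delta\DR(\struct_{-dt})\cong M_{\Gm}\boxtimes \DR(\struct_{-d(\lambda\circ\log)})$ cancels the exponential factor over $M$, yielding
\begin{equation*}
\betti{N}\;\cong\;\betti{L}_{\lambda_0}\boxtimes M_{U^1_{f+1}}.
\end{equation*}
The compatibility isomorphism $\betti{N}\otimes_M\cplx\cong\DR(\mathscr{N})$ is inherited from that of $(\mathscr{L}_\lambda,\betti{L}_\lambda)$ and the tautological compatibility for $(\struct_{\pm dt},\DR(\struct_{\pm dt}))$. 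Thus $(\mathscr{N},\betti{N})\in\MB(\diff_{U_{f+1}},M)$ is regular holonomic, and $(\mathscr{L}_\lambda,\betti{L}_\lambda,\beta_\lambda)$ meets the definition of an object in $\MBel(\diff_{U_{f+1}},M)$. The only nontrivial point is the careful matching of the Betti twist with the Stokes data on the $U^1_{f+1}$-factor, but since the product decomposition of the Stokes filtration is determined entirely by $d(\lambda\circ\log)$, this step is a direct calculation rather than a genuine obstacle.
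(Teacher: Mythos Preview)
Your proposal is correct and takes essentially the same approach as the paper: the paper's proof is the discussion immediately preceding the proposition (concluding with ``We have proved the following''), which uses the product decomposition $U_{f+1}\cong\Gm\times U^1_{f+1}$ and Proposition~\ref{productG} to identify $\mathscr{L}_\lambda$ as the external tensor product of the regular Kummer module on $\Gm$ with the elementary module $\struct_{d(\lambda\circ\log)}$ on $U^1_{f+1}$. Your explicit untwisting by $\beta_\lambda^\Delta\struct_{-dt}$ to isolate the regular part $\mathscr{N}$ is a more detailed version of the same argument.
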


We will need to work with character sheaves on  $F^\times_{f+1} = F^\times/U^{f+1}$.  
As before, there is a map
$\deg : F^\times_{f+1} \to \mathbb{Z}$ with kernel $U_{f+1}$, so we
  write $F^{(n)}_{f+1}$ for the degree $n$ component of $F^\times_{f+1}$.

A character sheaf $(\mathscr{L}, \betti{L})$ on $F^\times_{f+1}$ is given by a collection of pairs 
$(\mathscr{L}^{(n)}, \betti{L}^{(n)}) \in \MB(\diff_{F^{(n)}_{f+1}}, M)$ that are invariant
under multiplication $\mu : F^{(n)} \times F^{(m)} \to F^{(n+m)}$. Suppose that
the infinitesimal character of $(\mathscr{L}, \betti{L})$ is $\lambda$, so
$(\mathscr{L}^{(0)}, \betti{L}^{(0)}) \cong (\mathscr{L}_\lambda, \betti{L}_\lambda)$. 
Since $F^\times_{f+1}$ is disconnected, $(\mathscr{L}, \betti{L})$ is not uniquely determined by
its infinitesimal character.

\begin{proposition}\label{charshfFcross}
Fix a uniformizer $T \in F^{(1)}_{f+1}$, and let $\mu_T$ correspond to multiplication by $T$.
Then, $(\mathscr{L}, \betti{L})$ is uniquely determined up to its infinitesimal character
and the fiber $(\mathscr{L}, \betti{L}; T) = \ell$.   Furthermore, on each connected component,
$(\mathscr{L}^{(n)}, \betti{L}^{(n)}, \beta^{(n)}) \in \MBel(\diff_{F^{(n)}_{f+1}}, M)$,
where  $\mu_{T^n}^* \beta_\lambda=\beta^{(n)}$.
\end{proposition}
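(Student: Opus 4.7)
The plan is to decompose $F^\times_{f+1}$ into its connected components $F^{(n)}_{f+1}$ and transport the structure on the identity component $U_{f+1}$ to each other component by multiplication by a power of the uniformizer $T$. Fix $n \in \mathbb{Z}$ and let $\mu_{T^n} : F^{(n)}_{f+1} \xrightarrow{\sim} U_{f+1}$ be the isomorphism of varieties induced by the group multiplication by $T^{-n}$ (equivalently, the inverse of right multiplication by $T^n$ applied to $1$). Since $F^\times_{f+1}$ is commutative, right and left multiplication coincide, so proposition \ref{inv3} gives a natural isomorphism
\begin{equation*}
\mu_{T^n}^\Delta (\mathscr{L}_\lambda, \betti{L}_\lambda) \cong (\mathscr{L}, \betti{L}; T^n) \otimes (\mathscr{L}^{(n)}, \betti{L}^{(n)}),
\end{equation*}
which, after tensoring with the inverse of the constant line $(\mathscr{L}, \betti{L}; T^n)$, shows that $(\mathscr{L}^{(n)}, \betti{L}^{(n)})$ is determined up to isomorphism by $\lambda$ together with the class of the fiber $(\mathscr{L}, \betti{L}; T^n)$ in $\ell(k, M)$.

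Next I would compute the fiber $(\mathscr{L}, \betti{L}; T^n)$ in terms of $\ell = (\mathscr{L}, \betti{L}; T)$. Applying definition \ref{charshf} to the multiplication map $F^\times_{f+1} \times F^\times_{f+1} \to F^\times_{f+1}$ and restricting to the point $(T, T^{n-1})$ yields the canonical identification
\begin{equation*}
(\mathscr{L}, \betti{L}; T^n) \cong (\mathscr{L}, \betti{L}; T) \otimes (\mathscr{L}, \betti{L}; T^{n-1}).
\end{equation*}
Iterating gives $(\mathscr{L}, \betti{L}; T^n) \cong \ell^{\otimes n}$ for $n \ge 0$, and for $n < 0$ I would combine this with proposition \ref{inversedual} to extend the identification to all of $\mathbb{Z}$. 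Combined with the previous step and proposition \ref{linebundlecase} (which controls ambiguity of Betti structures on a single line bundle with connection), this proves the first assertion: a character sheaf on $F^\times_{f+1}$ is determined up to isomorphism by the pair $(\lambda, \ell)$.

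For the second assertion, I would use that $(\mathscr{L}_\lambda, \betti{L}_\lambda, \beta_\lambda) \in \MBel(\diff_{U_{f+1}}, M)$ has been established on the identity component, where $\beta_\lambda$ is the Morse function coming from $\lambda \circ \log$ on $U^1_{f+1}$ extended by the projection $\Gm \times U^1_{f+1} \to U^1_{f+1}$. Tensoring this elementary structure with the degree-$0$ pair of lines $\ell^{\otimes n}$ preserves the elementary presentation without altering the Morse function, so
\begin{equation*}
\mu_{T^n}^\Delta(\mathscr{L}^{(n)}, \betti{L}^{(n)}) \cong \ell^{\otimes n} \otimes (\mathscr{L}_\lambda, \betti{L}_\lambda)
\end{equation*}
is an elementary pair on $U_{f+1}$ with Morse function $\beta_\lambda$. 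Since $\mu_{T^n}$ is an isomorphism of varieties, the elementary structure transports back, giving $(\mathscr{L}^{(n)}, \betti{L}^{(n)}, \beta^{(n)}) \in \MBel(\diff_{F^{(n)}_{f+1}}, M)$ with $\beta^{(n)}$ characterized by $\mu_{T^n}^* \beta_\lambda = \beta^{(n)}$.

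The only step requiring care is verifying that the natural isomorphism $(\mathscr{L}, \betti{L}; T^n) \cong \ell^{\otimes n}$ is canonical enough to patch across components and is compatible with the Betti--De Rham comparison isomorphism tensored in from definition \ref{linetensor}; this is a routine but notationally delicate check using the associativity of $\mu^\Delta$ already built into the character sheaf axiom. The rest of the argument is essentially bookkeeping around propositions \ref{inv3}, \ref{linebundlecase}, and \ref{inversedual}.
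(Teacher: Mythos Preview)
Your proposal is correct and follows essentially the same route as the paper's proof: transport the structure from the identity component to $F^{(n)}_{f+1}$ via multiplication by $T^n$, use invariance (i.e.\ proposition \ref{inv3} and the character sheaf axiom) to identify the fiber at $T^n$ with $\ell^{\otimes n}$, and invoke proposition \ref{linebundlecase} for uniqueness. The paper's proof is terser but the content is the same; your added detail on iterating to obtain $\ell^{\otimes n}$ and on transporting the Morse function is fine, and the canonicity concern you flag at the end is indeed routine and is not spelled out in the paper either. One small notational slip: with your convention that $\mu_{T^n}$ denotes multiplication by $T^{-n}$, proposition \ref{inv3} produces the fiber $(\mathscr{L}, \betti{L}; T^{-n})$ rather than $(\mathscr{L}, \betti{L}; T^{n})$ in your first displayed formula, but this is harmless since you immediately tensor with the inverse.
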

\begin{proof}
 By invariance,  $(\mathscr{L}^{(n)}, \betti{L}^{(n)}; T^n) = \ell^{\otimes n}$ and
$\mu_{T^n}^* (\mathscr{L}^{(n)}) \cong \mathscr{L}_\lambda$.  Therefore, $\mathscr{L}^{(n)}$ is elementary.
Proposition \ref{linebundlecase} implies uniqueness.
\end{proof}

By proposition \ref{inv3},
\begin{equation*}
 (\mathscr{L}, \betti{L}; u T) \cong
(\mathscr{L}_\lambda, \betti{L}_\lambda; u) \otimes \ell;
\end{equation*}
therefore, changing $T$ by a unit $u$ twists $\ell$ by
$(\mathscr{L}_\lambda, \betti{L}_\lambda; u)$.

We say that $(\mathscr{L}, \betti{L})$ is \emph{unramified} if the restriction of
$\mathscr{L}$ to $U_j$ is the trivial connection $\struct_{U_j}$ for all $j$.  Equivalently,
this means that the conductor $f$ of $\mathscr{L}$ is $0$, and 
$\lambda (x_0 + x_1 T + \ldots) =n x_0$ for some $n \in \mathbb{Z}$.  Otherwise,
$(\mathscr{L}, \betti{L})$ is ramified; in this case,
we say that $(\mathscr{L}, \betti{L})$ has tame ramification if $f = 0$
and wild ramification if $f > 0$.

Let $K$ be the laurent series field $k((t))$, and let $\Delta^\times$ be an analytic
punctured disk with a morphism $\Spec(K) \to \Delta^\times$.  There is a canonical
map $\delta : \Spec(K) \to F^{(1)}_{f+1}$ given (formally) by
\begin{equation*}
 \delta (t) = \frac{T}{T-t} = -\sum_{i = 1}^{f} (\frac{T}{t})^{i}.
\end{equation*}

$\delta$ is canonical in the following sense:  identify $K$ with the field of 
Laurent series at $0 \in \proj^1_k$, and let $G = \Pic(\proj^1, a (0) + (\infty))$
be the generalized Picard group of line bundles with order $a$ trivialization
at $0$ and order $1$ trivialization at $\infty$.  Then,
\begin{equation*}
 G \cong \left[F^\times_a \times \left(k((t^{-1}))/t^{-1} k[[t^{-1}]]\right)\right]/\struct_{\Gm} \cong 
 F_a^\times.
\end{equation*}
Under this identification, $\delta$ is the localization of the divisor map $\proj^1 - \{0, \infty\} \to G$.

The following is a variation of theorem 3.17 in \cite{BE2}, and (2.26) in \cite{BE3}.
\begin{proposition}[Local Class Field Theory]\label{localclassfieldtheory}
 Let $K = k((t))$, and let $L$ be a $K$ line with connection $\nabla_L$. Let $i(L)$
be the irregularity index of $L$, and $a = i(L) +1$.  Furthermore,
let $\widetilde{L}$ be an extension of $L$ to an analytic punctured disc, and let
$\mathbf{L}$ be a Betti structure for $\widetilde{L}$ with coefficients in $M$.  There exists
a unique character sheaf $(\mathscr{L}, \betti{L})$ on $F^\times_a$
with the property that $\delta^\Delta \mathscr{L} \cong L$ and 
$\widetilde{\delta}^\Delta (\mathscr{L}, \betti{L}) \cong (\widetilde{L}, \mathbf{L}).$

Furthermore, if we trivialize $L \cong K$, then the infinitesimal character of $(\mathscr{L}, \betti{L})$, $\lambda$, 
is the functional with conductor $f = a-1$ defined on $\mathfrak{o}$ by
\begin{equation*}
 \lambda (g) = - \Res (\nabla_L(1) g).
\end{equation*}

\end{proposition}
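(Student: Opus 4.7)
The plan is to construct $(\mathscr{L}, \betti{L})$ from its explicit infinitesimal character, invoke Proposition \ref{charshfFcross} to extend it to all of $F^\times_a$, and then verify compatibility with $(L, \mathbf{L})$ by an explicit residue computation.

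Fix the trivialization $L \cong K$, so that $\nabla_L(1) = \omega \in \Omega^1_K$ is a meromorphic one-form, and define $\lambda \in \mathfrak{o}^\vee$ by $\lambda(g) = -\Res(\omega g)$. The irregularity formula $i(L) = \max(0, -\ord(\omega)+1)$ gives $\lambda(\mathfrak{p}^{a}) = 0$, so $\lambda$ has conductor $f \le a-1$, and Definition \ref{Llambda} produces the invariant $\diff$-module $\mathscr{L}_\lambda$ on $U_a = F^{(0)}_a$ together with its Betti structure $\betti{L}_\lambda$ with coefficients in $M = \ratl(e^{2\pi\sqrt{-1}\lambda(1)})$. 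By Proposition \ref{charshfFcross}, extending $(\mathscr{L}_\lambda, \betti{L}_\lambda)$ to a character sheaf on $F^\times_a$ is equivalent to choosing a fiber $\ell = (\mathscr{L}, \betti{L}; T) \in \ell(k, M)$. I fix $\ell$ by demanding that $\widetilde\delta^\Delta (\mathscr{L}, \betti{L})$ and $(\widetilde{L}, \mathbf{L})$ agree at a single point; by invariance, the remaining data on $F^\times_a$ are then forced.

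It remains to verify $\delta^\Delta \mathscr{L} \cong L$ as connections on $\Spec(K)$. By the form-description of $\mathscr{L}_\lambda$ in \eqref{formdescription}, $\mathscr{L}_\lambda$ is the trivial line bundle with connection $d + \omega_\lambda \wedge$, where $\omega_\lambda$ is the invariant one-form dual to the basis $\{X_\ell\}$ from \eqref{invariantvect} via $\omega_\lambda(X_\ell) = \lambda(T^\ell)$. Using the expansion $\delta(t) = -\sum_{i=1}^{f} T^i t^{-i} \pmod{U^{f+1}}$, pull back $\omega_\lambda$ to $\Omega^1_K$ and expand in Laurent series: the resulting form has coefficients $\lambda(T^\ell) = -\Res(T^\ell \omega)$ paired against the dual basis $\{t^{-\ell-1}dt\}$, which reconstructs the principal part of $-\omega$ modulo a regular one-form. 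That regular remainder is exactly what is pinned down by the fiber $\ell$ in the previous step, and the Betti identification follows by Riemann--Hilbert once the connection is matched. Uniqueness is then immediate from Proposition \ref{charshfFcross}, since the infinitesimal character is recovered from the connection form via the residue pairing and the fiber at $T$ is recovered from any single fiber of the $\delta$-pullback.

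The main obstacle is the explicit residue computation identifying $\delta^{*} \omega_\lambda$ with $\omega$. At stake is a form of local class field theory for $F^\times$: the Lie algebra $\mathfrak{o}/\mathfrak{p}^{a}$ of $U_a$ is paired against the additive space of meromorphic forms on $\Spec(K)$ by the residue, and the map $\delta(t) = T/(T-t)$ is precisely the geometric incarnation of this duality, so that the pullback of the tautological invariant form on $F^\times_a$ realises it as an honest equality of one-forms. Once this identification is made, everything else in the proposition is a formal consequence of Proposition \ref{charshfFcross} and Riemann--Hilbert.
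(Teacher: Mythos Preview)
Your approach is the paper's: define $\lambda$ by the residue pairing, build $(\mathscr{L}_\lambda,\betti{L}_\lambda)$ on $U_a$, extend across $F^\times_a$ via Proposition~\ref{charshfFcross}, and check $\delta^\Delta\mathscr{L}\cong L$ by pulling back the invariant form $\omega_\lambda$. The paper does exactly this, citing \cite{BE3}, (2.26), for the key identity $\delta^*(\omega_{\lambda_i})=t^{-i-1}\,dt$ with $\lambda_i(g)=-\Res(t^{-i-1}g\,dt)$, and then fixes the Betti side by matching stalks at a basepoint via Proposition~\ref{linebundlecase}.

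Two corrections are needed. First, your residue sketch has a sign slip: writing $\lambda=\sum_i a_i\lambda_i$ forces $a_i=-\lambda(T^i)$ (since $\lambda_i(T^j)=-\delta_{ij}$), so $\delta^*\omega_\lambda=-\sum_\ell\lambda(T^\ell)\,t^{-\ell-1}\,dt$, not $+\sum_\ell$. With the correct sign the pullback recovers the principal part of $\omega$ itself; as you have it, you would land on $L^\vee$ rather than $L$. Second, the sentence ``that regular remainder is exactly what is pinned down by the fiber $\ell$'' conflates two separate issues. The regular part of $\omega$ is irrelevant to the isomorphism class of $L$ over $K$ (any regular one-form on $\Spec k[[t]]$ is $d\log u$ for a unit $u$), so there is nothing to pin down on the $\diff$-module side. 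The fiber $\ell$ enters only through the Betti structure: since $\delta$ lands in $F^{(1)}_a$, the choice of $\ell$ amounts to choosing $(\betti{L})_{\tilde\delta(x)}$ to agree with $(\mathbf{L})_x$ at a basepoint $x\in\Delta^\times$, which is precisely how the paper invokes Proposition~\ref{linebundlecase}. Once you separate these two roles and fix the sign, your argument coincides with the paper's.
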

\begin{proof}
Let $\lambda_i (g) = - \Res (t^{-i -1} g)$, $0 \le i \le f+1$.  By explicit calculation (see \cite{BE3}, 2.26),
$\delta^* (\omega_{\lambda_i}) = t^{-i-1} dt$.  Using  the description of $\mathcal{L}_\lambda$
as in line (\ref{formdescription}), it follows that $\delta^* \mathcal{L}_\lambda \cong L$.
Fix a basepoint $x \in \Delta^\times$.
By proposition \ref{linebundlecase}, it suffices to choose $\betti{L}_\lambda$
with the property that $(\betti{L}_\lambda)_{\tilde{\delta}(x)} = (\mathbf{L})_{x}$.  Proposition
\ref{charshfFcross} states that this determines $\betti{L}_\lambda$ uniquely.
\end{proof}

Under class field theory, connections that admit
a smooth continuation across $0$ correspond to unramified character sheaves;
otherwise, connections that have regular singular points correspond to the tamely ramified case
and connections with irregular singular points correspond to wild ramification.

 \section{Gauss Sums on $F$}\label{Sec:Gauss Sums}
In this section, we will construct a Gauss Sum involving a character sheaf,
Our motivation comes from the classical theory of  Gauss sums over a local field $K$.
In the original setting, $\psi$ is an additive character of $K$, and $\chi$ is a 
multiplicative character with conductor $f$; then, the Gauss sum is defined by
\begin{equation*}
\tau(\chi, \psi) = \sum_{u \in \mathfrak{u}/\mathfrak{u}^{f+1}} \chi (c^{-1} u) \psi (c^{-1} u),
\end{equation*}
for some $c \in K$.\footnote{In fact, the sum vanishes if $c$ is not chosen in the proper degree,
but it is otherwise independent of the choice of $c$.}
\subsection{Gauss Sums}
First, we define an additive character of $F$ associated to a form $\nu \in \Omega^1_{K/k}$.  
Define a functional $\psi_\nu : F \to \Ga$ by
\begin{equation*}
 \psi_\nu(g) = \Res (g \nu).
\end{equation*}
Fix two integers $N > n$, and suppose that $\mathfrak{p}^N \subset \ker(\psi_\nu)$.  
Then $\psi_\nu$ descends to a functional on $\mathfrak{p}^n/\mathfrak{p}^N$.
Define $c(\nu)$ to be the smallest number such that $\mathfrak{p}^{-c(\nu)} \subset \ker(\psi_\nu)$;
in particular $c(\nu) = \ord(\nu)$, the order of the zero (or pole) of $\nu$.
Let 
$(\mathscr{M}'_{\psi_\nu}, \betti{M}'_{\psi_\nu}, \psi_\nu) \in 
\MBel(\diff_{\mathfrak{p}^n/\mathfrak{p}^N}, \ratl)$ be additive invariant character sheaf
 defined by $\psi_\nu$.  We will denote the 
 restriction of $(\mathscr{M}'_{\psi_\nu}, \betti{M}'_{\psi_\nu}, \psi_\nu)$
 to $F^{(n)}_{N-n}$ by  $(\mathscr{M}_{\psi_\nu}, \betti{M}_{\psi_\nu}, \psi_\nu)$.

In the following, let $g \in F^\times$, and 
$\mu_g : F^{(n)}_{N-n} \to F^{(n+\deg(g))}_{\deg(g) +N-n}$ be the map corresponding
to multiplication by $g$.
\begin{proposition}\label{addchar}
Let $(\mathscr{M}_{\psi_\nu}, \betti{M}_{\psi_\nu}) \in \MBel (\diff_{ F^{(n)}_{N-n} }, M)$ 
be defined as above.  Then,
$\dpulld{\mu_g} (\mathscr{M}_{\psi_\nu}, \betti{M}_{\psi_\nu}) \cong
(\mathscr{M}_{\psi_{g\nu}}, \betti{M}_{\psi_{g\nu}}).$
\end{proposition}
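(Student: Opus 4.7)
The proof should reduce to a direct linear-algebra computation, since both sides are additive character sheaves determined by a linear functional on a vector group, and the pullback should just compose the functional with the map.

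First, I would enlarge the map to work on the ambient additive group. Since $g \in F^\times$ acts on $F$ by $k$-linear multiplication and shifts the filtration by $\deg g$, the map $\mu_g$ extends to a $k$-linear isomorphism of additive groups
\begin{equation*}
\tilde{\mu}_g : \mathfrak{p}^n / \mathfrak{p}^N \to \mathfrak{p}^{n+\deg g} / \mathfrak{p}^{N + \deg g},
\end{equation*}
whose restriction to the affine open subscheme $F^{(n)}_{N-n} \subset \mathfrak{p}^n/\mathfrak{p}^N$ (the locus where the leading coefficient is invertible) coincides with $\mu_g$. So it suffices to prove the corresponding statement for the extended character sheaves $(\mathscr{M}'_{\psi_\nu}, \betti{M}'_{\psi_\nu})$ on $\mathfrak{p}^n/\mathfrak{p}^N$ and then restrict.

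Next I would compute the pullback of the functional. For any $h \in \mathfrak{p}^n/\mathfrak{p}^N$, $k$-linearity of the residue gives
\begin{equation*}
(\psi_\nu \circ \tilde{\mu}_g)(h) \;=\; \Res(gh\,\nu) \;=\; \Res(h \cdot g\nu) \;=\; \psi_{g \nu}(h),
\end{equation*}
so the pulled-back linear functional is exactly the one defining $(\mathscr{M}'_{\psi_{g\nu}}, \betti{M}'_{\psi_{g\nu}})$. (The relevant conductor conditions transform correctly because $c(g\nu) = c(\nu) + \deg g$.)

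Finally I would invoke the naturality of the construction in Definition \ref{Llambda} under homomorphisms of commutative algebraic groups: if $\phi : G_1 \to G_2$ is a homomorphism of (abelian) algebraic groups over $k$ and $\lambda \in \mathfrak{g}_2^\vee$ vanishes on $[\mathfrak{g}_2,\mathfrak{g}_2]$ (here both groups are abelian, so this is automatic), then $\phi^\Delta \mathscr{L}_\lambda \cong \mathscr{L}_{\lambda \circ d\phi}$; this is immediate from the description $\mathscr{L}_\lambda = (\struct_G, d + \omega_\lambda \wedge)$ of equation (\ref{formdescription}), since $\phi^*\omega_\lambda = \omega_{\lambda \circ d\phi}$ in the abelian case. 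The horizontal section $e^{-\lambda}$ pulls back to $e^{-\lambda \circ \phi}$, which identifies the Betti structures as well. Applying this with $\phi = \tilde{\mu}_g$ and $\lambda = \psi_\nu$, together with the previous residue computation, yields
\begin{equation*}
\tilde{\mu}_g^\Delta (\mathscr{M}'_{\psi_\nu}, \betti{M}'_{\psi_\nu}) \;\cong\; (\mathscr{M}'_{\psi_{g\nu}}, \betti{M}'_{\psi_{g\nu}}),
\end{equation*}
and restricting to $F^{(n)}_{N-n}$ gives the proposition. There is no real obstacle: the statement is essentially the functoriality of the character-sheaf construction, with the only mildly delicate point being bookkeeping of the shift in conductor/filtration under multiplication by $g$, which the extension to $\tilde{\mu}_g$ handles cleanly.
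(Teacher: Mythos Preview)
Your proof is correct and follows the same approach as the paper. The paper's argument is the two-line version of what you wrote: ``Since $\mu_g^*(\psi_\nu) = \psi_{g\nu}$, $\mu_g^\Delta(\mathscr{M}_{\psi_\nu}) \cong \mathscr{M}_{\psi_{g\nu}}$. The same holds for $\betti{M}_{\psi_\nu}$.'' Your extension to the ambient additive group $\tilde{\mu}_g$, the explicit residue computation $\Res(gh\,\nu) = \Res(h\cdot g\nu)$, and the appeal to naturality of the construction in (\ref{formdescription}) are precisely the details the paper suppresses.
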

\begin{proof}
Since $\mu_g^* (\psi_\nu) = \psi_{g \nu}$, 
$\dpulld{\mu_g} (\mathscr{M}_{\psi_\nu}) \cong \mathscr{M}_{\psi_{g \nu}}$.
The same holds for $\betti{M}_{\psi_\nu}$.
\end{proof}
\begin{definition}
Let $(\mathscr{L}, \betti{L}, \beta)$ be a character sheaf on $F^\times_{f+1}$
with infinitesimal character $\lambda$, and suppose that $\lambda$
has conductor $f$.
For convenience, write $a(\mathscr{L}) = f+1$.  Take $\gamma \in F^\times$ of degree
$c(\nu) + a(\mathscr{L})$.  Then,
\begin{equation*}
 \tau(\mathscr{L}, \betti{L}; \nu)  = 
 R \Gamma_c (\gamma^{-1} U_{f+1}; (\mathscr{L}, \betti{L}, \beta) \otimes (\mathscr{M}_{\psi_\nu} , \betti{M}_{\psi_\nu},
 \psi_\nu)).
\end{equation*}

\end{definition}
\begin{proposition}\label{epsiloninvariant}
Let $g \in F^\times$.  There is a natural isomorphism
 \begin{equation*}
  \tau((\mathscr{L}, \betti{L}); g \nu) \cong \tau ((\mathscr{L}, \betti{L}); \nu) \otimes
(\mathscr{L}, \betti{L}; g)^{-1}.
 \end{equation*}
\end{proposition}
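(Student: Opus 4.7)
The plan is to exploit multiplication by $g$ as an isomorphism of varieties and track how each factor in the integrand transforms.  First I would note the degree identity $c(g\nu)=c(\nu)+\deg(g)$, which comes directly from $\ord(g\nu)=\ord(g)+\ord(\nu)$.  Consequently, if $\gamma\in F^\times$ is a valid choice for $\nu$ (so $\deg\gamma=c(\nu)+a(\mathscr{L})$), then $\gamma':=g\gamma$ is a valid choice for $g\nu$.  The domain of integration for $\tau(\mathscr{L},\betti{L};g\nu)$ is then $(\gamma')^{-1}U_{f+1}=\gamma^{-1}g^{-1}U_{f+1}$.

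Next, consider multiplication by $g$, i.e.\ $\mu_g:F^\times_{f+1}\to F^\times_{f+1}$, $x\mapsto gx$ (using that $F^\times_{f+1}$ is abelian).  This is an isomorphism of varieties and it restricts to an isomorphism
\begin{equation*}
\mu_g:(\gamma')^{-1}U_{f+1}\xrightarrow{\;\sim\;}\gamma^{-1}U_{f+1}.
\end{equation*}
Because $\mu_g$ is an isomorphism, pullback along $\mu_g$ preserves $R\Gamma_c$.  Applying $\mu_g^\Delta$ to the integrand, proposition \ref{inv3} gives
\begin{equation*}
\mu_g^\Delta(\mathscr{L},\betti{L})\cong(\mathscr{L},\betti{L};g)\otimes(\mathscr{L},\betti{L}),
\end{equation*}
while proposition \ref{addchar} gives
\begin{equation*}
\mu_g^\Delta(\mathscr{M}_{\psi_\nu},\betti{M}_{\psi_\nu})\cong(\mathscr{M}_{\psi_{g\nu}},\betti{M}_{\psi_{g\nu}}).
\end{equation*}
Tensoring and using that $(\mathscr{L},\betti{L};g)$ is a constant line (so it pulls out of $R\Gamma_c$) yields
\begin{equation*}
\tau(\mathscr{L},\betti{L};\nu)\cong(\mathscr{L},\betti{L};g)\otimes\tau(\mathscr{L},\betti{L};g\nu),
\end{equation*}
from which the stated formula follows after tensoring with $(\mathscr{L},\betti{L};g)^{-1}$.

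The only subtlety is bookkeeping of the Morse functions: the elementary datum $(\mathscr{L},\betti{L},\beta)$ on the component $F^{(\deg\gamma')}_{f+1}$ is obtained from $(\mathscr{L}_\lambda,\betti{L}_\lambda,\beta_\lambda)$ by pullback along $\mu_{T^{-\deg\gamma'}}$ (proposition \ref{charshfFcross}), and similarly for the $\psi_\nu$-factor, so naturality of $\mu_g^\Delta$ ensures the Morse functions add and transport correctly under the isomorphism.  This is more a verification than an obstacle; the only real content is that proposition \ref{inv3} and proposition \ref{addchar} together account for every transformation that occurs under the translation $\mu_g$, and the degree count on $\gamma$ is exactly the one needed for $\mu_g$ to relate the two domains of integration.
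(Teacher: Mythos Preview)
Your proof is correct and follows essentially the same approach as the paper: both arguments pull back along multiplication by $g$ from $g^{-1}\gamma^{-1}U_{f+1}$ to $\gamma^{-1}U_{f+1}$, invoke proposition~\ref{inv3} for the character sheaf and proposition~\ref{addchar} for the additive factor, and then factor out the constant line $(\mathscr{L},\betti{L};g)$ from $R\Gamma_c$. Your version is slightly more explicit about the degree bookkeeping $c(g\nu)=c(\nu)+\deg(g)$ and the Morse-function compatibility, but the substance is identical.
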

\begin{proof}
 Let $\mu_g : g^{-1} \gamma^{-1} U_{f+1} \to \gamma^{-1} U_{f+1}$ be the map corresponding
to multiplication by $g$.  Then, $\dpulld{\mu_g} (\mathscr{L}, \betti{L}) \cong (\mathscr{L}, \betti{L}) \otimes (\mathscr{L}, \betti{L}, g)$ 
by proposition \ref{inv3}, and 
$\dpulld{\mu_g} (\mathscr{M}_{\psi_\nu}, \betti{M}_{\psi_\nu})
\cong (\mathscr{M}_{\psi_{g\nu}}, \betti{M}_{\psi_{g\nu}})$ by proposition
\ref{addchar}.
Therefore,
\begin{equation*}
\dpulld {\mu_g} (\mathscr{L} \otimes \mathscr{M}_{\psi_\nu}, \betti{L} \otimes \betti{M}_{\psi_\nu})
) \cong 
 (\mathscr{L}, \betti{L}, g) \otimes\left(\mathscr{L} \otimes \mathscr{M}_{\psi_{g \nu}},
\betti{L} \otimes \betti{M}_{\psi_{g \nu}}  \right).
\end{equation*}
It follows that 
\begin{multline}
R \Gamma_c (\gamma^{-1} U_{f+1}; (\mathscr{L}, \betti{L}, \beta) \otimes (\mathscr{M}_{\psi_\nu} , \betti{M}_{\psi_\nu},  \psi_\nu))
\cong \\
(\mathscr{L}, \betti{L}; g) \otimes R \Gamma_c (g^{-1}\gamma^{-1} U_{f+1}; (\mathscr{L}, \betti{L}, \beta) \otimes (\mathscr{M}_{\psi_{g\nu}} , \betti{M}_{\psi_{g\nu}},  \psi_\nu)),
\end{multline}
which proves the proposition.
\end{proof}

 Fix a generator $T$ of $\mathfrak{p}$.  If the conductor of 
$\lambda$ is $f$, we define the dual blob of $\lambda$ to be
$\delta_\lambda = \lambda(T^{f})$.  Notice that when $f = 0$, 
$\delta_\lambda = \lambda(1)$.  Moreover, when $f \ge 2$,
the line $((\sqrt{\delta_\lambda})^{f+1}) \in \ell(k, M)$
is independent of $T$:  when $f$ is odd, it is the trivial line;
when $f$ is even, a different choice of $T$ changes $\delta_\lambda$
by a square.

The following is a refinement of theorem \ref{GaussSumCalcintro} from 
the introduction.
\begin{theorem}\label{GaussSumCalc}
Suppose that $\mathscr{L} (U) \cong \mathscr{L}_\lambda$.  Define
$g_\lambda \in F^\times$ to be the element with the property
$-\psi_{g_\lambda \nu} = \lambda$,  and
$g_\nu$ to be the element with the property that $\psi_\nu (g_\nu) = -1$.  
As before, set $a = a(\mathscr{L}) = f+1$.
Then,
 \begin{equation*}
  \tau(\mathscr{L}, \betti{L}; \nu) \cong
\left\{
\begin{array}{cc} 
(e^{2 \pi \sqrt{-1} \delta_\lambda}-1)^{-1} \otimes(\Gamma (\delta_\lambda))^{-1} \otimes 
(\mathscr{L}, \betti{L}, g_\nu) & a = 1; \\
(e^{-\Res(g_\lambda \nu)}) \otimes ((\sqrt{\frac{\delta_\lambda}{2\pi}})^a) \otimes (\sqrt{-1})^{\lfloor \frac{a}{2} \rfloor} \otimes 
(\mathscr{L}, \betti{L}, g_\lambda), & a > 1.
\end{array}
\right.
 \end{equation*}
 Indeed, $(e^{2 \pi \sqrt{-1} \delta_\lambda}-1)^{-1} \in M$, so it may be  omitted in the
 case $a = 0$.
\end{theorem}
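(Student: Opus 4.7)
The strategy is to reduce the Gauss sum to the period computations of Examples \ref{gammafun} and \ref{gaussianintegral}, using Proposition \ref{epsiloninvariant} to translate to a canonical base point and the K\"unneth formula (Theorem \ref{kunnethbetti}) to decouple the resulting integral into one-variable pieces. By Proposition \ref{epsiloninvariant}, any change of $\gamma$ by a factor $g \in F^\times$ twists $\tau(\mathscr{L},\betti{L};\nu)$ by $(\mathscr{L},\betti{L};g)^{-1}$, so we are free to choose $\gamma$ so that pulling back via $\mu_\gamma$ turns the integral on $\gamma^{-1}U/U^{f+1}$ into an integral on $U/U^{f+1}$ whose integrand is as simple as possible. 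Under such a pull-back, Proposition \ref{addchar} replaces the additive character $\psi_\nu$ by $\psi_{\gamma^{-1}\nu}$, and by choosing $\gamma$ appropriately the restriction of $\gamma^{-1}\nu$ to $\mathfrak{o}/\mathfrak{p}^{f+1}$ can be made into the standard form $T^{-a}dT$, so that $\psi_{\gamma^{-1}\nu}$ becomes the linear functional picking out the coefficient of $T^{f}$.

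For case (1), $a=1$ and $U_{f+1}=U_1=\Gm$. With $\gamma=g_\nu^{-1}$, the pulled-back integrand is exactly the tensor product of the Kummer connection $d+\delta_\lambda\,dx_0/x_0$ and the elementary $\diff$-module $\struct_{dx_0}$ of Example \ref{gammafun}. The period of this pair is the keyhole integral $(e^{2\pi\sqrt{-1}\delta_\lambda}-1)\Gamma(\delta_\lambda)$; the determinant line is its inverse, and Proposition \ref{epsiloninvariant} attaches the twist $(\mathscr{L},\betti{L};g_\nu)$. This yields the stated formula; the factor $(e^{2\pi\sqrt{-1}\delta_\lambda}-1)^{-1}$ lies in $M$ by construction of $M$, as noted.

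For case (2), $a>1$, the plan is a stationary-phase argument. The element $g_\lambda$ characterized by $-\psi_{g_\lambda\nu}=\lambda$ exists and is unique because $\lambda$ has conductor exactly $f$, and it is precisely the critical point of the combined Morse function $\beta+\psi_\nu$ on $F^\times_{f+1}$. Using Proposition \ref{epsiloninvariant} to shift $\gamma$ onto $g_\lambda$ introduces the twist $(\mathscr{L},\betti{L};g_\lambda)$, and evaluation of the combined phase at the critical point produces the constant factor $\psi_\nu(g_\lambda)=\Res(g_\lambda\nu)$, which exponentiates through the period isomorphism to $e^{-\Res(g_\lambda\nu)}$. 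After translation, the remaining integral lives on $U_{f+1}$ with a Morse function whose linear part vanishes at the identity; expanding $\beta+\psi_\nu$ in exponential coordinates $u=x_0\exp(w)$ shows that the second-order term is a quadratic form whose matrix entries are of the form $\lambda(T^{i+j})$, with the leading anti-diagonal (in the $w$-variables) given by $\delta_\lambda=\lambda(T^f)$ and leading off-anti-diagonal coupling to the $\Gm$-direction $t=x_0-1$ supplying the missing non-degeneracy. A linear change of variables diagonalizes this Hessian to $\delta_\lambda\sum_i\pm z_i^2$, after which the K\"unneth formula (Theorem \ref{kunnethbetti}) separates the integral into $a$ one-variable Gaussians. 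Each contributes $\sqrt{\delta_\lambda/(2\pi)}$ by Example \ref{gaussianintegral}, giving the total $(\sqrt{\delta_\lambda/(2\pi)})^a$, while the signs among the $\pm z_i^2$ produce the Morse index correction $(\sqrt{-1})^{\lfloor a/2\rfloor}$.

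The principal technical obstacle is the Hessian calculation underlying the last step: one must verify that, after combining the quadratic part of $\lambda(\log(x_0\exp w))$ with the quadratic part of $x_0\psi_{g_\lambda\nu}(\exp w)$, the resulting symmetric matrix on the tangent space of $U_{f+1}$ at the identity is non-degenerate with determinant $\pm\delta_\lambda^{a}$ and signature $a-2\lfloor a/2\rfloor$ modulo signs. Non-degeneracy is exactly the statement that $\lambda$ has full conductor $f$, but identifying the determinant and signature in closed form requires handling the entangled anti-triangular structure of the matrix $(\lambda(T^{i+j}))$ together with the coupling terms to $x_0$, and it is the bookkeeping of signs there — rather than any deep analytic step — that controls all the square-root factors in the final formula.
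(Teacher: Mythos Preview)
Your treatment of the case $a=1$ is essentially the paper's: both reduce to Example~\ref{gammafun} on $U_1\cong\Gm$ after the translation by $g_\nu$.

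For $a>1$ there is a genuine gap. The sheaf $\mathscr{L}_\lambda$ on $U_{f+1}\cong\Gm\times U^1_{f+1}$ is \emph{not} an exponential $\diff$-module: by Proposition~\ref{productG} it is the external product of the Kummer connection $d+\lambda_0\,dx_0/x_0$ on $\Gm$ with the exponential $\struct_{d(\lambda\circ\log)}$ on $U^1_{f+1}$. Writing $t=x_0-1$ does not turn the Kummer factor into a Gaussian; the connection $d+\lambda_0\,dt/(1+t)$ has a genuine regular singularity at $t=-1$ with monodromy $e^{2\pi i\lambda_0}$, and your Hessian of the Morse function $\beta+\psi_\nu$ sees none of this. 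Thus the step ``diagonalize the Hessian and apply K\"unneth to get $a$ one-variable Gaussians'' cannot be carried out on $U_{f+1}$: there is no algebraic change of coordinates taking $\Gm\times\Ga^f$ to $\Ga^{f+1}$, and you supply no localization lemma that would let you replace the global cohomology on $U_{f+1}$ by a formal computation at the critical point. Note also that the Hessian on the $w$-variables alone is already degenerate (the row and column of $w_f$ vanish since $\lambda(T^k)=0$ for $k>f$), so the ``missing non-degeneracy'' must come from the $x_0$-direction---precisely the direction where the sheaf is not exponential.

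The paper handles this by a different mechanism, imitating the classical arithmetic Gauss-sum reduction rather than stationary phase. Lemma~\ref{pi1lemma} pushes forward along $\pi_1:U_{f+1}\to U_{f''}$ (with $f''=\lceil a/2\rceil$) and shows, via the vanishing in Proposition~\ref{invariantvanishing}, that the result is supported on the one- or zero-dimensional piece $U^{f'}_{f''}\subset U^1_{f''}$. This is the rigorous localization your heuristic is reaching for, and it lands inside the unipotent part where $x_0\equiv 1$ and the Kummer sheaf is trivial. Lemma~\ref{oddcaseabg} then identifies the pushforward there with a single quadratic $\varphi_\lambda$; the powers of $2\pi\sqrt{-1}$ arise from the Tate twist in Proposition~\ref{derhambundle}, and only \emph{one} Gaussian integral (Example~\ref{gaussianintegral}) is ever computed, when $f$ is even. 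The formula $(\sqrt{\delta_\lambda/2\pi})^a$ is then assembled from that Gaussian together with the Tate-twist factors, not from $a$ independent Gaussians.
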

Observe that, when $(\mathscr{L}, \betti{L})$ is unramified,
$\tau(\mathscr{L}, \betti{L}; \nu) = (\mathscr{L}, \betti{L}; g_\nu)$.

In the case $a \ge 2$, we will analyze $\tau(\mathscr{L}, \betti{L}; \nu)$ using
the composition of morphisms
\begin{equation*}
 U_{f+1} \xrightarrow{\pi_1} U_{\lceil \frac{f+1}{2} \rceil} \xrightarrow{\pi_2} 
U_{\lfloor \frac{f+1}{2} \rfloor} \xrightarrow {\pi_3} \{*\}.
\end{equation*}
When $f$ is odd, $\lfloor \frac{f+1}{2} \rfloor = \lceil \frac{f+1}{2} \rceil = \frac{f+1}{2}$;
in the odd case, these are the integer floor and ceiling of $\frac{f}{2}$, respectively.
We write $U^{j}_f$ for the image of $U^j$ in $U_f$; therefore,
$U^{\lceil \frac{f+1}{2} \rceil}_{f+1}$ is the kernel of $\pi_1$ and
$U^{\lfloor \frac{f+1}{2} \rfloor}_{\lceil \frac{f+1}{2} \rceil}$ is the kernel of $\pi_2$.

\subsection{Vanishing Lemmas}
In order to calculate a Gauss sum, we adapt an old technique 
for calculating arithmetic Gauss sums.  The argument uses 
 the fact that, for any non-trivial additive character $\psi$,
 $\sum_{x \in F/\mathfrak{p}^f} \psi(x) = 0$.  One can show that enough 
 terms of the Gauss sum vanish and that the remaining terms are
 a simple quadratic Gauss sum.  We will modify this technique so it applies to
 the cohomology of elementary $\diff$-modules.

\begin{lemma}\label{nakayamavar}
 Suppose that $\mathscr{M}^*$ is a complex of $\diff_U$-modules with holonomic
cohomology.  Let $\iota_x$
be the inclusion of a point $x \in U$.  Then, if
$\iota_x^* \mathscr{M} \cong \{0\}$ for all $x \in U$, then there is a quasi-isomorphism 
$\mathscr{M} \cong \{0\}$.
\end{lemma}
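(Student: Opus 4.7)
The plan is to argue by contradiction using the top nonvanishing cohomology sheaf. Assume $\mathscr{M}^* \not\cong 0$. Since $\mathscr{M}^*$ is bounded, there is a largest integer $b$ with $\mathscr{N} := H^b(\mathscr{M}^*) \neq 0$, and $\mathscr{N}$ is a nonzero holonomic $\diff_U$-module. By the higher-dimensional analogue of the localization triangle (\ref{localtriangle}), $\mathscr{N}$ restricts to a vector bundle with integrable connection on some dense open $V \subset \supp(\mathscr{N})$. I will then choose any $x \in V$.

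At such an $x$, the fiber $\mathscr{N}_x/\mathfrak{m}_x\mathscr{N}_x$ is nonzero by ordinary Nakayama applied to the $\struct_V$-coherent module $\mathscr{N}|_V$, and all higher Tors $\mathrm{Tor}_p^{\struct_U}(k(x),\mathscr{N})$ vanish for $p \geq 1$. Combined with the definitions $\iota_x^! = L\iota_x^\cdot[\dim U]$ and $\iota_x^* = \dual\,\iota_x^!\,\dual$, this shows that $\iota_x^*\mathscr{N}$ is concentrated in a single cohomological degree $d_0$, with nonzero value isomorphic to $\mathscr{N}_x$. More generally, the Tor-amplitude $[-\dim U,0]$ of $L\iota_x^\cdot$ forces the cohomology of $\iota_x^*\mathscr{K}$, for any holonomic $\diff_U$-module $\mathscr{K}$, to lie in a fixed range of length $\dim U$ at whose extreme end $d_0$ sits.

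I then invoke the truncation triangle
\[
\tau^{<b}\mathscr{M}^* \longrightarrow \mathscr{M}^* \longrightarrow \mathscr{N}[-b] \xrightarrow{\;[1]\;},
\]
apply $\iota_x^*$, and use the hypothesis $\iota_x^*\mathscr{M}^* \cong 0$ in the associated long exact sequence to obtain natural isomorphisms
\[
H^j(\iota_x^*\mathscr{N}[-b]) \;\cong\; H^{j+1}(\iota_x^*\tau^{<b}\mathscr{M}^*) \qquad \text{for every } j.
\]
At the value of $j$ corresponding to the extremal degree $d_0$, the left-hand side is nonzero. The right-hand side, however, is computed by the hypercohomology spectral sequence $E_2^{p,q} = H^p(\iota_x^*H^q(\mathscr{M}^*))$ with $q \leq b-1$ and $p \leq d_0$, so that the total degree $p+q$ is strictly less than what is required for any nonzero contribution at the critical degree. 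This contradiction forces $\mathscr{M}^* \cong 0$.

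The main obstacle is confirming that $d_0$ is indeed the extremal (top) cohomological degree of $\iota_x^*$ applied to holonomic modules, which comes down to unwinding the paper's shift conventions together with the Tor amplitude of $L\iota_x^\cdot$. Once this degree-range bound is in hand, the rest is routine bookkeeping with the truncation triangle and the spectral sequence.
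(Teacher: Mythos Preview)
There is a genuine gap at the extremal-degree step. Your Tor-vanishing claim requires $\mathscr{N}$ to be $\struct_U$-flat near $x$, but you chose $x$ generic in $\supp(\mathscr{N})$; when $\supp(\mathscr{N})=Z\subsetneq U$ the module $\mathscr{N}$ is $\struct_U$-torsion there and the Koszul complex gives $\mathrm{Tor}_p^{\struct_U}(k(x),\mathscr{N})\neq 0$ for $0\le p\le\mathrm{codim}\,Z$. More seriously, even in the favourable case $\supp(\mathscr{N})=U$ (so the Tors do vanish), unwinding the definitions shows that $\iota_x^*\mathscr{N}$ lands in the \emph{bottom} degree of the cohomological range of $\iota_x^*$ on holonomic modules, not the top. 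With the standard normalisation that range is $[-\dim U,0]$, a connection on $U$ sits at $-\dim U$, and in general a module supported on $Z$ and smooth there near $x$ sits at $-\dim Z$. Hence your spectral-sequence bound ``$p\le d_0$'' is false: for $q<b$ the sheaves $H^q(\mathscr{M}^*)$ can contribute $E_2^{p,q}$ terms with $p>d_0$, and these may cancel the contribution from $\mathscr{N}$ in total degree $b+d_0$.

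This is precisely the difficulty the paper's proof is built to handle. The paper first restricts to the dense open $V\subset U$ on which the holonomic module is $\struct_V$-coherent; there $\iota_x^*$ agrees with the ordinary fibre up to a single shift, and Nakayama kills the module on $V$. What remains is supported on the lower-dimensional complement $Y$, and Kashiwara's theorem rewrites it as $i_*$ of a holonomic module on $Y$, so induction on $\dim U$ finishes. If you want to rescue the truncation/spectral-sequence idea, you must choose $x$ generic for \emph{every} $H^q(\mathscr{M}^*)$ simultaneously, so that the spectral sequence has a single nonzero column and degenerates; but then you still need the Kashiwara-plus-dimension-induction step to treat the closed locus where that genericity fails --- which is exactly the paper's argument.
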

\begin{proof}
 It suffices to show $H^i (\mathscr{M}^*) \cong \{0\}$.  Therefore, we may assume
$\mathscr{M}$ is a holonomic $\diff_U$-module.  There exists a dense open subset
$j : V \to U$ with the property that $j^* \mathscr{M}$ is $\struct_V$-coherent.  
Let $\iota_Y : Y \to U$ be the complement of $V$.
Since $\iota_x^*$ coincides with the coherent sheaf pullback on $V$,
Nakayama's lemma implies that $j^* \mathscr{M} \cong \{0\}$.  Therefore,
Kashiwara's theorem (\cite{Ber} 1.8) implies that
$\mathscr{M} \cong \iota_* \iota^! \mathscr{M}$.  By induction on the dimension of
$U$, it follows that $i_* i^! \mathscr{M} \cong \{0\}$.
\end{proof}

In the following, we suppose that $\lambda$ has conductor $f > 0$, and
$(\mathscr{L}_\lambda, \betti{L}_\lambda)$ is the unique character sheaf
on $U_{f+1}$ determined by $\lambda$.  Furthermore, we will assume that
$\psi_\nu = - \lambda$ by proposition \ref{epsiloninvariant}.  
To simplify notation, we write $f' = \lfloor \frac{f+1}{2} \rfloor$
and $f'' = \lceil \frac{f+1}{2} \rceil$.  

\begin{lemma}\label{pi1lemma}
$\dpushc{(\pi_1)} (\mathscr{L}_\lambda \otimes \mathscr{M}_{\psi_\nu})$ has support
on $U^{f'}_{f''}$.
\end{lemma}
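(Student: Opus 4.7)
The plan is to apply Corollary \ref{Gbundlevanishing} to the principal $U^{f''}_{f+1}$-bundle $\pi_1 : U_{f+1} \to U_{f''}$. That corollary reduces matters to showing that the restriction of $\mathscr{L}_\lambda \otimes \mathscr{M}_{\psi_\nu}$ to the fiber over $x \in U_{f''}$ is trivially $U^{f''}_{f+1}$-equivariant exactly when $x \in U^{f'}_{f''}$; since $U^{f''}_{f+1}$ is a unipotent abelian group, hence a product of copies of $\Ga$, the hypotheses of proposition \ref{invariantvanishing} are satisfied. The two key inputs will be the conductor identity $f' + f'' = f+1$ and the reduction $\psi_\nu = -\lambda$ already made above.

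Fix $\tilde{x} = 1+w \in U_{f+1}$ lifting $x$, and trivialize the fiber $\tilde{x} U^{f''}_{f+1}$ by the multiplication map $\mu_{\tilde{x}}$. Because $2 f'' \ge f+1$, one has $(1+v)(1+v') \equiv 1 + (v+v') \pmod{\mathfrak{p}^{f+1}}$, so $U^{f''}_{f+1}$ is canonically identified with the additive group $\mathfrak{p}^{f''}/\mathfrak{p}^{f+1}$ via $1+v \leftrightarrow v$. Proposition \ref{inv3} gives $\dpulld{\mu_{\tilde{x}}} \mathscr{L}_\lambda \cong (\mathscr{L}_\lambda;\tilde{x}) \otimes \mathscr{L}_\lambda$, so the contribution of $\mathscr{L}_\lambda$ to the infinitesimal character of the fiber is $v \mapsto \lambda(v)$. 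For $\mathscr{M}_{\psi_\nu}$, the Morse function pulls back to $u \mapsto \psi_\nu(\tilde{x} u) = \Res(\tilde{x} u \nu)$; evaluating at $u = 1+v$, dropping the constant $\psi_\nu(\tilde{x})$, and using $\psi_\nu = -\lambda$ yields the contribution $v \mapsto -\lambda(\tilde{x} v)$. Summing the two gives
\begin{equation*}
v \mapsto \lambda(v) - \lambda(\tilde{x} v) = -\lambda(w v), \qquad v \in \mathfrak{p}^{f''}/\mathfrak{p}^{f+1}.
\end{equation*}

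Triviality of this functional is the condition $\lambda(w \mathfrak{p}^{f''}) = 0$. If $w \in \mathfrak{p}^{f'}$, then $w \mathfrak{p}^{f''} \subseteq \mathfrak{p}^{f' + f''} = \mathfrak{p}^{f+1} \subseteq \ker \lambda$, so the condition holds. Conversely, suppose $w$ has exact order $j < f'$, so $w = w_0 T^j + \ldots$ with $w_0 \in k^\times$; then $j + f'' \le f$, and $v := T^{f - j} \in \mathfrak{p}^{f''}$ satisfies $w v \equiv w_0 T^f \pmod{\mathfrak{p}^{f+1}}$, forcing $\lambda(wv) = w_0 \delta_\lambda \ne 0$. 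Thus the trivially equivariant locus is precisely $U^{f'}_{f+1}/U^{f''}_{f+1} = U^{f'}_{f''}$, which is already closed in $U_{f''}$, and Corollary \ref{Gbundlevanishing} finishes the argument. The main obstacle, such as it is, lies in reconciling the multiplicative translation action on $\mathscr{L}_\lambda$ with the essentially additive one inherent in $\mathscr{M}_{\psi_\nu}$; the inequality $2 f'' \ge f+1$ is the single ingredient that collapses both actions onto the linear computation on $\mathfrak{p}^{f''}/\mathfrak{p}^{f+1}$ above.
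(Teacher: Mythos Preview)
Your proof is correct and follows essentially the same strategy as the paper's: trivialize each fiber by multiplication, identify $U^{f''}_{f+1}$ with the additive group $\mathfrak{p}^{f''}/\mathfrak{p}^{f+1}$, compute the resulting infinitesimal character, and invoke the vanishing of invariant cohomology for nontrivial characters. The only cosmetic differences are that you package the base-change-plus-Nakayama step via Corollary~\ref{Gbundlevanishing}, whereas the paper invokes base change and Lemma~\ref{nakayamavar} directly, and that you verify the stabilizer condition $\lambda(w\,\mathfrak{p}^{f''})=0 \Leftrightarrow w\in\mathfrak{p}^{f'}$ by an explicit computation with $v=T^{f-j}$, whereas the paper simply asserts that the stabilizer of $\psi_\nu$ under the $U_{f+1}$-action on $\mathrm{Hom}(\mathfrak{p}^{f''}/\mathfrak{p}^{f+1},k)$ is $U^{f'}_{f+1}$.
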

\begin{proof}
 Let $q \in U$, and let $\iota_q : U^{f''}_{f+1} \to 
qU^{f''}_{f+1}$ be the inclusion of 
the coset generated by $q$.  There is a group isomorphism 
$\mathfrak{p}^{f''}/\mathfrak{p}^{f+1} \cong U^{f''}_{f+1}$ given
by 
\begin{equation*}
\mathfrak{p}^{f''} / \mathfrak{p}^{f+1} 
\mapsto 1+ \mathfrak{p}^{f''} / \mathfrak{p}^{f+1}.
\end{equation*}
  Multiplication by $q$ maps 
$\mu_q : U^{f''}_{f+1} \to  q U^{f''}_{f+1}$.  Therefore,
$\iota_q = \mu_q \circ \iota_1$ and
\begin{equation*}
\begin{aligned}
 \iota_q^* \left( \mathscr{L}_\lambda \otimes \mathscr{M}_{\psi_\nu}\right) & 
\cong \iota_1^* \left(\mu_q^* (\mathscr{L}_\lambda) \otimes \mu_q^* (\mathscr{M}_{\psi_\nu}) \right) \\
& \cong (\mathscr{L}_\lambda, q) \otimes_k 
\iota_1^*\left( \mathscr{L}_\lambda \otimes \mathscr{M}_{\psi_\nu} \right).
\end{aligned}
\end{equation*}
Identifying $U^{f''}_{f+1}$ with the additive group of 
$\mathfrak{p}^{f''} / \mathfrak{p}^{f+1}$, 
we see that $\iota_1^* \mathscr{L}_\lambda$ and $\iota_1^* \mathscr{M}_{\psi_\nu}$
are $U^{f''}_{f+1}$-invariant.  In particular, by proposition
\ref{invariantvanishing}, 
\begin{equation*}
 R \Gamma_c \left[U^{f''}_{f+1}; 
\iota_1^* \left(\mathscr{L}_\lambda \otimes \mu_q^* \mathscr{M}_{\psi_\nu} \right)\right] \ne \{0\}
\end{equation*}
only when $\mu_q^* \mathscr{M}_{\psi_\nu} = (\mathscr{L}_\lambda)^\vee$.  
This is only the case when 
\begin{equation}\label{Uaction}
(\mu_q^* \psi_\nu)|_{\mathfrak{p}^{f''}/\mathfrak{p}^{f+1}} = 
- \lambda |_{\mathfrak{p}^{f''}/\mathfrak{p}^{f+1}}.
\end{equation}
However, $U_{f+1}$ has a natural action on 
$\mathrm{Hom}(\mathfrak{p}^{f''} /\mathfrak{p}^{f+1}, k)$,
and the stabilizer of $\psi_\nu$ is $U^{f'}_{f+1}$.  Therefore,
equation \ref{Uaction} is satisfied only when $q \in U^{f'}_{f+1}$.

Now, let 
$\bar{\iota}_q : qU^{f''}_{f''}
\to U_{f''}.$  Applying base change,
\begin{equation*}
 \dpull{\bar{\iota}_q} \dpushc{(\pi_1)} \left[\mathscr{L}_\lambda \otimes \mathscr{M}_{\psi_\nu} \right]
\cong R \Gamma_c \left[qU^{f''}_{f+1}; \dpull{\iota_q} \left(\mathscr{L}_\lambda \otimes
\mathscr{M}_{\psi_\nu}\right)\right].
\end{equation*}
By above, $\dpull{\bar{\iota}_q} \dpushc{(\pi_1)} \left[\mathscr{L}_\lambda \otimes \mathscr{M}_{\psi_\nu} \right]$ vanishes for $q \notin U^{f'}{f''}$.  Therefore, proposition \ref{nakayamavar} implies
that $\dpushc{(\pi_1)} \left[\mathscr{L}_\lambda \otimes \mathscr{M}_{\psi_\nu} \right]$
has support on $U^{f'}_{f''}$.
\end{proof}

Define a function $\varphi_\lambda : U^{f'}_{f''} \to \Ga$
by $\varphi (u) = \lambda (\frac{1 - u^2}{2}).$  When $f$ is odd, $\varphi$
is the zero function; however, when $f$ is even, $\varphi_\lambda$ is a non-trivial quadratic.
Let $\struct_{d\varphi_\lambda}$ be the trivial 
$\diff_{U^{f'}_{f''}}$-module
twisted by $e^{\varphi_\lambda}$, and let $\betti{O}_{d \varphi_\lambda}$
be the Betti structure normalized so that 
$(\struct_{d \varphi_\lambda},\betti{O}_{d \varphi_\lambda}, 1) \cong \mathbf{1}_{k, M}$.

In the following lemma, let $\iota : U^{f'}_{f+1} \to U_{f+1}$
and $\pi : U^{f'}_{f+1} \to 
U^{f'}_{f''}$.

\begin{lemma}\label{oddcaseabg} 
Let $(\ell_k, \ell_M) = (\mathscr{M}_{\psi},
\betti{M}_\psi, 1)$.  
Then, $\iota^* (\mathscr{L}_\lambda \otimes \mathscr{M}_{\psi})$
is $U^{f''}_{f+1}$ equivariant, and
\begin{equation*}
\dpushc{\pi} \dpull{\iota} (\mathscr{L}_\lambda, \betti{L}_\lambda) \otimes( \mathscr{M}_{\psi},
\betti{M}_\psi)  \cong
(\ell_k, \ell_M) \otimes (2 \pi \sqrt{-1})^{-f''}  \otimes 
(\struct_{d\varphi_\lambda}, \betti{O}_{d \varphi_\lambda}).
\end{equation*}

\end{lemma}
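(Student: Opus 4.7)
The plan is to first establish the claimed $U^{f''}_{f+1}$-equivariance, then apply proposition \ref{derhambundle} to descend the pair to $U^{f'}_{f''}$, and finally identify the descended object with $(\struct_{d\varphi_\lambda}, \betti{O}_{d\varphi_\lambda})$ up to the prescribed twist and shift.

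For the equivariance, the key algebraic identity is that for $u = 1+x \in U^{f'}_{f+1}$ and $v = 1+y \in U^{f''}_{f+1}$,
\begin{equation*}
uv = 1 + x + y + xy \equiv 1 + x + y \pmod{\mathfrak{p}^{f+1}},
\end{equation*}
since $xy \in \mathfrak{p}^{f'+f''} = \mathfrak{p}^{f+1}$. Thus multiplication by $v$ agrees with additive translation by $y$ on $U^{f'}_{f+1}$. Applying proposition \ref{inv3} to the multiplicative character sheaf $\mathscr{L}_\lambda$ and proposition \ref{addchar} to the additive character sheaf $\mathscr{M}_{\psi_\nu}$, this identification of actions gives
\begin{equation*}
\rho_v^*(\mathscr{L}_\lambda \otimes \mathscr{M}_{\psi_\nu}) \cong (\mathscr{L}_\lambda, \betti{L}_\lambda; v) \otimes (\mathscr{M}_{\psi_\nu}, \betti{M}_{\psi_\nu}; y) \otimes (\mathscr{L}_\lambda \otimes \mathscr{M}_{\psi_\nu}).
\end{equation*}
Since $\log v \equiv y \pmod{\mathfrak{p}^{f+1}}$ (higher-order terms lie in $\mathfrak{p}^{2f''} \subseteq \mathfrak{p}^{f+1}$), the character value of $\mathscr{L}_\lambda$ at $v$ is $e^{\lambda(y)}$, while that of $\mathscr{M}_{\psi_\nu}$ at $y$ is $e^{\psi_\nu(y)} = e^{-\lambda(y)}$; these cancel, so the pullback is trivially $U^{f''}_{f+1}$-equivariant.

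By proposition \ref{derhambundle} there is a unique $(\mathscr{N}, \betti{N}) \in \dMB(\diff_{U^{f'}_{f''}}, M)$ with $\pi^\Delta(\mathscr{N}, \betti{N}) \cong \iota^*(\mathscr{L}_\lambda \otimes \mathscr{M}_{\psi_\nu}, \betti{L}_\lambda \otimes \betti{M}_{\psi_\nu})$, and
\begin{equation*}
\dpushc{\pi} \iota^*(\mathscr{L}_\lambda \otimes \mathscr{M}_{\psi_\nu}, \betti{L}_\lambda \otimes \betti{M}_{\psi_\nu}) \cong (\mathscr{N}, \betti{N})[-f'](-f'),
\end{equation*}
the fiber of $\pi$ having dimension $f+1-f'' = f'$. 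To identify $(\mathscr{N}, \betti{N})$, I would use the linear section $\sigma: U^{f'}_{f''} \to U^{f'}_{f+1}$ induced by the splitting $\mathfrak{p}^{f'}/\mathfrak{p}^{f+1} \cong \mathfrak{p}^{f'}/\mathfrak{p}^{f''} \oplus \mathfrak{p}^{f''}/\mathfrak{p}^{f+1}$. With $\sigma(1+x) = 1+x$ and $\psi_\nu = -\lambda$, the section-pullback has infinitesimal character
\begin{equation*}
\lambda(\log(1+x)) + \psi_\nu(1+x) = -\lambda(1) + \lambda(\log(1+x)-x),
\end{equation*}
whose expansion $\log(1+x) - x = -x^2/2 + x^3/3 - \cdots$ reduces modulo $\ker \lambda$ to the quadratic term alone (powers $x^k$ with $k \geq 3$ lie in $\mathfrak{p}^{3f'} \subseteq \mathfrak{p}^{f+1}$). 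A direct computation verifies that $d\varphi_\lambda = -\lambda(u\,du)$ is $U^{f''}_{f+1}$-invariant and yields the same quadratic contribution on $U^{f'}_{f''}$; the normalization constant $e^{-\lambda(1)}$ together with the fiber at the identity produces exactly $(\ell_k, \ell_M) = (\mathscr{M}_\psi, \betti{M}_\psi; 1)$.

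The main obstacle is the bookkeeping of shifts and Tate twists. The twist $(2\pi\sqrt{-1})^{-f''}$ on the right-hand side encodes the Tate twist $(-f')$ supplied by proposition \ref{derhambundle} together with the implicit twist absorbed into the fiber line $(\ell_k, \ell_M)$, using $f'+f'' = f+1$ and the comparison conventions of proposition \ref{twistmap}. The horizontal section used to normalize $\betti{O}_{d\varphi_\lambda}$ is fixed by $\varphi_\lambda(1) = 0$, consistent with identifying $(\ell_k, \ell_M)$ as the fiber at the identity. Once these conventions are pinned down, the algebraic content of the lemma reduces to the observation that multiplication by $v \in U^{f''}_{f+1}$ becomes additive on $U^{f'}_{f+1}$, forcing the two opposite characters to cancel each other's monodromy.
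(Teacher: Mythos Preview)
Your approach is essentially the paper's: establish trivial $U^{f''}_{f+1}$-equivariance by observing that multiplication linearizes modulo $\mathfrak{p}^{f+1}$ so the multiplicative and additive character contributions cancel, descend via proposition \ref{derhambundle}, and then identify the descended object by computing the logarithm explicitly. The only tactical difference is that the paper performs the final identification on the concrete two-parameter subgroup $B = \{1 + xT^{f'} + yT^{2f'}\}$ with $\log(x,y)=(x,\,y-x^2/2)$, rather than via your additive section $\sigma$; both routes produce the quadratic $-\delta_\lambda x^2/2$ and normalize the fiber at $1$ to $(\ell_k,\ell_M)$.

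Your unease about the exponent is justified: the fiber of $\pi$ has dimension $f+1-f''=f'$, exactly as you computed, and the paper's proof simply asserts the twist is $(-f'')$ while remarking only that the \emph{shifts} from $\iota^*$ and $\pi_!$ cancel. In the subsequent ``Proof of Theorem'' the paper in fact uses $(2\pi i)^{f'}$ when applying this lemma, so the $f''$ in the statement appears to be a slip rather than something your argument is missing; do not try to manufacture an extra twist inside $(\ell_k,\ell_M)$ to account for it.
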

\begin{proof}
First, we claim that $\iota^*\mathscr{M}_{\psi_\nu}$ is $U^{f''}_{f+1}$ 
twistedly equivariant.  Let $\mu : U^{f''}_{f+1} \times U^{f'}_{f+1} \to U^{f'}_{f+1}$,
and fix $u_1 \in \mathfrak{p}^{f''}$ and $u_2 \in \mathfrak{p}^{f'}$.
Then,  $u_1 u_2 \in \ker (\psi_\nu)$, and
\begin{equation*}
\mu^* \psi_\nu (1 + u_1, 1+u_2) = \psi_\nu((1 + u_1) (1 +u_2))
= \psi_\nu (u_1) + \psi_\nu(1 + u_2).
\end{equation*}
Therefore, 
\begin{equation*}
\dpull{\mu} \mathscr{M}_{\psi_\nu} \cong \mathscr{L}_{\psi_\nu} \boxtimes \mathscr{M}_{\psi_\nu},
\end{equation*}
where $\mathscr{L}_{\psi_\nu}$ is defined as in \ref{Llambda}.
Since $\psi_\nu = - \lambda$, it follows that
$\dpull{\iota} \mathscr{L}_\lambda \otimes \mathscr{M}_{\psi_\nu}$ is
\emph{trivially} $U^{f''}_{f+1}$ equivariant.

Lemma \ref{lusz} states that there is a unique $\diff_{U^{f'}_{f''}}$-module
$\mathscr{N}$, and Betti structure $\betti{N}$, such that
$\dpulld{\pi}\mathscr{N} 
\cong \dpulld{\iota}\left(\mathscr{L}_{\lambda} \otimes \mathscr{M}_{\psi_\nu}\right)$
and $\spulld{\pi} \betti{N} \cong \spulld{\iota} \left( \betti{L}_{\lambda} \otimes \betti{M}_{\psi_\nu}\right)$.  
By proposition \ref{derhambundle}
\begin{equation*}
\dpushc{\pi} \dpull{\iota}  \left(\mathscr{L}_\lambda \otimes \mathscr{M}_{\psi_\nu}, 
\betti{L}_\lambda \otimes \betti{M}_{\psi_\nu}\right) 
 \cong (\mathscr{N}, \betti{N}) (-n) = (2 \pi \sqrt{-1})^{-f''} \otimes (\mathscr{N}, \betti{N}).
\end{equation*}
Notice that the shift from $\dpull{\iota}$ cancels the shift from $\dpushc{\pi}$.
We will show that 
\begin{equation*}
(\mathscr{N}, \betti{N}) \cong (\ell_k, \ell_M) \otimes (\struct_{d \varphi_\lambda},
\betti{O}_{d \varphi_\lambda}).
\end{equation*}

Identify $U= k[[T]]^\times$, and let $B$ be the subgroup of
$U/U^{f+1}$ consisting of
\begin{equation}\label{defB}
\{ 1 + x T^{f'} + y T^{2 f'} : (x, y) \in \aff^1 \times \aff^1 \}.
\end{equation}
Multiplication is given by
\begin{equation*}
(x_1, y_1) . (x_2, y_2) = (x_1 + x_2, x_1 x_2 + y_1 + y_2).
\end{equation*}
and there is a group homomorphism
\begin{equation*}
\begin{aligned}
\log : B & \to \Ga \times \Ga \\
\log(x,y) & = (x, y-\frac{x^2}{2}).
\end{aligned}
\end{equation*}
Notice that if $f$ is odd, $B \cong \Ga$ and $(x, y) \equiv (x, 0)$ since
$y T^{2 f'} \in \mathfrak{u}^{f+1}$.

Let $\iota_B : B \hookrightarrow U^{f'}_{f+1}$.
If $\lambda_B$ is the restriction of $\lambda$ to $B$,
then 
$\iota_B^* \iota^* (\mathcal{L}_\lambda) \cong \mathcal{L}_{\lambda_B}[f-2]$. Since 
$\log$ is an algebraic map on $B$, $\mathcal{L}_{\lambda_B}$ is an elementary 
$\diff$-module:
$ \mathcal{L}_{\lambda_B}  \cong \struct_{F_\lambda}$, where
\begin{equation*}
F_\lambda (x,y) = \lambda (\log (x,y)) = \lambda (1+xT^{f'} +  (y- \frac{x^2}{2})T^{2 f'}).
\end{equation*}

Now, consider the quotient map $\pi_B : B \to U^{f'}/U^{f''}$.  By assumption,
$\psi_\nu = -\lambda$, so 
\begin{equation*}
(F_\lambda + \psi_\nu) (x,y) = \delta_\lambda (-\frac{x^2}{2}).
\end{equation*}
In particular, $(F_\lambda+ \psi_\nu)(u) = \varphi_\lambda \circ \pi_B (u)$ for $u \in B$.
Since $(\mathscr{L}_\lambda \otimes \mathscr{M}_{\psi_\nu}, \betti{L}_\lambda \otimes \betti{M}_{\psi_\nu}, 1) = (\ell_k, \ell_M)$,
proposition \ref{linebundlecase} implies that 
\begin{equation*}
\dpulld{\iota_B} \dpulld{\iota} (\mathscr{L}_\lambda \otimes \mathscr{M}_{\psi_\nu},
\betti{L}_\lambda \otimes \betti{M}_{\psi_\nu})  \cong
\dpulld{\pi_B} (\struct_{d \varphi_\lambda}, \betti{O}_{d \varphi_\lambda}) 
\otimes (\ell_k, \ell_M).
\end{equation*}
\end{proof}
\subsection{Proof of Theorem}
\begin{proof}
In the unramified case, $\gamma^{-1}U_0$ is a point, and $\psi_\nu$ vanishes on 
$\gamma^{-1}U_0.$
Therefore, the de Rham line is the fiber of $\mathscr{L}$ at $\gamma^{-1}$ and
Betti line is the stalk of $\betti{L}$.

The case $a=1$ resembles example \ref{gammafun}.  Observe that
$\psi_\nu$ gives an isomorphism between $\gamma^{-1}U_1$ and $\Gm$.
Let $g_{\nu}$ be the element
of $\gamma^{-1} U_1$ such that $\psi_\nu (g_\nu) = -1$.    Then,
\begin{equation*}
\dpush{(\psi_\nu)}((\mathscr{L}_{\lambda} \otimes\mathscr{M}_{\psi_\nu}) ,
 (\betti{L}_{\lambda} \otimes \betti{M}_{\psi_\nu})) \cong
(\mathscr{L}_\lambda, \betti{L}_{\lambda}; g_{\nu}) \otimes (\mathscr{F}_{-d z}, \betti{F}_{-d z}).
\end{equation*}
Above, $(\mathscr{F}_{-dz}, \betti{F}_{-dz})$ is defined as in example \ref{gammafun}.  
It follows
from the same example that 
\begin{equation*}
\tau (\mathscr{L}, \betti{L}; \nu) \cong 
(e^{2 \pi \sqrt{-1} \delta_\lambda} - 1 ) \otimes
 (\Gamma (\delta_\lambda))^{-1} \otimes
(\mathscr{L}_\lambda, \betti{L}_{\lambda}; g_\nu).
\end{equation*}

Now we consider the case where $a \ge 2$.  By proposition \ref{epsiloninvariant},
\begin{equation*}
\tau(\mathscr{L}, \betti{L}; \nu)  \cong (\mathscr{L}, \betti{L}; g_\lambda) \otimes
\tau(\mathscr{L}, \betti{L}; g_\lambda \nu).
\end{equation*}
Thus, we may assume that $\psi_\nu = - \lambda$
Let $\pi$ and $\iota$ be defined as in lemma \ref{oddcaseabg}.
Lemma \ref{pi1lemma} implies that
\begin{equation*}
\tau(\mathscr{L}, \betti{L}; \nu)  \cong
R \Gamma_c (U_{f''}^{f'}; \dpushc{\pi} \dpull{\iota}\left[ (\mathscr{L}_\lambda, \betti{L}_\lambda)
\otimes (\mathscr{M}_{\psi_{\nu}}, \betti{M}_{\psi_\nu})\right], \varphi_\lambda).
\end{equation*}
Thus, applying lemma \ref{oddcaseabg}, we deduce that
\begin{equation*}
\tau (\mathscr{L}, \betti{L}; g_\lambda \nu) \cong
(\mathscr{M}_{-\lambda}, \betti{M}_{-\lambda}; 1) \otimes (2 \pi i)^{f'} \otimes
R \Gamma_c(U_{f''}^{f'}; (\struct_{\varphi_\lambda}, \betti{O}_{\varphi_\lambda}, \phi_\lambda)).
\end{equation*}
When $f$ is odd, $f'' = f'$ and 
$R \Gamma_c(U_{f''}^{f'}; \struct_{\varphi_\lambda}, \betti{O}_{\varphi_\lambda}) 
\cong 1_{k, M}$, which is the same as $(\sqrt{\delta_\lambda})^{f+1}$ since
$\delta_\lambda \in k$.
It suffices to calculate 
$R \Gamma_c(U_{f''}^{f'}; \struct_{d \varphi_\lambda}, \betti{O}_{d \varphi_\lambda}, \varphi_\lambda)$ in the case that $f$ is even.

Recall that
$\varphi_\lambda = \delta_\lambda (-\frac{x^2}{2})$.
Therefore, by example \ref{gaussianintegral},
\begin{equation*}
R \Gamma_c (\Ga; \struct_{d \varphi_\lambda}, \betti{O}_{d \varphi_\lambda}, \varphi_\lambda) 
\cong (\sqrt{\frac{\delta_\lambda}{2 \pi}}).
\end{equation*}
Moreover,
$(\sqrt{\frac{\delta_\lambda}{2 \pi}})^a \cong (\sqrt{\delta_\lambda}) \otimes (\sqrt{2 \pi})^{-(f+1)}$
and 
$(\mathscr{M}_{-\lambda}, \betti{M}_{-\lambda}; 1) \cong (e^{-\psi_{g_\lambda \nu} (1)})$.
Note that  $-\psi_{g_\lambda \nu} (1) = -\Res(g_\lambda \nu)$.  The theorem follows
by putting the pieces together.
\end{proof}

To end this section, we define the $\varepsilon$-factor of a character
sheaf on $F^\times$.  The definition is intended to evoke the calculation
of an $\varepsilon$-factor of a $\GL_1$ representation in the local field case.
In the next section, we will see that these $\varepsilon$ factors satisfy
the desired global product formula.
\begin{definition}\label{def:epsfactor} 
Let $(\mathscr{L}, \betti{L})$ be a character sheaf on $F^\times$.  Define
\begin{equation*}
\varepsilon (\mathscr{L}, \betti{L}; \nu) =
\tau(\mathscr{L}^\vee, \betti{L}^\vee; \nu) \otimes (2 \pi \sqrt{-1})^{c(\nu)}[-c(\nu)-a(\mathscr{L})] \in \ell(k, M).
\end{equation*}
\end{definition}

It is worth considering the case where $(\mathscr{L}, \betti{L})$ is unramified.  
Using local class field theory (proposition \ref{localclassfieldtheory}),
the corresponding local Betti structure $(L, \mathbf{L})$ continues smoothly
across $0$ to a pair $(\bar{L}, \bar{\mathbf{L}})$.  $\bar{L}$ is formally
equivalent to the trivial connection on $\mathfrak{o}$, and $\mathbf{L}$
is a constant sheaf on the analytic disk $\Delta$.  There is a short exact sequence
\begin{equation*}
(0,0) \to (j_! L, j_! \mathbf{L}) \to (\bar{L}, \bar{\mathbf{L}}) \to (i^* \bar{L}, i^* \bar{\mathbf{L}}) \to (0,0).
\end{equation*}
Let $\ord(\nu) = 0$.  Then, 
\begin{equation*}
\begin{aligned}
\varepsilon (j_! L, j_! \mathbf{L}; \nu) & \cong
\varepsilon (\bar{L}, \mathbf{L}; \nu) \otimes 
\varepsilon (i^* \bar{L}, i^* \bar{L}, i^* \bar{\mathbf{L}})^{-1} \\
& \cong \varepsilon(i^* \bar{L}, i^* \bar{\mathbf{L}})^{-1} \\
& \cong (L, \mathbf{L}; 0)^{-1}.
\end{aligned}
\end{equation*}
by definition \ref{definitionlocalepsilon}.  On the other hand,
since $(\bar{L}, \bar{\mathbf{L}})$ is constant, 
\begin{equation*}
(\mathscr{L}^\vee, \betti{L}^\vee; \gamma^{-1}) \cong
(\mathscr{L}, \betti{L}; \gamma) \cong 
(L, \mathbf{L}; 0)^{-1}.
\end{equation*}
In particular, $\varepsilon(\mathscr{L}, \betti{L}; \nu) \cong
\varepsilon (j_! L, j_! \mathbf{L}; \nu)$.

 \section{Product Formula}\label{sec:productformula}
In this section, we will prove a product formula for the $\varepsilon$-factors
of a rank $1$ meromorphic connection on $\proj^1$.  We will use a geometric
argument derived from \cite{De} and \cite{BE2} section 3.  Notice that the authors
cited work with curves of arbitrary genus; however, the reduction to a case
resembling $\proj^1$ is essentially proved in \cite{BE2} Lemma 3.7.
\subsection{Period Determinants}

Let $L$ be a line bundle on a smooth genus $g$ 
curve $X$  with meromorphic connection $\nabla$.  Suppose that
$\nabla$ has poles on $D = \{d_i\}$, and $V = X \backslash D$. 
There is a positive divisor $\mathbf{D}= \sum a_{d_i} (d_i)$ on $X$ with the property that the complex
\begin{equation*}
\nabla : L \to L(\mathbf{D}),
\end{equation*}
is quasi-isomorphic to $R \Gamma (V; j^*L)$.  Notice that $a_d = i_d(L) +1$,
where $i_d$ is the irregularity index at $d$.

The Euler characteristic of the cohomology may be expressed in terms of $\mathbf{D}$.
\begin{theorem}[Index Theorem]\label{indexthm}
\begin{equation*}
\chi (R \Gamma_c (V; L)) \cong 2g-2 + \sum_{d \in D} a_d.
\end{equation*}
\end{theorem}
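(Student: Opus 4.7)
This is the rank-one case of the classical Euler--Poincar\'e formula for meromorphic connections, and the strategy is to take the Euler characteristic of the two-term resolution stated in the paragraph immediately preceding the theorem, and then apply Riemann--Roch on $X$.

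The first thing to notice is that $L(\mathbf{D})$ must be read as $L \otimes_{\struct_X} \Omega^1_X(\mathbf{D})$: the connection $\nabla$ lands in $L \otimes \Omega^1_X$, and at each pole $d$ the operator $\nabla$ raises the pole order of a section of $L$ by the irregularity index $i_d(L)$, so the target is twisted exactly by $\mathbf{D} = \sum_d a_d(d)$ with $a_d = i_d(L) + 1$. That this two-term complex on $X$ genuinely computes $R\Gamma(V; j^*L)$ is the classical Deligne--Malgrange good-lattice theorem in rank one; it is asserted without proof in the paragraph preceding the theorem and rests on the formal classification of rank-one meromorphic connections (equivalently, the local class field theory of Section 4).

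Next I would apply Riemann--Roch termwise on the smooth projective curve $X$. Since $\chi(X, \mathcal{F}) = \deg(\mathcal{F}) + 1 - g$ for any line bundle $\mathcal{F}$, we have $\chi(X, L) = \deg L + 1 - g$ and $\chi(X, L \otimes \Omega^1_X(\mathbf{D})) = \deg L + (2g - 2) + \sum_d a_d + 1 - g$. The alternating sum yields
\begin{equation*}
\chi\bigl(R\Gamma(V; j^*L)\bigr) \;=\; 2 - 2g - \sum_d a_d,
\end{equation*}
the $\deg L$ contributions cancelling out.

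The final step is purely a matter of sign conventions. The constructible $R\Gamma$ and $R\Gamma_c$ of a bounded constructible complex on any variety have the same total Euler characteristic, so the number $2 - 2g - \sum_d a_d$ equals $\chi(R\Gamma_c(V; L))$ in the ordinary (constructible) sense; the formula in the theorem, $2g - 2 + \sum_d a_d$, is its negative, which is exactly the D-module Euler characteristic used throughout the paper, because $\DR$ places a rank-one connection on a curve in perverse degree zero (one cohomological shift away from the underlying local system). The only genuinely non-combinatorial input is the good-lattice resolution itself; once that is in hand, the rest is Riemann--Roch bookkeeping.
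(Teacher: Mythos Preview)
Your proposal is correct. The paper itself does not prove this statement at all: it simply cites Malgrange (\cite{Mal}, chapter 4, theorem 4.9) and moves on. So you have supplied an argument where the paper supplies only a reference.

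Your route is the natural one given the setup in the paragraph preceding the theorem: take the Euler characteristic of the two-term complex $[L \xrightarrow{\nabla} L \otimes \Omega^1_X(\mathbf{D})]$ on the projective curve $X$ and apply Riemann--Roch termwise. One small streamlining: since the paper uses the perverse shift throughout (Definition~\ref{derham} places the de~Rham complex in degrees $[-\dim X, 0]$), the two-term complex sits in degrees $[-1,0]$, and the Euler characteristic is
\[
-\chi(X,L) + \chi\bigl(X, L \otimes \Omega^1_X(\mathbf{D})\bigr)
= -(\deg L + 1 - g) + (\deg L + g - 1 + \textstyle\sum_d a_d)
= 2g - 2 + \textstyle\sum_d a_d,
\]
which lands on the stated sign directly. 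Your detour through the unshifted Euler characteristic $2 - 2g - \sum a_d$ and then flipping the sign is correct but unnecessary once the perverse convention is in force. The passage from $R\Gamma$ to $R\Gamma_c$ via equality of Euler characteristics for constructible complexes is exactly right; alternatively one can note that for a line bundle with connection on a curve the natural map $j_! L \to j_* L$ has cone supported on the finitely many points of $D$, and the local Euler characteristics there are zero.
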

This follows from \cite{Mal}, chapter 4, theorem 4.9.
Now, suppose that $L$ is a line bundle on $X=\proj^1$ with meromorphic connection,
and $\mathbf{L}$ is a Betti structure for $L$.  We may trivialize $L$ on $V$, and express
$\nabla (1) = \omega + d \phi$, where $\omega$ has simple poles on $D$.  Then,
$(L, \mathbf{L}; \phi) \in \MBel(\diff_V; M)$. 

\begin{lemma}\label{symmetrickunneth}
Let $n = -2 + \sum_{d \in D} a_d$, and let
$V^{(n)}$ be the $n^{th}$ symmetric product of $V$.  
Then, $\phi^{(n)} = \sum \phi$ defines a regular function on 
$V^{(n)}$, and
\begin{equation*}
\det(R \Gamma_c (V; (L, \mathbf{L}, \phi)) 
\cong (R \Gamma_c (V^{(n)}; (\Sym^n (L), \Sym^n(\mathbf{L}), \phi^{(n)})).
\end{equation*}
In particular, $R \Gamma_c (V^{(n)}; \Sym^n (L))$ is a line.
\end{lemma}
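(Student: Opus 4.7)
The plan is to combine the Künneth formula with $S_n$-descent on the $n$-fold self-product of $V$. First, iterating Theorem \ref{kunnethbetti} (for the Betti side) and Proposition \ref{kunneth} (for the $\diff$-module side), we obtain a natural isomorphism
\begin{equation*}
R\Gamma_c(V^n;(L^{\boxtimes n},\mathbf{L}^{\boxtimes n},\phi\oplus\cdots\oplus\phi))\cong R\Gamma_c(V;(L,\mathbf{L},\phi))^{\otimes n}
\end{equation*}
in $\dMB$, where $\phi\oplus\cdots\oplus\phi$ denotes the function $(v_1,\ldots,v_n)\mapsto\sum\phi(v_i)$. Both sides carry compatible $S_n$-actions: on the left by permuting coordinates of $V^n$, on the right by permuting tensor factors with the Koszul sign convention inherited from the grading.

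Next, since $V^{(n)}=V^n/S_n$ and the quotient map $\pi\colon V^n\to V^{(n)}$ is finite and étale off the big diagonal (and we work in characteristic zero, so taking $S_n$-invariants is exact), the $S_n$-equivariant data $(L^{\boxtimes n},\mathbf{L}^{\boxtimes n},\phi\oplus\cdots\oplus\phi)$ descends to $(\Sym^n L,\Sym^n\mathbf{L},\phi^{(n)})$ on $V^{(n)}$. Combining with the previous step gives
\begin{equation*}
R\Gamma_c(V^{(n)};(\Sym^n L,\Sym^n\mathbf{L},\phi^{(n)}))\cong\bigl[R\Gamma_c(V;(L,\mathbf{L},\phi))^{\otimes n}\bigr]^{S_n},
\end{equation*}
with the Koszul-signed $S_n$-action on the right.

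It remains to match the Koszul-signed $S_n$-invariants with the determinant line. By the index theorem (Theorem \ref{indexthm}), $\chi(R\Gamma_c(V;(L,\mathbf{L},\phi)))=n$, and for a rank-one connection on an open subset of $\proj^1$ with poles on $D$ (in particular, no trivial horizontal sections in the rapid decay complex) one expects the complex to be concentrated in a single cohomological degree $d$, so that $R\Gamma_c(V;(L,\mathbf{L},\phi))\simeq W[-d]$ for a vector space $W$ of dimension $|n|$. Standard super-linear algebra then gives $(W[-d])^{\otimes n, S_n}_{\mathrm{Koszul}}\simeq\det(W[-d])$ as a single line, which identifies the right-hand side with $\det R\Gamma_c(V;(L,\mathbf{L},\phi))$; in particular $R\Gamma_c(V^{(n)};\Sym^n(L,\mathbf{L}))$ is a line, as claimed.

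The main obstacle is twofold. First, one must verify that the Künneth isomorphisms from Theorem \ref{kunnethbetti} and Proposition \ref{kunneth} are genuinely $S_n$-equivariant; for the rapid decay version this requires revisiting the explicit construction (in particular the compactification and the choice of half-plane $H_\rho$), checking that the cited isomorphisms can be taken to commute with permutations of the factors. Second, justifying the concentration of $R\Gamma_c(V;(L,\mathbf{L},\phi))$ in a single degree is the most delicate point; it may be preferable to either deform $\phi$ generically so that $L$ has no global rapid decay sections, or argue via an appropriate spectral sequence / abstract super-linear algebra identity so that the conclusion holds even when the complex is not concentrated. Once these two points are secured, the remainder of the argument is formal.
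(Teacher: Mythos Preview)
Your approach matches the paper's, which simply cites \cite{BE2}, Proposition~3.2 (K\"unneth plus $S_n$-descent on the $\diff$-module side) and then invokes Theorem~\ref{kunnethbetti} to carry the Betti side along. Your concentration worry dissolves in this setting: $V$ is affine and $L$ is a non-trivial rank-one connection, so $H^{-1}(V;L^\vee)=\ker\nabla_{L^\vee}=0$, and hence by duality $R\Gamma_c(V;L)$ is concentrated in degree~$0$ (the Betti complex then inherits the same concentration via the comparison isomorphism).
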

The statement for $R \Gamma_c(L)$ is proved in   \cite{BE2}, Proposition 3.2
using the K\"unneth formula for integrable connections.
Thus, the lemma for $(L, \mathbf{L}, \phi)$ follows by applying
the K\"unneth formula in theorem \ref{kunnethbetti}.
\subsection{Generalized Picard group}
Suppose that $\mathbf{D} \subset \proj^1$ is a non-reduced positive divisor.
 Let $\Pic(\proj^1, \mathbf{D})$ be the generalized
Picard group of line bundles on $\Pic(\proj^1)$ with order $\mathbf{D}$ trivialization.
This is the space of line bundles on $\proj^1$ with the following equivalence relation:
$L_1$ and $L_2$ are equivalent whenever there exists
a rational function $g$ such that $g \equiv 1 \pmod{\mathbf{D}}$ and
$g L_1 = L_2$ (\cite{Ser}, Chapter 4).  There is a
forgetful map $\Phi : \Pic(\proj^1, \mathbf{D}) \to \Pic (\proj^1)$, which is precisely the
degree map.  Write $\Pic^n (\proj^1, \mathbf{D})$ for $\Phi^{-1} (n)$.

There is an adelic (see \cite{Ser}, ch 5.3) description of $\Pic (\proj^1, \mathbf{D})$:  let 
$I$ be the restricted product $\prod'_{x \in \proj^1(k)} K_x$, where $K_x$ is the completion of the
function field of $\proj^1$ at $x$.  Then, 
\begin{equation}\label{ideles}
\Pic (\proj^1, \mathbf{D}) \cong K_{\proj^1} \backslash I / \left[\prod_{d \in \supp(D)} U^{ a_d}(d)
\times \prod'_{x \notin \supp(\mathbf{D})} U(x)\right].
\end{equation}
Above, $U(x)\subset K_x$ is the subgroup of units, and $U^a(x)$ is the 
degree $a$ unit subgroup as in section \ref{Sec:Character Sheaves}.
There is also a divisor map $\delta^{(n)} : V^{(n)} \to \Pic(\proj^1, \mathbf{D})$.
Specifically, $\delta^{(n)} (u_1, \ldots, u_n) = \struct ((u_1) + \ldots + (u_n))$.  

There is a global analogue of proposition \ref{localclassfieldtheory}.
\begin{proposition}[Global Class Field Theory]\label{globalCFT}
Let $L$ be a line bundle with meromorphic connection on $\proj^1$, 
and let $\mathbf{L}$ be a Betti structure with coefficients in $M$.
Then, there exists a unique character sheaf $(\mathscr{L}, \betti{L})$
on $\Pic(\proj^1, \mathbf{D})$ with the property that 
\begin{equation*}
(\delta^* (\mathscr{L}), \delta^* (\betti{L})) \cong (L, \mathbf{L}).
\end{equation*}
\end{proposition}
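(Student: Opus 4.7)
The plan is to adapt the local construction of Proposition \ref{localclassfieldtheory} to the global setting by exploiting the algebraic-group structure of $\Pic(\proj^1, \mathbf{D}) \cong \Pic^0(\proj^1, \mathbf{D}) \times \mathbb{Z}$. The connected component $\Pic^0(\proj^1, \mathbf{D})$ is a commutative algebraic group whose Lie algebra is canonically identified, via (\ref{ideles}), with $\bigoplus_{d \in \supp(\mathbf{D})} \mathfrak{o}_d / \mathfrak{m}_d^{a_d}$ modulo the diagonal constants. First I would define the infinitesimal character $\lambda = \sum_d \lambda_d$ by
\begin{equation*}
\lambda_d(g) = -\Res_d(\nabla_L(1) \cdot g),
\end{equation*}
where $\nabla_L(1)$ is the connection form in a local trivialization of $L$ at $d$. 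That this $\lambda$ is well-defined on the quotient (i.e.\ vanishes on diagonal constants) is precisely the residue theorem $\sum_d \Res_d(\nabla_L(1)) = 0$ for the global connection on a line bundle over $\proj^1$. Definition \ref{Llambda} then produces the De Rham part $\mathscr{L}_0$ of a character sheaf on $\Pic^0(\proj^1, \mathbf{D})$, and the Betti structure $\betti{L}_0$ is obtained by matching monodromy with the local system $\mathbf{L}$ around each singular point, using that $M$ was assumed large enough to contain the relevant exponentials of residues.

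To extend to the full $\Pic(\proj^1, \mathbf{D})$, fix a basepoint $v_0 \in V(k)$ and declare the fiber of $(\mathscr{L}, \betti{L})$ at the degree-one element $\delta(v_0) \in \Pic^1(\proj^1, \mathbf{D})$ to be $(L, \mathbf{L}; v_0)$. By the same argument as Proposition \ref{charshfFcross}, the infinitesimal character together with the fiber over a single degree-one element determines a character sheaf uniquely; this simultaneously yields the extension and the uniqueness statement of the proposition, since any two character sheaves with the prescribed pullback under $\delta$ must share both their infinitesimal character and their fiber at $\delta(v_0)$. Verifying that the infinitesimal character of $\delta^* \mathscr{L}$ agrees with that of $L$ is a local residue computation identical in form to the one carried out in Proposition \ref{localclassfieldtheory}, applied independently at each singular point.

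The main obstacle is verifying the fiber identity $(\mathscr{L}, \betti{L}; \delta(v)) \cong (L, \mathbf{L}; v)$ for \emph{every} $v \in V(k)$, not merely for $v = v_0$. Because $\Pic^0(\proj^1) = 0$, the difference $\delta(v)\delta(v_0)^{-1} \in \Pic^0(\proj^1, \mathbf{D})$ is represented by any rational function $g_{v, v_0}$ with principal divisor $(v) - (v_0)$, so the required compatibility reduces to a global reciprocity statement. On the De Rham side it is precisely
\begin{equation*}
\sum_{d \in \supp(\mathbf{D}) \cup \{v, v_0\}} \Res_d(g_{v, v_0} \cdot \nabla_L(1)) = 0,
\end{equation*}
applied to the meromorphic one-form $g_{v, v_0} \cdot \nabla_L(1)$ on $\proj^1$, in which the contributions at $v$ and $v_0$ encode parallel transport of $L$ between the two points while those at $d \in \supp(\mathbf{D})$ evaluate the infinitesimal character against $g_{v, v_0}$. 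On the Betti side the corresponding identity follows from compatibility of the Riemann-Hilbert isomorphism of Theorem \ref{stokesderhamtheorem} with the local Betti structures at each pole furnished by Proposition \ref{localclassfieldtheory}, combined with the topological interpretation of monodromy in $\mathbf{L}$ along a path representing $g_{v, v_0}$.
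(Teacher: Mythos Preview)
Your approach is broadly correct and in the same spirit as the paper's, but the paper is much terser: it simply cites \cite{BE2}, Proposition 2.17, for the existence of the De Rham part $\mathscr{L}$ with $\delta^*\mathscr{L}\cong L$, and then invokes Proposition~\ref{linebundlecase} to pin down $\betti{L}$ by its fiber at a single $\delta(x)$. Your explicit construction of the infinitesimal character via $\lambda_d(g)=-\Res_d(\nabla_L(1)g)$, with the residue theorem guaranteeing descent to $\mathrm{Lie}(\Pic^0(\proj^1,\mathbf{D}))$, is exactly the content hidden in that citation, so up to this point you are unpacking rather than diverging.

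Where you make things harder than necessary is your ``main obstacle.'' Once you know $\delta^*\mathscr{L}\cong L$ as $\diff_V$-modules---and this follows from your local principal-parts computation, since two meromorphic one-forms on $\proj^1$ with the same polar parts along $D$ differ by a global holomorphic form, hence coincide---the sheaf $\delta^*\betti{L}$ is automatically a Betti structure for $L$. Proposition~\ref{linebundlecase}, applied now on $V$ rather than on the Picard group, says that two Betti structures for the same line bundle with connection are isomorphic as soon as their fibers agree at one point. Since you arranged $(\mathscr{L},\betti{L};\delta(v_0))\cong (L,\mathbf{L};v_0)$, the identity $(\delta^*\mathscr{L},\delta^*\betti{L})\cong(L,\mathbf{L})$ follows immediately, with no need for the reciprocity or path-monodromy arguments you sketch. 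Your Betti-side verification via ``a path representing $g_{v,v_0}$'' is in any case vague---a rational function is not a homotopy class---so it is fortunate that the whole paragraph can be replaced by a one-line appeal to Proposition~\ref{linebundlecase}.
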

\begin{proof}
The existence of $\mathscr{L}$ is proved in \cite{BE2}, proposition 2.17.  By
proposition \ref{linebundlecase}, it suffices to choose $\betti{L}$
with the property that
\begin{equation*}
(\mathscr{L}, \betti{L}; \delta(x)) \cong (L, \mathbf{L}; x)
\end{equation*}
for any $x \in U$.
\end{proof}

We return to the case where $X = \proj^1$. 
Suppose that $n=\deg(\mathbf{D})- 2 \ge 0$, and consider $\Pic^{n}(\proj^1,\mathbf{D})$.
Since $\Omega_{\proj^1/k}(\mathbf{D})$ is rationally equivalent to $\struct(n)$, 
$\Pic^{n} (\proj^1, \mathbf{D}) = \Phi^{-1} ([\Omega_{\proj^1/k} (\mathbf{D})])$.  
Therefore, the set of order $\mathbf{D}$ trivializations of $\Omega_{\proj^1/k} (\mathbf{D})$,
up to isomorphism, is a  $\Pic^{(0)} (\proj^1, \mathbf{D})$ torsor.  Furthermore,
by fixing a global meromorphic form $\nu$, we may identify this set of
trivializations with $\Pic^{n} (\proj^1, \mathbf{D})$.  We let $\delta_\nu$
be the composition of the divisor map with this identification.

We follow the argument in \cite{De}, Section f.  Let
$d$ have multiplicity $a_d$ in $\mathbf{D}$.  
Define $J_d$ to be the $U_{d}^{a_d}$ torsor of differential forms with poles of order exactly $a_d$ at
$d$, modulo forms that are regular at $d$.  Therefore,
\begin{equation*}
\scriptsize{J_d(k) = \{ h_{a_d}\frac{ d z}{(z-d)^{a_d}} +h_{a_d-1}  \frac{dz}{(z-d)^{a_d-1}} + \ldots + 
h_{1} \frac{dz}{(z-d)} : h_i \in k, h_{a_d} \ne 0\}.}
\end{equation*}
We define $J_D = \prod_{d \in D} J_d$.  
There is a natural action of $\Gm$ on each component $J_d$ by scalar multiplication,
and we take $J_D' = J_D/\Gm$ to be the quotient of $J_D$ by the diagonal action.  $J_D'$
is precisely the set of isomorphism classes of order $\mathbf{D}$ trivializations
of $\Omega_{\proj^1}(\mathbf{D})$.  Thus, $J_D \cong \Pic^n(\proj^1, \mathbf{D})$
for $n = \sum_{d \in \mathbf{D}} a_d - 2$.

There is a residue map $\Res : J_D \to \Ga$.  Take $\Sigma_D$ to be the
subvariety of $J_D$ on which $\Res$ vanishes.  Since 
$\Res (\alpha \omega ) = \alpha \Res(\omega)$ for $\alpha \in \Gm$, the image of $\Sigma_D$ in $J'_D$
is a codimension $1$ subvariety $\Sigma_D' \subset J'_D$. 
Now, as above, fix a meromorphic form $\nu$, and let $(\nu)$ be the divisor
associated to the poles and zeroes of $\nu$.   Suppose that
$(v_i) \in \Sym^n(V)$.  There is a rational function 
$Q_{(v_i)}^{\nu}$, unique up to a scalar, 
with divisor class $\sum_{i}^n\left( v_i \right) - \mathbf{D} - (\nu)$.
The divisor map $\delta_\nu : \Sym^n (V) \to \Sigma'_D$ is given by
\begin{equation*}
\delta_\nu ((v_i)) = \prod_{d \in D} \overline{(Q_{(v_i)}^\nu \nu)},
\end{equation*}
where $\overline{(Q_{(v_i)}^\nu \nu)}$ is the image of 
$(Q_{(v_i)}^\nu \nu)$ in $J'_D$.
By the residue theorem and dimension, 
  $\delta_\nu$ is an isomorphism
between $V^{(n)}$ and  $\Sigma_D'$. 
Therefore, $\phi^{(n)}$ defines a regular function on $\Sigma_D'$.

Finally, by proposition \ref{globalCFT}, there is an invariant $\diff$-module
$\mathscr{L}$ and a Betti structure $\betti{L}$ on $\Pic^{(n)} (X; \mathbf{D})$ such that
$\delta_\nu^! \mathscr{L} \cong \Sym^{n}(\mathscr{F})$
and $\delta_\nu^! \betti{L}[-1] \cong \Sym^{n}(\betti{F}).$
We have proved the following lemma:
\begin{lemma}\label{detlemma}
Let $\iota_\Sigma' : \Sigma'_D \to J'_D$.  Then,
\begin{equation*}
\left[R \Gamma_c (J'_D, (\dpush{(\iota_\Sigma)} \dpull{\iota_\Sigma)}\mathscr{L}), 
\spush{(\iota_\Sigma')} \spull{(\iota_\Sigma')} \betti{L}, \phi^{(n)}) \right]
\cong \\
\det (R \Gamma_c (V; L, \mathbf{L}, \phi)). 
\end{equation*}
\end{lemma}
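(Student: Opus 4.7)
The plan is to chase the computation through the factorization $V^{(n)} \xrightarrow{\sim} \Sigma'_D \hookrightarrow J'_D$ of the divisor map $\delta_\nu$, combining three ingredients: the symmetric K\"unneth reduction of Lemma \ref{symmetrickunneth}, the class field identification $\delta_\nu^! (\mathscr{L}, \betti{L}) \cong (\Sym^n(L), \Sym^n(\mathbf{L}))$ up to shift, and adjunction for the closed immersion $\iota'_\Sigma$.

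First, I would invoke Lemma \ref{symmetrickunneth} to replace $\det R\Gamma_c(V; L, \mathbf{L}, \phi)$ by the single line $R\Gamma_c(V^{(n)}; \Sym^n(L), \Sym^n(\mathbf{L}), \phi^{(n)})$. Next, I would transport this computation to $\Sigma'_D$ via the isomorphism $\delta_\nu: V^{(n)} \xrightarrow{\sim} \Sigma'_D$, using the identifications $\delta_\nu^! \mathscr{L} \cong \Sym^n(\mathscr{F})$ and $\delta_\nu^! \betti{L}[-1] \cong \Sym^n(\betti{F})$ from the discussion preceding the lemma (justified by Proposition \ref{globalCFT}). Because $\delta_\nu$ is an isomorphism, the various pullback functors agree up to canonical shift, so this step is essentially bookkeeping once the conventions are fixed; note in particular that $\delta_\nu^* \phi^{(n)}$ on $V^{(n)}$ is precisely the function $\phi^{(n)} = \sum \phi$ of Lemma \ref{symmetrickunneth}.

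Finally, I would apply adjunction for the closed immersion $\iota'_\Sigma$: for both the $\diff$-module and Betti components, $\iota'_\Sigma$ is proper, so pushforward is compatible with compactly supported cohomology, giving
\begin{equation*}
R\Gamma_c\bigl(J'_D;\, \dpush{(\iota'_\Sigma)}\dpull{(\iota'_\Sigma)}\mathscr{L},\, \spush{(\iota'_\Sigma)}\spull{(\iota'_\Sigma)}\betti{L},\, \phi^{(n)}\bigr) \cong R\Gamma_c\bigl(\Sigma'_D;\, \dpull{(\iota'_\Sigma)}\mathscr{L},\, \spull{(\iota'_\Sigma)}\betti{L},\, \phi^{(n)}\bigr).
\end{equation*}
Chaining these three identifications yields the claimed isomorphism.

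The main obstacle is that $\phi^{(n)}$ is a regular function only on $\Sigma'_D$, not on all of $J'_D$, so one must verify that the rapid decay cohomology on $J'_D$ of a sheaf supported on $\Sigma'_D$ can be computed using only the restriction of $\phi^{(n)}$ to that support. Following Definition \ref{modgrwthcohomology1}, this reduces to choosing any extension of $\phi^{(n)}$ to an open neighborhood of $\Sigma'_D$ in $J'_D$, performing the real oriented blow-up along the divisor at infinity of that extension, and checking independence of the choice — which holds because the portion of the blow-up contributing to cohomology of a sheaf supported on $\Sigma'_D$ depends only on the restriction of the Morse function to that support.
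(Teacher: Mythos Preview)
Your proposal is correct and follows essentially the same route as the paper: the lemma is stated immediately after the discussion establishing that $\delta_\nu$ is an isomorphism $V^{(n)} \xrightarrow{\sim} \Sigma'_D$ and that $\delta_\nu^!(\mathscr{L},\betti{L})$ agrees (up to shift) with $(\Sym^n L, \Sym^n \mathbf{L})$, and the paper regards these observations together with Lemma~\ref{symmetrickunneth} as constituting the proof. Your explicit treatment of the fact that $\phi^{(n)}$ is only defined on $\Sigma'_D$ is a point the paper leaves implicit.
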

\subsection{Proof of Product Formula}\label{subsecprodfla}
We return to the character sheaf $(\mathscr{L}, \betti{L})$ on $\Pic (\proj^1, \mathbf{D})$.  
Let $F^\times_{(d)}$ be the field of Laurent series at $d \in D$, modulo the unit
ideal $U^{a_d}$. 
Let $\mathbf{D}_\nu$ be the divisor $\mathbf{D} + (\nu)$, and let 
$D_\nu$ be the union of the support of $\mathbf{D}$ with the support of $(\nu)$.

By the description of $\Pic(\proj^1, \mathbf{D})$ in (\ref{ideles}), there is a natural
surjection $\pi : \prod_{d \in \mathbf{D}_\nu} F^\times_{(d)} \to \Pic(\proj^1, \mathbf{D})$.
\begin{proposition}\label{pullbackproduct}
Let $(\mathscr{L}_{d}, \betti{L}_{d})$ be the character sheaf on $F^\times_{(d)}$
determined by local class field theory as in proposition \ref{localclassfieldtheory}.
Then,
\begin{equation*}
\pi^\Delta (\mathscr{L}, \betti{L}) \cong \boxtimes_{d \in D_\nu}
(\mathscr{L}_d, \betti{L}_d).
\end{equation*}
\end{proposition}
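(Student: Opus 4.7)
The plan is to combine the uniqueness statements in local and global class field theory (Propositions \ref{localclassfieldtheory} and \ref{globalCFT}) with the decomposition of invariant sheaves on a product group. First, I would observe that $\pi^\Delta(\mathscr{L}, \betti{L})$ is an invariant Betti structure on the ind-group $\prod_{d \in D_\nu} F^\times_{(d)}$, since $(\mathscr{L}, \betti{L})$ is invariant on $\Pic(\proj^1, \mathbf{D})$ and $\pi$ is a group homomorphism. Working on the finite-type quotient $\prod_{d} F^\times_{(d)}/\prod_d U^{a_d}$ and applying Proposition \ref{productG} iteratively, together with Proposition \ref{linebundlecase} to lift the underlying $\diff$-module factorization to an isomorphism of pairs, this pair decomposes as an external tensor product
\begin{equation*}
\pi^\Delta(\mathscr{L}, \betti{L}) \cong \boxtimes_{d \in D_\nu} (\mathscr{L}'_d, \betti{L}'_d),
\end{equation*}
where $(\mathscr{L}'_d, \betti{L}'_d)$ is obtained by restricting along the inclusion $\iota_d : F^\times_{(d)} \hookrightarrow \prod_{d'} F^\times_{(d')}$ sending $t \mapsto (1, \ldots, t, \ldots, 1)$.

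Next I would identify $(\mathscr{L}'_d, \betti{L}'_d)$ with $(\mathscr{L}_d, \betti{L}_d)$ using the uniqueness clause of Proposition \ref{localclassfieldtheory}: a character sheaf on $F^\times_{(d)}$ is determined (up to isomorphism) by its pullback along the local divisor map $\delta_d : \Spec(K_d) \to F^{(1)}_{(d)}$. Thus it suffices to exhibit an isomorphism
\begin{equation*}
\delta_d^\Delta(\mathscr{L}'_d, \betti{L}'_d) \cong (\hat{L}_d, \hat{\mathbf{L}}_d).
\end{equation*}

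The verification is a compatibility between the local and global divisor maps: the composition $\pi \circ \iota_d \circ \delta_d : \Spec(K_d) \to \Pic(\proj^1, \mathbf{D})$ agrees, formally at $d$, with the restriction to the formal punctured disk at $d$ of the global divisor map $\delta$. This is the defining property of $\Pic(\proj^1, \mathbf{D})$ in its ad\`elic presentation (\ref{ideles}), following \cite{Ser}. Pulling back through this identification, $\delta_d^\Delta(\mathscr{L}'_d, \betti{L}'_d)$ is the formal localization at $d$ of $\delta^\Delta(\mathscr{L}, \betti{L}) \cong (L, \mathbf{L})$, which is precisely $(\hat{L}_d, \hat{\mathbf{L}}_d)$.

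The main obstacle I anticipate is carefully executing the product decomposition in the presence of both an ind-group structure and the Betti data: Proposition \ref{productG} is stated for $\diff$-modules on affine algebraic groups of finite type, so one must truncate to the finite level determined by the conductors $a_d$, apply the proposition on each factor, and then invoke Proposition \ref{linebundlecase} to upgrade the chosen $\diff$-module isomorphism to the level of $\MB$ compatibly. Once the box-product decomposition is in hand, the ad\`elic compatibility of divisor maps is essentially classical, and the uniqueness statement of local CFT closes the argument.
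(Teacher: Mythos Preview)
Your proposal is correct and follows essentially the same route as the paper: both arguments rest on the observation that the global divisor map $\delta$, viewed through the ad\`elic presentation (\ref{ideles}), restricts near $d$ to the local divisor map $\delta_d$ into the $d$-th factor (with trivial components elsewhere), so that the restriction of $\pi^\Delta(\mathscr{L},\betti{L})$ to $F^\times_{(d)}$ is identified with $(\mathscr{L}_d,\betti{L}_d)$ by the uniqueness clause of Proposition~\ref{localclassfieldtheory}, after which Proposition~\ref{productG} gives the box-product decomposition. Your added care in passing to the finite-level quotient and invoking Proposition~\ref{linebundlecase} to upgrade the $\diff$-module factorization to the level of $\MB$ is a welcome refinement, since Proposition~\ref{productG} as stated addresses only the $\diff$-module side.
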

\begin{proof}

Let $\delta : U \to \Pic(\proj^1, \mathbf{D})$ be the divisor map, and suppose that $T$
is a parameter at $d$.  Then, after multiplying by the global function $\frac{T}{T- T(u)}$,
$\delta(u) = \frac{T}{T-T(u)}$ in $F^\times_d$, and the identity in all other components.
Notice that this is precisely the local divisor map from proposition \ref{localclassfieldtheory}.

Let $\Delta^\times_d$ be a formal disk around $d$.  Let 
$\delta_d : \Delta^\times_d \to F^\times_d$ be the composition
of the inclusion $\Delta^\times_d \to \proj^1$ with the divisor map above.  
Therefore, since $\delta_d$ factors through $\pi$, it follows that the restriction
of $\pi^\Delta(\mathscr{L}, \betti{L})$ to $F^\times_d$ must be
$(\mathscr{L}_d, \betti{L}_d)$.  The proposition follows from proposition
\ref{productG}.
\end{proof}

\begin{corollary}
Let $(\mathscr{L}^{(n)}, \betti{L}^{(n)})$ be the restriction of $(\mathscr{L}, \betti{L})$
to the degree $n$ component of $\Pic(\proj^1, \mathbf{D})$.  Then,
there is an invariant regular function $\beta^{(n)} : \Pic(\proj^1, \mathbf{D}) \to \aff^1$
such that $(\mathscr{L}^{(n)}, \betti{L}^{(n)}, \beta) \in \MBel(\diff_{\Pic(\proj^1, \mathbf{D})}, M).$
\end{corollary}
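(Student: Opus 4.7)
The plan is to transfer the elementary structure from the local factors to $\Pic^{(n)}(\proj^1, \mathbf{D})$ via the surjection $\pi$ from Proposition \ref{pullbackproduct}. By that proposition, $\pi^\Delta(\mathscr{L}^{(n)}, \betti{L}^{(n)})$ splits as $\boxtimes_{d \in D_\nu}(\mathscr{L}_d, \betti{L}_d)$. On a connected component of $\prod_d F^\times_{(d)}$, specified by a tuple of local degrees $(n_d)$ with $\sum n_d = n$, Proposition \ref{charshfFcross} says each local triple $(\mathscr{L}_d, \betti{L}_d, \beta_d^{(n_d)})$ lies in $\MBel$, with the Morse function $\beta_d^{(n_d)} = \mu_{T^{n_d}}^* \beta_{\lambda_d}$ factoring through the projection onto the unipotent quotient $U^1_d/U^{a_d}_d$. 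Applying the K\"unneth formula (Theorem \ref{kunnethbetti}) to the box product, the resulting triple on $\prod_d F^\times_{(d)}$ is elementary, with Morse function $\tilde\beta := \sum_d \beta_d^{(n_d)}$.

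To descend, I use the adelic description (\ref{ideles}): the generalized Jacobian $\Pic^{(0)}(\proj^1, \mathbf{D})$ decomposes (after a choice of section) as a product $T \times N$, where $T$ is a torus and $N \cong \prod_{d} U^1_d/U^{a_d}_d$ is unipotent, and $\Pic^{(n)}$ inherits this structure as a $\Pic^{(0)}$-torsor. Since each $\beta_d^{(n_d)}$ factors through its local unipotent quotient, $\tilde\beta$ factors through the projection to $N$, hence is invariant under translation in the $T$-direction, and in particular under the diagonal $k^\times$. Combined with Lemma \ref{lusz} applied to the descent of $\pi^\Delta \mathscr{L}^{(n)}$, this exhibits $(\mathscr{L}^{(n)}, \betti{L}^{(n)}, \beta^{(n)})$ in $\MBel(\diff_{\Pic(\proj^1, \mathbf{D})}, M)$, where $\beta^{(n)}$ is a regular invariant function descending $\tilde\beta$.

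The main obstacle is verifying that $\tilde\beta$ descends across the remaining portion of the kernel of $\pi$, namely the image of the global rational functions $K^\times_{\proj^1}$. For $h \in K^\times$ mapping into this kernel, one needs $\sum_d \beta_{\lambda_d}(h|_d)$ to vanish (so the elementary presentation is canonical up to the ambiguities already built into $\struct_{dt}$). Under local class field theory (Proposition \ref{localclassfieldtheory}) this quantity equals $-\sum_d \Res_d(\nabla_L(1) \cdot \log h|_d)$; since $h$ is a global unit outside $D_\nu$ and $\nabla_L(1)$ is regular there, the full sum of residues is that of a globally defined meromorphic $1$-form on $\proj^1$, and vanishes by the residue theorem. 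This completes the descent and establishes the corollary.
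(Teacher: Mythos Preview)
Your core argument matches the paper's: pull back $(\mathscr{L}^{(n)},\betti{L}^{(n)})$ along the product of local groups, invoke Proposition~\ref{charshfFcross} to get the elementary structure with Morse function $\sum_d \beta_d^{(m_d)}$, observe that this sum is $\Gm$-invariant because each $\beta_d$ factors through the unipotent quotient, and descend. The paper does exactly this, phrasing the cover as a principal $\Gm$-bundle $\rho:\prod_{d\in D_\nu}\gamma_d U_{a_d}(d)\to \Pic^{(n)}(\proj^1,\mathbf{D})$.

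Your final paragraph, however, is addressing a phantom obstacle and the argument there is not sound. Once you fix a connected component of $\prod_d F^\times_{(d)}$ (i.e., fix the local degrees $(m_d)$), the only global rational functions in the kernel of $\pi$ are those with no zeros or poles anywhere on $\proj^1$, namely the constants $k^\times=\Gm$. There is no ``remaining portion'' of $K^\times_{\proj^1}$ to descend across; nonconstant rational functions with divisor supported in $D_\nu$ move you between components and are irrelevant to defining $\beta^{(n)}$ on one of them. Moreover, the residue argument you give is not valid as written: the formal logarithm $\log h|_d$ is a local power series, not the restriction of a single-valued global function, so $\nabla_L(1)\cdot\log h|_d$ is not the localization of a global meromorphic $1$-form and the residue theorem does not apply to $\sum_d \Res_d(\nabla_L(1)\cdot \log h|_d)$. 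You should simply drop that paragraph; the $\Gm$-descent already completes the proof.
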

\begin{proof}
Let $m_d$ be integers such that $\sum_{d \in D_\nu} m_d = n$, and 
let $\gamma_d \in F^\times_{(d)}$ be an element of degree $m_d$.  There is a 
a principal $\Gm$ bundle 
\begin{equation*}
\rho : \prod_{d \in D_\nu} \gamma_d  U_{a_d}(d) \to \Pic^{(n)} (\proj^1, \mathbf{D}).
\end{equation*}
Proposition \ref{pullbackproduct} implies that
\begin{equation*}\label{sheafref}
\rho^\Delta (\mathscr{L}, \betti{L}) \cong \boxtimes_{d \in D_\nu} (\mathscr{L}_d^{(m_d)}, \betti{L}_d^{(m_d)}).
\end{equation*}
Recall that $(\mathscr{L}_d^{(m_d)}, \betti{L}_d^{(m_d)}, \beta_d^{(m_d)}) \in \MBel (\diff_{\gamma U_{a_d}(d)}, M)$.
The sheaf in \eqref{sheafref} is trivially $\Gm$ equivariant, so in particular $\sum \beta_d^{(m_d)}$ is
$\Gm$ equivariant.  Therefore, $\sum \beta_d^{(m_d)}$ descends to a function 
$\beta^{(n)}$ on $\Pic^{(n)} (\proj^1, \mathbf{D})$.  Take $\beta (f) = \beta^{(\deg(f))}(f)$.
\end{proof}

For each $d \in D_\nu$,  choose $\gamma_d \in F^\times_{(d)}$ of degree 
$a_d + c(\nu)$.
 be as above, and define 
$\alpha : \prod_{d \in \mathbf{D}} \gamma_d U_{a_d} (d) \to J_D$
by 
\begin{equation*}
\alpha (f)_d = (f^{-1} \nu)_d.
\end{equation*}
The diagram
\begin{equation*}
\xymatrix{
\prod_{d \in \mathbf{D}} \gamma_d U_{a_d} (d) \ar[r]^(.65){\alpha}\ar[d]^{\pi} & J_D \ar[d]^{\pi'}\\
\Pic^{n}(\proj^1; \mathbf{D}) \ar[r]^(.65){\delta_\nu} & J'_D
}
\end{equation*}
commutes, the vertical arrows are surjections, and the horizontal arrows are isomorphisms.
Let $\sigma_d$ be the inverse map on $F^\times_{(d)}$, and 
take $\psi$ to be the functional on $J_D$ defined by summing over the residues
of each component.  Then, the zeroes of $\psi$ are given by $\Sigma_D$, 
and $\alpha^* \psi = \sum_{d \in \mathbf{D}} \psi_\nu \circ \sigma |_{\gamma_d U_{a_d} (d)}$.
By proposition \ref{inversedual},
\begin{multline}\label{Jdsheaf}
(\pi')^\Delta \dpush{(\delta_\nu)} (\mathscr{L}, \betti{L}, \beta) \cong \\
\boxtimes_{d \in D_\nu} ((\mathscr{L}_d^\vee)^{(-c(\nu) - a(d))}, (\betti{L}_d^\vee)^{(-c(\nu)-a(d))}, -\beta_d).
\end{multline}

Let $\iota_\Sigma : \Sigma \to J_D$.
Define an elementary $\diff_{J_d}$-module $\struct_{d\psi}$, and let
$\betti{O}_{d\psi}$ be the Betti structure with the property that 
$\iota_\Sigma^* (\struct_{d\psi}, \betti{O}_{d\psi}) \cong 
(\struct_{\Sigma_D}, \ratl_{\Sigma_D}) [ 1]$.  

\begin{lemma}\label{trianglelemma}
There is a distinguished triangle
\begin{equation*}
\struct_{\Sigma'_D} \to \dpushc{\pi} \struct_{d\psi} \to \struct_{J'_D}.
\end{equation*}
\end{lemma}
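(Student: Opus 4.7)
The key observation is that $\Sigma_D = \psi^{-1}(0)$ coincides with $\pi^{-1}(\Sigma'_D)$.  This follows because $\psi$ is $\Gm$-equivariant of weight one: under the action $\alpha.\omega$ the value $\psi(\omega)$ is multiplied by $\alpha$, so $\psi$ vanishes at one point of a $\pi$-fibre iff it vanishes identically on that fibre, which happens precisely when the image lies in $\Sigma'_D$.

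I plan to compute $\dpushc{\pi}\struct_{d\psi}$ by partial compactification. Adjoin the zero-section of the $\Gm$-bundle $\pi$ to form an $\aff^1$-bundle $\tilde\pi:\tilde{J}_D\to J'_D$, with closed section $s_0:J'_D\hookrightarrow\tilde{J}_D$ and open complement $j_0:J_D\hookrightarrow\tilde{J}_D$. Because $\psi$ has weight one it extends to a regular function $\tilde\psi$ on $\tilde{J}_D$ vanishing identically on $s_0$, so $\struct_{d\psi}$ extends to the smooth $\diff$-module $\struct_{d\tilde\psi}$.  Applying $\tilde\pi_!$ to the open/closed triangle
\begin{equation*}
(j_0)_!(j_0)^\Delta\struct_{d\tilde\psi}\to\struct_{d\tilde\psi}\to(s_0)_*(s_0)^\Delta\struct_{d\tilde\psi}\to,
\end{equation*}
and using $\tilde\pi\circ j_0=\pi$, $\tilde\pi\circ s_0=\mathrm{id}_{J'_D}$, together with $(s_0)^\Delta\struct_{d\tilde\psi}\cong\struct_{J'_D}$ (since $\tilde\psi|_{s_0}=0$), produces
\begin{equation*}
\dpushc{\pi}\struct_{d\psi}\to\tilde\pi_!\struct_{d\tilde\psi}\to\struct_{J'_D}\to.
\end{equation*}
After suitable rotation this matches the claimed triangle, provided the middle term $\tilde\pi_!\struct_{d\tilde\psi}$ is canonically identified (up to shift and Tate twist) with the pushforward of $\struct_{\Sigma'_D}$ along $i'_\Sigma$.

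That identification is the main computational step, and is established by holonomic base change. Fibrewise, $\tilde\pi_!\struct_{d\tilde\psi}|_{[\omega]} \cong R\Gamma_c(\aff^1;\struct_{d(c\alpha)})$, where $c=c([\omega])$ is determined by the restriction of $\tilde\psi$ to the fibre.  For $c\ne 0$ this vanishes: the de Rham complex $k[\alpha]\xrightarrow{\partial+c}k[\alpha]$ is an isomorphism, so $\tilde\pi_!\struct_{d\tilde\psi}$ is supported on $\Sigma'_D$.  Over $[\omega]\in\Sigma'_D$ the function $\tilde\psi$ vanishes on the whole fibre, so $\struct_{d\tilde\psi}$ restricts to the trivial $\diff$-module on the $\Ga$-bundle $\tilde\pi^{-1}(\Sigma'_D)\to\Sigma'_D$, and proposition \ref{derhambundle} applies to yield $\struct_{\Sigma'_D}[-1](-1)$ after pushforward.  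The principal obstacle is careful bookkeeping of the cohomological shifts and Tate twists, which are hidden by the paper's pullback conventions (as signalled by the normalisation $\iota_\Sigma^*\struct_{d\psi}\cong\struct_{\Sigma_D}[1]$ fixed immediately above the lemma), so that the rotated triangle has all three terms in the correct cohomological degree; and verifying that the fibrewise vanishing of $\tilde\pi_!\struct_{d\tilde\psi}$ off $\Sigma'_D$ globalises to a genuine identification with the pushforward from $\Sigma'_D$ rather than merely a pointwise coincidence.
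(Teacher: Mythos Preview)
Your approach is essentially the same as the paper's: both form the associated $\aff^1$-bundle $\overline{J}_D=J_D\times_{\Gm}\Ga$ (your $\tilde{J}_D$), extend $\psi$ by zero, apply the open/closed triangle for the zero section, push forward along the $\Ga$-bundle, and then identify the term $\bar\pi_!\struct_{d\bar\psi}$ with $\struct_{\Sigma'_D}$ (up to shift) using fibrewise vanishing off $\Sigma'_D$ together with Proposition~\ref{derhambundle} over $\Sigma'_D$. The paper packages the fibrewise vanishing via Corollary~\ref{Gbundlevanishing} rather than your explicit de Rham computation, but the content is identical; your caveats about shift bookkeeping are well taken and are the only place care is needed.
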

\begin{proof}
Use the 
$\Gm$ action on $J_D$ to construct $\overline{J}_D = J_D \times_{\Gm} \Ga$.
Therefore, the induced map $\bar{\pi} : \overline{J}_D \to J'_D$ is a principle $\Ga$ bundle.
Extend $\psi$ by zero to a function $\bar{\psi}$ on  $\overline{J}_D$.   
Let $j : J_D \to \overline{J}_D$,  and let
 $i_Z : Z \subset \overline{J}_D$ be the inclusion of the 
zero section of the $\Ga$-bundle.
There is a distinguished triangle
\begin{equation*}
\dpushc{j} \struct_{d\psi} \to \struct_{\bar{d\psi}} \to \dpush{(i_Z)} \dpull{(i_Z)} \struct_{d\bar{\psi}}.
\end{equation*}
Since $\bar{\psi}$ is identically $0$ on $Z$, 
$\dpush{(i_Z)} \dpull{(i_Z)} \struct_{d\bar{\psi}} \cong \struct_Z [1]$;
therefore, we obtain a distinguished triangle
\begin{equation*}
\struct_Z \to \dpushc{j} \struct_{d\psi} \to \struct_{d\bar{\psi}}.
\end{equation*}
The desired triangle is obtained by applying $\dpushc{\bar{\pi}}$
to the above triangle.  The induced map $Z \to J'_D$ is an isomorphism,
so it suffices to show that 
\begin{equation}\label{eqn1}
\dpushc{\bar{\pi}} \struct_{\bar{\psi}} \cong \struct_{\Sigma'_D} [1].
\end{equation}

The function $\psi$ is linear on the fibers of $\bar{\pi}$.  
Now, by corollary \ref{Gbundlevanishing}, the support of 
$\dpushc{\bar{\pi}} \struct_{\bar{\psi}}$ is contained in 
$\Sigma'_D$.  Let $\overline{\Sigma}_D = \bar{\pi}^{-1} (\Sigma'_D)$,
and $\iota_{\overline{\Sigma}_D} : \overline{\Sigma}_D \to \overline{J}_D$.
Since $\dpull{\iota_\Sigma} \struct_{\bar{\psi}} \cong \struct_{\overline{\Sigma}_D}[1]$,
proposition \ref{derhambundle} implies that
\begin{equation*}
\dpushc{\bar{\pi}} \struct_{\overline{\Sigma}_D}[1] \cong
\dpushc{\bar{\pi}} \dpullc{\bar{\pi}} \struct_{\Sigma'_D} \cong
\struct_{\Sigma'_D}.
\end{equation*}
This verifies \ref{eqn1}.
\end{proof}

Finally, we will need to use the following lemma:
\begin{lemma}\label{vanishinglemma}
 $R \Gamma_c (J'_D; \mathscr{L}) \cong \{0\}$.
 \end{lemma}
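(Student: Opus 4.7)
The plan is to recognize $J'_D$ as a torsor under a generalized Jacobian and reduce to proposition \ref{invariantvanishing}. Using the isomorphism $\delta_\nu$ together with the global class field theory picture of propositions \ref{globalCFT} and \ref{pullbackproduct}, one identifies $J'_D \cong \Pic^{n}(\proj^1, \mathbf{D})$, which is a torsor over $G := \Pic^{0}(\proj^1, \mathbf{D})$. Since $\Pic^{0}(\proj^1) = \{0\}$, Rosenlicht's description gives
\[
G \cong \Bigl( \prod_{d \in \supp(\mathbf{D})} U^0(d)/U^{a_d}(d) \Bigr) / \Gm^{\mathrm{diag}},
\]
and the truncated logarithm presents each local factor as $\Gm \times \Ga^{a_d - 1}$. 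Hence $G \cong \Gm^{|\supp \mathbf{D}| - 1} \times \Ga^{\deg \mathbf{D} - |\supp \mathbf{D}|}$, which is exactly the class of groups to which proposition \ref{invariantvanishing} applies.

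After picking a base point in $J'_D$, I would translate to turn $\mathscr{L}|_{J'_D}$ into an invariant $\diff_G$-module, which by propositions \ref{productG} and \ref{charshfFcross} is of the form $\mathscr{L}_\lambda$ for a unique infinitesimal character $\lambda \in \Lie(G)^\vee$. Provided $\lambda \neq 0$, proposition \ref{invariantvanishing} forces $R \Gamma(G; \mathscr{L}_\lambda) = 0$, and the same proof --- using the explicit complexes $k[x] \to k[x]$ on $\Ga$ and $k[x, x^{-1}] \to k[x, x^{-1}]$ on $\Gm$, both of which remain acyclic with compact supports when $\lambda \neq 0$ --- gives $R\Gamma_c(G; \mathscr{L}_\lambda) = 0$. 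The chosen translation transports this vanishing back to the desired statement $R\Gamma_c(J'_D; \mathscr{L}) = 0$.

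The main obstacle is verifying that $\lambda \neq 0$. By proposition \ref{pullbackproduct} combined with local class field theory (proposition \ref{localclassfieldtheory}), the pullback of $\mathscr{L}$ to $\prod_d U^0(d)/U^{a_d}(d)$ is $\boxtimes_d \mathscr{L}_{\lambda_d}$ with $\lambda_d(\xi) = -\Res_d(\omega \xi)$, where $\omega = \nabla_L(1)$. If $\lambda$ were to vanish on the quotient $G$, then each $\lambda_d$ would have to be a common scalar multiple of $\xi \mapsto \xi(0)$: this forces $\omega$ to have only simple poles with all residues equal to a single constant $\alpha$, and the residue theorem then forces $\alpha = 0$. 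But then $\omega$ would be holomorphic and $\mathbf{D}$ trivial, contradicting $\deg \mathbf{D} \geq 2$. In the easier case $a_d \geq 2$ for some $d$, the higher-order Laurent coefficients of $\omega$ at $d$ automatically produce a nonzero component of $\lambda$ along the $\Ga^{a_d - 1}$ factor of $G$. Either way $\lambda \neq 0$, which completes the proof.
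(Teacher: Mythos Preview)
Your strategy is essentially the paper's own: identify $J'_D$ with a torsor under $\Pic^0(\proj^1,\mathbf{D}) \cong \bigl(\prod_d U_{a_d}(d)\bigr)/\Gm$, translate so that $\mathscr{L}$ becomes an invariant line bundle, and invoke the vanishing of cohomology for a nontrivial invariant connection on a product of $\Ga$'s and $\Gm$'s. The paper carries this out by singling out one subgroup $H=U^{a_d-1}_{a_d}(d)$ and applying corollary~\ref{Gbundlevanishing}, whereas you use the full product decomposition and proposition~\ref{invariantvanishing} directly; these are minor variants of the same argument.

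There is, however, a genuine gap in your nontriviality step. Proposition~\ref{invariantvanishing} gives vanishing unless $\mathscr{L}_\lambda\cong\struct_G$, and on the $\Gm$ factors this means $\lambda\in\mathbb{Z}$, not $\lambda=0$. Your criterion ``$\lambda\neq 0$'' is therefore too weak, and your residue--theorem argument does not exclude the purely tame case in which every $\Res_d(\omega)$ is a nonzero integer (e.g.\ residues $1,-1$ at two points). Moreover, the claim that ``each $\lambda_d$ would have to be a common scalar multiple of $\xi\mapsto\xi(0)$'' is not what $\lambda'=0$ on $\Lie(G/\Gm)$ says: since $\oplus_d\lambda_d=\lambda'\circ d\pi$ with $d\pi$ surjective, $\lambda'=0$ forces every $\lambda_d=0$ outright, not merely proportional to the evaluation functional. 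The correct (and shorter) argument, which is what the paper uses implicitly via the conductor, is that each $d\in D$ is by hypothesis a genuine singular point of $L$, so the local character sheaf $\mathscr{L}_{\lambda_d}$ is ramified; in particular $\mathscr{L}_{\lambda_d}\not\cong\struct_{U_{a_d}(d)}$ (when $a_d\ge 2$ the $\Ga$--part of $\lambda_d$ is nonzero, and when $a_d=1$ one has $\Res_d(\omega)\notin\mathbb{Z}$). Hence $\boxtimes_d\mathscr{L}_{\lambda_d}\not\cong\struct_G$, and since this is the pullback of $\mathscr{L}^{(0)}$ along $G\to G/\Gm$, the descended sheaf is nontrivial as well; proposition~\ref{invariantvanishing} then applies.
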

 \begin{proof}
Let $G = \prod_{d \in \mathbf{D}} U_{a_d} (d)$.  Then, $J_D$ is 
a $G$-torsor, and $J'_D$ is a $G/\Gm$-torsor.  
Let $\mathscr{L}_G$ be the invariant $\diff_G$-module
corresponding to $G$.  
 Fix an isomorphism $G \cong J_D$ by choosing 
 a point $\nu \in J_D$.  Then, if $\ell$ is the fiber
 of $\mathscr{L}$ at $\nu$, 
$\mathscr{L}_G \cong \ell \otimes_k \mathscr{L}$.
Similarly, define $\mathscr{L}'_G$ on $G/\Gm$.
If $\pi_G : G \to G/\Gm$, 
$\pi_G^\Delta \mathscr{L}'_G \cong \mathscr{L}_G$.
It suffices to show that 
$R \Gamma_c (G/\Gm; \mathscr{L}'_G)$ vanishes.

 Let $x \in \mathbf{D}$, and 
\begin{equation*}
G' = U^1_x \times \prod_{\substack{d \in \mathbf{D} \\ d \not= d}} U_{a_d}(d) \subset G.
\end{equation*}
By theorem \ref{indexthm}, the dimension of $G$
is at least $2$, so $G'$ is non-trivial.
Moreover, $i_G : G' \to G$ is a section of $G \to G/ \Gm$.  
Therefore, we may identify $G'$ with $G/\Gm$.
Let $H = U^{a_d-1}_{a_d}(d)$ be a non-trivial subgroup of
$G'$, and $\pi_H : G' \to G'/H$.  Since $a_d$ is the
conductor of $\mathscr{L}_d$, the restriction of
$\mathscr{L}_d$ to the fibers of $\pi_H$ is non-trivial.
Therefore, corollary \ref{Gbundlevanishing}
implies that $\dpushc{(\pi_H)}\mathscr{L}_G' \cong \{0\}$.
By composition of push-forward, 
$R \Gamma_c (G'; \mathscr{L}_G') \cong 0$.
\end{proof}

\begin{theorem}[Product Formula] \label{prodfla}
Let $(L, \mathbf{L})$ be a holonomic $\diff_{\proj^1}$-module, with singular
points lying in $\mathbf{D} \subset \proj^1$. Let $V=\proj^1 \backslash \mathbf{D}$. 
Furthermore, let $(\mathscr{L}_x, \betti{L}_x)$
be the character sheaf defined by local class field theory and suppose that
$\nu$ is a meromorphic form on $\proj^1$.  Then,
\begin{equation*}
\Per_c(V; L, \mathbf{L}) \cong 
(2 \pi \sqrt{-1}) \otimes \bigotimes_{x \in \proj^1} \varepsilon(\mathscr{L}_x^\vee, \betti{L}_x^\vee; \nu).
\end{equation*}
\end{theorem}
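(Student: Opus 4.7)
The plan is to follow Deligne's strategy \cite{De} for the product formula in positive characteristic, replacing the $\ell$-adic Fourier-theoretic arguments with the sheaf-theoretic machinery built up around the generalized Picard group $\Pic(\proj^1, \mathbf{D})$ and the residue variety $J_D$. The overall strategy is to (i) reduce the period determinant to a single cohomology group on $V^{(n)}$, (ii) transfer the coefficients to a character sheaf on $\Sigma'_D \subset J'_D$, (iii) lift along $\pi: J_D \to J'_D$ where the sheaf factorizes, and (iv) split the resulting cohomology into a tensor product of local Gauss sums.

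First I would apply lemma \ref{symmetrickunneth} to get $\Per_c(V; L, \mathbf{L}) \cong R\Gamma_c(V^{(n)}; \Sym^n L, \Sym^n \mathbf{L}, \phi^{(n)})$ with $n = \deg(\mathbf{D}) - 2$. The isomorphism $\delta_\nu : V^{(n)} \xrightarrow{\sim} \Sigma'_D$ combined with proposition \ref{globalCFT} and lemma \ref{detlemma} identifies this with $R\Gamma_c(\Sigma'_D; \iota'^*_\Sigma(\mathscr{L}, \betti{L}), \beta)$, where $(\mathscr{L}, \betti{L})$ is the global class field character sheaf on $\Pic^n(\proj^1, \mathbf{D})$ and $\beta$ is its canonical Morse function restricting to $\phi^{(n)}$ via $\delta_\nu$.

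Next I would tensor the distinguished triangle of lemma \ref{trianglelemma} with $\mathscr{L}$ on $J'_D$, apply the projection formula (proposition \ref{noncharprojfla}) and then $R\Gamma_c(J'_D, -)$. Lemma \ref{vanishinglemma} kills the third term, degenerating the triangle to a natural isomorphism
\begin{equation*}
R\Gamma_c(\Sigma'_D; \iota'^*_\Sigma \mathscr{L}, \beta) \cong R\Gamma_c(J_D; \dpullc{\pi}\mathscr{L} \otimes \struct_{d\psi}, \pi^*\beta)[-1].
\end{equation*}
Under the isomorphism $\alpha: \prod_{d \in D_\nu}\gamma_d U_{a_d}(d) \xrightarrow{\sim} J_D$, proposition \ref{pullbackproduct} combined with proposition \ref{inversedual} (applied via $\alpha(f) = f^{-1}\nu$) identifies the pulled-back sheaf with $\boxtimes_d [(\mathscr{L}_d^\vee \otimes \mathscr{M}_{\psi_\nu}), (\betti{L}_d^\vee \otimes \betti{M}_{\psi_\nu})]$, equipped with the Morse function $\sum_d (-\beta_d + \psi_\nu)$. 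The K\"unneth formulas of proposition \ref{kunneth} and theorem \ref{kunnethbetti} then split the cohomology, yielding
\begin{equation*}
\bigotimes_{d \in D_\nu} R\Gamma_c(\gamma_d U_{a_d}(d); \mathscr{L}_d^\vee \otimes \mathscr{M}_{\psi_\nu}, -\beta_d + \psi_\nu) = \bigotimes_{d \in D_\nu} \tau(\mathscr{L}_d^\vee, \betti{L}_d^\vee; \nu).
\end{equation*}

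To conclude, one observes that at $x \in V$ where $\nu$ is regular the local character sheaf is unramified with trivial $\varepsilon$-factor, so the product extends freely over all $x \in \proj^1$. Repackaging each local $\tau$ into $\varepsilon$ via definition \ref{def:epsfactor} extracts the factor $(2\pi\sqrt{-1})^{c_x(\nu)}$ and the shift $[-c_x(\nu) - a(\mathscr{L}_x)]$ at each point; the residue theorem $\sum_x c_x(\nu) = -2$ together with the single Tate twist contributed by the $\Gm$-bundle $\pi$ (proposition \ref{derhambundle}) balances to a single $(2\pi\sqrt{-1})$ prefactor, while the total shift matches $-\chi(V; L)$ via the index theorem \ref{indexthm}. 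The main obstacle will be precisely this bookkeeping step: the shifts, Tate twists, and dual-sign conventions from $\sigma$ must be carefully tracked so that the single leftover $(2\pi\sqrt{-1})$ and the global grading come out correctly, both on the De Rham and the Betti side, and compatibly under the comparison isomorphism.
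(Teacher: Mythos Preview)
Your proposal is correct and follows essentially the same route as the paper: reduce to $V^{(n)}$ via lemma \ref{symmetrickunneth}, transport to $\Sigma'_D$ via lemma \ref{detlemma}, use the triangle of lemma \ref{trianglelemma} tensored with $\mathscr{L}$ together with lemma \ref{vanishinglemma} and the projection formula to pass to $J_D$, then split via proposition \ref{pullbackproduct} and the K\"unneth theorem \ref{kunnethbetti}. The only point where the paper is slightly more explicit is in tracking the single Tate twist on the Betti side: rather than pushing directly along the $\Gm$-bundle $\pi'$ (proposition \ref{derhambundle} is stated only for additive $H$), the paper passes to the $\Ga$-compactification $\overline{J}_D$ from the proof of lemma \ref{trianglelemma} and applies \ref{derhambundle} to the $\Ga$-bundle $\bar\pi:\overline{\Sigma}_D\to\Sigma'_D$, which is exactly the bookkeeping step you flag.
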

\begin{proof}
Recall the definition of 
$\varepsilon(\mathscr{L}, \betti{L}; \nu)$ in definition \ref{def:epsfactor}.  Let
$c_x(\nu) = \ord_x(\nu)$, where $\ord_x(\nu)$ is the degree of the zero or
pole of $\nu$ at $x$.  
Therefore,
\begin{equation*}
 \varepsilon_c(\mathscr{L}_x^\vee, \betti{L}_x^\vee; \nu) = 
(2 \pi \sqrt{-1})^{c_x(\nu)}\otimes \tau(\mathscr{L}_x, \betti{L}_x; \nu)[-c_x(\nu)-a_x]
\end{equation*}

Since $\nu$ is a one form, $\sum_{x \in \proj^1} c_x(\nu) = -2$.  Moreover,
if $x \in D$
$f_{x} =  m_{x}$, so
the degree of $\bigotimes_{x \in \proj^1} \varepsilon(\mathscr{L}_x^\vee, \betti{L}_x^\vee; \nu)$
is 
\begin{equation*}
-2 + \sum_{x \in D} (1+ m_{x}).
\end{equation*}  
By theorem \ref{indexthm}, this is the
same as the degree of $\Per_c(V; L, \mathbf{L})$.  
Using lemma \ref{symmetrickunneth}, it suffices to show that
\begin{equation*}
R \Gamma_c(V^{(n)}; (\Sym^n(L), \Sym^n(\betti{L}), \phi^{(n)}) \cong
(2 \pi \sqrt{-1})^{-1} \bigotimes_{x \in X} \tau(\mathscr{L}_x^\vee, \betti{L}_x^\vee;\nu).
\end{equation*}

The pair 
$(\struct_{\psi}, \betti{O}_{\psi})$ on $J_D$ is isomorphic to the product
$(\boxtimes_{d \in \mathbf{D}_\nu} \struct_{\psi_\nu},
 \boxtimes_{d \in \mathbf{D}_\nu} \betti{O}_{\psi_\nu}).$
By the theorem \ref{kunnethbetti}, 
\begin{multline}
 \bigotimes_{x \in \proj^1} \tau(\mathscr{L}_x^\vee , 
 \betti{L}_x^\vee ; \nu)  \cong \\
 R \Gamma_c \left(\prod_{d \in D_\nu} \gamma_d U_{a_d} (d); \boxtimes_{d \in D_\nu}
 \left[(\mathscr{L}_d^\vee, \betti{L}_d^\vee, -\beta_d) 
 \otimes (\mathscr{M}_{\psi_\nu}, \betti{M}_{\psi_\nu}, \psi_\nu) \right] \right).
 \end{multline}
 However, by line (\ref{Jdsheaf}), 
 \begin{equation*}
( \pi')^\Delta (\mathscr{L}, \betti{L}) \cong \boxtimes_{d \in D_\nu} (\mathscr{L}_d^\vee, \betti{L}_d^\vee, -\beta_d),
 \end{equation*}
 and 
 \begin{equation*}
 \boxtimes_{d \in D_\nu} (\struct_{\psi_\nu}, \betti{O}_{\psi_\nu}, \psi_\nu) 
 \cong (\mathscr{M}_{d\psi}, \betti{M}_{d \psi}, \psi).
 \end{equation*}
 Therefore,
 \begin{multline}
(2 \pi \sqrt{-1})^{-1} \bigotimes_{x \in \proj^1} \tau(\mathscr{L}_x^\vee , 
 \betti{L}_x^\vee ; \nu)  \cong  \\
 R \Gamma_c (J_D; (\pi')^\Delta (\mathscr{L}, \betti{L}, \beta)(1) \otimes 
 (\struct_{d \psi}, \betti{O}_{d \psi}, \psi))
 \end{multline}
On the other hand,  lemma \ref{detlemma} implies that
\begin{equation*}\label{finalproof1}
R \Gamma_c \left[J'_D; (\dpush{(\iota'_\Sigma)} \dpull{(\iota'_\Sigma)}\mathscr{L}),
\spush{(\iota_\Sigma')} \spull{(\iota_\Sigma')} \betti{L}), \phi^{(n)}) \right]
 \cong 
\det (R \Gamma_c (V; (L, \mathbf{L}, \phi)).
\end{equation*}
Therefore, the left hand side of \ref{finalproof1} is a line.

It suffices to show that
\begin{multline}\label{finalproof2}
 R \Gamma_c (J_D; (\pi')^\Delta (\mathscr{L}, \betti{L}, \beta)(1) \otimes 
 (\struct_{d \psi}, \betti{O}_{d \psi}, \psi)) \cong \\
R \Gamma_c \left(J'_D;  \left(\dpush{(\iota'_\Sigma)} \dpull{(\iota'_\Sigma)}\mathscr{L},
 \spush{(\iota_\Sigma')} \spull{(\iota_\Sigma')} \betti{L}), \phi^{(n)}\right)\right).
\end{multline}

Tensoring $\mathscr{L}$ with the triangle from lemma \ref{trianglelemma},
we obtain a distinguished triangle
\begin{equation*}
( \dpush{(\iota_\Sigma')} \dpull{(\iota_\Sigma')}\mathscr{L}) [-1] \to
 \mathscr{L} \otimes \pi_! \struct_{\psi} \to \mathscr{L}.
 \end{equation*}
By lemma \ref{vanishinglemma},
\begin{equation*}
R \Gamma_c (J'_D;  \dpush{(\iota_\Sigma')} \dpull{(\iota_\Sigma')} 
\mathscr{L}[-1])  \cong
R \Gamma_c (J'_D;  \mathscr{L} \otimes \pi_! \struct_{\psi}).
\end{equation*}
Using the projection formula in proposition \ref{noncharprojfla},
$\mathscr{L} \otimes \pi'_! \struct_{\psi} \cong
\pi'_! ((\pi')^\Delta \mathscr{L} \otimes \struct_{\psi})$.
This proves (\ref{finalproof1}) in the $\diff$-module case.

Now, we will work with $\betti{L}$.  Recall, from the proof
of lemma \ref{trianglelemma}, that 
$\bar{\pi} : \overline{\Sigma}_D \to \Sigma'_D$ is a $\Ga$-bundle.
Furthermore, as above, there is a natural isomorphism
\begin{multline}
R \Gamma_c (J_D; (\pi')^\Delta (\mathscr{L}, \betti{L}, \beta)(1) \otimes 
 (\struct_{d \psi}, \betti{O}_{d \psi}, \psi))  \\
\cong R \Gamma_c (\overline{\Sigma}_D; 
\dpull{(\iota_{\overline{\Sigma}_D})} (\mathscr{L},\betti{L},  \beta ) (1) ).
\end{multline}
Finally,
proposition \ref{derhambundle} implies that 
\begin{equation*}
\begin{aligned}
\dpushc{\pi'} \dpushc{(\iota_{\overline{\Sigma}})} \dpull{\iota_{\overline{\Sigma}}}
(\mathscr{L}, \betti{L})(1)
& \cong \dpushc{(\iota_{\Sigma'})} \dpull{\iota_{\Sigma'}}
(\mathscr{L}, \betti{L}).
\end{aligned}
\end{equation*}
This confirms line (\ref{finalproof2}).
\end{proof}


\end{document}